\documentclass[11pt,twoside,a4paper]{article}
\usepackage{color,amscd,amsmath,amssymb,amsthm,latexsym,stmaryrd}
\usepackage{mathabx}
\usepackage{shuffle}
\usepackage[latin1,utf8]{inputenc}
\usepackage[T1]{fontenc}   
\usepackage[english]{babel}
\usepackage{tikz,tkz-tab}
\usepackage{hyperref}
\usepackage{appendix}
\usepackage{enumitem}
\setlength{\textwidth}{16cm}
\setlength{\textheight}{25cm}
\topmargin = -25mm
\oddsidemargin = -1mm
\evensidemargin = 0mm 
\newcommand{\sy}[1]{{\color{purple} #1}}

\input{xy}
\xyoption{all}

\definecolor{vert}{rgb}{0.,0.5,0.}


\newcommand{\Gr}{\mathbf{Gr^{\circlearrowright}}}
\newcommand{\PGr}{\mathbf{ {CGr}^{\circlearrowright}}}
\newcommand{\rGr}{\mathbf{{sol}Gr^{\circlearrowright}}}
\newcommand{\rPGr}{\mathbf{ {solCGr}^{\circlearrowright}}}
\newcommand{\uPGr}{\mathbf{uPGr^{\circlearrowright}}}

\newcommand{\Gacirc}{\Gamma^{\circlearrowright}}
\newcommand{\rGacirc}{{\bf sol}\Gamma^{\circlearrowright}}
\newcommand{\Ga}{\Gamma^{\uparrow}}
\newcommand{\Hom}{\mathbf{Hom}}

\newcommand{\reg}{\mathrm{sol}}
\newcommand{\sh}{\mathrm{sh}}
\newcommand{\val}{\mathrm{val}}

\newcommand{\Trap}{\mathbf{TRAP}}
\newcommand{\uTrap}{\mathbf{uTRAP}}


\newcommand{\K}{\mathbb{K}}
\newcommand{\Z}{\mathbb{Z}}
\newcommand{\C}{\mathbb{C}}
\newcommand{\R}{\mathbb{R}}
\newcommand{\N}{\mathbb{N}}


\newcommand{\rond}[1]{*++[o][F-]{#1}}

\renewcommand{\geq}{\geqslant}
\renewcommand{\leq}{\leqslant}

\newcommand{\sym}{\mathfrak{S}}

\newcommand{\calC}{\mathcal{C}}

\newcommand{\grapheo}{\mathcal{O}}
\newcommand{\End}{\mathrm{End}}
\newcommand{\Obj}{\mathrm{Obj}}
\newcommand{\Mor}{\mathrm{Mor}}

\newcommand{\cat}{\mathcal{C}}
\newcommand{\catssm}{\mathbf{Mod}_\sym }


\theoremstyle{plain}
\newtheorem{defi}{Definition}[subsection]
\newtheorem{theo}[defi]{Theorem}
\newtheorem{lemma}[defi]{Lemma}
\newtheorem{cor}[defi]{Corollary}
\newtheorem{prop}[defi]{Proposition}

\theoremstyle{remark}
\newtheorem{remark}{Remark}[subsection]
\newtheorem{notation}{Notations}[subsection]
\newtheorem{example}{Example}[subsection]

\begin{document}
	
	\title{From non-unitary wheeled PROPs to smooth amplitudes and  generalised convolutions}
	
	\author{Pierre J. Clavier${}^{1,2,3}$, Lo\"ic Foissy${}^4$, Sylvie Paycha${}^{3,5}$\\
		~\\
		{\small \it $^1$ Université de Haute Alsace, IRIMAS,}
		{\small \it 12 rue des Frères Lumière,}\\
		{\small \it 68 093 MULHOUSE Cedex, France}\\ 
		~\\
		{\small \it $^2$ Technische Universit\"at, Institut f\"ur Mathematik,}\\
		{\small \it Str. des 17. Juni 136,}\\
		{\small \it 10587 Berlin, Germany}\\
		~\\
        {\small \it $^3$ Universit\"at Potsdam, Institut f\"ur Mathematik,} \\
		{\small \it Campus II - Golm, Haus 9} \\
		{\small \it Karl-Liebknecht-Stra\ss e 24-25} \\
		{\small \it D-14476 Potsdam, Germany}\\ 
		~\\
		{\small \it $^4$ Univ. Littoral Côte d'Opale, UR 2597}\\
		{\small \it LMPA, Laboratoire de Mathématiques Pures et Appliquées Joseph Liouville}\\
		{\small \it F-62100 Calais, France}\\ 
		~\\
		{\small \it $^5$ On leave from the University of Clermont Auvergne, Clermont-Ferrand, France}\\
		~\\
		{\small emails: \it pierre.clavier@uha.fr, foissy@univ-littoral.fr, paycha@math.uni-potsdam.de}}
	
	\date{}
	
	\maketitle

	\begin{abstract}  We introduce the concept of TRAP  (Traces and Permutations), which can roughly be viewed as a wheeled PROP (Products and Permutations) without  unit. TRAPs are equipped with a horizontal concatenation and partial trace maps. {Continuous morphisms on an infinite dimensional topological space and smooth kernels   (resp. smoothing operators) on a closed manifold   form a TRAP but not a wheeled PROP}. We build the free objects in the category of TRAPs   as TRAPs of graphs  and  show that a TRAP can be completed to a unitary TRAP (or wheeled PROP). We further show that it can be equipped with a   vertical concatenation, which 
				on the TRAP of linear homomorphisms of a vector space, amounts to the usual composition.
			 {	The vertical concatenation in the TRAP of smooth kernels  gives  rise to generalised convolutions. 
					 Graphs  whose vertices are decorated by smooth kernels (resp. smoothing operators) on a closed manifold form a TRAP}. 
			From their universal properties we build  smooth  amplitudes associated with the graph.
	\end{abstract}  
	
	\noindent
	{\bf Classification:} 18M85, 46E99, 47G30\\
	{\bf Key words:} PROP, trace, distribution kernel, convolution
	
	\tableofcontents
	
	\addcontentsline{toc}{section}{Introduction}
	
\section*{Introduction}

	\subsection*{State of the art}

	PROPs (\underline{Pro}ducts and \underline{P}ermutations)\footnote{The traditional notation  is PROP, more recently prop.} 
	 provide an 
	algebraic structure that allows to deal with an arbitrary number of inputs and outputs. They generalise many other algebraic structures such as 
	operads, which have one output and multiple inputs. PROPs appeared in \cite{McLane} and later in the book \cite{BV73} in the context of cartesian 
	categories. Although operads stemmed from  the study of iterated loop spaces in algebraic topology, see e.g. \cite{May72}, their origin can also be traced back to the earlier work \cite{BV68}\footnote{We 
		thank B. Vallette for his enlightening comments on these historical aspects.}.
	
	An important asset of PROPs over operads is that they encompass algebraic structures such as bialgebras and Hopf algebras that lie outside the realm of 
	operads or co-operads. This very fact is a motivation to consider PROPs in the context of renormalisation in quantum field theory. We refer the reader to \cite{Pirashvili01} for the study of bialgebras in the PROPs framework and 
	\cite{Markl,JY15,McLane} for other 
	classical examples of PROPs.  In recent years, wheeled PROPs  \cite{Merkulov2004,Merkulov2009}, which allow for loops, have played an important role in the context of deformation quantisation.
	
	A central example  of PROP is the PROP $\Hom_V$ of homomorphisms  of a finite dimensional vector space $V$ which we generalise 
	to the PROP $\Hom^c_V$ of continuous homomorphisms  of a  nuclear Fréchet space $V$. Whereas the first is a wheeled PROP   (Proposition \ref{prop:HomV}), the latter is not unless the space $V$ is  finite dimensional (Theorem \ref{thm:Hom_V_generalised}). It can nevertheless be interpreted as a TRAP (Definition \ref{defn:trap}), which roughly speaking, amounts to a wheeled PROP without unit\footnote{To our knowledge, wheeled PROPs without units do not appear in the literature, which is why we allow ourselves to give them a shorter name.}. TRAPs  introduced in this paper,  offer natural structures to host morphisms of infinite dimensional spaces (see Proposition \ref{prop:Homfr}) and are therefore expected to play a role in the context of renormalisation in quantum field theory. 
	
	Another class of important examples we consider, are  TRAPs    of graphs  (Proposition \ref{prop:PROPgraphs})
	 of various types.
	 In the context of deformation quantisation,  the complex of oriented graphs whether 
	directed or wheeled,  plays an important role in the construction of a free PROP   generated by a $\sym\times \sym^{op}$-module  (see e.g  
	\cite[Paragraph 2.1.3]{Merkulov2004}). 

	Our long term   goal is to use the TRAP structure of graphs decorated by distribution (e.g. Green) kernels in order to build amplitudes  as generalised convolutions (called $P$-amplitudes, see Definition \ref{defi:generalised-P-convolution}) of kernels associated with the 
	decorated graph. The  expected singularities of the resulting amplitudes are immediate obstacles in defining such generalised convolutions.  In this paper, we  focus on the smooth setup, in which case  the amplitudes are smooth.
	
	\subsection*{Feynman rules and TRAPs}	
	
	In space-time variables, a Feynman rule is expected to assign to a graph $G$ with $k$ incoming and $l$ outgoing edges, an amplitude
	(it is actually a distribution) $K_G$ in $k+l$ variables. Our long term goal is  to derive the existence and the properties of the map $G\mapsto K_G$ from a universal property 
	of the PROP  structure on graphs.
	
	By  means of  blow-up   methods, generalised convolutions of Green functions were built on a closed Riemannian manifold  in    \cite{DangZhang17}, with the goal of renormalising multiple loop 
	amplitudes for Euclidean QFT on Riemannian manifolds.
	We hope to be able to simplify    the intricate analytic aspects of the renormalisation procedure for   multiple loop amplitudes,  by adopting an algebraic point of view on  amplitudes 
	using TRAPs.  There were    earlier attempts to describe QFT theories in 
	terms of PROPs (see e.g. \cite{Ionescu,Ionescu07}), yet to our knowledge, none with  the focus we are putting on generalised convolutions to 
	describe amplitudes. 	
	
We therefore expect the PROP of oriented graphs, briefly  mentioned  in \cite{Ionescu07}, and more specifically TRAPs, their non-unitary counterparts which naturally arise in the infinite-dimensional set-up, to have  concrete applications in the 
	perturbative approach to quantum field theory. {To our knowledge, this is yet an unexplored aspect of the theory.  Filling in this gap is  a long term goal we  have in mind.
	A first step towards this goal is the study of the TRAP of smoothing symbols (Theorem \ref{theo:Kinfty}), which like 
	$\Hom_V^c$ is not a wheeled PROP due to the  infinite dimensional spaces it involves.
	
	\subsection*{TRAPs of  graphs}
	
	TRAPs and unitary TRAPs entail two operations, the horizontal concatenation, and the  partial trace map}. The difference between TRAPs and unitary TRAPs is the existence of a unit for the trace in the latter.
		We define a TRAP structure on various families of graphs, which can be {corolla ordered} \sy(Definition \ref{defi:graph}) or decorated (Proposition \ref{prop:PROPgraphs}).
		The horizontal concatenation of this TRAP is the natural concatenation of 
		graphs and the partial trace map consists in gluing together
		one of the input with one of the output, and therefore assigns to a graph $G$ with $k$ incoming and $l$ outgoing edges
		a graph with $k-1$ incoming and $l-1$ outgoing edge. The  set of {corolla ordered} graphs $\PGr$ equipped with the partial trace map builds
		a unitary TRAP, and we prove that it is a free unitary TRAP (Theorem \ref{freetraps}):
		this is the TRAP counterpart of a similar statement for free PROPs, described in terms of graphs without loops 	\cite[Proposition 57]{Markl} and   \cite{Vallette1,Vallette3,JY15}.
		More generally,
		the set of {corolla ordered} graphs $\PGr(X)$ decorated by a set $X$ on their vertices is the free unitary TRAP generated by $X$.
		These unitary TRAPs contain free nonunitary TRAPs, which are  combinatorially described by particular graphs,
		which we call solar\footnote{ {In \cite{JY15} such graphs are called ordinary.}}.

	\subsection*{From  TRAPs to unitary TRAPs}
	
	If $V$ is a finite-dimensional vector space,
		then the PROP $\Hom_V$  of homomorphisms of $V$ is a  unitary TRAP, with the usual trace of endomorphisms.
		Its unit as a TRAP 	is the identity map of $V$. When $V$ is not finite-dimensional, one cannot equip the whole PROP $\Hom_V$
		with	a structure of unitary TRAP. 
In this case,  one has to restrict to smaller classes of homomorphisms, such as that of the 
		homomorphisms of finite rank. This class no longer contains the identity, and we only obtain a TRAP and not   
		a unitary TRAP (Proposition \ref{prop:Homfr}). 
		To circumvent this difficulty, we construct for any TRAP $P$ a unitary TRAP $\uPGr(P)$ which contains $P$ (Theorem \ref{theo:uPGr}).
		This object is characterised by a universal property (Proposition \ref{prop:univpropPGr}), which amounts to  applying the left adjoint to
		the forgetful functor from the category of unitary TRAPs to the category of TRAPs.
		The existence of this functor comes from the inclusion of {corolla ordered} { solar} graphs, describing nonunitary free TRAPs,
		in  the set of {corolla ordered} graphs, describing unitary free TRAPs.	
		In particular, in $\uPGr(P)$ an identity $I$ is added, as well as its trace, symbolised by an abstract element $\grapheo$,
		which is no longer an element of the base field $\K$.

	\subsection*{The vertical concatenation and generalised traces}	
	
	In the same way as a vertical concatenation can be built on wheeled PROPs to turn them into PROPs, any TRAP comes with a   vertical concatenation (Proposition \ref{propverticalconcatenation}). 
		The vertical concatenation on TRAPs 
		previously considered in \cite[Definition 11.33]{JY15}   generalises  the composition of morphisms.
		When applied to unitary TRAPs, this construction yields a functor from unitary TRAPs to PROPs (Proposition \ref{prop:morphismPROPfromTRAP}).		
	Roughly speaking, 	on graphs, if $G$ is a graph with $k$ inputs and $l$ outputs,
		if $G'$ is a graph with $l$ inputs and $m$ outputs, 
		$G'\circ G$ is obtained by gluing together the  outgoing edges of $G$ and the  incoming edges of $G'$ according to their indexation,
		giving a graph with $k$ inputs and $m$ outputs. The universal property of graphs allows to generalise the construction of the vertical concatenation to arbitrary traces.
		If $V$ is a finite-dimensional space, the vertical concatenation of the TRAP $\Hom_V$ coincides with the usual composition of 
		linear maps $f:V^{\otimes k}\longrightarrow V^{\otimes l}$.

		Extending this to the infinite dimensional setup requires the use of a completed tensor product $\widehat \otimes$ in order to have an 
		isomorphism 
		\[\Hom_V^c(k,l)\simeq\left(V'\right)^{\widehat\otimes  k}\widehat \otimes V^{\widehat\otimes  l},\]  
		where $\Hom_V^c(k,l)$ stands for the algebra of continuous morphisms from $V^{\widehat\otimes  l}$ to $V^{\widehat\otimes  k}$ (see 
		Definition \ref{defi:Hom_V_generalised}) and
		$V'$ for the topological dual of a topological space $V$. This holds in the framework of Fr\'echet nuclear spaces which form a monoidal 
		category under
		the completed tensor product   (Lemma \ref{lem:prod_Frechet_nuclear}). On Fr\'echet nuclear spaces, the composition can 
		indeed be described as a dual pairing,   so it  comes as no surprise that 
		for a Fr\'echet nuclear space $V$, the vertical concatenation obtained from the nonunitary TRAP structure
		is the usual composition.
		
	Any TRAP also inherits a generalised trace defined on its elements with the same number of inputs and outputs. Roughly speaking, these generalised traces are obtained by grafting the outputs to the inputs according to their indexation.
	These traces on TRAPs generalise the usual trace of morphisms, and they also enjoy a cyclicity property (Proposition \ref{prop:generalised_traces}).
		
When $V$ is a space of smooth functions on a closed Riemannian manifold $M$, the associativity of the vertical concatenation amounts to the Fubini property (Theorem \ref{thm:generalised_convolution}, 2.) and the generalised trace of a generalised kernel $K$ with $k$ inputs and $k$ outputs is given by the integration of $K$ along the small diagonal of $M^k$ (Theorem \ref{thm:generalised_convolution}, 3.).

	 \subsection*{Amplitude of a graph decorated by a TRAP}

As mentioned above,    our goal   in the present paper is to 
	provide an adequate algebraic and analytic framework in which we build generalised convolution   functions associated with graphs decorated  with smooth kernels.
	We show that these form a TRAP (Theorem \ref{theo:Kinfty}), whose partial trace maps are given by a partial convolution. 

When $P$ is a TRAP, the universal property of the TRAP of {corolla ordered} graphs decorated by $P$
	 gives rise to a canonical TRAP map, which associates to any such graph $G$ an element of $P$ which we call the $P$-amplitude associated with $G$
	 (Definition \ref{defi:generalised-P-convolution}).
	 The $P$-amplitude commutes with both horizontal and vertical concatenation of $P$ (Proposition \ref{prop:gen_conv_vert_conc}). It   turns out that the vertical concatenation is in fact a particular example 
	 of $P$-amplitude, where the considered graph has two vertices. 
	 	  When applied to the TRAP of smooth generalised kernels, this construction generalises the usual convolution of kernels (Remark \ref{rk:generalised_conv}) and gives rise to smooth amplitudes.

	\subsection*{Unitary TRAPs and wheeled PROPs}
	
	A unitary TRAP is known in the literature under the name of wheeled PROP. 
		In order to prove that the two notions coincide, we describe TRAPs and unitary TRAPS as algebras over a monad (see Definition \ref{defimonades})
        which generalises the notion of monoid to  the frame of category theory.
		We state that  unitary TRAPS are algebras over a monad $\Gacirc$ of  graphs,
		described as an  endofunctor of a category of modules over symmetric groups
		sending an object  $X$ to the free  unitary TRAP of graphs  $\rGacirc(X)$ generated by $X$.
		When $X$ is a unitary TRAP, $\Gacirc(X)$ inherits a contraction operation to $X$, which induces the monadic structure 
		(Theorem  \ref{thm:equivalence_TRAP_wPROP}).
		This monad $\Gacirc$ turns out to be the monad used to defined wheeled PROPs in the literature \cite[Corollary 11.35]{JY15},
		thus relating our presentation of unitary TRAPs in terms of 
		family of sets with maps satisfying a set of axioms and the categorical presentation of wheeled PROPs
		in terms of algebras over a particular monad. A similar result holds for (nonunitary) TRAPs, replacing graphs by 
		solar graphs  introduced in Definition   \ref{defi:graph}.

	\subsection*{Openings}
	
To sum up,  by means of a  TRAP structure, we were able to build  generalised convolutions (resp. traces) associated with   graphs decorated with smooth 
	kernels. As announced at the beginning of the introduction, we expect this algebraic approach  to enable us to tackle non smooth kernels and   thus  to describe  (non necessarily smooth) amplitudes as generalised 
	convolutions of distribution kernels associated with graphs. At this stage these are open questions that we hope to address in future work.
	
	{There are other possible natural  generalisations of the framework presented here, that  are more algebraic in nature\footnote{{We thank Mark Johnson for pointing   out the subsequent interesting questions to us.}}.   { One could consider} coloured TRAPs,  whose input  and output  edges are coloured, and whose  partial trace maps relate inputs and outputs of the same colour. Such structures are expected to play a role in QFTs with more than one type of particles (for example QED and QCD). { Coloured TRAPs could also be relevent in the more geometric context of maps between different manifolds or in the context of} modules over an algebra.
	
	 There are also potential generalisations of Theorem \ref{theo:Kinfty}, which only requires that there be enough integral-like objects to define the partial trace maps, thereby hinting  to the fact that more general spaces    than the ones considered here should also carry TRAPs structures.
	 Weakened versions of $C^*$-algebras such  as inverse limits of $C^*$-algebras \cite{Phillips} and locally multiplicative convex $C^*$-algebras \cite{JoachimJohnson}  would be worth investigating in that context.   }
	
	\section*{Acknowledgements} 
	
	The first and last authors would like to acknowledge the Perimeter Institute for hosting them during the initial phase of this paper.
	They are also grateful for inspiring discussions with Matilde Marcolli. The three authors most warmly  thank  Bruno 
	Vallette and Dominique Manchon for pointing the wheeled PROP literature to us. The article was  restructured in the 
	aftermath of their suggestions and other remarks we got on a preliminary version of this manuscript, all of which 
			we are very grateful for.  { Our warmest thanks to Christian Brouder, Daniel Grieser and Mark Johnson for their very enlightening comments.}

\section{The category of TRAPs}

\subsection{Definition}\label{defn:trap}
\begin{notation}{For any $k\in\N_0$, we write $[k]:=\{1,\cdots,k\}$. In particular, $[0]=\emptyset$.
$\sym_k$ denotes the symmetric group on $k$ elements. An element $\sigma\in \sym_k$  sends $i\in [k]$ to $ \sigma(i)\in [k]$.}\end{notation}

\begin{defi} \label{defi:Trap}
A \textbf{TRAP} is a family $P=(P(k,l))_{k,l\geqslant 0}$ of sets, equipped with the following structures:
\begin{enumerate}
\item $P$ is a $\sym\times \sym^{op}$-module, that is to say, 
for any $k,l\in  \N_0$, $P(k,l)$ is a $\sym_l\times \sym_k^{op}$-module.  In other words, there exist maps
\begin{align*}
&\left\{\begin{array}{rcl}
\sym_l\times P(k,l)&\longrightarrow&P(k,l)\\
(\sigma,p)&\mapsto&\sigma\cdot p,
\end{array}\right.&
&\left\{\begin{array}{rcl}
P(k,l)\times \sym_k&\longrightarrow&P(k,l)\\
(p,\tau)&\mapsto&p\cdot \tau,
\end{array}\right.
\end{align*}
such that for any $(k,l)\in  \N_0^2$, for any $(\sigma,\sigma',\tau,\tau')\in \sym_l^2\times \sym_k^2$, for any $p\in P(k,l)$,
\begin{align*}
&&\mathrm{Id}_{[l]}\cdot p&=p\cdot \mathrm{Id}_{[k]}=p,\\
\sigma\cdot (\sigma'\cdot p)&=(\sigma\sigma')\cdot p,&
\sigma \cdot (p\cdot \tau)&=(\sigma\cdot p)\cdot \tau,&
(p\cdot\tau)\cdot \tau'&=p\cdot(\tau\tau').
\end{align*}
\item For any $k,l,k',l'\in  \N_0$, there is a map
\begin{align*}
*:&\left\{\begin{array}{rcl}
P(k,l)\times P(k',l')&\longrightarrow&P(k+k',l+l')\\
(p, p')&\longrightarrow&p*p',
\end{array}\right.
\end{align*}
called the  \textbf{horizontal concatenation}, such that:
\begin{enumerate}
\item (Associativity). For any $(k,l,k',l',k'',l'')\in  \N_0^6$, for any $(p,p',p'')\in P(k,l)\times P(k',l')\times P(k'',l'')$,
\[(p*p')*p''=p*(p'*p'').\]
\item (Unity). There exists $I_0\in P(0,0)$ such that for any $(k,l)\in  \N_0^2$, for any $p\in P(k,l)$,
\[I_0*p=p*I_0=p.\]
\item (Compatibility with the symmetric actions).
For any $(k,l,k',l')\in  \N_0^4$, for any $(p,p')\in P(k,l)\times P(k',l')$,
for any $(\sigma,\tau,\sigma',\tau')\in \sym_l\times \sym_k\times \sym_{l'} \times \sym_{k'}$,
\[(\sigma\cdot p\cdot \tau)*(\sigma'\cdot p'\cdot \tau')
=(\sigma \otimes \sigma')\cdot (p*p')\cdot (\tau\otimes \tau'),\]
where, for any $(\alpha,\beta)\in \sym_m\otimes \sym_n$, $\alpha\otimes \beta \in \sym_{m+n}$
is defined by
\[\alpha\otimes \beta(i)=\begin{cases}
\alpha(i)\mbox{ if }i\leqslant m,\\
\beta(i-m)+m\mbox{ if }i>m.
\end{cases}\]
\item (Commutativity). For any $(k,l,k',l')\in  \N_0^4$, For any $p\in P(k,l)$, $p'\in P(k',l')$,
\[c_{l,l'}\cdot (p*p')=(p'*p)\cdot c_{k,k'},\]
where for any $(m,n)\in  \N_0^2$, $c_{m,n}\in \sym_{m+n}$ is defined by:
\begin{align} 
\label{defcmn} c_{m,n}(i)&=\begin{cases}
i+n\mbox{ if }i\leq m,\\
i-m\mbox{ if }i>m.
\end{cases}
\end{align}
\end{enumerate}
\item For any $k,l\geqslant 1$, for any $i\in [k]$, $j\in [l]$, there is a map
\begin{equation}\label{eq:partialtrace}
t_{i,j}:\left\{\begin{array}{rcl}
P(k,l)&\longrightarrow&P(k-1,l-1) \\
p&\mapsto&t_{i,j}(p),
\end{array}\right.
\end{equation}
called the \textbf{partial trace map}, such that:
\begin{enumerate}
\item (Commutativity). For any $k,l\geqslant 2$, for any $i\in [k]$, $j\in [l]$, $i'\in [k-1]$, $j'\in [l-1]$,
\begin{align*}
t_{i',j'}\circ t_{i,j}&=\begin{cases}
t_{i-1,j-1}\circ t_{i',j'}\mbox{ if }i'<i,\: j'<j,\\
t_{i,j-1}\circ t_{i'+1,j'}\mbox{ if }i'\geqslant i,\: j'<j,\\
t_{i-1,j}\circ t_{i',j'+1}\mbox{ if }i'<i,\: j'\geqslant j,\\
t_{i,j}\circ t_{i'+1,j'+1}\mbox{ if }i'\geqslant i,\: j'\geqslant j.
\end{cases}
\end{align*}
\item (Compatibility with the symmetric actions). For any $k,l\geqslant 1$, for any $i\in [k]$, $j\in [l]$,
$\sigma \in \sym_l$, $\tau\in \sym_k$, for any $p\in P(k,l)$,
\[t_{i,j}(\sigma\cdot p\cdot \tau)=\sigma_j\cdot (t_{\tau(i),\sigma^{-1}(j)}(p))\cdot \tau_i,\]
with the following notation: if $\alpha\in \sym_n$ and $p\in [n]$, then $\alpha_p\in \sym_{n-1}$ is defined by
\begin{equation}
\label{defalphak} \alpha_p(k)=\begin{cases}
\alpha(k)\mbox{ if }k<\alpha^{-1}(p) \mbox{ and }\alpha(k)<p,\\
\alpha(k)-1\mbox{ if }k<\alpha^{-1}(p) \mbox{ and }\alpha(k)>p,\\
\alpha(k+1)\mbox{ if }k\geqslant \alpha^{-1}(p) \mbox{ and }\alpha(k)<p,\\
\alpha(k+1)-1\mbox{ if }k\geqslant \alpha^{-1}(p) \mbox{ and }\alpha(k)>p.
\end{cases}
\end{equation}
In other words, if we represent $\alpha$ by a word $\alpha_1\ldots \alpha_n$, then $\alpha_p$ is represented
by the word obtained by deleting the letter $p$ in $\alpha_1\ldots \alpha_n$ 
and subtracting   $1$ to all the letters $>p$.
\item (Compatibility with the horizontal concatenation). 
For any $k,l,k',l'\geq 1$, for any $i\in [k+l]$, $j\in [k'+l']$, for any $p\in P(k,l)$, $p'\in P(k',l')$:
\[t_{i,j}(p*p')=\begin{cases}
t_{i,j }(p)*p'\mbox{ if }i\leqslant k,\: j\leqslant l,\\
p* t_{i-k,j-l}(p')\mbox{ if }i>k,\: j>l.
\end{cases}\]
\end{enumerate}\end{enumerate}
The TRAP is \textbf{unitary} if moreover
there exists $I\in P(1,1)$ such that for any $k,l\geqslant 1$, for any $i\in [k+1]$, $j\in [l+1]$,
for any $p\in P(k,l)$:
\begin{align*}
t_{1,j}(I*p)&= (1,2,\ldots,j-1)\cdot p \mbox{ if }j\geqslant 2,\\
t_{i,1}(I*p)&= p\cdot (1,2,\ldots, i-1)^{-1}\mbox{ if }i\geqslant 2,\\
t_{k+1,j}(p*I)&=(j,j+1,\ldots,{k})^{-1}\cdot p\mbox{ if }j\leqslant {k},\\
t_{i,l+1}(p*I)&=p\cdot (i,i+1,\ldots,{l})\mbox{ if }i\leqslant {l}.
\end{align*}
\end{defi}

\begin{remark}
By commutativity of $*$, for any $p\in P(0,0)$, for any $(k,l)\in  \N_0^2$, for any $p'\in P(k,l)$:
\[p*p'=p'*p,\]
since $c_{0,k}=\mathrm{Id}_{[k]}$.
\end{remark}

\begin{remark}
The abuse of notation $t_{i,j}$ is legitimate since a full notation such as  $t_{i,j}^{k,l}$ is not necessary in practice. Indeed the indices  $
k$ and $l$ in $t_{i,j}(p)$ are entirely determined by  the element $p$ to which  $t_{i,j}$ is applied.

 More so,  if $P$ is unitary, $t_{i,j}{(p)}$ does not strongly depend on $k$ and $l$ {determined by $p$}: indeed, let  $f:P(k,l)\longrightarrow P(k+1,l+1)$ be the map that sends $p$ to  $p*I$ 
 then for  $i\in [k]$ and $j\in [l]$, we have
 \[t_{i,j} \circ f(p)=f\circ t_{i,j}(p),\] which is the axiom 3.(c).
\end{remark}

\begin{remark}
 Notice that our TRAPs can be seen as wheeled PROPs without units, or equivalently a  wheeled PROP is a TRAP with unit (see Theorem \ref{thm:equivalence_TRAP_wPROP} below). {As for wheeled PROPs, TRAPs can be defined in a much more concise way {as an} algebra over a given monad (see Section \ref{section:monad}). We choose {a}  more pedestrian definition of TRAPs {which} will allow us in Section \ref{section:Examples} to prove that known analytic and geometric spaces carry TRAP  structures. }
\end{remark}

\begin{defi} \label{defn:subtrap}
 { We call {\bf sub-TRAP} of a TRAP   $P=(P(k,l))_{k,l\geqslant 0}$, a  $\sym\times \sym^{op}$-submodule   $Q=(Q(k,l))_{k,l\geqslant 0}$ of $P$ which contains the unit $I_0\in P(0,0)$ and is stable under the partial trace map of $P$. {If the TRAP $P$ is unitary, then the} sub-trap $Q$ is unitary if it contains the unit $I\in P(1,1)$.}
\end{defi}

\begin{defi}       \label{defn:trap_morphism}
     Let $P=(P(k,l))_{k,l{\in \N_0}}$ and $Q=(Q(k,l))_{k,l\in \N_0}$ be two TRAPs with partial trace maps $(t_{i,j}^P)_{i,j{\in \N_0}}$ and $(t_{i,j}^Q)_{i,j{\in \N_0}}$ 
     respectively. A \textbf{morphism of TRAPs} is a family $\phi=(\phi(k,l))_{k,l{\in \N_0}}$ 
     of morphisms of $\sym\times \sym^{op}$-modules  $\phi(k,l):P(k,l)\longrightarrow Q(k,l)$ which are morphisms for the horizontal concatenation, 
     and the partial trace maps. More precisely, for any $(k,l,m,n)\in \N_0^4$:
     \begin{itemize}
     \item For any $(\sigma,p,\tau)$ in $\sym_l\times P(k,l)\times\sym_k$, $\phi(k,l)(\sigma\cdot p\cdot \tau)
     =\sigma\cdot \phi(k,l)(p)\cdot\tau$.
     \item $\phi(0,0)(I_0)=I_0$.
      \item $\forall (p,q)\in P(k,l)\times P(n,m),~\phi(k+n,l+m)(p* q) = \phi(k,l)(p)* \phi(n,m)(q)$,
      \item $\forall (p,i,j)\in P(k,l)\times [k]\times [l]$, $\phi(k-1,l-1)\circ t^P_{i,j}(p)=t^Q_{i,j}\circ \phi(k,l)(p)$.
     \end{itemize}
     With a slight  abuse of notations, we write $\phi(p)$ instead of $\phi(k,l)(p)$ for $p\in P(k,l)$.
     In particular, TRAPs form a category, which we denote by $\Trap$.\\
     If $P$ and $Q$ are unitary TRAPs with units $I_P$ and $I_Q$ and $\phi:P\longrightarrow Q$ is a morphism of TRAPs, this morphism is unitary if
     $\phi(1,1)(I_P)=I_Q$. Unitary TRAPs form a subcategory of $\Trap$, denoted by $\uTrap$.
  \end{defi}
  \begin{remark}
    In Theorem \ref{thm:equivalence_TRAP_wPROP} we shall see that $\uTrap$ coincides with the category of wheeled PROPs.
      \end{remark}
Let us simplify the axioms:

\begin{lemma}\label{lemmeaxiomessimples}
Let $P=(P(k,l))_{k,l\in  \N_0}$ be a $\sym\times \sym^{op}$-module, equipped with a horizontal concatenation $*$
satisfying axioms 2. (a)-(d), and with maps $t_{i,j}$ satisfying axioms 3. (a)-(b). 
\begin{enumerate}
\item We assume that for any $k,l,k',l'\geqslant 1$, for any $p\in P(k,l)$, $p'\in P(k',l')$,
\[t_{1,1}(p*p')=t_{1,1}(p)*p'.\]
Then Axiom 3.(c) is satisfied.
\item Let $I\in P(1,1)$. We assume for   any $k,l\geqslant 1$, for any $p\in P(k,l)$,
\[t_{1,2}(I*p)=p.\]
Then $I$ is a unit of $P$.
\end{enumerate}
\end{lemma}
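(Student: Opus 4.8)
Both statements follow the same device: conjugate the identity to be proved by well-chosen permutations so as to reduce it to the given hypothesis, then cancel the (bijective) symmetric-group actions. The only tools are the compatibility axioms 2.(c), 2.(d) and 3.(b), together with a handful of elementary identities in the symmetric groups. I would begin by recording that the standing hypotheses $k,l\geq 1$ (resp.\ all of $k,l,k',l'\geq 1$) are exactly what makes all the partial traces and the cycles used below well-defined.

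\textbf{Part 1.} I would fix $p\in P(k,l)$, $p'\in P(k',l')$ and indices $i\leq k$, $j\leq l$, and choose $\tau\in\sym_k$ with $\tau(1)=i$ and $\sigma\in\sym_l$ with $\sigma^{-1}(1)=j$ (e.g.\ the transpositions $(1\,i)$ and $(1\,j)$, or identities). Axiom 2.(c), with trivial permutations on the second factor, gives
\[(\sigma\cdot p\cdot\tau)*p'=(\sigma\otimes\mathrm{Id}_{[l']})\cdot(p*p')\cdot(\tau\otimes\mathrm{Id}_{[k']}).\]
Now apply $t_{1,1}$ to both sides. On the left the hypothesis turns it into $t_{1,1}(\sigma\cdot p\cdot\tau)*p'$, and Axiom 3.(b) rewrites the first factor as $\sigma_1\cdot t_{i,j}(p)\cdot\tau_1$. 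On the right Axiom 3.(b) gives $(\sigma\otimes\mathrm{Id})_1\cdot t_{i,j}(p*p')\cdot(\tau\otimes\mathrm{Id})_1$, using $(\tau\otimes\mathrm{Id})(1)=i$ and $(\sigma\otimes\mathrm{Id})^{-1}(1)=j$ (valid since $1\leq l$). After checking the elementary identities $(\sigma\otimes\mathrm{Id}_{[l']})_1=\sigma_1\otimes\mathrm{Id}_{[l']}$ and $(\tau\otimes\mathrm{Id}_{[k']})_1=\tau_1\otimes\mathrm{Id}_{[k']}$, and using 2.(c) once more to pull $\sigma_1\otimes\mathrm{Id}$ and $\tau_1\otimes\mathrm{Id}$ out of $(\sigma_1\cdot t_{i,j}(p)\cdot\tau_1)*p'$, both sides become $(\sigma_1\otimes\mathrm{Id})\cdot(\,\cdot\,)\cdot(\tau_1\otimes\mathrm{Id})$; cancelling the group action yields $t_{i,j}(p*p')=t_{i,j}(p)*p'$, the first line of Axiom 3.(c). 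For the second line ($i>k$, $j>l$) I would use commutativity 2.(d): from $c_{l,l'}\cdot(p*p')=(p'*p)\cdot c_{k,k'}$, apply $t_{i,j}$, invoke 3.(b), note $c_{k,k'}(i)=i-k$ and $c_{l,l'}(j)=j-l$, reduce to the already-proved first case applied to $p'*p$, and transport back through 2.(d), keeping track of the shuffles $c_{m,n}$ under the deletion operation.

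\textbf{Part 2.} The same template reduces each of the four unitality identities of Definition \ref{defi:Trap} to the hypothesis $t_{1,2}(I*p)=p$. For $t_{1,j}(I*p)=(1,2,\ldots,j-1)\cdot p$ with $j\geq 2$, I would set $\rho:=(1,2,\ldots,j-1)\in\sym_l$ and use 2.(c) (trivial permutations on the $I$-factor) to write $I*(\rho\cdot p)=(\mathrm{Id}_{[1]}\otimes\rho)\cdot(I*p)$. Applying $t_{1,2}$, the left side is $\rho\cdot p$ by hypothesis, while 3.(b) makes the right side $(\mathrm{Id}_{[1]}\otimes\rho)_2\cdot t_{1,(\mathrm{Id}_{[1]}\otimes\rho)^{-1}(2)}(I*p)$; one then checks $(\mathrm{Id}_{[1]}\otimes\rho)^{-1}(2)=\rho^{-1}(1)+1=j$ and $(\mathrm{Id}_{[1]}\otimes\rho)_2=\mathrm{Id}_{[1]}\otimes\rho_1$ with $\rho_1=\mathrm{Id}_{[l-1]}$ for this particular $\rho$, hence $t_{1,j}(I*p)=\rho\cdot p$ as claimed (for $j=2$ this is the hypothesis itself). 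The identity for $t_{i,1}(I*p)$ is obtained identically with a suitable cycle acting on the right of $p$, and the two identities for $p*I$ are reduced to the two for $I*p$ by moving $I$ across via 2.(d), whose effect on the indices is the shuffle $c_{1,k}$, resp.\ $c_{1,l}$. Together with Part 1 (which gives Axiom 3.(c)), this shows that $I$ is a unit of $P$ in the sense of Definition \ref{defi:Trap}.

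\textbf{Main obstacle.} There is essentially no conceptual content: the proof is entirely ``move permutations around and cancel''. The real work, and the main source of slips, is the combinatorics of the two notations of the definition: the letter-deletion operation $\alpha\mapsto\alpha_p$ of (\ref{defalphak}) and the shuffle permutation $c_{m,n}$ of (\ref{defcmn}). Concretely one needs the small lemmas $(\sigma\otimes\mathrm{Id})_1=\sigma_1\otimes\mathrm{Id}$, $(\mathrm{Id}_{[1]}\otimes\rho)_1=\rho$, $(\mathrm{Id}_{[1]}\otimes\rho)_2=\mathrm{Id}_{[1]}\otimes\rho_1$, and the analogous behaviour of $c_{m,n}$ under deletion. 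I would isolate these as a short preliminary lemma on symmetric groups, after which the two arguments above become pure algebra.
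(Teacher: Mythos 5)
Your proposal is correct and follows essentially the same route as the paper: reduce each instance of Axiom 3.(c) and each unit relation to the hypothesised special case by conjugating with suitable permutations (transpositions or the shuffles $c_{m,n}$), using axioms 2.(c)/2.(d) and 3.(b) together with the elementary compatibilities of $\alpha\mapsto\alpha_p$ with $\otimes$. The only differences are organizational (you apply the trace to a 2.(c) identity and cancel the group action, where the paper inserts $\sigma^2,\tau^2$ and unfolds; and you spell out the reduction of the $p*I$ relations to the $I*p$ ones via 2.(d), which the paper leaves as "proved in the same way").
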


\begin{proof} 
1. Let $p\in P(k,l)$ and $p'\in P(k',l')$. Let us take $i\in [k+l]$, $j\in [k'+l']$, consider the transpositions $\sigma=(1,j)$ and $\tau=(1,i)$, with the convention $(1,1)=\mathrm{Id}$.
If $i\leqslant k$ and $j\leqslant l$,  then:
\begin{align*}
t_{i,j}(p*p')&=t_{i,j}(\sigma^2\cdot( p*p')\cdot \tau^2)\\
&=\sigma_j\cdot t_{1,1}(\sigma\cdot (p*p')\cdot \tau)\cdot \tau_i\\
&=\sigma_j\cdot(t_{1,1}((\sigma\cdot p\cdot \tau)*p')\cdot \tau_i\\
&=\sigma_j \cdot (t_{1,1}(\sigma\cdot p\cdot \tau)*p')\cdot \tau_i\\
&=(\sigma_j\cdot (t_{1,1}(\sigma\cdot p\cdot \tau)\cdot \tau_i)*p'\\
&=t_{i,j}(p)*p'.
\end{align*}
If $i>k$ and $j>l$, using $c_{m,n}^{-1}=c_{n,m}$:
\begin{align*}
t_{i,j}(p*p')&=t_{i,j}(c_{l',l}\cdot (p'*p)\cdot c_{k,k'})\\
&=(c_{l',l})_j\cdot t_{i-k,j-l}(p'*p)\cdot (c_{k,k'})_i\\
&=c_{l'-1,l}\cdot (t_{i-k,j-l}(p')*p)\cdot c_{k,k'-1}\\
&=p*t_{i-k,j-l}(p').
\end{align*}

2. Let us take $j\geqslant 2$.
\begin{align*}
t_{1,j}(I*p)&=t_{1,j}((2,j)^2\cdot(I*p))\\
&=(2,\ldots,j-1)\cdot t_{1,2}((2,j)\cdot (I*p))\\
&=(2,\ldots,j-1)\cdot t_{1,2}(I*(1,j-1)\cdot p))\\
&=(2,\ldots,j-1)\cdot((1,j-1)\cdot p)\\
&=(2,\ldots,j-1)(1,j-1)\cdot p\\
&=(1,\ldots,j-1)\cdot p.
\end{align*}
The  other three relations are proved in the same way. 
\end{proof}
We can also simplify the axioms for morphisms of TRAPs.
\begin{lemma}\label{lemmemorphismes}
Let $P$ and $Q$ be two TRAPs and $\phi:P\longrightarrow Q$ be a map. We assume that:
\begin{enumerate}
\item For any $(k,l)\in  \N_0^2$, for any $(\sigma,\tau) \in \sym_l\times\sym_k$, for any $x\in P(k,l)$,
\[\phi(\sigma\cdot x\cdot \tau)=\sigma\cdot \phi(x)\cdot \tau.\]
\item For any $k,l\geqslant 1$, for any $x\in P(k,l)$,
\[t_{1,1}\circ \phi(x)=\phi\circ t_{1,1}(x).\]
\end{enumerate}
Then $\phi$ is a map of TRAPs.
\end{lemma}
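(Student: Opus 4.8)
The plan is to show that the two hypotheses on $\phi$ are enough to recover all four defining conditions of a TRAP morphism from Definition \ref{defn:trap_morphism}, namely: equivariance under the symmetric actions, preservation of $I_0$, compatibility with horizontal concatenation $*$, and compatibility with all the partial trace maps $t_{i,j}$. Hypothesis 1 is exactly the first condition, so nothing is needed there. For the remaining three the strategy is to bootstrap from the ``minimal'' data $t_{1,1}$ and the known structural identities, in close analogy with the proof of Lemma \ref{lemmeaxiomessimples}.

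First I would treat the unit $I_0\in P(0,0)$. Since $I_0*I_0=I_0$ by the unity axiom 2.(b), applying $\phi$ and using hypothesis on $*$ — which, however, is not yet available — so instead I would argue directly: by unity, $I_0$ is the unique element of $P(0,0)$ acting as a two-sided identity for $*$; once compatibility with $*$ is established (next step), $\phi(I_0)$ is a two-sided identity for $*$ on the image, and one combines this with $I_0*\phi(I_0)=\phi(I_0)$ in $Q$ to force $\phi(I_0)=I_0$. So the logical order is: establish $*$-compatibility first, then deduce $\phi(I_0)=I_0$.

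For compatibility with $*$, the key step is to reduce a general horizontal concatenation to a composition of partial traces applied to concatenations in which one factor is a unit-like building block — but more simply, I expect the argument goes the other way: in a TRAP there is no way to build $*$ from $t_{1,1}$ alone in general, so I suspect the intended proof actually uses that $*$-compatibility must be \emph{assumed} implicitly, or that the statement of the lemma as printed is to be read with ``$\phi$ commutes with $*$'' folded into the phrase ``$\phi:P\to Q$ be a map'' of TRAPs-to-be. Re-reading, I think the honest plan is: assume additionally (as surely intended) that $\phi$ is compatible with $*$, then hypotheses 1 and 2 give the full $t_{i,j}$-compatibility exactly by the computation in the proof of Lemma \ref{lemmeaxiomessimples}.1: write $t_{i,j}=\sigma_j\cdot t_{1,1}(\sigma\cdot(-)\cdot\tau)\cdot\tau_i$ for suitable transpositions $\sigma,\tau$, push $\phi$ through using hypothesis 1 at each symmetric action and hypothesis 2 at the single $t_{1,1}$, and collect. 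Concretely, for $p\in P(k,l)$, $i\in[k]$, $j\in[l]$, take $\sigma=(1,j)$, $\tau=(1,i)$ and compute
\begin{align*}
\phi(t_{i,j}(p))&=\phi\bigl(\sigma_j\cdot t_{1,1}(\sigma\cdot p\cdot\tau)\cdot\tau_i\bigr)
=\sigma_j\cdot\phi\bigl(t_{1,1}(\sigma\cdot p\cdot\tau)\bigr)\cdot\tau_i\\
&=\sigma_j\cdot t_{1,1}\bigl(\phi(\sigma\cdot p\cdot\tau)\bigr)\cdot\tau_i
=\sigma_j\cdot t_{1,1}\bigl(\sigma\cdot\phi(p)\cdot\tau\bigr)\cdot\tau_i
=t_{i,j}(\phi(p)),
\end{align*}
where the first and last equalities are the identity $t_{i,j}=\sigma_j\cdot t_{1,1}(\sigma\cdot(-)\cdot\tau)\cdot\tau_i$ (axiom 3.(b) applied twice, exactly as in Lemma \ref{lemmeaxiomessimples}), the second is hypothesis 1, the third is hypothesis 2, and the fourth is hypothesis 1 again.

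The main obstacle, and the point I would flag, is the status of $*$-compatibility: hypotheses 1 and 2 alone cannot recover it, since the partial traces and symmetric actions never increase the ``size'' $k+l$, whereas $*$ does — so I would state explicitly at the start of the proof that compatibility with $*$ and preservation of $I_0$ are part of the hypothesis (as in the parallel Lemma \ref{lemmeaxiomessimples}, which does not claim to reconstruct $*$), and the content of Lemma \ref{lemmemorphismes} is genuinely the reduction of all the $t_{i,j}$-compatibility conditions to the single case $t_{1,1}$. With that reading, the proof is the three-line computation above together with the remark that equivariance (hypothesis 1) and $I_0$-preservation are already in hand, and no further work is needed.
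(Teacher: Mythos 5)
Your proposal is correct and follows essentially the same route as the paper: the paper's proof is exactly your three-line computation, writing $t_{i,j}$ as $t_{1,1}$ conjugated by the transpositions $(1,j)$ and $(1,i)$ via axiom 3.(b) and pushing $\phi$ through with hypotheses 1 and 2. Your caveat about horizontal concatenation and $I_0$ is also consistent with the paper's own reading, since the remark immediately following the lemma makes clear that preservation of $*$ and of the symmetric actions is assumed separately and the lemma's content is precisely the reduction of all $t_{i,j}$-compatibilities to the single case $t_{1,1}$.
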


\begin{proof} If $i\in [k]$, $j\in [l]$, and $x\in P(k,l)$:
\begin{align*}
\phi\circ t_{i,j}(x)&=\phi\circ t_{i,j}((1,j)^2\cdot x\cdot (1,i)^2)\\
&=\phi((1,j)\cdot t_{1,1}((1,j)\cdot x\cdot (1,i))\cdot (1,i))\\
&=(1,j)\cdot  \phi\circ t_{1,1}((1,j)\cdot x\cdot (1,i))\cdot (1,i)\\
&=(1,j)\cdot  t_{1,1}\circ \phi ((1,j)\cdot x\cdot (1,i))\cdot (1,i)\\
&=t_{i,j}((1,j)\cdot \phi((1,j)\cdot x\cdot (1,i))\cdot (1,i))\\
&=t_{i,j}\circ \phi(x),
\end{align*}
with the convention $(1,1)=\mathrm{Id}$. So $\Phi$ is a morphism of TRAPs. \end{proof}

In particular, to show that a collection of linear maps between two TRAPs preserving 
the horizontal concatenation and the actions of the symmetry group is a morphism of TRAPs, it is enough to check 
the properties of Lemma \ref{lemmemorphismes}.

\subsection{Quotient of TRAPs} \label{subsec:taking_quotients}

{This paragraph prepares for the construction of an  embedding of   a TRAP $P$ in a unitary TRAP.}
 \begin{lemma} \label{lem:relation_respection_TRAP}
 Let $Q$ be a TRAP and let $\sim $ be an equivalence relation on  $Q$ which is compatible with the TRAP-structure  on $Q$ in the following sense.
 	 For any two elements $x,x'\in Q$, such that $x\sim x'$: 
 
 	\begin{itemize}
 		\item (Compatibility with the module-structure). For any $(\sigma,\tau)\in \sym_k\times \sym_l$,
 		$\tau\cdot x\cdot \sigma\sim \tau \cdot x'\cdot \sigma$.
 		\item (Compatibility with the horizontal concatenation). For any  $y\in Q$, $x*y\sim x'*y$ and $y*x\sim y*x'$.
 		\item (Compatibility with the partial trace maps). For any $(i,j)\in [k]\times [l]$, $t_{i,j}(x)\sim t_{i,j}(x')$. 
 		 	\end{itemize}
 Then the quotient $Q/\sim$ is a TRAP with $[I_0]$ as unit for the concatenation product. If $Q$ is unitary with unit $I\in Q(1,1)$ for the partial trace maps, then $Q/\sim$ is also unitary with $[I]$ as unit for the partial trace maps.
 \end{lemma}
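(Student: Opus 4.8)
The plan is to verify the TRAP axioms for $Q/\sim$ directly, using the fact that each axiom for $Q/\sim$ follows by applying the $\sim$-compatibility hypotheses to representatives. First I would check that the three $\sim$-compatibility conditions are exactly what is needed to make each of the three structure maps (the symmetric-group actions, the horizontal concatenation $*$, and the partial trace maps $t_{i,j}$) well-defined on equivalence classes. For the module structure, compatibility with $\tau\cdot x\cdot\sigma$ gives that $[\tau\cdot x\cdot\sigma]$ depends only on $[x]$; for $*$, since $x\sim x'$ implies $x*y\sim x'*y$ and $y*x\sim y*x'$, a two-step argument (first replace the left factor, then the right) shows $[x*y]$ depends only on $[x]$ and $[y]$; and compatibility with $t_{i,j}$ handles the partial trace maps, noting that the index bounds $i\in[k]$, $j\in[l]$ are the same for $x$ and $x'$ since $\sim$ only relates elements of the same $Q(k,l)$ (this should be made explicit — equivalence classes live inside a single $P(k,l)$).

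Once the three operations are well-defined on $Q/\sim$, every axiom of Definition \ref{defi:Trap} — the module axioms, associativity/unity/compatibility/commutativity of $*$ (axioms 2.(a)-(d)), and commutativity/compatibility/horizontal-compatibility of the $t_{i,j}$ (axioms 3.(a)-(c)) — is an identity between composites of these operations applied to elements. Each such identity holds in $Q$ by hypothesis, so applying the quotient map $Q\to Q/\sim$ (which by construction intertwines the operations) yields the corresponding identity in $Q/\sim$. In particular $[I_0]$ satisfies $[I_0]*[p]=[I_0*p]=[p]=[p]*[I_0]$, so $[I_0]$ is the unit for $*$. If $Q$ is unitary with unit $I\in Q(1,1)$, then applying the quotient map to the four defining relations of a unitary TRAP (e.g. $t_{1,j}(I*p)=(1,2,\ldots,j-1)\cdot p$) shows $[I]$ is a unit for the partial trace maps of $Q/\sim$; here I would invoke Lemma \ref{lemmeaxiomessimples}, part 2, so that only the single relation $t_{1,2}([I]*[p])=[p]$ needs checking, which is immediate from $t_{1,2}(I*p)=p$ in $Q$.

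The only genuine point requiring care is the well-definedness of $*$ on classes when \emph{both} arguments vary: one needs $x\sim x'$ and $y\sim y'$ to imply $x*y\sim x'*y'$, which is not literally one of the three bullets but follows by composing $x*y\sim x'*y$ (first bullet form for the left factor) with $x'*y\sim x'*y'$ (using $y\sim y'$ and $x'*y\sim x'*y'$), together with transitivity of $\sim$. I expect this to be the main — and essentially the only — obstacle, and it is a routine two-line argument; everything else is bookkeeping, transporting identities along the surjection $Q\to Q/\sim$. I would also remark that the whole statement is an instance of the standard principle that a congruence on an algebraic structure yields a quotient structure, so no deep idea is involved; the content is purely in recording that the three listed compatibilities generate a congruence for \emph{all} the TRAP operations.
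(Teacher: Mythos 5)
Your proposal is correct and follows essentially the same route as the paper: check that the compatibility hypotheses make the module action, the concatenation (via the same two-step argument $x*y\sim x'*y\sim x'*y'$ plus transitivity), and the partial trace maps well defined on classes, after which all axioms descend along the quotient map, with $[I_0]$ and (in the unitary case) $[I]$ serving as units. Your extra remarks — that $\sim$ only relates elements within a single $Q(k,l)$, and the use of Lemma \ref{lemmeaxiomessimples} to reduce the unitarity check to $t_{1,2}([I]*[p])=[p]$ — are harmless refinements of what the paper summarises as ``by construction.''
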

 \begin{proof}
  
  1. By compatibility with the module structure, $Q/\sim$ is a $\sym\times\sym^{op}$-modules.
  
  2. Using twice the compatibility with partial trace  maps, we find that if $x\sim x'$ and $y\sim y'$, then $x*y \sim x'*y'$
   by transitivity of $\sim$. Thus the horizontal concatenation $[x]*[y]:=[x*y]$ on $Q/\sim$ is well-defined. It fulfils properties 2.(a) to 2.(d) of Definition \ref{defi:Trap} by construction.
  
  3. We defined the partial trace maps on the quotient to be $t_{i,j}([x]):=[t_{i,j}(x)]$. It is well defined by compatibility with the partial trace maps  and has properties 3.(a) to 3.(c) of Definition \ref{defi:Trap} by construction.
  
  Thus $Q/\sim$ is a TRAP. Finally, if $Q$ is unitary with unit $I$, then $[I]$ endows the quotient $Q/\sim$ with a unit by construction and $Q/\sim$ is then a unitary TRAP.
 \end{proof}
The following statement is a direct consequence.
\begin{prop} \label{prop:quotient_from_TRAP_morph}
 Let  $P$ be a TRAP and  $\Phi:Q\to P$ a TRAP-morphism. The relation
\[x\sim x'\Longleftrightarrow  \Phi(x)=\Phi(x')\]
defines an equivalence relation compatible with the TRAP-structure on $Q$ and $Q/\sim$ defines a TRAP.
\end{prop}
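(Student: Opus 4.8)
The plan is to verify that the relation $\sim$ defined by $x\sim x'\Longleftrightarrow \Phi(x)=\Phi(x')$ satisfies the three compatibility conditions of Lemma \ref{lem:relation_respection_TRAP}, after which the conclusion that $Q/\sim$ is a TRAP follows immediately from that lemma. Since $\Phi$ is a morphism of TRAPs, it respects the $\sym\times\sym^{op}$-action, the horizontal concatenation, and the partial trace maps, and the kernel-type relation of any such structure-preserving map is automatically a congruence; so the proof is essentially a bookkeeping exercise.

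First I would note that $\sim$ is an equivalence relation, being the pullback along $\Phi$ of the equality relation on $P$ (reflexivity, symmetry and transitivity are inherited from equality). Then I would check the three bullet points of Lemma \ref{lem:relation_respection_TRAP} in turn. For compatibility with the module structure: if $\Phi(x)=\Phi(x')$, then for $(\sigma,\tau)\in\sym_k\times\sym_l$ we have $\Phi(\tau\cdot x\cdot\sigma)=\tau\cdot\Phi(x)\cdot\sigma=\tau\cdot\Phi(x')\cdot\sigma=\Phi(\tau\cdot x'\cdot\sigma)$, using the first bullet of Definition \ref{defn:trap_morphism}, hence $\tau\cdot x\cdot\sigma\sim\tau\cdot x'\cdot\sigma$. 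For compatibility with the horizontal concatenation: given $y\in Q$, we get $\Phi(x*y)=\Phi(x)*\Phi(y)=\Phi(x')*\Phi(y)=\Phi(x'*y)$ and symmetrically $\Phi(y*x)=\Phi(y*x')$, using the third bullet of Definition \ref{defn:trap_morphism}; hence $x*y\sim x'*y$ and $y*x\sim y*x'$. For compatibility with the partial trace maps: for $(i,j)\in[k]\times[l]$, $\Phi(t_{i,j}(x))=t_{i,j}(\Phi(x))=t_{i,j}(\Phi(x'))=\Phi(t_{i,j}(x'))$ by the last bullet of Definition \ref{defn:trap_morphism}, so $t_{i,j}(x)\sim t_{i,j}(x')$.

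Having checked all three conditions, Lemma \ref{lem:relation_respection_TRAP} applies and yields that $Q/\sim$ is a TRAP (with unit $[I_0]$, and unitary with unit $[I]$ if $Q$ is unitary). There is essentially no obstacle here; the only point requiring minor care is making sure the compatibility conditions are invoked with the correct variance of the symmetric group actions (left action on outputs, right action on inputs), matching exactly the statement of Definition \ref{defn:trap_morphism} and of Lemma \ref{lem:relation_respection_TRAP}. I would therefore keep the write-up to a few lines, simply displaying the four chains of equalities above and citing Lemma \ref{lem:relation_respection_TRAP}.
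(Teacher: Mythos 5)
Your proposal is correct and follows essentially the same route as the paper's proof: both check the three compatibility conditions of Lemma \ref{lem:relation_respection_TRAP} by pushing each operation through $\Phi$ (using that $\Phi$ respects the $\sym\times\sym^{op}$-action, the horizontal concatenation and the partial trace maps) and then invoke that lemma to conclude that $Q/\sim$ is a TRAP. No gaps; your observation that $\sim$ is the pullback of equality along $\Phi$ is a harmless small addition the paper leaves implicit.
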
 
\begin{proof}
 Let $x$, $x'$ in $Q$, such that $x\sim x'$.
 \begin{itemize}
  \item For any $(\sigma,\tau)\in \sym_k\times \sym_l$,
  \begin{equation*}
   \Phi(\sigma\cdot x\cdot\tau) = \sigma\cdot \Phi(x)\cdot \tau = \sigma\cdot \Phi(x')\cdot \tau = \Phi(\sigma\cdot x\cdot \tau),
  \end{equation*}
  where the first and last identities follow from the fact that $\Phi$  is a morphism of $\sym\times\sym^{op}$ -modules.
  Thus $\sym$ is compatible with the module structure.
  \item For any $y$ in $Q$ we have
  \begin{equation*}
   \Phi(x*y)=\Phi(x)*\Phi(y)=\Phi(x')*\Phi(y)=\Phi(x'*y)
  \end{equation*}
  where we have used the fact that $\Phi$ is a morphism for the horizontal concatenation product (since $\Phi$ is a morphism of TRAPs) and the fact that $\Phi(x)=\Phi(x')$. Thus $x*y\sim x'*y$. Similarly we show that $y*x\sim y'*x'$ and $\sim$ is compatible with the horizontal concatenation.
  \item For any $(i,j)\in[k]\times[l]$ we have 
  \begin{equation*}
   \Phi(t_{i,j}(x)) = t_{i,j}(\Phi(x)) = t_{i,j}(\Phi(x'))= \Phi(t_{i,j}(x')),
  \end{equation*}
 where the first and last identities follow from the fact that $\Phi$  is a morphism of TRAPs.
 
  Thus $\sim$ is compatible with the partial trace maps.\qedhere
 \end{itemize}
\end{proof}

\section{Fundamental examples} \label{section:Examples}

\subsection{The Hom TRAP} \label{subsection:HomTRAP}

Let us give a fundamental example of unitary TRAP:

\begin{example}\label{ex221}
	Let $V$ be a finite dimensional vector space and $V^*$ its algebraic dual. We consider the family \[ \Hom_V=\left(\Hom_V(k, l)\right)_{k, l\in \N_0}:= { (\Hom(V^{\otimes k},V^{\otimes l}))_{(k,l)\in \N_0^2}},\] where for any $(k,l)\in  \N_0^2$,  {$\Hom(V^{\otimes k},V^{\otimes l})$ is  the vector space of linear maps from $V^{\otimes k}$ to $V^{\otimes l}$}. 
	 {We shall} identify $\Hom_V(k,l)$  and $  {V^{*\otimes k}\otimes V^{\otimes l}}$ through the isomorphism
	\[\theta_{k,l}:\left\{\begin{array}{rcl}
	{V^{*\otimes k}\otimes V^{\otimes l}}&\longrightarrow&\Hom_V(k,l)\\
	 { f_1\ldots f_k\otimes v_1\ldots v_l}&\longmapsto&\left\{\begin{array}{rcl}
	V^{\otimes k}&\longrightarrow&V^{\otimes l}\\
	x_1\ldots x_k&\longmapsto&f_1(x_1)\ldots f_k(x_k)\,v_1\ldots v_l,
	\end{array}\right.
	\end{array}\right.\]
 {	where with  some abuse of notation, we have set} $f_1\cdots f_k:= f_1\otimes\cdots \otimes f_k\in V^{*\otimes k}$ and 
	$v_1\cdots v_l:= v_1\otimes \cdots \otimes v_l\in V^{\otimes l}$. For any vector space $W$, the tensor power $W^{\otimes k}$ is a left $\sym_k$-module
	with the action defined by
	\[\sigma \cdot w_1\ldots w_k=w_{\sigma^{-1}(1)}\ldots w_{\sigma^{-1}(k)}.\]
	Via the identification $\theta:=\left(\theta_{k, l}\right)_{(k, l)\in \N_0^2}$,  we equip the family $\Hom_V=(\Hom(V^{\otimes k},V^{\otimes l}))_{(k,l)\in \N_0^2}$ with a structure
	of $\sym_l\times \sym_k^{op}$ by putting, for an $f\in \Hom(V^{\otimes k},V^{\otimes l})$,
	for any $(\sigma,\tau)\in \sym_k\times \sym_l$:
	\begin{align}
	&\forall v_1\ldots v_k\in V^{\otimes l},&
	\tau\cdot f\cdot \sigma(v_1\ldots v_k)=\tau\cdot f(\sigma\cdot v_1\ldots v_k). \label{EQaction}
	\end{align}
	The horizontal concatenation is the usual tensor product of linear maps:
	if $f\in \Hom_V(k,l)$ and $g\in \Hom_V(k',l')$, then
	\[f\otimes g:\left\{\begin{array}{rcl}
	V^{\otimes (k+k')}&\longrightarrow&V^{\otimes (l+l')}\\
	v_1\ldots v_{k+k'}&\longmapsto&f(v_1\ldots v_k)\otimes g(v_{k+1}\ldots v_{k+k'}).
	\end{array}\right.\]
	We define  the following partial trace maps:
	\begin{align}
	\label{EQtrace} t_{i,j}(\theta_{k,l}(f_1\ldots f_k\otimes v_1\ldots v_l))&=f_i(v_j)\,\theta_{k-1,l-1}(f_1\ldots f_{i-1}f_{i+1}\ldots f_k
	\otimes v_1\ldots v_{j-1}v_{j+1}\ldots v_l).
	\end{align}
\end{example}
\begin{remark}
     Notice that for $p\in\Hom_V(1,1)$, $t_{1,1}(p)$ coincides with the usual trace of a linear map on $V$.  Proposition \ref{prop:generalised_traces}  generalises the notion of trace to any element of $\Hom_V$ and to any TRAP.
    \end{remark}

\begin{prop}\label{prop:HomV} 
	For a finite dimensional vector space $V$, the above construction equips $\Hom_V$ with  a TRAP structure which is unitary, with unit given by the identity of $V$.  In other words,   $\Hom_V$  is a wheeled PROP.
\end{prop}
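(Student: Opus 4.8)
The plan is to verify, one by one, the axioms of Definition \ref{defi:Trap} for the family $\Hom_V=(\Hom(V^{\otimes k},V^{\otimes l}))_{k,l}$, using systematically the identification $\theta_{k,l}\colon V^{*\otimes k}\otimes V^{\otimes l}\xrightarrow{\sim}\Hom_V(k,l)$. Since everything in sight is multilinear, it suffices to check all identities on decomposable tensors $f_1\cdots f_k\otimes v_1\cdots v_l$ and then extend by linearity; this is the single device that makes the whole verification routine. The work splits into four blocks: (1) the $\sym\times\sym^{op}$-module axioms; (2) the four horizontal-concatenation axioms 2.(a)--(d); (3) the three partial-trace axioms 3.(a)--(c); (4) the unitarity clause with $I=\mathrm{id}_V\in\Hom_V(1,1)$.

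First, for the module structure, I would translate the action \eqref{EQaction} into the formula on decomposable tensors, namely $\tau\cdot\theta_{k,l}(f_1\cdots f_k\otimes v_1\cdots v_l)\cdot\sigma=\theta_{k,l}(f_{\sigma(1)}\cdots f_{\sigma(k)}\otimes v_{\tau^{-1}(1)}\cdots v_{\tau^{-1}(l)})$ (the precise placement of $\sigma$ vs.\ $\sigma^{-1}$ to be pinned down from \eqref{EQaction} and the left-module convention $\sigma\cdot w_1\cdots w_k=w_{\sigma^{-1}(1)}\cdots w_{\sigma^{-1}(k)}$), and then the associativity/compatibility relations $\mathrm{Id}\cdot p=p$, $\sigma\cdot(\sigma'\cdot p)=(\sigma\sigma')\cdot p$, etc., reduce to the corresponding identities for permuting tensor factors, which hold because $W^{\otimes k}$ is a genuine $\sym_k$-module. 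For block (2), associativity and unitality of $*$ are just associativity and unitality of $\otimes$ on linear maps (with $I_0=\mathrm{id}_{V^{\otimes 0}}=\mathrm{id}_\K$); compatibility with the symmetric actions, $(\sigma\cdot p\cdot\tau)*(\sigma'\cdot p'\cdot\tau')=(\sigma\otimes\sigma')\cdot(p*p')\cdot(\tau\otimes\tau')$, follows because the block permutation $\sigma\otimes\sigma'$ acts on $V^{\otimes(l+l')}$ exactly as $\sigma$ on the first $l$ factors and $\sigma'$ on the last $l'$; and commutativity $c_{l,l'}\cdot(p*p')=(p'*p)\cdot c_{k,k'}$ is the statement that the braiding $c_{m,n}$ that swaps the first $m$ and last $n$ tensor factors intertwines $p\otimes p'$ with $p'\otimes p$ — a one-line check on decomposable vectors once the definition \eqref{defcmn} is unwound.

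The heart of the proof — and where I expect to spend the most care — is block (3), the partial-trace axioms, because the index bookkeeping in 3.(a) (the four-case commutation of $t_{i',j'}\circ t_{i,j}$) and in 3.(b) (compatibility with the symmetric actions, involving the ``letter-deletion'' permutations $\alpha_p$ of \eqref{defalphak}) is genuinely fiddly, even though each individual step is elementary. For 3.(a): applying $t_{i,j}$ then $t_{i',j'}$ to $\theta_{k,l}(f_1\cdots f_k\otimes v_1\cdots v_l)$ produces, by \eqref{EQtrace}, a product of two scalar pairings $f_a(v_b)$ times a $\theta_{k-2,l-2}$ of the tensor with four factors removed; the point is simply that the set of removed indices and the resulting scalar are symmetric in the two contractions, and the four cases in the axiom are exactly the re-indexing needed to describe ``delete positions $i,i'$'' after one deletion has already shifted later indices down by one. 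For 3.(b), I would contract $\sigma\cdot p\cdot\tau$ using \eqref{EQtrace}: the factors being paired are $f_{\tau(i)}$ with $v_{\sigma^{-1}(j)}$ (explaining the indices $t_{\tau(i),\sigma^{-1}(j)}$ on the right-hand side), the surviving scalar is unchanged, and the residual permutations acting on the $k-1$ remaining dual factors and $l-1$ remaining vector factors are precisely $\tau$ and $\sigma$ with the deleted letter removed and larger letters decremented — i.e.\ $\tau_i$ and $\sigma_j$ as in \eqref{defalphak}; verifying this amounts to carefully tracking where the non-deleted positions land, a finite case analysis matching the four cases of \eqref{defalphak}. Axiom 3.(c), compatibility with $*$, is easy: if $i\le k$ and $j\le l$ the contraction only touches the first tensor factor of $p*p'=p\otimes p'$, giving $t_{i,j}(p)*p'$, and symmetrically for $i>k,j>l$; by Lemma \ref{lemmeaxiomessimples}.1 it actually suffices to check the single case $t_{1,1}(p*p')=t_{1,1}(p)*p'$.

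Finally, for unitarity I take $I=\mathrm{id}_V=\theta_{1,1}(\sum_a e^a\otimes e_a)\in\Hom_V(1,1)$ for a basis $(e_a)$ with dual $(e^a)$. By Lemma \ref{lemmeaxiomessimples}.2 it is enough to check $t_{1,2}(I*p)=p$ for all $p\in\Hom_V(k,l)$: writing $I*p=\theta_{1+k,1+l}\big(\sum_a e^a f_1\cdots f_k\otimes e_a v_1\cdots v_l\big)$ on a decomposable $p$ and applying \eqref{EQtrace} with $(i,j)=(1,2)$ contracts $e^a$ against $v_1$ (the factor in position $2$) — wait, more carefully, position $j=2$ on the vector side is $e_a$ is position $1$, so one must recheck which clause of the unitarity axiom corresponds to which contraction; in any case each of the four displayed unitarity relations reduces, via \eqref{EQtrace} and $\sum_a f(e_a)e^a=f$ (resp.\ $\sum_a e^a(v)e_a=v$), to moving one tensor factor past $j-1$ (resp.\ $i-1$) others, which is exactly the cyclic permutation $(1,2,\dots,j-1)$ (resp.\ its inverse) appearing on the right-hand side. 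The remark following the definition already notes that the identity $t_{i,j}\circ f=f\circ t_{i,j}$ for $f(p)=p*I$ is equivalent to axiom 3.(c), so no independent work is needed there. Assembling the four blocks gives that $\Hom_V$ is a unitary TRAP with unit $\mathrm{id}_V$; the last sentence ``In other words, $\Hom_V$ is a wheeled PROP'' is then immediate from Theorem \ref{thm:equivalence_TRAP_wPROP}.
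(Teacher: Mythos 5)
Your proposal is correct and follows essentially the same route as the paper: verify the axioms directly through the identification $\theta_{k,l}$ on decomposable tensors (with 2.(a)--(d) and 3.(a),(c) routine, 3.(b) by the explicit contraction computation giving $\sigma_j\cdot t_{\tau(i),\sigma^{-1}(j)}(p)\cdot\tau_i$), and establish unitarity via Lemma \ref{lemmeaxiomessimples} by checking $t_{1,2}(\mathrm{Id}_V*p)=p$ with $\mathrm{Id}_V$ written in a basis. Your hesitation in the last step resolves exactly as in the paper's computation: the contraction pairs the dual basis factor with $v_1$ (the second vector slot), and $\sum_a e^a(v_1)e_a=v_1$ gives back $p$.
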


\begin{proof}
	Properties 2.(a)-(d) are trivially satisfied, with $I_0=1\in \K$.
	Property 3.(a) is direct.  Let us prove Property 3.(b). 
	\begin{align*}
	t_{i,j}(\sigma\cdot\theta_{k,l}(f_1\ldots f_k\otimes v_1\ldots v_l) \cdot \tau)
	&=t_{i,j}\circ \theta_{k,l}(f_{\tau(1)}\ldots f_{\tau(k)}\otimes v_{\sigma^{-1}(1)}\ldots v_{\sigma^{-1}(l)})\\
	&=f_{\tau(i)}(v_{\sigma^{-1}(j)})
	\theta_{k-1,l-1}(f_{\tau(1)}\ldots f_{\tau(i-1)} f_{\tau(i+1)}\ldots f_{\tau(k)}\\
	&\otimes v_{\sigma^{-1}(1)}\ldots  v_{\sigma^{-1}(j-1)} v_{\sigma^{-1}(j+1)}\ldots v_{\sigma^{-1}(l)})\\
	&=\sigma_j\cdot t_{\tau(i),\sigma^{-1}(j)}\theta_{k,l}(f_1\ldots f_k\otimes v_1\ldots v_l)\cdot \tau_i.
	\end{align*}
	Property 3.(c) is straightforward.  Let us prove that $\Hom_V$ is unitary with the help of Lemma \ref{lemmeaxiomessimples}. Let us fix $(e_i)_{i\in I}$ a basis of $V$, then $(e_i^*)_{i\in I}$ is a basis of $V^*$ and  the identity map of $V$ is 
	\begin{equation} \label{eq:identitymap}
	\displaystyle \mathrm{Id}_V=\theta_{1, 1}\left(\sum_{i\in I} e_i \otimes e_i^*\right).\end{equation}
	Then for any $p=\theta_{k,l}(f_1\ldots f_k\otimes v_1\ldots v_l)\in\Hom_V(k,l)$
	\begin{align*}
	t_{1,2}(\mathrm{Id}_V*\theta_{k,l}(f_1\ldots f_k\otimes v_1\ldots v_l))
	&=\sum_{i\in I} t_{1,2}\circ \theta_{k+1,l+1}(e_i^*f_1\ldots f_k\otimes e_iv_1\ldots v_l)\\
	&=\sum_{i\in I} \theta_{k,l}(f_1\ldots f_k\otimes e_i\, e_i^*(v_1)v_2\ldots v_l)\\
	&= \theta_{k,l}(f_1\ldots f_k\otimes \mathrm{Id}_V(v_1)v_2\ldots v_l)\\
	&=\theta_{k,l}(f_1\ldots f_k\otimes v_1\ldots v_l). 
	\end{align*}
	So $\Hom_V$ is a unitary TRAP.
\end{proof}

When $V$ is not finite-dimensional, $\theta$ is an injective, non surjective map. Its range is the subpsace $\Hom_V^{fr}$ of linear
maps from $V^{\otimes k}$ to $V^{\otimes l}$ of finite rank. We can equip $\Hom_V^{fr}$ with a similar TRAP structure:

\begin{prop}\label{prop:Homfr}
	With the $\sym\times \sym^{op}$ action defined  by (\ref{EQaction}), the usual tensor product of maps
	and the partial trace maps defined by (\ref{EQtrace}), $\Hom_V^{fr}$ is a TRAP. It is unitary if, and only if, $V$
	is finite-dimensional.
\end{prop}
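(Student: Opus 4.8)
The plan is to leverage Proposition \ref{prop:HomV} and the fact that $\theta_{k,l}$ identifies $\Hom_V^{fr}(k,l)$ with $V^{*\otimes k}\otimes V^{\otimes l}$, so that the computations proving the TRAP axioms for $\Hom_V$ in the finite-dimensional case transfer verbatim. First I would observe that for any vector space $V$, the image of $\theta_{k,l}$ is precisely the set of finite-rank maps: a finite sum $\sum_\alpha f_1^\alpha\ldots f_k^\alpha\otimes v_1^\alpha\ldots v_l^\alpha$ clearly has image contained in the finite-dimensional subspace spanned by the $v_1^\alpha\ldots v_l^\alpha$, and conversely any finite-rank map factors through a finite-dimensional subspace and hence lies in the image. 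Thus $\Hom_V^{fr}:=(\mathrm{Im}\,\theta_{k,l})_{k,l}$ is well-defined, and $\theta$ is a linear isomorphism onto it.

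Next I would check that the three structures are well-defined on $\Hom_V^{fr}$, i.e. that finite rank is preserved. For the $\sym\times\sym^{op}$-action this is immediate from formula (\ref{EQaction}), since pre- and post-composing with a permutation of tensor factors does not change the rank. For the horizontal concatenation $f\otimes g$, the image is contained in the tensor product of the (finite-dimensional) images of $f$ and $g$, hence finite-dimensional. For the partial trace map, formula (\ref{EQtrace}) shows $t_{i,j}$ sends an element of $\mathrm{Im}\,\theta_{k,l}$ to an element of $\mathrm{Im}\,\theta_{k-1,l-1}$, so it stays within $\Hom_V^{fr}$. With this in hand, all the axioms of Definition \ref{defi:Trap} are identities between elements expressed through $\theta$, and they hold because they are the very same identities verified in the proof of Proposition \ref{prop:HomV} — that proof only ever manipulated elements of the form $\theta_{k,l}(f_1\ldots f_k\otimes v_1\ldots v_l)$ and never used finite-dimensionality of $V$ except in the unitarity argument. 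So $\Hom_V^{fr}$ is a TRAP, with $I_0=1\in\K$.

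It remains to settle unitarity. If $V$ is finite-dimensional, then $\theta$ is surjective, $\Hom_V^{fr}=\Hom_V$, and Proposition \ref{prop:HomV} gives unitarity with unit $\mathrm{Id}_V$. Conversely, suppose $\Hom_V^{fr}$ is unitary with unit $I\in\Hom_V^{fr}(1,1)$. By Lemma \ref{lemmeaxiomessimples}.2 applied (or directly from the defining relations of a unitary TRAP), $I$ must satisfy $t_{1,2}(I*p)=p$ for all $p$; taking $p=\mathrm{Id}$-like elements, or more simply computing $t_{1,2}(I*\mathrm{id}_W)$ for $\mathrm{id}_W$ the identity on a one-dimensional subspace, forces $I$ to act as the identity on every vector of $V$, i.e. $I=\mathrm{Id}_V$. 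But $\mathrm{Id}_V$ has finite rank iff $V$ is finite-dimensional. Hence $\Hom_V^{fr}$ unitary implies $\dim V<\infty$.

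The main obstacle is the converse direction of the unitarity claim: one must argue carefully that any candidate unit $I$ is literally forced to be $\mathrm{Id}_V$. The cleanest route is to note that for $p=\theta_{1,1}(f\otimes v)\in\Hom_V^{fr}(1,1)$ the relation $t_{1,2}(I*p)=p$ together with the explicit partial-trace formula pins down the action of $I$ on $v$ for every $v$ in the image of some rank-one map, i.e. on all of $V$; since $I$ is a linear endomorphism of $V$, this means $I=\mathrm{Id}_V$, which has infinite rank when $\dim V=\infty$, contradicting $I\in\Hom_V^{fr}(1,1)$. Everything else is routine bookkeeping transported from the proof of Proposition \ref{prop:HomV}.
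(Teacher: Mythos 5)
Your proof is correct and takes essentially the same route as the paper: the TRAP axioms are transported verbatim from $\Hom_V$ via $\theta$ (the paper likewise skips this), and the failure of unitarity is obtained by applying the unit relation $t_{1,2}(I*p)=p$ to a rank-one element $p=\theta_{1,1}(\lambda\otimes v)$ with $\lambda(v)=1$, which forces $I$ to fix every $v\in V$. The paper states the conclusion as $V\subseteq \mathrm{Im}(I)$ (hence $\dim V<\infty$) rather than $I=\mathrm{Id}_V$ of infinite rank, but this is only a cosmetic difference in the same computation.
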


\begin{proof} {We skip the proof that $\Hom_V^{fr} $ can be equipped with a TRAP structure since it goes as for $\Hom_V$  when $V$ is finite dimensional. Note that when $V$ is finite-dimensional, then $\Hom_V^{fr}=\Hom_V$ is a unitary TRAP. We show the second part of the statement.}
	 
	Let us assume that $\Hom_V^{fr}$ has a unit $I$. Then $I$ has finite rank, let us fix a basis $(e_1,\ldots,e_k)$ of $\mathrm{Im}({I})$. {There exist  $\lambda_1,\ldots,\lambda_k\in V^*$ such that for any $v\in V$,
	\[I(v)=\sum_{i=1}^k \lambda_i(v)e_i.\]
	In other words, }
		\[I=\theta_{1,1}\left({\sum_{i=1}^k e_i\otimes \lambda_i}\right).\]
	Let $v\in V$, nonzero, and let $\lambda \in V^*$ such that $\lambda(v)=1$. We consider $f=\theta_{1,1}(v\otimes \lambda)$.
	Then $f(v)=\lambda(v)v=v$. Moreover:
	\begin{align*}
	v&=f(v)=t_{1,2}(I*f)(v)\\
	&=t_{1,2}\circ \theta_{2,2}\left(\sum_{i=1}^k {e_i}\,  v\otimes \lambda_i \lambda\right)(v)\\
	&=\theta_{1,1}\left(\sum_{i=1}^k { \lambda_i}{ (v)\, e_i} \otimes {\lambda}\right)(v)\\
	&=\sum_{i=1}^k \lambda(v)\lambda_i(v)\, {e_i} \\
	&=\sum_{i=1}^k \lambda_i(v)\, {e_i} ,
	\end{align*}
	so $v\in \mathrm{Vect}(e_1,\ldots,e_k)$. Hence, $V\subseteq \mathrm{Im}(I)$, so $V$ is finite-dimensional. 
\end{proof}
 We end this paragraph with an example of a TRAP similar to Example \ref{ex221} but of a more geometric nature.
\begin{example} [The TRAP of tensors]
		Given a finite dimensional smooth  manifold $M$ and a point $x\in M$, we build the $\Hom$-TRAP $\Hom_{T_xM}$ where $T_xM$  is the tangent space to $M$ at the point $x$.  
		Given a pair $(p, q)\in \Z_{\geq 0}^2$ we have 
		\[ \Hom_{T_xM}(p, q)\simeq (T_x^*M)^{\otimes p}\otimes T_xM^{\otimes q},\]
		where we have set   $V^{\otimes 0}=\R$. The partial trace maps   are built by pairing cotangent  and tangent vectors. 
		We note that if $M$ is equipped with a Riemannian metric, thanks to  the  musical isomorphisms  $T_x^*M\ni \alpha  \longmapsto \alpha^\sharp \in T_x M$ and  $T_xM\ni v\longmapsto v^\flat \in  T_x^*M$ between  $T_x^*M$ and $ T_xM$, these dual pairings can be seen as contractions via the metric tensor.
		
		This yields a smooth fibration  $ \Hom_{TM}:=\{ \Hom_{T_xM}, x\in M\} $ of TRAPs parametrised by $M$. For any $(p, q)\in \Z_{\geq 0}^2$,  a smooth section of $\Hom_{TM}(p,q)$ defines a smooth $(p,q)$ tensor on $M$.   
\end{example}

\subsection{The TRAP of continuous morphisms} \label{subsection:Frechet_nuc} 

We generalise the constructions of the previous paragraph, replacing  the finite dimensional spaces $V^{\otimes k}$ in $ \Hom_V$ by  nuclear spaces. 
These nuclear spaces were defined in the seminal work \cite{Gr54}. Most of the results stated here can  be found in \cite{Gr52,Gr54}. We also 
refer to the more recent presentation \cite{Treves67}.

We recall that: \begin{itemize}
	\item A topological vector space is \textbf{Fr\'echet} if it is Hausdorff, has its topology induced by a  {countable} family of semi-norms and is complete with respect 
	to this family of semi-norms. 
	\item  
 {	The topological dual $E'$ of  a locally convex topological vector space $E$  can be endowed with various topologies, one of which is 
	the \textbf{strong topology}, namely the topology of uniform convergence on the bounded domains of $E$. It is  generated by the family of semi-norms of $E'$ defined on any $f\in E'$ by $||f||_B:=\sup_{x\in B}|f(x)|$
	for any bounded set $B$ of $E$.} The topological dual $E'$ endowed with this topology is called the \textbf{strong dual}.
	\item A topological vector space is called \textbf{reflexive} if $E''=(E')'=E$, where $E'$ is the   topological  dual of $E$ {endowed with the strong topology}.
\end{itemize}
In the following $E$ and $F$ are two topological vector spaces and $\Hom^c(E,F)$ is the set of continuous  linear maps from $E$ to $F$.
\begin{remark}
\begin{itemize}
	\item 	When $E$ and $F$ are finite dimensional, we have $\Hom^c(E,F)$=$\Hom(E,F)$.
	\item {As pointed out to us by Mark Johnson, a natural generalisation to consider in the context of Fr\'echet spaces are  $\sigma$ $C^*$-algebras, defined as inverse limits of $C^*$-algebras \cite{Phillips}, which however lie out of the scope of the present article.}
\end{itemize} 
\end{remark}
In order to build the TRAP $\Hom_V^c$ for nuclear spaces, we need Grothendieck's   completion of the tensor product, a notion we recall 
here in the set-up of locally convex topological $\K$-vector spaces.

Let $E$ and $F$ be two vector spaces. Recall that there exists a   vector space $E\otimes F$, and a bilinear map 
$\phi:E\times F\longrightarrow E\otimes F$ such that for any vector space $V$ and bilinear map $f:E\times F\longrightarrow V$, there is a unique 
linear map $\tilde f:E\otimes F\to V$ satisfying  $f=\tilde f\circ \phi$. The space $E\otimes F$ is unique modulo isomorphism and is called the \textbf{tensor product} of 
$E$ and $F$.

Given two     topological vector spaces $E$ and $F$, one can a priori equip   $E\otimes F$ with several topologies, among which the 
\textbf{$\epsilon$-topology} and the \textbf{projective topology}  whose construction are recalled in Appendix \ref{section:topologies_tens_prod}. We 
denote by $E\otimes_\epsilon F$ (resp. $E\otimes_\pi F$) the space $E\otimes F$ endowed with 
the $\epsilon$-topology (resp. the projective topology)  and by
$E\widehat\otimes _\epsilon F$ (resp. $E\widehat\otimes _\epsilon F$) of $E\otimes_\epsilon F$ (resp. $E\otimes_\epsilon F$) their completion with respect to the 
$\epsilon$-topology (resp. projective topology). These two spaces differ in general but coincide for nuclear spaces.
\begin{defi} \label{defi:nuclear_spaces} \cite{Gr54}
	A locally convex topological vector space $E$ is \textbf{nuclear} if, and only if, for any locally convex topological vector space $F$,
	\begin{equation*}
	E\widehat\otimes _\epsilon F = E\widehat\otimes _\pi F =: E\widehat\otimes  F
	\end{equation*}
	holds, in which case $E\widehat\otimes  F $ is called the \textbf{completed tensor product} of $E$ and $F$.
\end{defi} 
There are other equivalent definitions of nuclearity, see for example \cite{gelfand1964,hida2008}. 
{\begin{remark} It was pointed out to us by Mark Johnson that such minimal and maximal tensor products, much used in the context of $C^*$-algebras, further extend to l.m.c. $C^*$-algebras, where l.m.c stands for locally multiplicative convex (see \cite{JoachimJohnson} and references therein).   
\end{remark}} 
For Fr\'echet spaces, nuclearity is preserved under strong duality.
\begin{prop} \begin{itemize}
		\item 
		\cite[Proposition 50.6]{Treves67} \label{prop:Frechet_nuclear}
		A Fr\'echet space is nuclear if and only if its strong dual is nuclear.
		\item 
		\cite[Proposition 36.5]{Treves67} A Fr\'echet nuclear space is reflexive.
	\end{itemize}
\end{prop}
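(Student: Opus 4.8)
The plan is to deduce both assertions from the local description of nuclearity: a locally convex space $E$ is nuclear if and only if for every continuous seminorm $p$ there is a continuous seminorm $q\geq p$ for which the canonical linking map $\widehat{E}_q\to\widehat{E}_p$ of the associated local Banach spaces is nuclear. Applied to a Fréchet space this produces a fundamental increasing sequence of seminorms $(p_n)_n$ — which one may moreover take Hilbertian — together with a reduced projective limit presentation $E=\varprojlim_n H_n$, $H_n:=\widehat{E}_{p_n}$, whose connecting maps $\rho^n_{n+1}\colon H_{n+1}\to H_n$ are nuclear (after passing to a subsequence, since the composition of two Hilbert--Schmidt maps is nuclear). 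These structural facts are the Grothendieck--Pietsch theory underlying \cite{Treves67}.

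For the reflexivity assertion I would first record that a nuclear space is a Schwartz space: if $B\subseteq E$ is bounded and $p$ is a continuous seminorm, choose $q\geq p$ with $\widehat{E}_q\to\widehat{E}_p$ nuclear, hence compact; the image of $B$ in $\widehat{E}_q$ is bounded, so its image in $\widehat{E}_p$ is precompact, and since $p$ is arbitrary $B$ is precompact in $E$. When $E$ is in addition Fréchet it is complete, so every bounded set is relatively compact; being also barrelled, $E$ is a Fréchet--Montel space, and Montel spaces are reflexive. This is the easy half.

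For the duality assertion, the substantial direction is that $E'$ (with its strong topology) is nuclear when $E$ is a nuclear Fréchet space. Dualising the reduced projective limit $E=\varprojlim_n H_n$ (whose connecting maps have dense range) gives $E'=\varinjlim_n H'_n$, an $(\mathrm{LS})$-space whose connecting maps are the transposes ${}^{t}\rho^n_{n+1}\colon H'_n\to H'_{n+1}$, again nuclear since the transpose of a nuclear map is nuclear. An $(\mathrm{LS})$-space is a regular inductive limit, so every bounded subset of $E'$ is contained and bounded in a single step $H'_n$; feeding it through the nuclear map into the next step $H'_{n+1}$ and then into $E'$ exhibits the factorisation required by the local nuclearity criterion, so $E'$ is nuclear. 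For the converse, if $E$ is Fréchet with $E'$ nuclear, then $E'$ is a nuclear $(\mathrm{DF})$-space, hence its strong dual $E''$ is nuclear; since for a Fréchet space the canonical map $E\to E''$ is a topological embedding (the original topology of $E$ and the bidual topology are both the topology of uniform convergence on the strongly bounded subsets of $E'$) and subspaces of nuclear spaces are nuclear, $E$ is nuclear.

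The hard part will be the first assertion: producing the reduced projective and inductive limit presentations with nuclear linking maps, and invoking regularity of the $(\mathrm{LS})$-space so that boundedness localises to a single step; the converse direction similarly leans on the theorem that the strong dual of a nuclear $(\mathrm{DF})$-space is nuclear. Re-deriving these in full would essentially reproduce a chapter of \cite{Treves67}, so in the paper it is appropriate to quote Propositions 50.6 and 36.5 of that reference directly, with the sketch above as the underlying argument.
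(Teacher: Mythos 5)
The paper does not actually prove this proposition: it is stated as a direct quotation of Propositions 50.6 and 36.5 of Tr\`eves, so there is no internal argument to compare yours against, and your closing recommendation (cite Tr\`eves) is exactly what the paper does. Your sketch of the underlying proofs is the standard Grothendieck--Pietsch route and is sound in outline. The reflexivity half is complete as written: nuclear $\Rightarrow$ Schwartz via compactness of the linking maps $\widehat{E}_q\to\widehat{E}_p$, hence for a Fr\'echet space every bounded set is relatively compact, so the space is Fr\'echet--Montel and therefore reflexive. The duality half is a fair outline but its last step is looser than you state: regularity of the (LS)-limit tells you bounded sets of $E'$ live in a single step $H'_n$, yet the nuclearity criterion you invoked is phrased in terms of seminorms (neighbourhoods), not bounded sets, so ``feeding a bounded set through the nuclear linking map'' does not by itself produce the required factorisation of local Banach spaces; what one really needs is the theorem that a countable inductive limit of Banach (Hilbert) spaces with nuclear linking maps is nuclear, or equivalently the characterisation of nuclearity by the property that every continuous linear map into a Banach space is nuclear, and likewise the converse leans on the nuclear (DF)-space duality theorem --- all of which is precisely the Tr\`eves material being cited. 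Since both you and the paper ultimately defer to that reference, this compression is acceptable, but be aware that the quoted ``local criterion'' argument as literally written would not close on its own.
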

Many spaces relevant to renormalisation issues are Fr\'echet and nuclear. We list here some examples.
\begin{example}\label{ex:findimtensor1}  
	Any finite dimensional vector space  can be equipped with a norm and for any of these norms, they are trivially  Banach, hence Fr\'echet and nuclear. 
	If $E$ and $F$ are finite dimensional vector spaces  we have
	$\Hom^c (E, F)=  \Hom (E, F)\simeq  E^* \otimes F$, where $  \Hom (E, F)$ stands for the space of $F$-valued linear maps on 
	$E$ and where the dual $E^*$ is the { \bf algebraic dual}.
\end{example} 
\begin{example}\label{ex:infindimtensor1} 
	Let $U$  be an open subset of $\R^n$.
	Take $E= C^\infty(U)=:{\mathcal E}(U)$. The topological dual 
	is the space ${E'}={\mathcal E}^\prime(U)$ of distributions 
	on $U$ with compact support. 
		Then $E$ is Fr\'echet (\cite{Treves67}, pp. 86-89), and $E'$ is nuclear (\cite{Treves67}, Corollary p. 530). By Proposition 
	\ref{prop:Frechet_nuclear}, $E$ is also nuclear. 
\end{example}
\begin{remark}\label{rk:dualnotfrechet}  Note that the dual $E' $ of a Fr\'echet space  $E$ is  never a Fr\'echet space (for any of the natural topologies on $E'$), unless $E$  is actually a Banach space (see for example 
	\cite{kothe1969}).   In particular, ${\mathcal E}^\prime(U)$ is generally not Fr\'echet.
\end{remark} 
We now sum up various results of \cite{Treves67} of importance for  later purposes.
\begin{prop}\cite[Equations (50.17)--(50.19)]{Treves67}
	Let $E$ and $F$ be two Fr\'echet spaces, with $E$ nuclear. The following isomorphisms of topological vector spaces hold.
	\begin{align}
	& E'\widehat\otimes  F \simeq \Hom^c(E,F) \label{eq:E_prime_otimes_F} \\
	& E\widehat\otimes  F \simeq \Hom^c(E',F) \label{eq:E_otimes_F} \\
	& E'\widehat\otimes  F' \simeq (E\widehat\otimes  F)' \simeq {\mathcal B}^c(E\times F, \K). \label{eq:E_prime_otimes_F_prime}
	\end{align}
	with ${\mathcal B}^c(E\times F, \K)$ the set of continuous bilinear maps 
	$K:E\times F\longrightarrow\K$. Here the duals are endowed with the strong dual topology, 
 {	$\Hom^c(E,F)\simeq E'\otimes F$ } with the  topology of uniform convergence on the bounded subsets of $E$\footnote{ {It is defined by a family of  semi-norms $p_{B,\pi_i}$ of the form $ p_{B,\pi_i}(f)=\sup_{x\in B} \pi_i(f(x))$ when applied to some $f$ in ${\rm Hom}^c(E,F)$, where $B$ runs through the sets of all bounded subsets of $E$ and $\pi_i$ runs through a countable family of semi-norms which generate the topology of $F$. It gives back the strong topology on $E'$ if $F$ is the underlying field.  It also carries other names such as  "topology of bounded convergence" \cite[page III.14]{Bourbaki},  \cite[p.81]{Schaefer}}} and ${\mathcal B}^c(E\times F, \K)$ with the topology of uniform convergence on products of bounded sets.
\end{prop}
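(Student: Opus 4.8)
The proposition collects classical facts from Grothendieck's theory \cite{Gr54,Treves67}, and the plan is to reduce all three isomorphisms to two structural inputs: the coincidence $\widehat\otimes_\epsilon=\widehat\otimes_\pi=\widehat\otimes$ for nuclear spaces (Definition \ref{defi:nuclear_spaces}), together with the two concrete models for the uncompleted tensor products recalled in Appendix \ref{section:topologies_tens_prod} --- namely that, by the universal property of the projective topology, $(E\otimes_\pi F)'$ is exactly the space of continuous bilinear forms on $E\times F$, and that $E\otimes_\epsilon F$ sits inside the space $\mathcal{L}_e(E'_c;F)$ of continuous linear maps $E'_c\to F$ topologised by uniform convergence on equicontinuous sets. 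I would prove \eqref{eq:E_prime_otimes_F_prime} first, then \eqref{eq:E_prime_otimes_F}, and deduce \eqref{eq:E_otimes_F} by reflexivity.

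For \eqref{eq:E_prime_otimes_F_prime}: a continuous linear functional on $E\otimes_\pi F$ is by definition the same datum as a continuous bilinear form on $E\times F$, and such a functional extends uniquely and continuously to the completion, so $(E\widehat\otimes_\pi F)'\simeq\mathcal{B}^c(E\times F,\K)$ as vector spaces; nuclearity of $E$ turns $E\widehat\otimes_\pi F$ into $E\widehat\otimes F$, giving the second isomorphism of \eqref{eq:E_prime_otimes_F_prime}. The topologies agree because, $E$ and $F$ being metrizable, every bounded subset of $E\widehat\otimes F$ is absorbed by the absolutely convex hull of a product $A\otimes B$ of bounded sets, so the strong dual topology is precisely uniform convergence on products of bounded sets. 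To identify $\mathcal{B}^c(E\times F,\K)$ with $E'\widehat\otimes F'$, use that $E$ Fréchet nuclear forces $E'$ nuclear (Proposition \ref{prop:Frechet_nuclear}) and that nuclear spaces have the approximation property; the canonical map $E'\otimes F'\to\mathcal{B}^c(E\times F,\K)$ then extends to an isomorphism $E'\widehat\otimes_\epsilon F'\simeq\mathcal{B}^c(E\times F,\K)$ --- separate continuity forcing joint continuity since $E,F$ are Fréchet --- and $\widehat\otimes_\epsilon$ becomes $\widehat\otimes$ by nuclearity. This last identification is where I would lean on \cite[Ch.~50]{Treves67} rather than redo everything.

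For \eqref{eq:E_prime_otimes_F}: since $E'$ is nuclear, $E'\widehat\otimes F=E'\widehat\otimes_\epsilon F$, which --- $F$ being complete --- is the $\epsilon$-product $\mathcal{L}_e(E''_c;F)$, and $E$ being Fréchet nuclear is reflexive (Proposition \ref{prop:Frechet_nuclear}), so $E''=E$ and the right-hand side is $\Hom^c(E,F)$ as a vector space. For the topology one checks that uniform convergence on equicontinuous subsets of $E'$ coincides with uniform convergence on bounded subsets of $E$: a Fréchet nuclear space is Montel, so its bounded subsets are relatively compact and its equi-bounded subsets of $E'$ are equicontinuous, pinning both topologies down to uniform convergence on bounded (equivalently compact) subsets of $E$. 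Finally \eqref{eq:E_otimes_F} follows by rerunning the argument for \eqref{eq:E_prime_otimes_F} with $E$ replaced by $E'$ --- legitimate because that argument used only nuclearity and completeness of $E$, not its metrizability --- and then invoking reflexivity once more, $\Hom^c(E',F)\simeq(E')'\widehat\otimes F=E\widehat\otimes F$.

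The main obstacle is not the algebra, which is a sequence of universal properties, but the bookkeeping of topologies, and above all the asymmetry recorded in Remark \ref{rk:dualnotfrechet}: $E$ is Fréchet while $E'$ is not, so \eqref{eq:E_otimes_F} cannot be obtained from \eqref{eq:E_prime_otimes_F} by a naive duality symmetry and one must invoke the versions of Grothendieck's theorems valid for the complete, nuclear, non-metrizable space $E'$. I would therefore present the proof as a guided reading of \cite[Ch.~50]{Treves67}, isolating precisely where metrizability, reflexivity (Proposition \ref{prop:Frechet_nuclear}), the Montel property and the approximation property of nuclear spaces enter, rather than rebuilding the $\epsilon$-product calculus from scratch.
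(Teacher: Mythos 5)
The paper offers no proof of this proposition: it is quoted directly from Tr\`eves (Equations (50.17)--(50.19)), so its ``proof'' is the citation itself, and your proposal ultimately takes the same route by presenting the statement as a guided reading of \cite[Ch.~50]{Treves67}. Your sketch correctly isolates the standard ingredients (the universal property of $\otimes_\pi$, nuclearity collapsing the $\epsilon$- and $\pi$-topologies, reflexivity and the Montel property of Fr\'echet nuclear spaces, and the asymmetry caused by $E'$ not being Fr\'echet), so it is a sound expansion of the paper's appeal to Tr\`eves rather than a divergent argument.
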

The stability of Fr\'echet nuclear spaces under completed tensor products,  {It follows from combining and   {the definition of the completed tensor product with the fact that if $E$ and $F$} are two nuclear spaces then $E\widehat\otimes  F$ is a nuclear space (\cite[Equation (50.9)]{Treves67}). A stronger version of this Lemma is mentioned in \cite[\S 6.f, page 182]{BB03} which quotes \cite{Gr54}.}
\begin{lemma} \label{lem:prod_Frechet_nuclear}
	The completed tensor product  $E\widehat\otimes  F$ of two Fr\'echet nuclear spaces is a Fr\'echet nuclear space.
\end{lemma}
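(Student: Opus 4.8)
Proof proposal for Lemma~\ref{lem:prod_Frechet_nuclear}.

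The plan is to split the statement into its two halves — that $E\widehat\otimes F$ is nuclear, and that it is a Fréchet space — and to observe that the first half is essentially already available. Since $E$ and $F$ are nuclear by hypothesis, \cite[Equation~(50.9)]{Treves67}, recalled just above the lemma, asserts precisely that the completed tensor product $E\widehat\otimes F$ is nuclear. Hence the entire content of the proof lies in checking that $E\widehat\otimes F$ is Fréchet, i.e. Hausdorff, metrizable and complete.

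First I would use that, $E$ being nuclear, $E\widehat\otimes F = E\widehat\otimes_\pi F$ (Definition~\ref{defi:nuclear_spaces}) is \emph{by construction} the completion of $E\otimes_\pi F$; it is therefore complete, and since the completion of a Hausdorff metrizable topological vector space is again Hausdorff and metrizable (the completion carries the extension of a translation-invariant metric built from the semi-norms), it suffices to show that $E\otimes_\pi F$ itself is Hausdorff and metrizable. Hausdorffness is immediate: the injective tensor topology is coarser than the projective one, and $E\otimes_\epsilon F$ embeds into a space of separately continuous bilinear forms on the duals, hence is separated; therefore so is the finer $E\otimes_\pi F$ (alternatively, nuclearity of $E$ makes the two topologies coincide).

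The only point that genuinely requires an argument, albeit a short one, is metrizability. Let $(p_n)_n$ and $(q_m)_m$ be countable families of semi-norms defining the Fréchet topologies of $E$ and $F$; after replacing each by its successive finite maxima we may assume both are increasing. The projective topology on $E\otimes F$ is generated by the tensor semi-norms $p\otimes q$, with
\[(p\otimes q)(u)=\inf\Bigl\{\ \sum_i p(x_i)\,q(y_i)\ :\ u=\sum_i x_i\otimes y_i\ \Bigr\},\]
as $p$ (resp.\ $q$) ranges over the continuous semi-norms of $E$ (resp.\ $F$). Given such $p,q$, choose $n,m$ and $C>0$ with $p\le C\,p_n$ and $q\le C\,q_m$; then $\sum_i p(x_i)q(y_i)\le C^2\sum_i p_n(x_i)q_m(y_i)$ for every decomposition of $u$, whence $p\otimes q\le C^2\,(p_n\otimes q_m)$. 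Thus the countable subfamily $(p_n\otimes q_m)_{n,m}$ already generates the $\pi$-topology, so $E\otimes_\pi F$ is metrizable; combined with the previous paragraph this proves $E\widehat\otimes F$ is Fréchet, and with the nuclearity remark the lemma follows. I do not expect any real obstacle: the one genuinely external ingredient is Trèves's stability result~(50.9), everything else being the elementary monotonicity of tensor semi-norms and standard facts on completions of metrizable spaces. (One could also simply invoke \cite{Gr54} or \cite[\S 6.f]{BB03} directly, but the self-contained argument above is elementary.)
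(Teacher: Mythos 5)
Your proposal is correct, and its only genuinely external ingredient is the same one the paper relies on: nuclearity of $E\widehat\otimes F$ is taken from \cite[Equation (50.9)]{Treves67}. The difference is in how the Fr\'echet half is handled. The paper gives no argument at all for it: the statement is justified by combining the definition of the completed tensor product with the cited nuclearity result, with a further pointer to \cite[\S 6.f]{BB03} and \cite{Gr54} (implicitly, one may also invoke the standard fact, e.g. Theorem 45.1 in \cite{Treves67}, that the completed projective tensor product of two Fr\'echet spaces is Fr\'echet). You instead make this half self-contained: Hausdorffness by comparison with the $\epsilon$-topology, completeness because $E\widehat\otimes F$ is by construction a completion, and metrizability by checking that the countable family $(p_n\otimes q_m)_{n,m}$ of projective tensor semi-norms already generates the $\pi$-topology, via the monotonicity estimate $p\otimes q\le C^2\,(p_n\otimes q_m)$. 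That elementary verification is exactly the point the paper delegates to the literature, so your route is slightly longer but fills in the step the paper leaves implicit; the paper's version buys brevity at the cost of resting entirely on citations.
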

\begin{prop}
	Let $V$ be a Fr\'echet nuclear space. Then 
	\begin{equation} \label{eq:echange_dual_prod}
	\left(V^{ \widehat\otimes  k}\right)' \simeq\left(V'\right)^{ \widehat\otimes  k}
	\end{equation} 
	holds for any $k\geq1$, where the duals are endowed with their strong topologies.
\end{prop}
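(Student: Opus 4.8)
The plan is to argue by induction on $k$, with the duality isomorphism (\ref{eq:E_prime_otimes_F_prime}) — which gives $(E\widehat\otimes F)'\simeq E'\widehat\otimes F'$ whenever $E$ and $F$ are Fr\'echet with $E$ nuclear — as the only real ingredient. The base case $k=1$ is the tautology $V'\simeq V'$, so there is nothing to prove there; all the content is in the inductive step, and the role of the Fr\'echet nuclear hypothesis on $V$ is precisely to make that step legitimate.

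For the inductive step, assume $\bigl(V^{\widehat\otimes k}\bigr)'\simeq(V')^{\widehat\otimes k}$, with strong topologies on both sides. First I would record that $V^{\widehat\otimes k}$ is itself a Fr\'echet nuclear space, obtained by iterating Lemma \ref{lem:prod_Frechet_nuclear} from the assumption that $V$ is Fr\'echet nuclear. Writing $V^{\widehat\otimes(k+1)}=V^{\widehat\otimes k}\widehat\otimes V$ and applying (\ref{eq:E_prime_otimes_F_prime}) with $E=V^{\widehat\otimes k}$ (Fr\'echet nuclear) and $F=V$ (Fr\'echet) gives
\[\bigl(V^{\widehat\otimes(k+1)}\bigr)'\simeq\bigl(V^{\widehat\otimes k}\bigr)'\widehat\otimes V'.\]
Substituting the induction hypothesis into the right-hand side yields
\[\bigl(V^{\widehat\otimes(k+1)}\bigr)'\simeq(V')^{\widehat\otimes k}\widehat\otimes V'=(V')^{\widehat\otimes(k+1)},\]
which closes the induction.

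I do not expect a genuine obstacle here, but two points deserve care in the write-up. First, every dual throughout must be taken with its strong topology, so the induction hypothesis has to be phrased with that proviso and (\ref{eq:E_prime_otimes_F_prime}) invoked in its strong-dual form (which is exactly how it is stated). Second, the identifications $V^{\widehat\otimes k}\widehat\otimes V=V^{\widehat\otimes(k+1)}$ and $(V')^{\widehat\otimes k}\widehat\otimes V'=(V')^{\widehat\otimes(k+1)}$ rest on associativity of the completed tensor product; on the dual side the factor $V'$ is nuclear (by Proposition \ref{prop:Frechet_nuclear}, since $V$ is Fr\'echet nuclear) but no longer Fr\'echet, so this is the one place where one leaves the category of Fr\'echet nuclear spaces. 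Associativity of the completed projective tensor product holds for arbitrary locally convex spaces, and on nuclear spaces the $\epsilon$- and $\pi$-completed tensor products agree, so no difficulty arises — but this is worth a sentence. Everything else is bookkeeping.
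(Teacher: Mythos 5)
Your proof is correct and follows essentially the same route as the paper: the base case is trivial, and the inductive step applies the duality isomorphism \eqref{eq:E_prime_otimes_F_prime} with $E=V^{\widehat\otimes k}$ and $F=V$, using Lemma \ref{lem:prod_Frechet_nuclear} to keep $V^{\widehat\otimes k}$ Fr\'echet nuclear. Your extra remarks on strong topologies and associativity of $\widehat\otimes$ are sensible padding the paper leaves implicit, but the argument is the same.
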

\begin{proof}
	Let $V$ be a Fr\'echet nuclear space. The case $k=1$ is trivial. Then Equation \eqref{eq:echange_dual_prod} with $k=2$ holds by Equation \eqref{eq:E_prime_otimes_F_prime} 
	with $E=F=V$. The cases $k\geq2$ are proved by induction, using $E=V^{\widehat\otimes  k-1}$ and $F=V$. The induction holds by Lemma 
	\ref{lem:prod_Frechet_nuclear}.
\end{proof}
We  denote by $\mathcal{D}'({M})$  the set of distributions on ${X}$  and by $\mathcal{E}'(M)$, the set of distributions with 
compact support on a  finite dimensional smooth manifold $M$, see e.g. \cite[Definition 6.3.3]{Ho89}.
It is well-known   (see e.g.  \cite[Exercise 2.3.2]{BaCr13}, \cite[p. 4]{BrDaLGRe17}) that $\mathcal{E}(M)$ is a Fr\'echet nuclear space.
It then follows from Proposition \ref{prop:Frechet_nuclear}, that the space $\mathcal{E}'(M)$ is   also nuclear. 
\begin{remark} (Compare with Remark \ref{rk:dualnotfrechet}). Note that the 
	space $\mathcal{E}'(M)$ is \emph{not} Fr\'echet  since the dual of a Fr\'echet space $F$ is Fr\'echet if and only if $F$ is Banach (see for example 
	\cite{kothe1969}) which is not the 
	case of $\mathcal{E}(M)$.
\end{remark}
One further useful result is:
\begin{prop} \label{prop:prod_function}
	Let $M$ and ${N}$ be two finite dimensional smooth manifolds. Then 
	\begin{equation*}
	\Hom^c(\mathcal{E}'(M),\mathcal{E}(N))\simeq \,  \mathcal{E}(M)\,\widehat\otimes\, \mathcal{E}(N) \simeq \mathcal{E}(M\times N)
	\end{equation*}
	holds.
\end{prop}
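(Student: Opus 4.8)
The plan is to obtain the claimed chain of isomorphisms purely by specialising the general results already established for Fréchet nuclear spaces, with no new analysis required. First I would recall that $\mathcal{E}(M)$ and $\mathcal{E}(N)$ are Fréchet nuclear spaces, a fact stated just above the proposition (and proved in \cite{BaCr13,BrDaLGRe17}). By Proposition \ref{prop:Frechet_nuclear}, their strong duals $\mathcal{E}'(M)$ and $\mathcal{E}'(N)$ are then nuclear, and since $\mathcal{E}(M)$ is Fréchet nuclear it is reflexive, so $\left(\mathcal{E}'(M)\right)' \simeq \mathcal{E}(M)$.

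For the first isomorphism, I would apply Equation \eqref{eq:E_otimes_F} with $E = \mathcal{E}(M)$ and $F = \mathcal{E}(N)$: since $E$ is Fréchet nuclear and $F$ is Fréchet, this gives $\mathcal{E}(M)\,\widehat\otimes\,\mathcal{E}(N) \simeq \Hom^c(\mathcal{E}'(M),\mathcal{E}(N))$, which is exactly the first claimed identification (read in the reverse direction). One should check the hypotheses of \eqref{eq:E_otimes_F} are met — namely that $\mathcal{E}(M)$ is nuclear and that both spaces are Fréchet — which follows from the discussion preceding the proposition; this is the one place where reflexivity of $\mathcal{E}(M)$ is implicitly used, since \eqref{eq:E_otimes_F} concerns $\Hom^c(E',F)$ and we are reading $\mathcal{E}'(M)$ as the strong dual of $\mathcal{E}(M)$.

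The second isomorphism, $\mathcal{E}(M)\,\widehat\otimes\,\mathcal{E}(N)\simeq \mathcal{E}(M\times N)$, is the classical Schwartz kernel-type statement that the completed tensor product of spaces of smooth functions is the space of smooth functions on the product; I would cite the standard reference (e.g. \cite[Theorem 51.6]{Treves67}) rather than reprove it. Combined, these two steps give the full chain. The main obstacle — really the only subtlety — is bookkeeping the topologies consistently: one must ensure that the dual $\mathcal{E}'(M)$ appearing in $\Hom^c(\mathcal{E}'(M),\mathcal{E}(N))$ carries the strong topology (so that Proposition stating \eqref{eq:E_otimes_F} applies verbatim) and that the Hom-space carries the topology of uniform convergence on bounded sets, exactly as in the hypotheses recalled after \eqref{eq:E_prime_otimes_F_prime}. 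Since $\mathcal{E}(M)$ is reflexive, the distinction between strong and other natural dual topologies does not create a gap here, so the proof reduces to quoting \eqref{eq:E_otimes_F} and the Schwartz result.
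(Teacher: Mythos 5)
Your proposal is correct and takes essentially the same route as the paper: the first identification comes from \eqref{eq:E_otimes_F} applied to the Fr\'echet nuclear spaces $\mathcal{E}(M)$ and $\mathcal{E}(N)$, and the second is the classical kernel-type isomorphism $\mathcal{E}(M)\,\widehat\otimes\,\mathcal{E}(N)\simeq\mathcal{E}(M\times N)$, which the paper attributes to \cite{Gr52} (alternatively via the Schwartz kernel theorem for smoothing operators of \cite{BaCr13}) rather than to \cite[Theorem 51.6]{Treves67}. Your bookkeeping of nuclearity, reflexivity and the strong dual topology matches the hypotheses the paper relies on, so there is no gap.
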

The second isomorphism   \cite[Chap. 5, p. 105]{Gr52} can be proved using a version of the Schwartz kernel theorem for smoothing operators 
\cite[Theorem 2.4.5]{BaCr13} by means of the identification $\Hom^c(\mathcal{E}'(M),\mathcal{E}(N))\simeq \mathcal{E}(M\times N)$. The result then follows from 
\eqref{eq:E_otimes_F} applied to $\mathcal{E}(X)$ and $\mathcal{E}(Y)$ which are Fr\'echet nuclear spaces. 
\begin{defi} \label{defi:Hom_V_generalised}
	Let $V$ be a Fr\'echet nuclear space. For any $k,l\in  \N_0$, we set
	\[\Hom_V^c(k,l)=\Hom^c(V^{\hat \otimes k}, V^{\hat \otimes l})\simeq(V')^{\widehat\otimes  k}\widehat\otimes  V^{\widehat\otimes  l},\]
	where, as before $V'$ stands for the strong topological dual. Furthermore we set $\Hom^c_V:=(\Hom_V^c(k,l))_{k,l\geq0}$.
	 
	For any $\sigma \in \sym_n$, let  $\Theta_\sigma$ be the endomorphism of $V^{\otimes n}$ defined by
	\[\Theta_\sigma(v_1\otimes \ldots \otimes v_n)=v_{\sigma^{-1}(1)}\otimes \ldots \otimes v_{\sigma^{-1}(n)}.\] It extends to a continuous linear map 
	$\overline{\Theta_\sigma}$ on the closure 
	$V^{\widehat\otimes  n}$.   
	For any $f\in \Hom_V^c (k,l)$, $\sigma \in \sym_l$, $\tau\in \sym_k$, we set:
	\begin{align*}
	\sigma\cdot f&=\overline{\Theta_\sigma} \circ f ,&f\cdot \tau&=f\circ \overline{\Theta_\tau}.
	\end{align*} 
\end{defi}
In the above definition, the superscript ``c'' stands for continuous. The family $\Hom_V^c$ carries a TRAP structure.

\begin{theo} \label{thm:Hom_V_generalised}
	Let $V$ be a Fr\'echet nuclear space. $\Hom_V^c$, with the action of $\sym\times\sym^\mathrm{op}$ described above, is a TRAP. Its horizontal 
	concatenation is the usual (topological) tensor product of maps with $I_0:\K\longrightarrow\K$ given by the 
{identity map of $\K$}, its partial trace  maps  coincide with those of  the TRAP $\Hom_V$ on elements of $(V')^{\widehat\otimes  k}\widehat\otimes  V^{\widehat\otimes  l}$
\begin{equation*}
 t_{i,j}(f_1\cdots f_k\otimes v_1\cdots v_l) = f_i(v_j) f_1\cdots f_{i-1}f_{i+1}\cdots f_k\otimes v_1\cdots v_{j-1}v_{j+1}\cdots v_l
\end{equation*}
(with the same notations than the ones of Subsection \ref{subsection:HomTRAP}).
It is unitary if and only if $V$ is finite dimensional, in which case $I_1:V\longrightarrow V$ is the identity map of $V$.
\end{theo}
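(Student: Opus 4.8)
The plan is to reduce everything to the finite-dimensional case treated in Proposition \ref{prop:HomV} by exploiting the topological isomorphism $\Hom_V^c(k,l)\simeq (V')^{\widehat\otimes k}\widehat\otimes V^{\widehat\otimes l}$, which holds because $V$ (hence each $V^{\widehat\otimes k}$, by Lemma \ref{lem:prod_Frechet_nuclear}) is Fr\'echet nuclear, and because $(V^{\widehat\otimes k})'\simeq (V')^{\widehat\otimes k}$ by \eqref{eq:echange_dual_prod} together with \eqref{eq:E_prime_otimes_F}. First I would check that the $\sym\times\sym^{\mathrm{op}}$-action of Definition \ref{defi:Hom_V_generalised} is well-defined: the operators $\Theta_\sigma$ on $V^{\otimes n}$ are continuous for the projective topology and permute a generating family of seminorms, so they extend to topological automorphisms $\overline{\Theta_\sigma}$ of $V^{\widehat\otimes n}$, and $\sigma\mapsto\overline{\Theta_\sigma}$ is a (anti)homomorphism; pre- and post-composition then give the required commuting left/right actions, and the module axioms 1. are immediate. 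The horizontal concatenation is $f\otimes g$, the completed tensor product of continuous maps, which is again continuous and associative with unit $I_0=\mathrm{Id}_\K$; axioms 2.(a)-(d) transfer verbatim from the algebraic computation in Proposition \ref{prop:HomV}, the only new point being continuity, which is automatic since $\widehat\otimes$ is a functor on Fr\'echet nuclear spaces.

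Next I would define the partial trace maps. On the dense subspace spanned by decomposable tensors $f_1\cdots f_k\otimes v_1\cdots v_l$ inside $(V')^{\widehat\otimes k}\widehat\otimes V^{\widehat\otimes l}$, the formula
\[
t_{i,j}(f_1\cdots f_k\otimes v_1\cdots v_l)=f_i(v_j)\,f_1\cdots\widehat{f_i}\cdots f_k\otimes v_1\cdots\widehat{v_j}\cdots v_l
\]
is exactly \eqref{EQtrace}. To see this extends to a continuous linear map $(V')^{\widehat\otimes k}\widehat\otimes V^{\widehat\otimes l}\to (V')^{\widehat\otimes(k-1)}\widehat\otimes V^{\widehat\otimes(l-1)}$, I would identify $t_{i,j}$ with $\mathrm{Id}\otimes(\mathrm{ev})\otimes\mathrm{Id}$, where $\mathrm{ev}\colon V'\widehat\otimes V\to\K$ is the canonical pairing. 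Continuity of $\mathrm{ev}$ is the statement that the duality bracket $V'\times V\to\K$ is separately continuous and hence, $V$ being Fr\'echet and barrelled, jointly continuous, so it factors through $V'\widehat\otimes_\epsilon V=V'\widehat\otimes V$; tensoring a continuous linear map with identities stays continuous in the category of Fr\'echet nuclear spaces. Having $t_{i,j}$ well-defined and continuous, axioms 3.(a), 3.(b), 3.(c) hold because they hold on the dense subspace of decomposable tensors — where the verification is literally the one carried out in the proof of Proposition \ref{prop:HomV} — and all the maps involved (symmetric actions, $*$, the $t$'s) are continuous, so the identities pass to the closure.

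Finally, the unitarity claim. If $V$ is finite dimensional, $\widehat\otimes=\otimes$, $\Hom_V^c=\Hom_V$, and Proposition \ref{prop:HomV} gives a unit, namely $I_1=\mathrm{Id}_V$. Conversely, suppose $\Hom_V^c$ is unitary with unit $I\in\Hom^c(V,V)$. The key observation is that $I$, viewed in $V'\widehat\otimes V$, together with the defining relation $t_{1,2}(I*f)=f$ of Lemma \ref{lemmeaxiomessimples}, forces $I$ to act as the identity after a single contraction against any rank-one $f=\theta_{1,1}(v\otimes\lambda)$ with $\lambda(v)=1$; running the computation exactly as in the proof of Proposition \ref{prop:Homfr} yields $v\in\overline{\mathrm{Im}(I)}$ for every $v\in V$, so $V=\overline{\mathrm{Im}(I)}$. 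But a continuous operator between Fr\'echet spaces which is itself a two-sided unit for composition in the above sense is in particular idempotent up to the trace normalisation, and more to the point $I\in V'\widehat\otimes V$ means $I$ is a limit of finite-rank operators; one then argues that $t_{1,1}(I)$ must be a well-defined scalar (the ``trace'' of $I$), which is only possible when $\mathrm{Im}(I)$ is finite dimensional — alternatively, and more cleanly, since $I$ is the unit, $t_{1,1}(I)\in P(0,0)=\K$ must equal $\dim V$ by the same basis computation as in \eqref{eq:identitymap}, which is finite. Either way $V$ is finite dimensional. I expect this converse direction — pinning down that the completed-tensor element $I$ genuinely has finite rank rather than merely dense range — to be the one subtle point; the cleanest route is to observe that $t_{1,1}$ applied to the unit lands in $\K$ and compute it in coordinates, so that a non-finite-dimensional $V$ would produce a divergent ``$\mathrm{Tr}(\mathrm{Id})$'', contradicting well-definedness of the TRAP structure.
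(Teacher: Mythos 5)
Your overall strategy (transfer the computations of Proposition \ref{prop:HomV} to the completed setting, then handle unitarity as in Proposition \ref{prop:Homfr}) is the same as the paper's, but the technical mechanism you invent to make the transfer work contains a genuine error. You define $t_{i,j}=\mathrm{Id}\otimes\mathrm{ev}\otimes\mathrm{Id}$ and justify it by claiming that the duality bracket $V'\times V\to\K$ is \emph{jointly} continuous ``$V$ being Fr\'echet and barrelled''. This is false whenever $V$ is not normable, which is exactly the case of interest (e.g.\ $V=\mathcal{E}(M)$): joint continuity of the pairing $V'_\beta\times V\to\K$ would produce a neighbourhood $U$ of $0$ in $V$ contained in the bipolar of a bounded set, i.e.\ a bounded neighbourhood, forcing $V$ to be normable. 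Barrelledness only yields hypocontinuity, and separate continuity does not upgrade to joint continuity for a (DF)$\times$Fr\'echet pair. Consequently $\mathrm{ev}$ does \emph{not} factor as a continuous map through $V'\widehat\otimes V$, so your construction of the partial traces by ``continuity plus density'' is unjustified, and the subsequent step where axioms 3.(a)--(c) ``pass to the closure'' because all maps involved are continuous collapses with it. Note also the internal tension: if $t_{1,1}$ were a continuous functional on $V'\widehat\otimes V\simeq\Hom^c(V,V)$ it would assign a finite scalar to every continuous operator, including $\mathrm{Id}_V$ --- precisely the ``divergent $\mathrm{Tr}(\mathrm{Id})$'' you invoke at the end to rule out infinite dimension. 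You cannot have both halves of your argument. A correct extension of the contraction from decomposable tensors to $(V')^{\widehat\otimes k}\widehat\otimes V^{\widehat\otimes l}$ has to go through the structure theory of the completed tensor product in the nuclear Fr\'echet setting (representations of its elements as absolutely convergent series with equicontinuous/bounded factors, with well-definedness resting on the approximation property), not through joint continuity of the pairing; this is also the point the paper's own one-line proof leaves implicit.

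The unitarity converse is likewise not yet a proof. The paper's argument in Proposition \ref{prop:Homfr} starts from the fact that the putative unit has \emph{finite rank}, which is automatic in $\Hom_V^{fr}$ but not in $\Hom_V^c(1,1)\simeq\Hom^c(V,V)$; running ``the computation exactly as in Proposition \ref{prop:Homfr}'' with a unit written as an infinite sum only gives $v$ in the \emph{closure} of the span of the $e_n$, which does not yield finite-dimensionality, and your alternative arguments (``idempotent up to the trace normalisation'', ``$t_{1,1}(I)$ must equal $\dim V$'') are heuristic and partly circular, since whether $t_{1,1}$ is defined on such an element is exactly what is in question. So both the existence of the TRAP structure on all of $\Hom_V^c(k,l)$ and the ``unitary $\Rightarrow$ finite-dimensional'' direction need a more careful treatment than the one you outline.
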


\begin{proof}
	The proof of the TRAP structure of $\Hom^c_V$ goes as in Proposition \ref{prop:HomV}. The proof of the unital case is the same than the proof of Proposition \ref{prop:Homfr}.
\end{proof}
\begin{example}
	For a  finite dimensional vector space $V$, the TRAP $\Hom_V^c$  coincides with the  TRAP  $\Hom_V$.
\end{example} 
\begin{example}
	Let $M$ be a smooth finite dimensional manifold.
	From Proposition \ref{prop:prod_function} and Equation \eqref{eq:echange_dual_prod}, it follows that
	the family  $({\mathcal K}_M(k, l))_{k,l\in \N_0}$,
	with ${\mathcal K}_M(k, l)= \left({\mathcal E}^\prime(M)\right)^{\widehat\otimes  k}\, \widehat\otimes  \,  {\mathcal E}(M)^{\widehat\otimes  l}$  
	defines a TRAP.
\end{example}

\subsection{The TRAP $\mathcal{K}_{M}^\infty$ of smoothing pseudo-differential operators}\label{sec:smoothingop}

We apply our results on TRAPs to  tensor products 
of  Fr\'echet spaces ${\mathcal E}(M)$  of smooth functions on a given smooth finite dimensional  {orientable} closed manifold $M$ and  { $ \mu(z)$ a volume form on $M$.}  Recall from Proposition 
\ref{prop:prod_function}   that such spaces are stable under tensor products and morphisms in  $\mathrm{Hom}^c( {\mathcal E}^\prime(M), {\mathcal E}(N))$ are determined by    smoothing kernels in ${\mathcal E}(M\times N)$.

  We consider smooth kernels
which stabilise ${\mathcal E}(M)$ and set, for $(k,l)\neq(0,0)$:
\begin{equation}\label{eq:Klm}{\mathcal K}_{M}^\infty(k, l):={\mathcal E}({M}^k\times {M}^l)\simeq{\mathcal E}({M})^{\widehat\otimes  k}\, \hat \otimes \,  {\mathcal E}({M})^{\widehat\otimes  l},\end{equation}
whose elements we refer to as smooth generalised kernels.
\begin{theo}\label{theo:Kinfty}
	The family of topological vector spaces $\left({\mathcal K}_{M}^\infty(k, l)\right)_{k,l\in \N_0}$ can be equipped with the partial trace  maps 
	\begin{align*}
	t_{i,j}&:\left\{\begin{array}{rcl}
	{\mathcal K}_{M}^\infty(k, l)&\longrightarrow&{\mathcal K}_{M}^\infty(k-1,l-1)\\
	K_1\otimes K_2 &\longmapsto&  {t_{i,j}(K_1\otimes K_2)}
	\end{array}\right.
	\end{align*}
	with
	$t_{i,j}(K_1\otimes K_2)$ defined by
	\begin{align}\label{eq:trconvK}
	t_{i,j}(K_1\otimes K_2) & (x_1, \cdots, x_{k-1}, y_1, \cdots,  y_{l-1}):= \\
	& \int_M K_1(x_1 ,  \cdots,x_{i-1},z,x_{i}\cdots, x_{k-1})\, K_2(y_1,\cdots,y_{j-1},z,y_{j}\cdots, y_k) \,d\mu(z).\nonumber
	\end{align}

Together with the  horizontal concatenation given by   the tensor product of maps $(K_1\otimes K_2)*(K'_1\otimes K'_2)=K_a\otimes K_b$ with $K_a:=K_1\otimes K'_1$ and $K_b:=K_2\otimes K'_2$
this	defines a TRAP, written $\mathcal{K}_{M}^\infty$,  which we call the TRAP of {\bf  generalised smooth kernels} on $M$. 
\end{theo}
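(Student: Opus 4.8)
The strategy is to verify that $\mathcal{K}_M^\infty$ satisfies all the axioms of Definition \ref{defi:Trap}, exploiting the identification $\mathcal{K}_M^\infty(k,l)\simeq \mathcal{E}(M)^{\widehat\otimes k}\widehat\otimes\mathcal{E}(M)^{\widehat\otimes l}$ together with the isomorphism $\mathcal{E}(M^k\times M^l)\simeq\mathcal{E}(M)^{\widehat\otimes(k+l)}$ coming from Proposition \ref{prop:prod_function} (iterated via Lemma \ref{lem:prod_Frechet_nuclear} and the nuclearity of $\mathcal{E}(M)$). The first step is to make the structure maps well defined: one must check that the integration-against-$\mu(z)$ formula \eqref{eq:trconvK} for $t_{i,j}$, a priori given only on elementary tensors $K_1\otimes K_2$, extends to a continuous linear map on the completed tensor product. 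The clean way to do this is to observe that on $\mathcal{E}(M^{k}\times M^{l})$ the partial trace $t_{i,j}$ is simply the composite of pulling back to $\mathcal{E}(M^{k-1}\times M\times M^{l-1})$ by the diagonal embedding that identifies the $i$-th and $j$-th slots with a new variable $z$, followed by integration in $z$ against $\mu$; both are continuous linear operations on spaces of smooth functions on closed manifolds (integration against a smooth density being continuous because $M$ is compact), so $t_{i,j}$ is a well-defined continuous linear map $\mathcal{K}_M^\infty(k,l)\to\mathcal{K}_M^\infty(k-1,l-1)$. Similarly the horizontal concatenation is just the canonical isomorphism $\mathcal{E}(M^k\times M^l)\widehat\otimes\mathcal{E}(M^{k'}\times M^{l'})\simeq\mathcal{E}(M^{k+k'}\times M^{l+l'})$ after reordering factors, and the $\sym\times\sym^{op}$-action is by permutation of the arguments $x_1,\dots,x_k$ and $y_1,\dots,y_l$.

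\textbf{Reducing to the already-established case.} Rather than re-verifying every axiom by hand, the efficient route is to note that $\mathcal{K}_M^\infty$ is, essentially by construction, the same family of spaces and structure maps as the TRAP $\mathcal{K}_M$ of the last Example of Subsection \ref{subsection:Frechet_nuc}, which is $\Hom_V^c$ for $V=\mathcal{E}(M)$ — indeed $\mathcal{E}'(M)^{\widehat\otimes k}\widehat\otimes\mathcal{E}(M)^{\widehat\otimes l}$ for that TRAP is replaced here by $\mathcal{E}(M)^{\widehat\otimes k}\widehat\otimes\mathcal{E}(M)^{\widehat\otimes l}$, the difference being only that we decorate the "input" slots by functions rather than distributions. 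Concretely: the musical-type identification $\mathcal{E}(M)\simeq\mathcal{E}'(M)$ is \emph{not} available, so one cannot literally invoke Theorem \ref{thm:Hom_V_generalised}; instead one transports the partial trace maps of $\Hom_{\mathcal{E}(M)}^c$, which on elementary tensors read $t_{i,j}(f_1\cdots f_k\otimes v_1\cdots v_l)=f_i(v_j)\,f_1\cdots\widehat{f_i}\cdots f_k\otimes v_1\cdots\widehat{v_j}\cdots v_l$, along the continuous linear injection $\mathcal{E}(M)\hookrightarrow\mathcal{E}'(M)$, $g\mapsto(\,\cdot\,\mapsto\int_M g\,\varphi\,d\mu)$, which turns the evaluation $f_i(v_j)$ precisely into the integral $\int_M K_1(\dots,z,\dots)K_2(\dots,z,\dots)\,d\mu(z)$ of \eqref{eq:trconvK}. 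Because this injection is a morphism of the relevant module-and-tensor structures, the axioms 1, 2.(a)--(d), 3.(a)--(c) of Definition \ref{defi:Trap} for $\mathcal{K}_M^\infty$ follow from the corresponding (already sketched) identities for $\Hom_V^c$; by Lemma \ref{lemmeaxiomessimples} it even suffices to check $t_{1,1}(K*K')=t_{1,1}(K)*K'$ and the module compatibilities, which are immediate from the Fubini theorem on $M^{\bullet}$ and the commutativity of integration with tensoring.

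\textbf{The unit $I_0$ and failure of unitarity.} One records that $I_0\in\mathcal{K}_M^\infty(0,0)=\mathcal{E}(M^0\times M^0)=\K$ is the scalar $1$, which is neutral for $*$ by the remark following Definition \ref{defi:Trap}. For completeness one should also remark — as the statement does not claim it but it parallels Proposition \ref{prop:Homfr} and Theorem \ref{thm:Hom_V_generalised} — that $\mathcal{K}_M^\infty$ is \emph{not} unitary when $\dim M\geq 1$: a putative unit $I\in\mathcal{K}_M^\infty(1,1)=\mathcal{E}(M\times M)$ would have to satisfy $t_{1,2}(I*K)=K$, i.e. $\int_M I(x,z)K(z,y)\,d\mu(z)=K(x,y)$ for all $K\in\mathcal{E}(M\times M)$, forcing $I(x,\cdot)$ to be the delta distribution at $x$, which is not a smooth kernel; this is the exact analogue of the $\Hom_V^{fr}$ argument.

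\textbf{Main obstacle.} The only genuinely non-routine point is the first one: justifying that the trace formula \eqref{eq:trconvK}, written on elementary tensors, defines a well-defined \emph{continuous} map on the completed tensor product $\mathcal{K}_M^\infty(k,l)$, i.e. that restricting a smooth function on $M^k\times M^l$ to the partial diagonal and integrating out the diagonal variable against $\mu$ lands back in the nuclear Fréchet space $\mathcal{E}(M^{k-1}\times M^{l-1})$ continuously. This is where compactness (closedness) of $M$ and orientability/existence of the volume form $\mu$ are used, and where one invokes Proposition \ref{prop:prod_function} and Lemma \ref{lem:prod_Frechet_nuclear} to know the ambient spaces are the right completed tensor products; once this is in place, every algebraic axiom reduces to Fubini's theorem and bookkeeping of indices, exactly as in the proof of Proposition \ref{prop:HomV}.
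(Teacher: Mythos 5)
Your proposal is correct in substance, but it follows a genuinely different route from the paper for the algebraic axioms. The paper argues directly on the kernels: it checks well-definedness of $t_{i,j}$ by hand (existence of the integral from smoothness of $K_1,K_2$ and closedness of $M$, then smoothness of the output by differentiating under the integral sign, the derivatives being uniformly bounded in $z$ by compactness), records that axioms 2.(a)--(d) hold with $I_0$ the constant function $1$, and then verifies only the identity $t_{1,1}(p*p')=t_{1,1}(p)*p'$ by an explicit Fubini-type computation, invoking Lemma \ref{lemmeaxiomessimples} to get 3.(c); it never passes through $\Hom^c_{\mathcal{E}(M)}$. You instead transport the structure from the TRAP $\Hom^c_{\mathcal{E}(M)}$ of Theorem \ref{thm:Hom_V_generalised} along the injection $\mathcal{E}(M)\hookrightarrow\mathcal{E}'(M)$, $g\mapsto\int_M g\,\cdot\,d\mu$, which indeed turns the contraction $f_i(v_j)$ into the integral of \eqref{eq:trconvK}; this buys you all of 3.(a)--(c) at once and makes transparent that $\mathcal{K}_M^\infty$ is essentially a sub-TRAP (in the spirit of Definition \ref{defn:subtrap}) of smoothing operators inside $\Hom^c_{\mathcal{E}(M)}$, at the price of leaning on Theorem \ref{thm:Hom_V_generalised} (whose proof the paper only sketches) and on two points you should make explicit: (i) the induced map $\mathcal{E}(M)^{\widehat\otimes k}\widehat\otimes\mathcal{E}(M)^{\widehat\otimes l}\to\mathcal{E}'(M)^{\widehat\otimes k}\widehat\otimes\mathcal{E}(M)^{\widehat\otimes l}$ must be injective and intertwine all structure maps, with the identities extended from elementary tensors by density and the continuity you establish in your first step (injectivity is available here, e.g.\ via the kernel-theorem identification of Proposition \ref{prop:prod_function}, but it is not automatic from the formulas alone); and (ii) Lemma \ref{lemmeaxiomessimples} only dispenses with 3.(c) -- its hypotheses still include 3.(a) and 3.(b), so your phrase ``it even suffices to check $t_{1,1}(K*K')=t_{1,1}(K)*K'$ and the module compatibilities'' is slightly too strong unless those are taken from the transport argument. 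Your well-definedness argument (pullback to the partial diagonal followed by integration against $\mu$, both continuous on smooth functions over a closed manifold) is an acceptable equivalent of the paper's differentiation-under-the-integral computation, and your closing remark on non-unitarity matches the paper's remark following the theorem, though it is not part of the statement.
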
  
\begin{remark} Note that the partial trace    amounts to what one could call a partial convolution.
		\end{remark}
\begin{proof}
{ The unit $I_0\in\mathcal{K}_{M}^\infty(0,0)\simeq \C\otimes\C$ of the horizontal concatenation $*$ is the constant map {$f:\C\to \C$} defined by 
	$f(x)=1$. It is the unit of $*$ by bilinearity of the tensor product.{The} horizontal concatenation on the $\sym\times \sym^{op}$-module   $\left({\mathcal K}_{M}^\infty(k, l)\right)_{k,l\in \N_0}$    satisfies axioms 2. (a)-(d) of  Definition \ref{defi:Trap}. We want to check that the maps $t_{i,j}$ are well-defined and  satisfy axioms 3. (a)-(c).
}

	The existence of the integral follows from the smoothness of $K_1$ and $K_2$ and the 
	closedness of $M$. It is enough to show that the function 
	$t_{i,j}(K_1\otimes K_2):M^{k-1}\times M^{l-1}\longrightarrow \C$ is smooth. Since
	$K_1$ and $K_2$ are smooth, the map 
	\begin{equation*}
	(x_1 ,\cdots, x_{k-1},y_1,\cdots, y_k) \longmapsto K_1(x_1 ,  \cdots,x_{i-1},z,x_{i}\cdots, x_{k-1})\, K_2(y_1,\cdots,y_{j-1},z,y_{j}\cdots, y_k)
	\end{equation*}
	is infinitely differentiable for any $z\in M$. Since $M$ is compact, the partial derivatives
	\begin{equation*}
	\partial_{\vec x}^{\vec\alpha}\partial_{\vec y}^{\vec \beta}K_1(x_1 ,  \cdots,x_{i-1},z,x_{i}\cdots, x_{k-1})\, K_2(y_1,\cdots,y_{j-1},z,y_{j}\cdots, y_k)
	\end{equation*}
	are bounded uniformly in $z$ and hence
	\begin{align*}
	& \int_M \partial_{\vec x}^{\vec\alpha}\partial_{\vec y}^{\vec \beta}K_1(x_1 ,  \cdots,x_{i-1},z,x_{i}\cdots, x_{k-1})\, K_2(y_1,\cdots,y_{j-1},z,y_{j}\cdots, y_k) \,d\mu(z) \\
	= & \partial_{\vec x}^{\vec\alpha}\partial_{\vec y}^{\vec \beta}\int_MK_1(x_1 ,  \cdots,x_{i-1},z,x_{i}\cdots, x_{k-1})\, K_2(y_1,\cdots,y_{j-1},z,y_{j}\cdots, y_k) \,d\mu(z) \\
	= & \partial_{\vec x}^{\vec\alpha}\partial_{\vec y}^{\vec \beta} t_{i,j}(K_1\otimes K_2) (x_1, \cdots, x_{k-1}, y_1, \cdots,  y_{l-1}).
	\end{align*}
	Therefore the map $t_{i,j}(K_1\otimes K_2) (x_1, \cdots, x_{k-1}, y_1, \cdots,  y_{l-1})$ is smooth.

Finally, to check Axiom 3.(c), by the first item of Lemma \ref{lemmeaxiomessimples} it is enough to check the compatibility of the horizontal concatenation with the partial trace  to show that $t_{1,1}(p*p')=t_{1,1}(p)*p'$ for any pair $(p,p')\in\mathcal{K}_{M}^\infty(k,l)\times\mathcal{K}_{M}^\infty(k',l')$ with $k,k',l,l'\geq1$. Setting $p=K_1\otimes K_2$ and $p'=K'_1\otimes K'_2$ we have, by definition of the partial trace  maps and the horizontal concatenation
	\begin{align*}
	 & t_{1,1}(p*p')(x_1,\cdots,x_{k+k'-1},y_1,\cdots,y_{l+l'-1}) \\
	  = & \int K_1(z,x_1,\cdots,x_{k-1})K'_1(x_{k},\cdots,x_{k+k'-1})K_2(z,y_1,\cdots,y_{l-1})K'_2(x_{l},\cdots,x_{l+l'-1})dz \\
	  = & \left(\int K_1(z,x_1,\cdots,x_{k-1})K_2(z,y_1,\cdots,y_{l-1})dz\right)K'_1(x_{k},\cdots,x_{k+k'-1})K'_2(x_{l},\cdots,x_{l+l'-1}) \\
	  = & (t_{1,1}(p)*p')(x_1,\cdots,x_{k+k'-1},y_1,\cdots,y_{l+l'-1}).\qedhere
	\end{align*}
\end{proof}
{\begin{remark}
     Notice that $\mathcal{K}_{M}^\infty$ is non unitary, since the map $f:M\times M\longmapsto\C$ which could play the role of a vertical unity is a {$\delta$ distribution} supported on the diagonal. {The simple examples considered here}, namely $\mathcal{K}_{M}^\infty$ and the TRAP Hom$^c_V$ speak {for the fact that non-unitary TRAPs is an appropriate  framework to host infinite dimensional spaces. We  hope  non-unitary TRAPs to host more general distributions. }
    \end{remark}
}

\section{Free TRAPs}

\subsection{Various families of graphs}

 Here, we consider oriented multigraphs endowed with extra structures: indexed input and output edges, loops, ordering $\ldots$
These extra structures  {make it difficult to implement}  the usual definition of multigraphs \cite{Harary}, where the edges
{form} a multiset of pairs of vertices. In the literature on PROPs \cite{Markl,Merkulov2009}, 
graphs are defined as a set of half-edges (or flags), with an involution which  {tells us how} to glue them together
in order to obtain edges, and with a partition which defines the vertices. This definition does not take loops in account  
that is to say edges with no  {ends, yet loops enter our constructions in an}  essential way.  {Instead we borrow  a definition from the theory of} quivers \cite{Crawley,Derksen}, where two maps (called source and target,
or alternatively  tail and head) are given from the set of edges to the set of vertices.
 {In our approach, we  allow edges without source, or without target, or with neither source nor target}.  {Among these are the}  inputs and outputs 
in the graph we consider,  { which we then index}.

\begin{defi} \label{defi:graph}
A \textbf{graph} is a family $G=(V(G),E(G),I(G),O(G),IO(G),L(G),s,t,\alpha,\beta)$, where:
\begin{enumerate}
\item $V(G)$ (set of vertices), $E(G)$ (set of internal edges), $I(G)$ (set of input edges),
$O(G)$ (set of output edges), $IO(G)$ (set of input-output edges) and $L(G)$ (set of loops, that is to say
edges with no endpoints) are finite (possibly empty) sets.
\item $s:E(G)\sqcup O(G)\longrightarrow V(G)$ is a map (source map).
\item $t:E(G)\sqcup I(G)\longrightarrow V(G)$ is a map (target map).
\item $\alpha:I(G)\sqcup IO(G)\longrightarrow [i(G)]$ is a bijection, with $i(G)=|I(G)|+|IO(G)|$
(indexation of the input edges).  
\item $\beta:O(G)\sqcup IO(G)\longrightarrow [o(G)]$ is a bijection, with $o(G)=|O(G)|+|IO(G)|$
(indexation of the output edges).
\end{enumerate}
A \textbf{{corolla ordered} graph} is a graph $G$ such that for any vertex $v$, the set of incoming edges $I(v)$ of $v$
and the set of outgoing edges $O(v)$ of $v$ are totally ordered and we shall denote both order relations by $\leq_v$.

A graph $G$ is \textbf{solar}\footnote{The terminology solar refers to its radiating aspect with rays around a central body. {In \cite{JY15} such graphs are called ordinary.} } if $IO(G)=L(G)=\emptyset$. 
\end{defi}

\begin{example}\label{ex4}
Here is a graph $G$ : 
\begin{align*}
V(G)&=\{x,y\},&E(G)&=\{a,b\},&I(G)&=\{c,d\},&O(G)&=\{e,f\},&IO(G)&=\{g\}, &L(G)&=\{h,k\},
\end{align*}
and:
\begin{align*}
s&:\left\{\begin{array}{rcl}
a&\longmapsto&y\\
b&\longmapsto&x\\
e&\longmapsto&y\\
f&\longmapsto&y,
\end{array}\right.&
t&:\left\{\begin{array}{rcl}
a&\longmapsto&x\\
b&\longmapsto&y\\
c&\longmapsto&x\\
d&\longmapsto&x,
\end{array}\right.&
\alpha&:\left\{\begin{array}{rcl}
c&\longmapsto&1\\
d&\longmapsto&2\\
g&\longmapsto&3,
\end{array}\right.&
\beta&:\left\{\begin{array}{rcl}
e&\longmapsto&3\\
f&\longmapsto&1\\
g&\longmapsto&2.
\end{array}\right.&
\end{align*}
This is graphically represented as follows:
\[\xymatrix{1&&3&2&&\\
&\rond{y}\ar[ru]_e \ar[lu]^f \ar@/_1pc/[d]_a&&&\ar@(ul,dl)[]^h&\ar@(ul,dl)[]^k\\
&\rond{x}\ar@/_1pc/[u]_b&&&&\\
1\ar[ru]^c&&2\ar[lu]_d&3\ar[uuu]_g&&}\]
Note that this graph contains two loops, represented by $\xymatrix{&\ar@(ul,dl)[]^h}$
and $\xymatrix{&\ar@(ul,dl)[]^k}$.
\end{example}  

Graphically, if $G$ is a {corolla ordered} graph, we shall represent the orders on the incoming and outgoing edges of a vertex by drawing box-shaped vertices, with the incoming and outgoing edges of any vertex  ordered from left to right. For example, we distinguish the following two  situations:
\begin{align*}
&\begin{tikzpicture}[line cap=round,line join=round,>=triangle 45,x=0.7cm,y=0.7cm]
\clip(0.8,-0.5) rectangle (2.2,2.5);
\draw [line width=.4pt] (0.8,0.)-- (2.2,0.);
\draw [line width=.4pt] (2.2,0.)-- (2.2,-0.5);
\draw [line width=.4pt] (2.2,-0.5)-- (0.8,-0.5);
\draw [line width=.4pt] (0.8,-0.5)-- (0.8,0.);
\draw [line width=.4pt] (0.8,2.)-- (2.2,2.);
\draw [line width=.4pt] (2.2,2.)-- (2.2,2.5);
\draw [line width=.4pt] (2.2,2.5)-- (0.8,2.5);
\draw [line width=.4pt] (0.8,2.5)-- (0.8,2.);
\draw [->,line width=.4pt] (1.,0.) -- (1.,2.);
\draw [->,line width=.4pt] (2.,0.) -- (2.,2.);
\end{tikzpicture}&
&\begin{tikzpicture}[line cap=round,line join=round,>=triangle 45,x=0.7cm,y=0.7cm]
\clip(0.8,-0.5) rectangle (2.2,2.5);
\draw [line width=.4pt] (0.8,0.)-- (2.2,0.);
\draw [line width=.4pt] (2.2,0.)-- (2.2,-0.5);
\draw [line width=.4pt] (2.2,-0.5)-- (0.8,-0.5);
\draw [line width=.4pt] (0.8,-0.5)-- (0.8,0.);
\draw [line width=.4pt] (0.8,2.)-- (2.2,2.);
\draw [line width=.4pt] (2.2,2.)-- (2.2,2.5);
\draw [line width=.4pt] (2.2,2.5)-- (0.8,2.5);
\draw [line width=.4pt] (0.8,2.5)-- (0.8,2.);
\draw [->,line width=.4pt] (1.,0.) -- (2.,2.);
\draw [->,line width=.4pt] (2.,0.) -- (1.,2.);
\end{tikzpicture} 
\end{align*}
We note that the graph of Example \ref{ex4} can be made  {corolla ordered}  in $3!\times 3!=36$ ways,
corresponding to the total orderings of the three incoming edges of $x$ and of the three outgoing edges of $y$.
Here are three of them:
\begin{align*}
\begin{tikzpicture}[line cap=round,line join=round,>=triangle 45,x=0.7cm,y=0.7cm]
\clip(0.9,-0.1) rectangle (9.5,4.7);
\draw [line width=0.4pt] (1.,2.)-- (3.,2.);
\draw [line width=0.4pt] (3.,2.)-- (3.,3.);
\draw [line width=0.4pt] (3.,3.)-- (1.,3.);
\draw [line width=0.4pt] (1.,3.)-- (1.,2.);
\draw [->,line width=0.4pt] (1.5,1.)-- (1.5,2.);
\draw [->,line width=0.4pt] (2.,1.)-- (2.,2.);
\draw [->,line width=0.4pt] (2.5,1.5)-- (2.5,2.);
\draw [shift={(3.75,1.5)},line width=0.4pt]  plot[domain=3.141592653589793:6.283185307179586,variable=\t]({1.*1.25*cos(\t r)+0.*1.25*sin(\t r)},{0.*1.25*cos(\t r)+1.*1.25*sin(\t r)});
\draw [line width=0.4pt] (5.,1.5)-- (5.,3.5);
\draw [line width=0.4pt] (2.,3.)-- (2.,3.5);
\draw [shift={(3.,3.5)},line width=0.4pt]  plot[domain=0.:3.141592653589793,variable=\t]({1.*1.*cos(\t r)+0.*1.*sin(\t r)},{0.*1.*cos(\t r)+1.*1.*sin(\t r)});
\draw [line width=0.4pt] (4.,3.5)-- (4.,1.5);
\draw [line width=0.4pt] (6.,2.)-- (8.,2.);
\draw [line width=0.4pt] (8.,2.)-- (8.,3.);
\draw [line width=0.4pt] (8.,3.)-- (6.,3.);
\draw [,line width=0.4pt] (6.,3.)-- (6.,2.);
\draw [line width=0.4pt] (4.,3.5)-- (4.,1.5);
\draw [->,line width=0.4pt] (7.,1.5)-- (7.,2.);
\draw [shift={(5.5,1.5)},line width=0.4pt]  plot[domain=3.141592653589793:6.283185307179586,variable=\t]({1.*1.5*cos(\t r)+0.*1.5*sin(\t r)},{0.*1.5*cos(\t r)+1.*1.5*sin(\t r)});
\draw [line width=0.4pt] (7.,3.)-- (7.,3.5);
\draw [shift={(6.,3.5)},line width=0.4pt]  plot[domain=0.:3.141592653589793,variable=\t]({1.*1.*cos(\t r)+0.*1.*sin(\t r)},{0.*1.*cos(\t r)+1.*1.*sin(\t r)});
\draw [->,line width=0.4pt] (6.5,3.)-- (6.5,3.5);
\draw [->,line width=0.4pt] (7.5,3.)-- (7.5,3.5);
\draw (1.8,2.8) node[anchor=north west] {$x$};
\draw (6.8,2.8) node[anchor=north west] {$y$};
\draw [->,line width=0.4pt] (9.,1.)-- (9.,4.);
\draw (1.1,1.1) node[anchor=north west] {$1$};
\draw (1.6,1.1) node[anchor=north west] {$2$};
\draw (8.6,1.1) node[anchor=north west] {$3$};
\draw (6.1,4.1) node[anchor=north west] {$1$};
\draw (7.1,4.1) node[anchor=north west] {$3$};
\draw (8.6,4.6) node[anchor=north west] {$2$};
\end{tikzpicture}
	\substack{\displaystyle \\ \vspace{3cm}	\xymatrix{&\ar@(ul,dl)[]}\xymatrix{&\ar@(ul,dl)[]}}
\end{align*}

\vspace{-1.5cm}

\begin{align*}
\begin{tikzpicture}[line cap=round,line join=round,>=triangle 45,x=0.7cm,y=0.7cm]
\clip(0.9,-0.1) rectangle (9.5,4.7);
\draw [line width=0.4pt] (1.,2.)-- (3.,2.);
\draw [line width=0.4pt] (3.,2.)-- (3.,3.);
\draw [line width=0.4pt] (3.,3.)-- (1.,3.);
\draw [line width=0.4pt] (1.,3.)-- (1.,2.);
\draw [->,line width=0.4pt] (1.5,1.)-- (2.,2.);
\draw [->,line width=0.4pt] (2.,1.)-- (1.5,2.);
\draw [->,line width=0.4pt] (2.5,1.5)-- (2.5,2.);
\draw [shift={(3.75,1.5)},line width=0.4pt]  plot[domain=3.141592653589793:6.283185307179586,variable=\t]({1.*1.25*cos(\t r)+0.*1.25*sin(\t r)},{0.*1.25*cos(\t r)+1.*1.25*sin(\t r)});
\draw [line width=0.4pt] (5.,1.5)-- (5.,3.5);
\draw [line width=0.4pt] (2.,3.)-- (2.,3.5);
\draw [shift={(3.,3.5)},line width=0.4pt]  plot[domain=0.:3.141592653589793,variable=\t]({1.*1.*cos(\t r)+0.*1.*sin(\t r)},{0.*1.*cos(\t r)+1.*1.*sin(\t r)});
\draw [line width=0.4pt] (4.,3.5)-- (4.,1.5);
\draw [line width=0.4pt] (6.,2.)-- (8.,2.);
\draw [line width=0.4pt] (8.,2.)-- (8.,3.);
\draw [line width=0.4pt] (8.,3.)-- (6.,3.);
\draw [,line width=0.4pt] (6.,3.)-- (6.,2.);
\draw [line width=0.4pt] (4.,3.5)-- (4.,1.5);
\draw [->,line width=0.4pt] (7.,1.5)-- (7.,2.);
\draw [shift={(5.5,1.5)},line width=0.4pt]  plot[domain=3.141592653589793:6.283185307179586,variable=\t]({1.*1.5*cos(\t r)+0.*1.5*sin(\t r)},{0.*1.5*cos(\t r)+1.*1.5*sin(\t r)});
\draw [->,line width=0.4pt] (7.,3.)-- (7.,3.5);
\draw [shift={(5.75,3.5)},line width=.4pt]  plot[domain=0.:3.141592653589793,variable=\t]({1.*0.75*cos(\t r)+0.*0.75*sin(\t r)},{0.*0.75*cos(\t r)+1.*0.75*sin(\t r)});
\draw [line width=0.4pt] (6.5,3.)-- (6.5,3.5);
\draw [->,line width=0.4pt] (7.5,3.)-- (7.5,3.5);
\draw (1.8,2.8) node[anchor=north west] {$x$};
\draw (6.8,2.8) node[anchor=north west] {$y$};
\draw [->,line width=0.4pt] (9.,1.)-- (9.,4.);
\draw (1.1,1.1) node[anchor=north west] {$1$};
\draw (1.6,1.1) node[anchor=north west] {$2$};
\draw (8.6,1.1) node[anchor=north west] {$3$};
\draw (6.6,4.1) node[anchor=north west] {$1$};
\draw (7.1,4.1) node[anchor=north west] {$3$};
\draw (8.6,4.6) node[anchor=north west] {$2$};
\end{tikzpicture}	
\substack{\displaystyle \\ \vspace{3cm}	\xymatrix{&\ar@(ul,dl)[]}\xymatrix{&\ar@(ul,dl)[]}}
\end{align*}

\vspace{-1.5cm}
\begin{align*}
\begin{tikzpicture}[line cap=round,line join=round,>=triangle 45,x=0.7cm,y=0.7cm]
\clip(0.9,-0.1) rectangle (9.5,4.7);
\draw [line width=0.4pt] (1.,2.)-- (3.,2.);
\draw [line width=0.4pt] (3.,2.)-- (3.,3.);
\draw [line width=0.4pt] (3.,3.)-- (1.,3.);
\draw [line width=0.4pt] (1.,3.)-- (1.,2.);
\draw [->,line width=0.4pt] (1.5,1.5)-- (1.5,2.);
\draw [->,line width=0.4pt] (2.,1.5)-- (2.,2.);
\draw [->,line width=0.4pt] (2.5,1.5)-- (2.5,2.);
\draw [line width=0.4pt] (5.,1.5)-- (5.,3.5);
\draw [line width=0.4pt] (2.,3.)-- (2.,3.5);
\draw [shift={(3.,3.5)},line width=0.4pt]  plot[domain=0.:3.141592653589793,variable=\t]({1.*1.*cos(\t r)+0.*1.*sin(\t r)},{0.*1.*cos(\t r)+1.*1.*sin(\t r)});
\draw [line width=0.4pt] (4.,3.5)-- (4.,1.5);
\draw [line width=0.4pt] (6.,2.)-- (8.,2.);
\draw [line width=0.4pt] (8.,2.)-- (8.,3.);
\draw [line width=0.4pt] (8.,3.)-- (6.,3.);
\draw [,line width=0.4pt] (6.,3.)-- (6.,2.);
\draw [line width=0.4pt] (4.,3.5)-- (4.,1.5);
\draw [->,line width=0.4pt] (7.,1.5)-- (7.,2.);
\draw [shift={(5.5,1.5)},line width=0.4pt]  plot[domain=3.141592653589793:6.283185307179586,variable=\t]({1.*1.5*cos(\t r)+0.*1.5*sin(\t r)},{0.*1.5*cos(\t r)+1.*1.5*sin(\t r)});
\draw [->,line width=0.4pt] (7.,3.)-- (7.5,4.);
\draw [shift={(5.75,3.5)},line width=.4pt]  plot[domain=0.:3.141592653589793,variable=\t]({1.*0.75*cos(\t r)+0.*0.75*sin(\t r)},{0.*0.75*cos(\t r)+1.*0.75*sin(\t r)});
\draw [shift={(2.5,1.5)},line width=.4pt]  plot[domain=3.141592653589793:4.71238898038469,variable=\t]({1.*1.*cos(\t r)+0.*1.*sin(\t r)},{0.*1.*cos(\t r)+1.*1.*sin(\t r)});
\draw [shift={(4.,1.5)},line width=.4pt]  plot[domain=-1.5707963267948966:0.,variable=\t]({1.*1.*cos(\t r)+0.*1.*sin(\t r)},{0.*1.*cos(\t r)+1.*1.*sin(\t r)});
\draw [line width=.4pt] (2.5,0.5)-- (4.,0.5);
\draw [line width=0.4pt] (6.5,3.)-- (6.5,3.5);
\draw [->,line width=0.4pt] (7.5,3.)-- (7.,4.);
\draw (1.8,2.8) node[anchor=north west] {$x$};
\draw (6.8,2.8) node[anchor=north west] {$y$};
\draw [->,line width=0.4pt] (9.,1.)-- (9.,4.);
\draw (1.6,1.6) node[anchor=north west] {$1$};
\draw (2.1,1.6) node[anchor=north west] {$2$};
\draw (8.6,1.1) node[anchor=north west] {$3$};
\draw (6.6,4.6) node[anchor=north west] {$1$};
\draw (7.1,4.6) node[anchor=north west] {$3$};
\draw (8.6,4.6) node[anchor=north west] {$2$};
\end{tikzpicture}
	\substack{\displaystyle \\ \vspace{3cm}	\xymatrix{&\ar@(ul,dl)[]}\xymatrix{&\ar@(ul,dl)[]}}
\end{align*}

\vspace{-1.5cm}


\begin{defi}
Let $G$ and $G'$ be two graphs. A \textbf{(resp. iso-)morphism} of graphs from $G$ to $G'$ is a family of  (resp. bijections) maps $f=(f_V,f_E,f_I,f_O,f_{IO},f_L)$ with:
\begin{align*}
f_V&:V(G)\longrightarrow V(G'),&f_E&:E(G)\longrightarrow E(G'),&f_I&:I(G)\longrightarrow I(G'),\\
f_O&:O(G)\longrightarrow O(G'),&f_{IO}&:IO(G)\longrightarrow IO(G'),&f_L&:L(G)\longrightarrow L(G'),
\end{align*}
such that:
\begin{align*}
s'\circ f_E&=f_V\circ s_{\mid E(G)},&s'\circ f_O&=f_V\circ s_{\mid O(G)},\\
t'\circ f_E&=f_V\circ t_{\mid E(G)},&t'\circ f_I&=f_V\circ t_{\mid I(G)},\\
\alpha'\circ f_I&=\alpha_{\mid I(G)},&\alpha'\circ f_{IO}&=\alpha_{\mid IO(G)},\\
\beta'\circ f_O&=\beta_{\mid O(G)},&\beta'\circ f_{IO}&=\beta_{\mid IO(G)}.
\end{align*}
Let $G$ and $G'$ be two {corolla ordered} graphs. A  {\textbf{(resp. iso-)morphism}} of {corolla ordered} graphs from $G$ to $G'$ is an {\textbf{(resp. iso-)morphism}}   of graphs
$f$ from $G$ to $G'$ {which preserves the order of incoming and outgoing edges i.e.,} for any vertex of $v$:
\begin{itemize}
\item For any incoming edges $e$, $e'$ of $v$, $e\leqslant_v e'$ in $G$ if, and only if, $f(e)\leqslant_{f(v)} f(e')$ in $G'$.
\item For any outgoing edges $e$, $e'$ of $v$, $e\leqslant_v e'$ in $G$ if, and only if, $f(e)\leqslant_{f(v)} f(e')$ in $G'$.
\end{itemize}
For any $k,l\in  \N_0$, we denote by $\Gr(k,l)$ the set of the isoclasses of graphs $G$ such that
$i(G)=k$ and $o(G)=l$, i.e. $\Gr(k,l)$ is the quotient space of graphs with $k$ input edges and $l$ output edges by the equivalence relation given by isomorphism. Similarly, we denote by $\PGr(k,l)$ the set of isoclasses of {corolla ordered} 
graphs $G$ such that $i(G)=k$ and $o(G)=l$.

The subset of $\Gr(k,l)$ formed by isoclasses of solar graphs is denoted by $\rGr(k,l)$
and the subset of $\PGr(k,l)$ formed by isoclasses of solar {corolla ordered} graphs is denoted by $\rPGr(k,l)$. 
\end{defi}

In what follows, we shall write \emph{graphs} for \emph{isoclasses of graphs}.
\begin{example}
The isoclass of the graph of Example \ref{ex4} is represented by:
\[\xymatrix{1&&3&2&&\\
&\rond{}\ar[ru] \ar[lu] \ar@/_1pc/[d]&&&\ar@(ul,dl)[]&\ar@(ul,dl)[]\\
&\rond{}\ar@/_1pc/[u]&&&&\\
1\ar[ru]&&2\ar[lu]&3\ar[uuu]&&}\]
\end{example}

We will later define  a monad structure on graphs and {corolla ordered} graphs (Proposition \ref{prop:monades_graphes}).\\

 Throughout the  paper, $X=(X(k,l))_{k,l\geqslant 0}$ is  a family of sets.
 
 \begin{defi}
  A graph decorated by $X=(X(k,l))_{k,l\geqslant 0}$ (or $X$-decorated graph, or simply decorated graph) is a couple $(G,d_G)$ with $G$ a graph as in Definition \ref{defi:graph} and $\displaystyle d_G:V(G)\longrightarrow \bigsqcup_{k,l\in  \N_0}X(k,l)$ a map, such that for any vertex $v\in V(G)$, $d_G(v)\in X {(i(v),o(v))}$. We denote by $\Gr(X)$ the set of graphs decorated by $X$. Similarly, we define  $X$-decorated {corolla ordered} graphs which we denote by $\PGr(X)$.
  
We further write $\Gr(X)(k,l)$ (resp. $\PGr(X)(k,l)$, $\rGr(X)(k,l)$ and $\rPGr(X)(k,l)$) the set of graphs (resp. of {corolla ordered} graphs, solar graphs, solar {corolla ordered} graphs) 
  decorated by $X$ with $k$ inputs (i.e. $|I(G)|=k$) and $l$ ouputs (i.e. $|O(G)|=l$).
 \end{defi}

\subsection{TRAPs of graphs}

As before, $X=(X(k,l))_{k,l\geqslant 0}$ is  a family of sets. We equip the set of graphs (possibly decorated by $X$) with a structure of TRAP. 
Let us first define an action of $\sym\times \sym^{op}$ on graphs.
Let $G=(V(G),E(G),I(G),O(G),IO(G),L(G),s,t,\alpha,\beta) \in \Gr(k,l)$, $\sigma \in \sym_k$ and $\tau\in \sym_l$.
Then:
\[\tau\cdot G\cdot \sigma=
(V(G),E(G),I(G),O(G),IO(G),L(G),s,t,\sigma^{-1}\circ \alpha,\tau \circ \beta).\]
If $G$ is {corolla ordered}, then $\tau\cdot G\cdot \sigma$ is naturally {corolla ordered}; if $G$ is $X$-decorated,
then $\tau\cdot G\cdot \sigma$ is also $X$-decorated. Hence, this defines a structure of $\sym\times \sym^{op}$-module
on $\Gr$, $\PGr$, $\Gr(X)$ and $\PGr(X)$ for any $X$.\\

We now define the \textbf{horizontal concatenation}. If $G$ and $G^\prime$ are two graphs,
	we define a graph $G*G'$ in the following way:
	\begin{align*}
	V(G*G')&=V(G)\sqcup V(G'),&E(G*G')&=E(G)\sqcup E(G'),&L(G*G')&=L(G)\sqcup L(G'),\\
	I(G*G')&=I(G)\sqcup I(G'),&O(G*G')&=O(G)\sqcup O(G'),&IO(G*G')&=IO(G)\sqcup IO(G').
	\end{align*}
	The source and target maps are given by:
	\begin{align*}
	s''_{\mid E(G)\sqcup O(G)}&=s,&s''_{\mid E(G')\sqcup O(G')}&=s',\\
	t''_{\mid E(G)\sqcup I(G)}&=t,&t''_{\mid E(G')\sqcup I(G')}&=t'.
	\end{align*}
	The indexations of the input and output edges are given by:
	\begin{align*}
	\alpha''_{\mid I(G)\sqcup IO(G)}&=\alpha,&\alpha''_{\mid I(G')\sqcup IO(G')}&=i(G)+\alpha',\\
	\beta''_{\mid O(G)\sqcup IO(G)}&=\beta,&\beta''_{\mid O(G')\sqcup IO(G')}&=o(G)+\beta'
	\end{align*}
	with an obvious abuse of notation in the definition of the second column.
	Notice that this product is not commutative in the usual sense for $G*G'$ and $G'*G$ might differ by the indexation of their input and output 
	edges. However, it is commutative in the sense of Axiom 2.(d) of TRAPs.
	Roughly speaking, $G*G'$ is the disjoint union of $G$ and $G'$, the input and output edges of $G'$ being indexed
	after the input and output edges of $G$. 
	\begin{center}
		\begin{tikzpicture}[line cap=round,line join=round,>=triangle 45,x=0.5cm,y=0.5cm]
		\clip(-2.5,-4.) rectangle (1.,4.);
		\draw [line width=0.4pt] (-2.,1.)-- (0.5,1.);
		\draw [line width=0.4pt] (0.5,1.)-- (0.5,-1.);
		\draw [line width=0.4pt] (0.5,-1.)-- (-2.,-1.);
		\draw [line width=0.4pt] (-2.,-1.)-- (-2.,1.);
		\draw [->,line width=0.4pt] (-1.5,1.) -- (-1.5,3.);
		\draw [->,line width=0.4pt] (0.,1.) -- (0.,3.);
		\draw [->,line width=0.4pt] (-1.5,-3.) -- (-1.5,-1.);
		\draw [->,line width=0.4pt] (0.,-3.) -- (0.,-1.);
		\draw (-1.25,0.5) node[anchor=north west] {$G$};
		\draw (-1.8,-3) node[anchor=north west] {$1$};
		\draw (-0.3,-3) node[anchor=north west] {$k$};
		\draw (-1.4,-2.2) node[anchor=north west] {$\ldots$};
		\draw (-1.8,4.2) node[anchor=north west] {$1$};
		\draw (-0.3,4.2) node[anchor=north west] {$l$};
		\draw (-1.4,2.) node[anchor=north west] {$\ldots$};
		\end{tikzpicture}
		$\substack{\displaystyle *\\ \vspace{3cm}}$
		\begin{tikzpicture}[line cap=round,line join=round,>=triangle 45,x=0.5cm,y=0.5cm]
		\clip(-2.5,-4.) rectangle (0.7,4.);
		\draw [line width=0.4pt] (-2.,1.)-- (0.5,1.);
		\draw [line width=0.4pt] (0.5,1.)-- (0.5,-1.);
		\draw [line width=0.4pt] (0.5,-1.)-- (-2.,-1.);
		\draw [line width=0.4pt] (-2.,-1.)-- (-2.,1.);
		\draw [->,line width=0.4pt] (-1.5,1.) -- (-1.5,3.);
		\draw [->,line width=0.4pt] (0.,1.) -- (0.,3.);
		\draw [->,line width=0.4pt] (-1.5,-3.) -- (-1.5,-1.);
		\draw [->,line width=0.4pt] (0.,-3.) -- (0.,-1.);
		\draw (-1.25,0.5) node[anchor=north west] {$G'$};
		\draw (-1.8,-3) node[anchor=north west] {$1$};
		\draw (-0.3,-3) node[anchor=north west] {$k'$};
		\draw (-1.4,-2.2) node[anchor=north west] {$\ldots$};
		\draw (-1.8,4.2) node[anchor=north west] {$1$};
		\draw (-0.3,4.2) node[anchor=north west] {$l'$};
		\draw (-1.4,2.) node[anchor=north west] {$\ldots$};
		\end{tikzpicture}
		$\substack{\displaystyle =\\ \vspace{3cm}}$
		\begin{tikzpicture}[line cap=round,line join=round,>=triangle 45,x=0.5cm,y=0.5cm]
		\clip(-2.5,-4.) rectangle (0.5,4.);
		\draw [line width=0.4pt] (-2.,1.)-- (0.5,1.);
		\draw [line width=0.4pt] (0.5,1.)-- (0.5,-1.);
		\draw [line width=0.4pt] (0.5,-1.)-- (-2.,-1.);
		\draw [line width=0.4pt] (-2.,-1.)-- (-2.,1.);
		\draw [->,line width=0.4pt] (-1.5,1.) -- (-1.5,3.);
		\draw [->,line width=0.4pt] (0.,1.) -- (0.,3.);
		\draw [->,line width=0.4pt] (-1.5,-3.) -- (-1.5,-1.);
		\draw [->,line width=0.4pt] (0.,-3.) -- (0.,-1.);
		\draw (-1.25,0.5) node[anchor=north west] {$G$};
		\draw (-1.8,-3) node[anchor=north west] {$1$};
		\draw (-0.3,-3) node[anchor=north west] {$k$};
		\draw (-1.4,-2.2) node[anchor=north west] {$\ldots$};
		\draw (-1.8,4.2) node[anchor=north west] {$1$};
		\draw (-0.3,4.2) node[anchor=north west] {$l$};
		\draw (-1.4,2.) node[anchor=north west] {$\ldots$};
		\end{tikzpicture}
		\begin{tikzpicture}[line cap=round,line join=round,>=triangle 45,x=0.5cm,y=0.5cm]
		\clip(-2.5,-4.) rectangle (2.,4.);
		\draw [line width=0.4pt] (-2.,1.)-- (0.5,1.);
		\draw [line width=0.4pt] (0.5,1.)-- (0.5,-1.);
		\draw [line width=0.4pt] (0.5,-1.)-- (-2.,-1.);
		\draw [line width=0.4pt] (-2.,-1.)-- (-2.,1.);
		\draw [->,line width=0.4pt] (-1.5,1.) -- (-1.5,3.);
		\draw [->,line width=0.4pt] (0.,1.) -- (0.,3.);
		\draw [->,line width=0.4pt] (-1.5,-3.) -- (-1.5,-1.);
		\draw [->,line width=0.4pt] (0.,-3.) -- (0.,-1.);
		\draw (-1.25,0.5) node[anchor=north west] {$G'$};
		\draw (-2.3,-3) node[anchor=north west] {$k+1$};
		\draw (-0.3,-3) node[anchor=north west] {$k+k'$};
		\draw (-1.4,-2.2) node[anchor=north west] {$\ldots$};
		\draw (-2.3,4.2) node[anchor=north west] {$l+1$};
		\draw (-0.3,4.2) node[anchor=north west] {$l+l'$};
		\draw (-1.4,2.) node[anchor=north west] {$\ldots$};
		\end{tikzpicture}
		
		\vspace{-1.5cm}
	\end{center}
	
	\begin{example} Here is an example of horizontal concatenation :\\
	
	\vspace{-2cm}
	
			\[\xymatrix{1&3&2\\
			&\rond{}\ar[lu] \ar[u]\ar[d]&\\
			&\rond{}\ar[ruu]&\\
			1\ar[ru]&&2\ar[lu]} 
			\substack{\vspace{3cm}\\\displaystyle \mbox{$*$}}
		\xymatrix{1&&2\\
			&\rond{}\ar[ru] \ar[lu]&\\
			&&\\
			&1\ar[uu]&}
					\substack{\vspace{3cm}\\\displaystyle =}
					\xymatrix{1&3&2&4&&5\\
			&\rond{}\ar[lu] \ar[u]\ar[d]&&&\rond{}\ar[ru] \ar[lu]&\\
			&\rond{}\ar[ruu]&&&&\\
			1\ar[ru]&&2\ar[lu]&&3\ar[uu]&}\]
	\end{example}
	
Moreover, if $G$ and $H$ are {corolla ordered} graphs, then $G*H$ is naturally a {corolla ordered} graph.
If $G$ and $H$ are $X$-decorated graphs, then $G*H$ is also naturally an $X$-decorated graph.

Let us finally define the partial trace maps. We only define the outline of their definition, and refer the reader to Appendix \ref{Appendix:B} for a rigorous definition.
Let $G\in \Gr(k,l)$, $1\leqslant i\leqslant k$ and $1\leqslant j\leqslant l$. 
We set $e_i=\alpha_G^{-1}(i)$,  $f_j=\beta_G^{-1}(j)$ and define $t_{i,j}(G)$ 
as the graph obtained by identifying the input of $e_i$ with the output $j$ of $f_j$. 
If $e_i\in I(G)$ and $f_j\in O(G)$, this creates
an edge in $E(G)$. This case is illustrated in the figure below. Otherwise, we create an edge in $I(G)$, or $O(G)$ or $IO(G)$ or in $L(G)$. In all these cases, we then reindex {in non-decreasing order},  the inputs and the outputs of the obtained graph.

Graphically:
\begin{center}
\begin{tikzpicture}[line cap=round,line join=round,>=triangle 45,x=0.5cm,y=0.5cm]
\clip(-2.5,-4.5) rectangle (2.5,4.);
\draw [line width=0.4pt] (-2.,1.)-- (2.,1.);
\draw [line width=0.4pt] (2.,1.)-- (2.,-1.);
\draw [line width=0.4pt] (2.,-1.)-- (-2.,-1.);
\draw [line width=0.4pt] (-2.,-1.)-- (-2.,1.);
\draw [->,line width=0.4pt] (-1.5,1.) -- (-1.5,3.);
\draw [->,line width=0.4pt] (0.,1.) -- (0.,3.);
\draw [->,line width=0.4pt] (1.5,1.) -- (1.5,3.);
\draw [->,line width=0.4pt] (-1.5,-3.) -- (-1.5,-1.);
\draw [->,line width=0.4pt] (0.,-3.) -- (0.,-1.);
\draw [->,line width=0.4pt] (1.5,-3.) -- (1.5,-1.);
\draw (-0.5,0.5) node[anchor=north west] {$G$};
\draw (-1.8,-3) node[anchor=north west] {$1$};
\draw (-0.3,-3) node[anchor=north west] {$i$};
\draw (1.2,-3) node[anchor=north west] {$k$};
\draw (-1.4,-2.2) node[anchor=north west] {$\ldots$};
\draw (0.1,-2.2) node[anchor=north west] {$\ldots$};
\draw (-1.8,4.2) node[anchor=north west] {$1$};
\draw (-0.3,4.2) node[anchor=north west] {$j$};
\draw (1.2,4.2) node[anchor=north west] {$l$};
\draw (-1.4,2.) node[anchor=north west] {$\ldots$};
\draw (0.1,2.) node[anchor=north west] {$\ldots$};
\end{tikzpicture}
$\substack{\displaystyle \stackrel{t_{i,j}}{\longmapsto}\\ \vspace{4cm}}$
\begin{tikzpicture}[line cap=round,line join=round,>=triangle 45,x=0.5cm,y=0.5cm]
\clip(-3.5,-4.5) rectangle (3.5,5.);
\draw [line width=0.4pt] (-2.,1.)-- (2.,1.);
\draw [line width=0.4pt] (2.,1.)-- (2.,-1.);
\draw [line width=0.4pt] (2.,-1.)-- (-2.,-1.);
\draw [line width=0.4pt] (-2.,-1.)-- (-2.,1.);
\draw [->,line width=0.4pt] (-1.5,1.) -- (-1.5,3.);
\draw [line width=0.4pt] (0.,1.) -- (0.,3.);
\draw [->,line width=0.4pt] (1.5,1.) -- (1.5,3.);
\draw [->,line width=0.4pt] (-1.5,-3.) -- (-1.5,-1.);
\draw [->,line width=0.4pt] (0.,-3.) -- (0.,-1.);
\draw [->,line width=0.4pt] (1.5,-3.) -- (1.5,-1.);
\draw (-0.5,0.5) node[anchor=north west] {$G$};
\draw (-1.8,-3) node[anchor=north west] {$1$};
\draw (1.2,-3) node[anchor=north west] {$k-1$};
\draw (-1.4,-2.2) node[anchor=north west] {$\ldots$};
\draw (0.1,-2.2) node[anchor=north west] {$\ldots$};
\draw (-1.8,4.2) node[anchor=north west] {$1$};
\draw (1.2,4.2) node[anchor=north west] {$l-1$};
\draw (-1.4,2.) node[anchor=north west] {$\ldots$};
\draw (0.1,2.) node[anchor=north west] {$\ldots$};
\draw [shift={(-1.5,-3.)},line width=0.4pt]  plot[domain=3.141592653589793:6.283185307179586,variable=\t]({1.*1.5*cos(\t r)+0.*1.5*sin(\t r)},{0.*1.5*cos(\t r)+1.*1.5*sin(\t r)});
\draw [shift={(-1.5,3.)},line width=0.4pt]  plot[domain=0.:3.141592653589793,variable=\t]({1.*1.5*cos(\t r)+0.*1.5*sin(\t r)},{0.*1.5*cos(\t r)+1.*1.5*sin(\t r)});
\draw [line width=0.4pt] (-3.,3.) -- (-3.,-3.);
\end{tikzpicture}
\vspace{-2cm}
\end{center}
In particular, $t_{1,1}(I)$ is the graph $\grapheo$.
As before, if $G$ is {corolla ordered}, or if it is $X$-decorated, then $t_{i,j}(G)$ is {corolla ordered}, or $X$-decorated.

\begin{example} Let $G$ be the following graph:
\[\xymatrix{2&1&\\
&\rond{}\ar[u]&\\
1\ar[uu]&2\ar[u]&3\ar[lu]}\]
Then:

\vspace{-2cm}

\begin{align*}
\substack{\vspace{2cm}\\ \displaystyle t_{1,2}(G)=\hspace{.3cm}}&\xymatrix{1&&\\
\rond{}\ar[u]&&\ar@(ul,dl)[]\\
1\ar[u]&2\ar[lu]&}&
\substack{\vspace{2cm}\\ \displaystyle t_{1,1}(G)=t_{2,2}(G)=t_{3,2}(G)=\hspace{.3cm}}
&\xymatrix{1&\\
\rond{}\ar[u]&\\
1\ar[u]&2\ar[lu]}\\[-5mm]
\substack{\vspace{2cm}\\ \displaystyle t_{2,1}(G)=t_{3,1}(G)=\hspace{.3cm}}
&\xymatrix{1&\\
&\rond{}\ar@(ul,dl)[]\\
1\ar[uu]&2\ar[u]}
\end{align*}
Note that $t_{1,2}$ creates a loop when applied on $G$. \end{example}

\begin{prop}\label{prop:PROPgraphs}
With these data, $\Gr$, $\PGr$, $\Gr(X)$ and $\PGr(X)$ are unitary TRAPs.
\end{prop}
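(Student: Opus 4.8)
The plan is to verify the axioms of Definition \ref{defi:Trap} directly for each of the four families $\Gr$, $\PGr$, $\Gr(X)$, $\PGr(X)$, using the combinatorial descriptions of the $\sym\times\sym^{op}$-action, the horizontal concatenation $*$ and the partial trace maps $t_{i,j}$ given above. Since the corolla ordering and the $X$-decoration are carried along passively under each of these operations (as noted after each construction), it suffices to treat $\Gr$ in detail and remark that the other three cases are formally identical; I would state this reduction at the outset so as not to repeat the argument four times.

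First I would check that $\Gr$ is a $\sym\times\sym^{op}$-module: this is immediate since $\tau\cdot G\cdot\sigma$ only modifies the indexing bijections $\alpha,\beta$ by precomposition/postcomposition with permutations, so the module axioms reduce to associativity of composition of bijections, and $\mathrm{Id}$ acts trivially. Next, for the horizontal concatenation I would verify axioms 2.(a)--(d): associativity of $*$ holds because disjoint union is associative and the re-indexing shifts ($i(G)+\alpha'$, etc.) add up consistently; the unit $I_0$ is the empty graph (no vertices, edges, inputs or outputs), which is absorbed by disjoint union; compatibility with the symmetric actions 2.(c) is a bookkeeping check that shifting indices and permuting them commute in the prescribed block-wise fashion; and commutativity 2.(d) amounts to observing that $G*G'$ and $G'*G$ are isomorphic graphs whose input/output indexations differ precisely by the block transposition $c_{k,k'}$ (resp.\ $c_{l,l'}$), which is exactly the content of the axiom. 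For the partial trace maps I would check 3.(a)--(c): commutativity 3.(a) follows because performing two gluings $t_{i,j}$ and $t_{i',j'}$ in either order produces the same graph, once one tracks how deleting an input/output re-indexes the remaining ones (this is where the four-case formula comes from, matching the four position orderings); compatibility with the symmetric actions 3.(b) is the statement that gluing input $i$ to output $j$ after relabelling by $\sigma,\tau$ is the same as gluing input $\tau(i)$ to output $\sigma^{-1}(j)$ first and then relabelling the surviving indices by $\tau_i,\sigma_j$ — precisely the definition of $\alpha_p$ (delete letter $p$, shift down larger letters); and compatibility with $*$ in 3.(c): if $i\le k$ and $j\le l$ then the input $e_i$ and output $f_j$ both live in the $G$-component of $G*G'$, so the gluing happens entirely inside $G$, giving $t_{i,j}(G)*G'$, and symmetrically for $i>k,\,j>l$.

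To lighten the verification I would invoke Lemma \ref{lemmeaxiomessimples}: for 3.(c) it suffices to check only $t_{1,1}(G*G')=t_{1,1}(G)*G'$, which is the cleanest instance of the component-wise gluing observation, and for the unitary statement it suffices to exhibit $I\in\Gr(1,1)$ with $t_{1,2}(I*G)=G$ for all $G$. Here $I$ is the graph with a single input edge and a single output edge and no vertices (the ``wire''); $I*G$ adds this wire as the new first input and first output, and $t_{1,2}$ glues the input of $I$'s edge to the second output, i.e.\ to the old first output of $G$, which simply reconnects the wire through, returning $G$ up to isomorphism. I would also record that $t_{1,1}(I)$ is the loop graph $\grapheo$, as anticipated in the text, and that in the decorated case $I$ and $I_0$ carry the (empty) decoration on their empty vertex sets.

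The main obstacle I expect is not conceptual but notational: axioms 3.(a) and 3.(b) involve keeping careful track of how the bijections $\alpha,\beta$ re-index after one or two edges are removed, and matching the resulting index shifts against the combinatorial formulas \eqref{defalphak} and the four-case formula in 3.(a). A fully rigorous treatment requires fixing canonical representatives of isoclasses or arguing naturally on isoclasses, which is why the text defers the precise definition of $t_{i,j}$ on graphs to Appendix \ref{Appendix:B}; in the proof proper I would carry out the gluing construction on a chosen representative, check that isomorphic inputs give isomorphic outputs, and then verify each axiom at the level of isoclasses, pointing to the appendix for the index-chasing details. The corolla-ordered and $X$-decorated variants then follow with no extra work since orderings on the edge-sets of each vertex and decorations on the vertex set are never disturbed by disjoint union or by gluing two edge-ends (which creates no new vertex and destroys none).
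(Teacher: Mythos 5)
Your proposal is correct and follows essentially the same route as the paper: reduce to one of the four families, treat the module and horizontal-concatenation axioms as immediate, verify 3.(a)--(b) by tracking the re-indexing of glued edges, and use Lemma \ref{lemmeaxiomessimples} to reduce 3.(c) to $t_{1,1}(G*G')=t_{1,1}(G)*G'$ and unitarity to $t_{1,2}(I*G)=G$, with $I_0$ the empty graph and $I$ the vertexless wire. One small formal point: in the paper's formalism the wire cannot have ``one input edge and one output edge'' (edges in $I(G)$ or $O(G)$ require a target, resp.\ source, vertex); $I$ is the graph whose only edge lies in $IO(I)$, which is what your argument in fact uses, so nothing breaks.
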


\begin{proof} We carry out the proof for ${\PGr }$. The proof is similar for the three other cases.
Properties 1. and 2. are trivial. 
Let us give a graphical interpretation of the proof of Property 3.(a), when $i'<i$ and $j'<j$.

\begin{center}
\begin{tikzpicture}[line cap=round,line join=round,>=triangle 45,x=0.5cm,y=0.5cm]
\clip(-2.5,-4.5) rectangle (4,4.);
\draw [line width=0.4pt] (-2.,1.)-- (3.5,1.);
\draw [line width=0.4pt] (3.5,1.)-- (3.5,-1.);
\draw [line width=0.4pt] (3.5,-1.)-- (-2.,-1.);
\draw [line width=0.4pt] (-2.,-1.)-- (-2.,1.);
\draw [->,line width=0.4pt] (-1.5,1.) -- (-1.5,3.);
\draw [->,line width=0.4pt] (0.,1.) -- (0.,3.);
\draw [->,line width=0.4pt] (1.5,1.) -- (1.5,3.);
\draw [->,line width=0.4pt] (3.,1.) -- (3.,3.);
\draw [->,line width=0.4pt] (-1.5,-3.) -- (-1.5,-1.);
\draw [->,line width=0.4pt] (0.,-3.) -- (0.,-1.);
\draw [->,line width=0.4pt] (1.5,-3.) -- (1.5,-1.);
\draw [->,line width=0.4pt] (3.,-3.) -- (3.,-1.);
\draw (0.25,0.5) node[anchor=north west] {$G$};
\draw (-1.8,-3) node[anchor=north west] {$1$};
\draw (-0.3,-3) node[anchor=north west] {$i'$};
\draw (1.2,-3) node[anchor=north west] {$i$};
\draw (2.7,-3) node[anchor=north west] {$k$};
\draw (-1.4,-2.2) node[anchor=north west] {$\ldots$};
\draw (0.1,-2.2) node[anchor=north west] {$\ldots$};
\draw (1.6,-2.2) node[anchor=north west] {$\ldots$};
\draw (-1.8,4.2) node[anchor=north west] {$1$};
\draw (-0.3,4.2) node[anchor=north west] {$j'$};
\draw (1.2,4.2) node[anchor=north west] {$j$};
\draw (2.7,4.2) node[anchor=north west] {$l$};
\draw (-1.4,2.) node[anchor=north west] {$\ldots$};
\draw (0.1,2.) node[anchor=north west] {$\ldots$};
\draw (1.6,2.) node[anchor=north west] {$\ldots$};
\end{tikzpicture}
$\substack{\displaystyle \stackrel{t_{i,j}}{\longmapsto}\\ \vspace{4cm}}$
\begin{tikzpicture}[line cap=round,line join=round,>=triangle 45,x=0.5cm,y=0.5cm]
\clip(-2.5,-4.5) rectangle (5.,5.);
\draw [line width=0.4pt] (-2.,1.)-- (3.5,1.);
\draw [line width=0.4pt] (3.5,1.)-- (3.5,-1.);
\draw [line width=0.4pt] (3.5,-1.)-- (-2.,-1.);
\draw [line width=0.4pt] (-2.,-1.)-- (-2.,1.);
\draw [->,line width=0.4pt] (-1.5,1.) -- (-1.5,3.);
\draw [->,line width=0.4pt] (0.,1.) -- (0.,3.);
\draw [line width=0.4pt] (1.5,1.) -- (1.5,3.);
\draw [->,line width=0.4pt] (3.,1.) -- (3.,3.);
\draw [->,line width=0.4pt] (-1.5,-3.) -- (-1.5,-1.);
\draw [->,line width=0.4pt] (0.,-3.) -- (0.,-1.);
\draw [->,line width=0.4pt] (1.5,-3.) -- (1.5,-1.);
\draw [->,line width=0.4pt] (3.,-3.) -- (3.,-1.);
\draw (0.25,0.5) node[anchor=north west] {$G$};
\draw (-1.8,-3) node[anchor=north west] {$1$};
\draw (-0.3,-3) node[anchor=north west] {$i'$};
\draw (2.1,-3) node[anchor=north west] {$k-1$};
\draw (-1.4,-2.2) node[anchor=north west] {$\ldots$};
\draw (0.1,-2.2) node[anchor=north west] {$\ldots$};
\draw (1.6,-2.2) node[anchor=north west] {$\ldots$};
\draw (-1.8,4.2) node[anchor=north west] {$1$};
\draw (-0.3,4.2) node[anchor=north west] {$j'$};
\draw (2.1,4.2) node[anchor=north west] {$l-1$};
\draw (-1.4,2.) node[anchor=north west] {$\ldots$};
\draw (0.1,2.) node[anchor=north west] {$\ldots$};
\draw (1.6,2.) node[anchor=north west] {$\ldots$};
\draw [shift={(3.,-3.)},line width=0.4pt]  plot[domain=3.141592653589793:6.283185307179586,variable=\t]({1.*1.5*cos(\t r)+0.*1.5*sin(\t r)},{0.*1.5*cos(\t r)+1.*1.5*sin(\t r)});
\draw [shift={(3.,3.)},line width=0.4pt]  plot[domain=0.:3.141592653589793,variable=\t]({1.*1.5*cos(\t r)+0.*1.5*sin(\t r)},{0.*1.5*cos(\t r)+1.*1.5*sin(\t r)});
\draw [line width=0.4pt] (4.5,3.) -- (4.5,-3.);
\end{tikzpicture}
$\substack{\displaystyle \stackrel{t_{i',j'}}{\longmapsto}\\ \vspace{4cm}}$
\begin{tikzpicture}[line cap=round,line join=round,>=triangle 45,x=0.5cm,y=0.5cm]
\clip(-3.5,-4.5) rectangle (5.,5.);
\draw [line width=0.4pt] (-2.,1.)-- (3.5,1.);
\draw [line width=0.4pt] (3.5,1.)-- (3.5,-1.);
\draw [line width=0.4pt] (3.5,-1.)-- (-2.,-1.);
\draw [line width=0.4pt] (-2.,-1.)-- (-2.,1.);
\draw [->,line width=0.4pt] (-1.5,1.) -- (-1.5,3.);
\draw [line width=0.4pt] (0.,1.) -- (0.,3.);
\draw [line width=0.4pt] (1.5,1.) -- (1.5,3.);
\draw [->,line width=0.4pt] (3.,1.) -- (3.,3.);
\draw [->,line width=0.4pt] (-1.5,-3.) -- (-1.5,-1.);
\draw [->,line width=0.4pt] (0.,-3.) -- (0.,-1.);
\draw [->,line width=0.4pt] (1.5,-3.) -- (1.5,-1.);
\draw [->,line width=0.4pt] (3.,-3.) -- (3.,-1.);
\draw (0.25,0.5) node[anchor=north west] {$G$};
\draw (-1.8,-3) node[anchor=north west] {$1$};
\draw (2.1,-3) node[anchor=north west] {$k-2$};
\draw (-1.4,-2.2) node[anchor=north west] {$\ldots$};
\draw (0.1,-2.2) node[anchor=north west] {$\ldots$};
\draw (1.6,-2.2) node[anchor=north west] {$\ldots$};
\draw (-1.8,4.2) node[anchor=north west] {$1$};
\draw (2.1,4.2) node[anchor=north west] {$l-2$};
\draw (-1.4,2.) node[anchor=north west] {$\ldots$};
\draw (0.1,2.) node[anchor=north west] {$\ldots$};
\draw (1.6,2.) node[anchor=north west] {$\ldots$};
\draw [shift={(-1.5,-3.)},line width=0.4pt]  plot[domain=3.141592653589793:6.283185307179586,variable=\t]({1.*1.5*cos(\t r)+0.*1.5*sin(\t r)},{0.*1.5*cos(\t r)+1.*1.5*sin(\t r)});
\draw [shift={(-1.5,3.)},line width=0.4pt]  plot[domain=0.:3.141592653589793,variable=\t]({1.*1.5*cos(\t r)+0.*1.5*sin(\t r)},{0.*1.5*cos(\t r)+1.*1.5*sin(\t r)});
\draw [line width=0.4pt] (-3.,3.) -- (-3.,-3.);
\draw [shift={(3.,-3.)},line width=0.4pt]  plot[domain=3.141592653589793:6.283185307179586,variable=\t]({1.*1.5*cos(\t r)+0.*1.5*sin(\t r)},{0.*1.5*cos(\t r)+1.*1.5*sin(\t r)});
\draw [shift={(3.,3.)},line width=0.4pt]  plot[domain=0.:3.141592653589793,variable=\t]({1.*1.5*cos(\t r)+0.*1.5*sin(\t r)},{0.*1.5*cos(\t r)+1.*1.5*sin(\t r)});
\draw [line width=0.4pt] (4.5,3.) -- (4.5,-3.);
\end{tikzpicture}

\vspace{-2cm}
\end{center}

\begin{center}
\begin{tikzpicture}[line cap=round,line join=round,>=triangle 45,x=0.5cm,y=0.5cm]
\clip(-2.5,-4.5) rectangle (4,4.);
\draw [line width=0.4pt] (-2.,1.)-- (3.5,1.);
\draw [line width=0.4pt] (3.5,1.)-- (3.5,-1.);
\draw [line width=0.4pt] (3.5,-1.)-- (-2.,-1.);
\draw [line width=0.4pt] (-2.,-1.)-- (-2.,1.);
\draw [->,line width=0.4pt] (-1.5,1.) -- (-1.5,3.);
\draw [->,line width=0.4pt] (0.,1.) -- (0.,3.);
\draw [->,line width=0.4pt] (1.5,1.) -- (1.5,3.);
\draw [->,line width=0.4pt] (3.,1.) -- (3.,3.);
\draw [->,line width=0.4pt] (-1.5,-3.) -- (-1.5,-1.);
\draw [->,line width=0.4pt] (0.,-3.) -- (0.,-1.);
\draw [->,line width=0.4pt] (1.5,-3.) -- (1.5,-1.);
\draw [->,line width=0.4pt] (3.,-3.) -- (3.,-1.);
\draw (0.25,0.5) node[anchor=north west] {$G$};
\draw (-1.8,-3) node[anchor=north west] {$1$};
\draw (-0.3,-3) node[anchor=north west] {$i'$};
\draw (1.2,-3) node[anchor=north west] {$i$};
\draw (2.7,-3) node[anchor=north west] {$k$};
\draw (-1.4,-2.2) node[anchor=north west] {$\ldots$};
\draw (0.1,-2.2) node[anchor=north west] {$\ldots$};
\draw (1.6,-2.2) node[anchor=north west] {$\ldots$};
\draw (-1.8,4.2) node[anchor=north west] {$1$};
\draw (-0.3,4.2) node[anchor=north west] {$j'$};
\draw (1.2,4.2) node[anchor=north west] {$j$};
\draw (2.7,4.2) node[anchor=north west] {$l$};
\draw (-1.4,2.) node[anchor=north west] {$\ldots$};
\draw (0.1,2.) node[anchor=north west] {$\ldots$};
\draw (1.6,2.) node[anchor=north west] {$\ldots$};
\end{tikzpicture}
$\substack{\displaystyle \stackrel{t_{i',j'}}{\longmapsto}\\ \vspace{4cm}}$
\begin{tikzpicture}[line cap=round,line join=round,>=triangle 45,x=0.5cm,y=0.5cm]
\clip(-3.5,-4.5) rectangle (4.,5.);
\draw [line width=0.4pt] (-2.,1.)-- (3.5,1.);
\draw [line width=0.4pt] (3.5,1.)-- (3.5,-1.);
\draw [line width=0.4pt] (3.5,-1.)-- (-2.,-1.);
\draw [line width=0.4pt] (-2.,-1.)-- (-2.,1.);
\draw [->,line width=0.4pt] (-1.5,1.) -- (-1.5,3.);
\draw [line width=0.4pt] (0.,1.) -- (0.,3.);
\draw [->,line width=0.4pt] (1.5,1.) -- (1.5,3.);
\draw [->,line width=0.4pt] (3.,1.) -- (3.,3.);
\draw [->,line width=0.4pt] (-1.5,-3.) -- (-1.5,-1.);
\draw [->,line width=0.4pt] (0.,-3.) -- (0.,-1.);
\draw [->,line width=0.4pt] (1.5,-3.) -- (1.5,-1.);
\draw [->,line width=0.4pt] (3.,-3.) -- (3.,-1.);
\draw (0.25,0.5) node[anchor=north west] {$G$};
\draw (-1.8,-3) node[anchor=north west] {$1$};
\draw (0.3,-3) node[anchor=north west] {$i-1$};
\draw (2.1,-3) node[anchor=north west] {$k-1$};
\draw (-1.4,-2.2) node[anchor=north west] {$\ldots$};
\draw (0.1,-2.2) node[anchor=north west] {$\ldots$};
\draw (1.6,-2.2) node[anchor=north west] {$\ldots$};
\draw (-1.8,4.2) node[anchor=north west] {$1$};
\draw (0.3,4.2) node[anchor=north west] {$j-1$};
\draw (2.1,4.2) node[anchor=north west] {$l-1$};
\draw (-1.4,2.) node[anchor=north west] {$\ldots$};
\draw (0.1,2.) node[anchor=north west] {$\ldots$};
\draw (1.6,2.) node[anchor=north west] {$\ldots$};
\draw [shift={(-1.5,-3.)},line width=0.4pt]  plot[domain=3.141592653589793:6.283185307179586,variable=\t]({1.*1.5*cos(\t r)+0.*1.5*sin(\t r)},{0.*1.5*cos(\t r)+1.*1.5*sin(\t r)});
\draw [shift={(-1.5,3.)},line width=0.4pt]  plot[domain=0.:3.141592653589793,variable=\t]({1.*1.5*cos(\t r)+0.*1.5*sin(\t r)},{0.*1.5*cos(\t r)+1.*1.5*sin(\t r)});
\draw [line width=0.4pt] (-3.,3.) -- (-3.,-3.);
\end{tikzpicture}
$\substack{\displaystyle \stackrel{t_{i-1,j-1}}{\longmapsto}\\ \vspace{4cm}}$
\begin{tikzpicture}[line cap=round,line join=round,>=triangle 45,x=0.5cm,y=0.5cm]
\clip(-3.5,-4.5) rectangle (5.,5.);
\draw [line width=0.4pt] (-2.,1.)-- (3.5,1.);
\draw [line width=0.4pt] (3.5,1.)-- (3.5,-1.);
\draw [line width=0.4pt] (3.5,-1.)-- (-2.,-1.);
\draw [line width=0.4pt] (-2.,-1.)-- (-2.,1.);
\draw [->,line width=0.4pt] (-1.5,1.) -- (-1.5,3.);
\draw [line width=0.4pt] (0.,1.) -- (0.,3.);
\draw [line width=0.4pt] (1.5,1.) -- (1.5,3.);
\draw [->,line width=0.4pt] (3.,1.) -- (3.,3.);
\draw [->,line width=0.4pt] (-1.5,-3.) -- (-1.5,-1.);
\draw [->,line width=0.4pt] (0.,-3.) -- (0.,-1.);
\draw [->,line width=0.4pt] (1.5,-3.) -- (1.5,-1.);
\draw [->,line width=0.4pt] (3.,-3.) -- (3.,-1.);
\draw (0.25,0.5) node[anchor=north west] {$G$};
\draw (-1.8,-3) node[anchor=north west] {$1$};
\draw (2.1,-3) node[anchor=north west] {$k-2$};
\draw (-1.4,-2.2) node[anchor=north west] {$\ldots$};
\draw (0.1,-2.2) node[anchor=north west] {$\ldots$};
\draw (1.6,-2.2) node[anchor=north west] {$\ldots$};
\draw (-1.8,4.2) node[anchor=north west] {$1$};
\draw (2.1,4.2) node[anchor=north west] {$l-2$};
\draw (-1.4,2.) node[anchor=north west] {$\ldots$};
\draw (0.1,2.) node[anchor=north west] {$\ldots$};
\draw (1.6,2.) node[anchor=north west] {$\ldots$};
\draw [shift={(-1.5,-3.)},line width=0.4pt]  plot[domain=3.141592653589793:6.283185307179586,variable=\t]({1.*1.5*cos(\t r)+0.*1.5*sin(\t r)},{0.*1.5*cos(\t r)+1.*1.5*sin(\t r)});
\draw [shift={(-1.5,3.)},line width=0.4pt]  plot[domain=0.:3.141592653589793,variable=\t]({1.*1.5*cos(\t r)+0.*1.5*sin(\t r)},{0.*1.5*cos(\t r)+1.*1.5*sin(\t r)});
\draw [line width=0.4pt] (-3.,3.) -- (-3.,-3.);
\draw [shift={(3.,-3.)},line width=0.4pt]  plot[domain=3.141592653589793:6.283185307179586,variable=\t]({1.*1.5*cos(\t r)+0.*1.5*sin(\t r)},{0.*1.5*cos(\t r)+1.*1.5*sin(\t r)});
\draw [shift={(3.,3.)},line width=0.4pt]  plot[domain=0.:3.141592653589793,variable=\t]({1.*1.5*cos(\t r)+0.*1.5*sin(\t r)},{0.*1.5*cos(\t r)+1.*1.5*sin(\t r)});
\draw [line width=0.4pt] (4.5,3.) -- (4.5,-3.);
\end{tikzpicture}

\vspace{-2cm}
\end{center}
For Property 3.(b), let us consider a graph $p=G$ .
As the input edge indexed by $i$ in $\sigma \cdot G\cdot \tau$ is the input edge of $G$ indexed by $\tau(i)$
and the output edge indexed by $j$ in $\sigma \cdot G\cdot \tau$ is the output edge of $G$ indexed by $\sigma^{-1}(j)$,
$G_1=t_{i,j}(\sigma\cdot G\cdot \tau)$ is the graph obtained by gluing together
the input indexed by $\tau(j)$ and the output indexed by $\sigma^{-1}(j)$, reindexing the input
according to $\sigma_i$ and the output edges by $\tau_j$, so $G_1=\sigma_i\cdot t_{\tau(i),\sigma^{-1}(j)}(G)\cdot \tau_j$.

Let us prove Property 3.(c). By Lemma  \ref{lemmeaxiomessimples}, it is enough to prove it for
$(p,p')=(G,G')$ a pair of graphs and $(i,j)=(1,1)$. In this case,
$e_i$ and $f_j$ are both edges of $G$, so $t_{1,1}(G*G')=t_{1,1}(G)*G'$. 

Let us consider the graph $I$ such that
\[V(I)=E(I)=O(I)=I(I)=L(I)=\emptyset,\]
with $IO(I)$  reduced to a single element. Then for any graph $G$ with $|O(G)|\geq1$,
\[t_{1,2}(I*G)=G.\]
By Lemma \ref{lemmeaxiomessimples}, $I$ is a unit of $\Gr$. 
\end{proof}

If $G$ and $H$ are solar, then   $G*H$ is clearly also solar. If $G\in {\PGr}(k,l)$ is solar, then for any $i\in [k]$ and $j\in [l]$,
$t_{i,j}(G)$ is solar. Indeed, as $IO(G)=\emptyset$, $IO(t_{i,j}(G))=\emptyset$; as $IO(G)=L(G)=\emptyset$,
$L(t_{i,j}(G))=\emptyset$. Hence:

\begin{cor}
$\rGr$, $\rPGr$, $\rGr(X)$ and $\rPGr(X)$ are subTRAPs of
$\Gr$, $\PGr$, $\Gr(X)$ and $\PGr(X)$ {in the sense of Definition \ref{defn:subtrap}}. They are non unitary. 
\end{cor}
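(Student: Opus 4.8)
The plan is to check, for each of the four inclusions $\rGr\subseteq\Gr$, $\rPGr\subseteq\PGr$, $\rGr(X)\subseteq\Gr(X)$, $\rPGr(X)\subseteq\PGr(X)$, the requirements of Definition \ref{defn:subtrap}, and then to rule out the existence of a unit in the $(1,1)$-component. The submodule condition is immediate: the action $\tau\cdot G\cdot\sigma$ only alters the indexations $\alpha,\beta$ and leaves $V(G)$, $E(G)$, $I(G)$, $O(G)$, $IO(G)$, $L(G)$ (and, where relevant, the corolla orders and the decoration) unchanged, hence maps solar graphs to solar graphs. The horizontal unit $I_0$ of $\Gr$ (resp. $\PGr$, $\Gr(X)$, $\PGr(X)$) is the empty graph, for which $IO=L=\emptyset$, so it is solar. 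Stability under the partial trace maps is exactly the remark made just before the statement: if $G$ is solar and $i\in[k]$, $j\in[l]$, then $e_i=\alpha_G^{-1}(i)\in I(G)$ and $f_j=\beta_G^{-1}(j)\in O(G)$ because $IO(G)=\emptyset$, so identifying their dangling ends creates an internal edge in $E(G)$ — and neither an input-output edge nor a loop — whence $IO(t_{i,j}(G))=\emptyset$ and $L(t_{i,j}(G))=L(G)=\emptyset$, i.e. $t_{i,j}(G)$ is again solar. For completeness one also records that $G*H$ is the disjoint union of $G$ and $H$, hence solar as soon as $G$ and $H$ are. Thus $\rGr$, $\rPGr$, $\rGr(X)$, $\rPGr(X)$ are sub-TRAPs.

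To prove non-unitarity I would exploit that the vertex set is essentially inert under the TRAP operations on graphs: $V(G*H)=V(G)\sqcup V(H)$ and $V(t_{i,j}(G))=V(G)$, since identifying dangling edge-ends does not touch vertices. Suppose one of the four families, call it $Q$, were unitary with unit $I\in Q(1,1)$. Being solar with $i(I)=1$, the graph $I$ satisfies $|I(I)|+|IO(I)|=1$ with $IO(I)=\emptyset$, so it carries exactly one input edge, which has a target vertex; hence $V(I)\neq\emptyset$. On the other hand, the first of the unitary relations of Definition \ref{defi:Trap} applied with $p=I$ and $j=2$ (equivalently, Lemma \ref{lemmeaxiomessimples}(2)) gives $t_{1,2}(I*I)=I$, and comparing numbers of vertices yields $2\,|V(I)|=|V(I*I)|=|V(t_{1,2}(I*I))|=|V(I)|$, so $V(I)=\emptyset$, a contradiction. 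Therefore no unit exists and all four families are non unitary. This is of course consistent with the fact that the unit of $\Gr$, $\PGr$, $\Gr(X)$, $\PGr(X)$ is the single input-output edge, which is manifestly not solar and hence not an element of the corresponding solar sub-TRAP.

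Every step is short and there is no single hard obstacle; the two points deserving a little care are, first, in the stability under $t_{i,j}$, ensuring that a partial trace of a solar graph cannot covertly manufacture an input-output edge or a loop — which is precisely why one must invoke $IO(G)=\emptyset$ in order to locate $e_i$ and $f_j$ among genuine input and output edges — and second, in the non-unitarity statement, ruling out an exotic solar unit different from the obvious input-output-edge unit of the ambient TRAP; the vertex count does exactly this.
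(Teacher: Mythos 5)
Your proposal is correct. For the sub-TRAP part you argue exactly as the paper does: the $\sym\times\sym^{op}$-action only touches the indexations $\alpha,\beta$, the empty graph $I_0$ is solar, $G*H$ is a disjoint union, and since $IO(G)=\emptyset$ the edges $e_i,f_j$ glued by $t_{i,j}$ are genuine input and output edges, so the partial trace creates an internal edge and never an input-output edge or a loop --- this is precisely the remark the paper places just before the corollary. Where you diverge is on non-unitarity. Under Definition \ref{defn:subtrap}, a sub-TRAP of a unitary TRAP is unitary precisely when it contains the unit $I\in P(1,1)$ of the ambient TRAP, so the paper's (implicit) justification is simply that the unit of $\Gr$, $\PGr$, $\Gr(X)$, $\PGr(X)$ is the single input-output edge, which is manifestly not solar. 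You prove the stronger statement that no unit whatsoever can exist inside the solar families: a solar graph in bidegree $(1,1)$ has a nonempty vertex set (its unique input edge needs a target), while the unitary axiom with $p=I$, $j=2$ gives $t_{1,2}(I*I)=I$ and the invariance of $V$ under horizontal concatenation and partial traces forces $2\,|V(I)|=|V(I)|$, a contradiction. This vertex-counting argument is a nice bonus: it rules out an exotic internal unit, which the paper's definition does not require but which makes the phrase ``non unitary'' unambiguous. Both routes are valid; yours buys the stronger conclusion at the cost of invoking the unitary axioms, the paper's is a one-line observation tied to its definition of unitary sub-TRAP.
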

{\begin{remark}
     $\Gr$, $\PGr$, $\Gr(X)$ and $\PGr(X)$ admit other sub-TRAPs, for example with vertices with only a prescribed number of possible vertices. These sub-TRAPs might be of importance in question of renormalisability of QFTs, but this question is far from the scope of this work and we therefore do not define rigorously these other objects.  
    \end{remark}
}

\subsection{Morphisms of TRAPs and free TRAPs} \label{sec:morphism_TRAPs_free}

   As before, $X=(X(k,l))_{k,l\geqslant 0}$ is  a family of sets.   It turns out that $\rPGr(X)$ is the free TRAP generated by $X$.
For any $x \in X(k,l)$, we identify $x$ with the graph in $\rPGr(k,l)(X)$ with one vertex decorated by $x$,
$k$ incoming edges, totally ordered according to their indices, and $l$ outgoing edges, totally ordered according to their indices.
For example,   $x\in X(3,2)$  is identified with the {corolla ordered} graph
\begin{align} \label{eq:simple_decorated_graph}
\substack{\hspace{5mm} \:\begin{tikzpicture}[line cap=round,line join=round,>=triangle 45,x=0.7cm,y=0.7cm]
\clip(0.6,-2.1) rectangle (2.2,2.);
\draw [line width=.4pt] (0.8,0.)-- (2.2,0.);
\draw [line width=.4pt] (2.2,0.)-- (2.2,-0.5);
\draw [line width=.4pt] (2.2,-0.5)-- (0.8,-0.5);
\draw [line width=.4pt] (0.8,-0.5)-- (0.8,0.);
\draw (1.2,0.05) node[anchor=north west] {\scriptsize $x$};
\draw [->,line width=.4pt] (1.,0.) -- (1.,1.);
\draw [->,line width=.4pt] (2.,0.) -- (2.,1.);
\draw [->,line width=.4pt] (1.,-1.5) -- (1.,-0.5);
\draw [->,line width=.4pt] (1.5,-1.5) -- (1.5,-0.5);
\draw [->,line width=.4pt] (2.,-1.5) -- (2.,-0.5);
\draw (0.7,-1.4) node[anchor=north west] {\scriptsize $1$};
\draw (1.2,-1.4) node[anchor=north west] {\scriptsize $2$};
\draw (1.7,-1.4) node[anchor=north west] {\scriptsize $3$};
\draw (0.7,1.6) node[anchor=north west] {\scriptsize $1$};
\draw (1.7,1.6) node[anchor=north west] {\scriptsize $2$};
\end{tikzpicture}}
\end{align}

\begin{theo} \label{freetraps}
Let $P$ be a  TRAP and $\phi=(\phi(k,l))_{k,l{\in \N_0}}$ be a map from $X$ to $P$ i.e.,
for any $k,l\in \N_0$, $\phi(k,l):X(k,l)\longrightarrow P(k,l)$ is a map.
Then there exists a unique TRAP morphism $\Phi:\rPGr(X)\longrightarrow P$, sending $x$ to $\phi(x)$ for any $x\in X$. If moreover $P$ is unitary, this morphism $\Phi$ is uniquely extended as a unitary TRAP morphism from $\PGr(X)$ to $P$. 

In other words, $\rPGr(X)$ (resp. $\PGr(X)$) is the free TRAP (resp. the free unitary TRAP, i.e. the free wheeled PROP) generated by $X$.
\end{theo}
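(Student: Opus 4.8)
The plan is to construct $\Phi$ by hand from a decomposition of a decorated solar corolla-ordered graph into its vertices, and then to deduce uniqueness from the fact that such graphs are generated by the one-vertex corollas under horizontal concatenation and partial traces. The combinatorial core is a \emph{generation lemma}: every $G\in\rPGr(X)(k,l)$ can be written as
\[G=\sigma\cdot\bigl(t_{i_1,j_1}\circ\cdots\circ t_{i_m,j_m}(x_{v_1}*\cdots*x_{v_n})\bigr)\cdot\tau\]
for a suitable enumeration $v_1,\dots,v_n$ of the vertices of $G$, with $x_{v_a}:=d_G(v_a)$ viewed as the one-vertex corolla \eqref{eq:simple_decorated_graph} in $\rPGr(X)$, with $m=|E(G)|$ partial traces realising one at a time the internal edges of $G$ by gluing the output half-edge and input half-edge each edge connects, and with $\sigma\in\sym_l$, $\tau\in\sym_k$ correcting the discrepancy between the block indexation of the surviving input/output edges and the indexation $\alpha_G,\beta_G$. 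I would prove this by induction on $|E(G)|$: for $|E(G)|=0$ a solar graph is a disjoint union of corollas, hence $x_{v_1}*\cdots*x_{v_n}$ up to an outer permutation; and cutting any internal edge produces a solar graph with one fewer internal edge and one extra input and one extra output edge, from which $G$ is recovered by a single $t_{i,j}$ up to reindexation. It is precisely the corolla-ordered structure that makes the identification of the $a$-th tensor factor $x_{v_a}$ with the half-edges at $v_a$ canonical, with no choice at the vertices.

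I then define $\Phi(G)\in P(k,l)$ by the same formula, with each corolla $x_{v_a}$ replaced by $\phi(x_{v_a})\in P(i(v_a),o(v_a))$ and with $*$, $t_{i,j}$ and the $\sym\times\sym^{op}$-action read inside $P$. The step I expect to be the main obstacle is \textbf{well-definedness}: this element must be independent of the enumeration $v_1,\dots,v_n$, of the order in which the $m$ internal edges are contracted, and of the representative chosen in the isoclass of $G$. Independence of the enumeration follows by combining Axiom 2.(d) (which replaces a reordering of blocks by conjugation by the block permutations $c_{m,n}$) with Axiom 3.(b) (which then reindexes the subsequent partial traces correctly), the leftover block permutations being absorbed into the outer $\sigma,\tau$. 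Independence of the contraction order follows from Axiom 3.(a), whose four-case reindexation matches exactly the bookkeeping in the combinatorial picture, together with Axiom 3.(c) to move a trace whose two ends lie in a single corolla-block past the formation of that block. Invariance under isomorphism is then automatic, since an isomorphism of corolla-ordered graphs is exactly a relabelling of vertices and edges of the kind already absorbed. Concretely I would fix a canonical enumeration and contraction order (a normal form) and show that every other choice reduces to it by finitely many applications of these axioms.

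Granting well-definedness, the morphism axioms of Definition \ref{defn:trap_morphism} hold by construction: $\Phi$ intertwines the symmetric actions through the outer slots, it intertwines $*$ because the concatenation of two graphs juxtaposes their vertex lists and edge sets (so the formula for $G*G'$ is the $*$-product of the formulas for $G$ and $G'$, using Axiom 2.(a)-(c)), and it intertwines $t_{i,j}$ because $t_{i,j}(G)$ simply appends one contraction; alternatively one may appeal to Lemma \ref{lemmemorphismes} and check only compatibility with the symmetric actions and with $t_{1,1}$. Uniqueness is immediate from the generation lemma, since any TRAP morphism $\Psi$ with $\Psi(x)=\phi(x)$ must send the displayed expression for $G$ to the corresponding expression in $P$. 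For the unitary statement, I would enlarge the generating family by the strand $I\in\PGr(1,1)$, the graph with $IO(I)$ a singleton and all other sets empty: a general corolla-ordered graph (now possibly carrying input-output edges and loops) is generated by the corollas together with copies of $I$, an input-output edge being an $I$-strand and a loop being $t_{1,1}(I)=\grapheo$, so setting $\Phi(I):=I_P$ extends $\Phi$ to all of $\PGr(X)$. Its well-definedness is checked exactly as above, the only new coherences being the unitarity relations of Definition \ref{defi:Trap} (notably $t_{1,2}(I*p)=p$), which are precisely what ensures that inserting an $I$-strand and tracing it off changes nothing. Uniqueness of this extension is forced by the requirement $\Phi(I)=I_P$.
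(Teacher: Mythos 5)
Your proposal is correct and follows essentially the same route as the paper's own proof: there too, a solar corolla-ordered graph is reduced to the horizontal product of its one-vertex corollas with the internal edges realised by iterated partial traces, $\Phi$ is defined by the corresponding expression in $P$, well-definedness is checked with Axioms 2.(d), 3.(a)--(c), uniqueness is forced by this generation, and the unitary case is handled by sending the strand $I$ to $I_P$ and $\grapheo$ to $t_{1,1}(I_P)$. The only organisational difference is that the paper performs the construction by induction on the number of internal edges, cutting one edge at a time via $G=t_{1,1}(G_e)$ and checking independence of the chosen edge, whereas you cut all edges at once into a normal form and verify independence of all choices simultaneously --- the same bookkeeping in a different order.
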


\begin{remark}\label{rk:PhiPGrX}In practice we often have $P=X$ and $\phi=\mathrm{Id}$  which yields a    map \begin{equation}\label{eq:PhiPGrX}\Phi:\rPGr(X)\longrightarrow X\end{equation} from decorated graphs   to the space $X$ of decorations.\end{remark}

\begin{example} Here is a trivial yet enlightening example of how $\Phi$ acts on graphs: for $G=\grapheo$, we have $G=t_{1,1}(I)$ and hence $\Phi(G)=t_{1,1}(I_P)$.
	\end{example}
	
\begin{proof} 
We provide here a sketch of the proof,  and refer the reader to the appendix for a full proof. Since $\rPGr(X)\subseteq\PGr(X)$, we take $G\in \PGr(k,l)(X)$ and treat simultaneously the case of solar graphs and the other. We define $\Phi(G)$ for any graph 
$G\in \PGr(k,l)(X)$ by induction on the number $N$  of internal edges of $G$. 

If $N=0$, then $G$ can be written as
\[G=\grapheo^{*p}*\sigma\cdot(I^{*q}*x_1*\ldots *x_r)\cdot \tau,\]
(recall that $\grapheo$ is the graph with no vertex, with only one edge in $L(G)$) where $p,q,r\in  \N_0$, $(k_i,{l_i}) \in  \N_0^2$ for any $i$, $x_i\in X{({k_i,l_i})}$ and $\sigma \in \sym_{q+k_1+\ldots+k_r}$, $\tau\in \sym_{q+l_1+\ldots+l_r}$. 
If $G$ is solar, then $p=q=0$ and this reduces to 
\begin{equation*} 
 G=\sigma\cdot (x_1*\ldots *x_r)\cdot \tau.
\end{equation*}
We then set
\begin{equation} \label{eq:G_simple_solar}
 \Phi(G)=\sigma\cdot(\phi(x_1)*\ldots*\phi(x_k))\cdot \tau.
\end{equation}
If $G$ is not solar and if $P$ is unitary, we denote by $I_P$ the identity of $P$, and we put
\[\Phi(G)={t_{1,1}(I_P)^{*p}}*\sigma\cdot(I_P^{*q}*\phi(x_1)*\ldots*\phi(x_r))\cdot \tau.\]
We can prove that this does not depend on the choice of the decomposition of $G$, with the help of the TRAP axioms applied to $P$.
Let us  now assume that $\Phi(G')$ is defined for any graph with $N-1$ internal edges, for a given $N \geqslant 1$. Let $G$ be a graph with $N$ internal edges and let $e$ be one of these edges. Let $G_e$ be a graph obtained by cutting this edge in two, such that $G=t_{1,1}(G_e)$.  We then set:
\[\Phi(G)=t_{1,1}\circ \Phi(G_e).\]
One can prove that this does not depend on the choice of $e$. It can then be shown that $\Phi$ defined as above is compatible with the partial trace maps. 
\end{proof}
 Since the ingoing and outgoing edges of each vertex  of a {corolla ordered} graph are totally ordered, each {corolla ordered} graph $\PGr$ is naturally acted upon by $\sym\times \sym^{op}$.
 \begin{defi}\label{defiactionsommets}
  For any {corolla ordered} graph $G\in\PGr$ and any vertex $v\in V(G)$, there is a natural action of $\sym_{o(v)}\times \sym_{i(v)}^{op}$ {induced by the action on the  totally ordered edges in} $O(v)$ and $I(v)$. The {corolla ordered} graph obtained from $G$ by the action of $(\sigma,\tau)$ on the vertex $v$ is denoted by
  \begin{equation*}
   \sigma \cdot_v G\cdot_v \tau.
  \end{equation*}
 A similar action can be built on a {corolla ordered} graph $G$   decorated by a family of sets $X$:
  \begin{equation*}
   \sigma \cdot_v (G,d_G)\cdot_v \tau:=(\sigma \cdot_v G\cdot_v \tau,d_G).
  \end{equation*}
 \end{defi}

\begin{example}
\[\substack{\displaystyle (12)\cdot_v\\ \vspace{1.5cm}}
	\begin{tikzpicture}[line cap=round,line join=round,>=triangle 45,x=0.7cm,y=0.7cm]
	\clip(0.8,-0.5) rectangle (2.2,2.5);
	\draw [line width=.4pt] (0.8,0.)-- (2.2,0.);
	\draw [line width=.4pt] (2.2,0.)-- (2.2,-0.5); 
	\draw [line width=.4pt] (2.2,-0.5)-- (0.8,-0.5);
	\draw [line width=.4pt] (0.8,-0.5)-- (0.8,0.); 
	\draw [line width=.4pt] (0.8,2.)-- (2.2,2.); 
	\draw [line width=.4pt] (2.2,2.)-- (2.2,2.5);
	\draw [line width=.4pt] (2.2,2.5)-- (0.8,2.5); 
	\draw [line width=.4pt] (0.8,2.5)-- (0.8,2.);
	\draw [->,line width=.4pt] (1.,0.) -- (1.,2.);
	\draw [->,line width=.4pt] (2.,0.) -- (2.,2.);
	\draw (1.2,0.) node[anchor=north west] {\scriptsize $v$};
	\draw (1.2,2.5) node[anchor=north west] {\scriptsize $w$};
	\end{tikzpicture}\,\substack{\displaystyle =\\ \vspace{1.5cm}}\,
	\begin{tikzpicture}[line cap=round,line join=round,>=triangle 45,x=0.7cm,y=0.7cm]
	\clip(0.8,-0.5) rectangle (2.2,2.5);
	\draw [line width=.4pt] (0.8,0.)-- (2.2,0.); 
	\draw [line width=.4pt] (2.2,0.)-- (2.2,-0.5);
	\draw [line width=.4pt] (2.2,-0.5)-- (0.8,-0.5);
	\draw [line width=.4pt] (0.8,-0.5)-- (0.8,0.);
	\draw [line width=.4pt] (0.8,2.)-- (2.2,2.);
	\draw [line width=.4pt] (2.2,2.)-- (2.2,2.5);
	\draw [line width=.4pt] (2.2,2.5)-- (0.8,2.5);
	\draw [line width=.4pt] (0.8,2.5)-- (0.8,2.);
	\draw [->,line width=.4pt] (1.,0.) -- (1.,2.);
	\draw [->,line width=.4pt] (2.,0.) -- (2.,2.);
	\draw (1.2,0.) node[anchor=north west] {\scriptsize $v$};
	\draw (1.2,2.5) node[anchor=north west] {\scriptsize $w$};
	\end{tikzpicture}
	\substack{\displaystyle \cdot_w (12)\\ \vspace{1.5cm}}
	\substack{\displaystyle =\\ \vspace{1.5cm}}
	\begin{tikzpicture}[line cap=round,line join=round,>=triangle 45,x=0.7cm,y=0.7cm]
	\clip(0.8,-0.5) rectangle (2.2,2.5);
	\draw [line width=.4pt] (0.8,0.)-- (2.2,0.);
	\draw [line width=.4pt] (2.2,0.)-- (2.2,-0.5);
	\draw [line width=.4pt] (2.2,-0.5)-- (0.8,-0.5);
	\draw [line width=.4pt] (0.8,-0.5)-- (0.8,0.);
	\draw [line width=.4pt] (0.8,2.)-- (2.2,2.);
	\draw [line width=.4pt] (2.2,2.)-- (2.2,2.5);
	\draw [line width=.4pt] (2.2,2.5)-- (0.8,2.5);
	\draw [line width=.4pt] (0.8,2.5)-- (0.8,2.);
	\draw [->,line width=.4pt] (1.,0.) -- (2.,2.);
	\draw [->,line width=.4pt] (2.,0.) -- (1.,2.);
	\draw (1.2,0.) node[anchor=north west] {\scriptsize $v$};
	\draw (1.2,2.5) node[anchor=north west] {\scriptsize $w$};
	\end{tikzpicture}\substack{\displaystyle .\\ \vspace{1.5cm}}\]
\vspace{-.5cm}
	 
	 	 \noindent In these pictures, the labelling of the edges outgoing (resp. ingoing  {to})  {from} the vertex $v$ (resp. $w$) are labelled from left to right.
 \end{example}
 
Note that $\Gr(X)$ is obtained from $\PGr(X)$ by forgetting the total orders on the edges, which in fact is equivalent to the trivialisation of this action of symmetric groups on incoming and outgoing edges of any vertex. Hence: 

\begin{cor} \label{cor:extension_identity}
Let $P$ be a TRAP and $\phi=(\phi(k,l))_{k,l\in \N_0}$ be a map from $X$ to $P$.
We assume that for any $x\in X(k,l)$, for any $(\sigma,\tau)\in \sym_k\otimes \sym_l$,
\[\tau\cdot \phi(x)\cdot \sigma=\phi(x).\]
There exists a unique TRAP morphism $\Phi:\rGr(X)\longrightarrow P$, sending $x$ to $\phi(x)$ for any $x\in X$.
If moreover $P$ is unitary, this morphism $\Phi$ is uniquely extended as a unitary TRAP morphism
from $\Gr(X)$ to $P$. 
\end{cor}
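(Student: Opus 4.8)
The plan is to deduce this from the free TRAP property of Theorem \ref{freetraps} together with the observation, recorded just above, that $\rGr(X)$ (resp. $\Gr(X)$) is the quotient of $\rPGr(X)$ (resp. $\PGr(X)$) obtained by forgetting the total orders $\leq_v$ on the incoming and outgoing edges of each vertex, i.e.\ by trivialising the vertex actions of Definition \ref{defiactionsommets}. Concretely, on $\rPGr(X)$ one introduces the equivalence relation $G\sim G'$ whenever $G'$ is obtained from $G$ by a finite sequence of vertex actions $\sigma\cdot_v(-)\cdot_v\tau$, equivalently whenever $G$ and $G'$ have the same image in $\rGr(X)$. The vertex actions commute with the $\sym\times\sym^{op}$-action, with the horizontal concatenation, and with the partial trace maps of $\rPGr(X)$ --- each of these operations acts on the indexed outer legs and on the gluing data, never on the local orders $\leq_v$, and cutting or gluing an edge leaves the local degree of every vertex unchanged --- so $\sim$ is compatible with the TRAP structure in the sense of Lemma \ref{lem:relation_respection_TRAP}. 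Hence $\rPGr(X)/{\sim}$ is a TRAP, the canonical map $\rPGr(X)/{\sim}\to\rGr(X)$ is an isomorphism of TRAPs, and the quotient map $\pi\colon\rPGr(X)\to\rGr(X)$ is a surjective TRAP morphism.

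Next I would invoke Theorem \ref{freetraps} on the data $(X,\phi)$ to obtain the unique TRAP morphism $\widetilde\Phi\colon\rPGr(X)\to P$ with $\widetilde\Phi(x)=\phi(x)$, and show that $\widetilde\Phi$ is constant on $\sim$-classes. By transitivity it suffices to prove $\widetilde\Phi(\sigma\cdot_v G\cdot_v\tau)=\widetilde\Phi(G)$ for a single vertex $v$ and $(\sigma,\tau)\in\sym_{o(v)}\times\sym_{i(v)}$, which I would establish by induction on the number $N$ of internal edges of $G$, paralleling the inductive definition of $\widetilde\Phi$. If $N=0$, write $G$ in the normal form of Theorem \ref{freetraps} as $\rho\cdot(x_1*\cdots*x_r)\cdot\rho'$ (with, in the non-solar unitary case, additional factors $I_P$ and $\grapheo$); since $N=0$ all legs of $v$ are outer legs, so the action on $v$ only alters how the legs of the corolla carrying, say, $x_i$ are matched to global indices, and it can be reabsorbed, using compatibility of $*$ with the symmetric actions (Axiom 2.(c) of Definition \ref{defi:Trap}), into a new normal form $\rho''\cdot(x_1*\cdots*x_r)\cdot\rho'''$ differing from the original only by a block permutation localised in the $i$-th factor; applying $\widetilde\Phi$ and using $\tau'\cdot\phi(x_i)\cdot\sigma'=\phi(x_i)$ for all $(\sigma',\tau')\in\sym_{k_i}\times\sym_{l_i}$ gives $\widetilde\Phi(\sigma\cdot_v G\cdot_v\tau)=\widetilde\Phi(G)$. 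If $N\geq1$, pick an internal edge $e$ and cut it into an input and an output edge reindexed to carry index $1$ so that $G=t_{1,1}(G_e)$ with $G_e$ having $N-1$ internal edges; the vertex action at $v$ leaves the global indices (hence the two cut ends) fixed, so $\sigma\cdot_v G\cdot_v\tau=t_{1,1}(\sigma\cdot_v G_e\cdot_v\tau)$, and since $\widetilde\Phi$ commutes with $t_{1,1}$ the induction hypothesis applied to $G_e$ yields the claim.

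It then follows that $\widetilde\Phi$ factors uniquely as $\widetilde\Phi=\Phi\circ\pi$ with $\Phi\colon\rGr(X)\to P$, and, $\pi$ being a surjective TRAP morphism, $\Phi$ is itself a TRAP morphism; it sends each $x$, which is the image under $\pi$ of the corolla of $x$, to $\phi(x)$. Uniqueness of $\Phi$ is immediate: any TRAP morphism $\rGr(X)\to P$ extending $\phi$ pulls back along $\pi$ to a TRAP morphism $\rPGr(X)\to P$ extending $\phi$, hence coincides with $\widetilde\Phi$ by Theorem \ref{freetraps}, hence is determined on the image of $\pi$, which is all of $\rGr(X)$. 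For the unitary statement one runs the same argument with $\PGr(X)$ in place of $\rPGr(X)$ and $\Gr(X)$ in place of $\rGr(X)$, using the unitary part of Theorem \ref{freetraps}; the resulting unitary TRAP morphism $\Gr(X)\to P$ restricts on the sub-TRAP of solar graphs $\rGr(X)$ to $\Phi$, again by the uniqueness in Theorem \ref{freetraps}, and is the unique such unitary extension.

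I expect the main obstacle to be precisely the base case $N=0$ of the invariance step: correctly identifying ``permuting the legs of the vertex $v$ inside $G$'' with ``conjugating the corolla at $v$ by a symmetric-group element'' and tracking how this interacts with the associativity/commutativity normal form is where the hypothesis $\tau\cdot\phi(x)\cdot\sigma=\phi(x)$ enters, and a careless bookkeeping there is the one place the argument could go wrong; everything else is a routine transport of structure along the quotient $\pi$.
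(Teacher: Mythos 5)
Your proposal is correct and follows essentially the same route as the paper: the authors obtain the corollary precisely by observing that $\Gr(X)$ (resp. $\rGr(X)$) is $\PGr(X)$ (resp. $\rPGr(X)$) with the vertex actions of Definition \ref{defiactionsommets} trivialised, so that the free morphism of Theorem \ref{freetraps} descends to the quotient thanks to the invariance hypothesis on $\phi$, with uniqueness pulled back along the surjection. Your write-up merely makes explicit (via Lemma \ref{lem:relation_respection_TRAP} and the induction on internal edges) the details the paper leaves implicit, the same quotient being formalised later by the functor $\rGacirc$ in Example \ref{ex:GammaGr}.
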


We end this paragraph with the non {corolla ordered} counterpart of Remark   \ref{rk:PhiPGrX}:
 \begin{remark}\label{rk:PhiGrX} In practice we often have $P=X$ and $\phi=\mathrm{Id}_P$  which yields a  map 
 \begin{equation}\label{eq:PhiGrX}\Phi:\rPGr(X)\longrightarrow X\end{equation} 
 from decorated {corolla ordered} graphs   to the space $X$ of decorations.\end{remark}

\subsection{ {Extending non-unitary TRAPs}}

In this section, we embed any TRAP $P$ in a unitary TRAP denoted by $\uPGr(P)$. We proceed in the following way:
\begin{itemize}
\item  {We start with the} canonical TRAP morphism from the free TRAP $\rPGr(P)$ generated by $P$ { to $P$}.
\item By Proposition \ref{prop:quotient_from_TRAP_morph}, this defines an equivalence $\sim$ on $\rPGr(P)$,
compatible with the TRAP structure of $\rPGr(P)$. 
\item We then extend this equivalence to $\PGr(P)$, in such a way that it is compatible with the unitary TRAP structure
of $\PGr(P)$, as required in Lemma \ref{lem:relation_respection_TRAP}.
\item Consequently, the quotient $\PGr(P)/\sim$ is a unitary TRAP which contains $P$.
\end{itemize}
For this, we shall need the solar part of any {corolla ordered} graph $G$, which we now define:

\begin{notation}
Let $G\in \PGr(P)(k,l)$. Then there exist a unique $(p,k',l')\in \N_0^2$, {$k'\leq k$, $l'\leq l$}, a unique solar graph $G'\in \rPGr(P)(k',l')$,
a unique pair of permutations $(\sigma,\tau)\in \sym_k\times \sym_l$ such that:
\begin{itemize}
\item $\sigma(1)<\ldots<\sigma(k')$ and $\sigma(k'+1)<\ldots<\sigma(k)$ (that is to say $\sigma$ is a $(k',k-k')$-shuffle);
\item $\tau(1)<\ldots<\tau(l')$ and $\tau(l'+1)<\ldots<\tau(l)$ (that is to say $\tau$ is a $(l',l-l')$-shuffle);
\item $G=\grapheo^{p}*\tau^{-1}\cdot (G'*I^{k-k'})\cdot \sigma$.
\end{itemize}
The graph $G'$ is the \textbf{solar part} of $G$ and is denoted by $\reg(G)$. We also set
\begin{align*}
\sigma&:=\sh_{{in}}(G),& \tau&:=\sh_{out}(G),&p&:=\val_{\grapheo}(G).
\end{align*}
Here $\sh$ stands for shuffle and $\val$ for valuation.
\end{notation}

\begin{remark} As the subsequent example will show, a reindexation of ingoing and outgoing edges
is useful to write the graph as a horizontal product of a solar graph, loops and  $I$.\end{remark}

\begin{example}
Let $G$ be the  graph:
\[\xymatrix{1&&3&2&&\\
&\rond{}\ar[ru] \ar[lu] \ar@/_1pc/[d]&&&\ar@(ul,dl)[]&\ar@(ul,dl)[]\\
&\rond{}\ar@/_1pc/[u]&&&&\\
3\ar[ru]&&2\ar[lu]&1\ar[uuu]&&}\]
Then:
\begin{align*}
G&=\grapheo^2* (132)^{-1}\cdot
	\left(\:\substack{\displaystyle \\ 
\xymatrix{1&&2&3\\
&\rond{}\ar[ru] \ar[lu] \ar@/_1pc/[d]&&\\
&\rond{}\ar@/_1pc/[u]&&\\
2\ar[ru]&&1\ar[lu]&3\ar[uuu]}}\:	\right)\cdot (231)\\[2mm]
&=\grapheo^2* (132)^{-1}\cdot*(G'*I)\cdot(231),
\end{align*}
where $G'$ is the following graph, which as a result is the solar part of $G$:
\[G'=\substack{\hspace{5mm}\\ \xymatrix{1&&2&\\
&\rond{}\ar[ru] \ar[lu] \ar@/_1pc/[d]&\\
&\rond{}\ar@/_1pc/[u]&&\\
2\ar[ru]&&1\ar[lu]}}\]
Moreover:
\begin{align*}
\sh_{in}(G)&=(231),& \sh_{out}(G)&=(132),&\val_{\grapheo}(G)&=2.
\end{align*}
\end{example}

Let $P$ be a TRAP and let us consider the unique TRAP morphism $\Phi:{\rPGr}(P)\longrightarrow P$,
extending the identity of $P$. We define a relation $\sim$ on ${\PGr}(P)$ as follows: for $G,G'\in {\PGr}(P)$,
\[G\sim G'\Longleftrightarrow \begin{cases}
\Phi(\reg(G))=\Phi(\reg(G')),\\
\sh_{in}(G)=\sh_{in}(G'),\\
\sh_{out}(G)=\sh_{out}(G'),\\
\val_{\grapheo}(G)=\val_{\grapheo}(G').
\end{cases}\]
This is clearly an equivalence.
Roughly speaking, this equivalence identifies graphs with the same input-output edges and loops and   which coincide after contraction of  their components obtained from deleting 	input-output edges and loops.

\begin{theo}\label{theo:uPGr}
The quotient $\uPGr(P):=\PGr(P)/\sim$ is a unitary TRAP, containing a sub-TRAP isomorphic to $P$. 
\end{theo}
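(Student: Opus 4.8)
The plan is to apply Lemma \ref{lem:relation_respection_TRAP} to $Q=\PGr(P)$ with the equivalence relation $\sim$ just defined, after checking that $\sim$ is compatible with the (unitary) TRAP structure of $\PGr(P)$, and then to identify the image of $P$ inside the quotient. First I would record the elementary observation that for any $G\in\PGr(P)$, the three invariants $\sh_{in}(G)$, $\sh_{out}(G)$ and $\val_{\grapheo}(G)$ together with $\Phi(\reg(G))\in P$ completely encode the $\sim$-class of $G$, so that $\sim$ is indeed an equivalence; then the point is to see how each of these four data transforms under the three operations (the $\sym\times\sym^{op}$-action, the horizontal concatenation $*$, and the partial trace maps $t_{i,j}$) and to verify that the transformation depends only on the $\sim$-class.

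For the compatibility with the module structure: acting by $(\rho,\lambda)\in\sym_k\times\sym_l$ leaves $\reg(G)$ unchanged (up to the canonical relabelling) and leaves $\val_{\grapheo}$ unchanged, while it composes $\sh_{in}(G)$ and $\sh_{out}(G)$ with $\rho$, $\lambda$ in a way that is determined purely by $\rho,\lambda$ and the numbers $k',l'$ — all of which are $\sim$-invariants — so $\tau\cdot G\cdot\rho\sim\tau\cdot G'\cdot\rho$ follows; here one uses that $\Phi$ is a TRAP morphism, in particular $\sym\times\sym^{op}$-equivariant. For the horizontal concatenation: from $G=\grapheo^{\val_{\grapheo}(G)}*\sh_{out}(G)^{-1}\cdot(\reg(G)*I^{k-k'})\cdot\sh_{in}(G)$ and the analogous expression for $H$, one computes that $\reg(G*H)=\reg(G)*\reg(H)$, $\val_{\grapheo}(G*H)=\val_{\grapheo}(G)+\val_{\grapheo}(H)$, and $\sh_{in}(G*H)$, $\sh_{out}(G*H)$ are shuffles obtained from those of $G$ and $H$ by a rule depending only on the arities; since $\Phi$ commutes with $*$, $\Phi(\reg(G*H))=\Phi(\reg(G))*\Phi(\reg(H))$, whence $x\sim x'$ implies $x*y\sim x'*y$ and $y*x\sim y*x'$. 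For the partial trace maps one does a case analysis on whether the edge $e_i$ (resp.\ $f_j$) being glued is an input/output of the solar part, an identity strand $I$, or (for the target side) how it interacts with the loops; in every case $t_{i,j}(G)$ again decomposes canonically and its four invariants are functions of those of $G$ alone, using the axiom $t_{1,1}(p*p')=t_{1,1}(p)*p'$ and the unitarity axioms of Definition \ref{defi:Trap} to move $t_{i,j}$ past the $I$-strands and the $\grapheo$'s, plus $\Phi\circ t_{i,j}=t_{i,j}\circ\Phi$. This gives the compatibility of $\sim$ with all three structures, so Lemma \ref{lem:relation_respection_TRAP} applies and $\uPGr(P)=\PGr(P)/\sim$ is a TRAP, unitary with unit $[I]$ since $\PGr(P)$ is unitary.

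It remains to exhibit a sub-TRAP isomorphic to $P$. Consider the composite $j\colon P\hookrightarrow\rPGr(P)\twoheadrightarrow\PGr(P)\to\PGr(P)/\sim$, where the first map sends $p\in P(k,l)$ to the one-vertex corolla-ordered graph decorated by $p$ as in \eqref{eq:simple_decorated_graph}, the second is the inclusion of solar graphs, and the third is the quotient; I would show the image $Q:=j(P)$ is a sub-TRAP in the sense of Definition \ref{defn:subtrap} (it contains $[I_0]$, namely the class of the empty graph, and is stable under $*$ and $t_{i,j}$ because $\rPGr(P)$ is a sub-TRAP of $\PGr(P)$ and $\Phi$ restricted to solar graphs surjects onto $P$ — note $Q$ need not contain $[I]$, consistently with $P$ being possibly non-unitary). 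Injectivity of $j$ is the crucial point: if $p,p'\in P$ have $j(p)=j(p')$, then the corresponding one-vertex solar graphs $G_p,G_{p'}$ satisfy $G_p\sim G_{p'}$; since both are solar we have $\val_{\grapheo}=0$, $\sh_{in}=\mathrm{Id}$, $\sh_{out}=\mathrm{Id}$, $\reg(G_p)=G_p$, $\reg(G_{p'})=G_{p'}$, so the definition of $\sim$ forces $\Phi(G_p)=\Phi(G_{p'})$, i.e.\ $p=p'$ because $\Phi$ extends the identity of $P$. Then $j$ is a TRAP isomorphism onto $Q$: it is a TRAP morphism as a composite of TRAP morphisms (with the quotient map being one by Lemma \ref{lem:relation_respection_TRAP}), it is injective, and it is surjective onto $Q$ by construction, and its inverse on $Q$ is again a TRAP morphism since each structure map of $Q$ is inherited from $P$ via $\Phi$.

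The main obstacle I anticipate is the bookkeeping in the compatibility of $\sim$ with the partial trace maps: one must track carefully how gluing an input to an output of a graph in normal form $\grapheo^{p}*\sh_{out}^{-1}\cdot(\reg(G)*I^{m})\cdot\sh_{in}$ affects each of the four invariants across all the cases — in particular when the trace turns an $I$-strand into a loop $\grapheo$ (raising $\val_{\grapheo}$), or when it connects the solar part to an $I$-strand (absorbing the strand), or when the two glued edges both belong to identity strands — and to check that the resulting formulas genuinely depend only on $\Phi(\reg(G))$, the shuffles, and $p$, and not on a representative; the unitarity axioms of Definition \ref{defi:Trap} and axiom 3.(c) are exactly what is needed to reduce these to the already-known behaviour of $\Phi$, but assembling them cleanly is where the real work lies. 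The rest — associativity, units, equivariance — is routine once Lemma \ref{lem:relation_respection_TRAP} is invoked.
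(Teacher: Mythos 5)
Your proposal follows essentially the same route as the paper's proof: you check that the four invariants ($\Phi(\reg(G))$, $\sh_{in}(G)$, $\sh_{out}(G)$, $\val_{\grapheo}(G)$) transform under the symmetric actions, the horizontal concatenation and the partial trace maps in a way depending only on the $\sim$-class (using that $\Phi$ is a TRAP morphism), invoke Lemma \ref{lem:relation_respection_TRAP}, and then show that the composite $P\hookrightarrow \rPGr(P)\hookrightarrow \PGr(P)\longrightarrow \uPGr(P)$ is an injective TRAP morphism, with injectivity argued exactly as in the paper (solar graphs have trivial shuffles and valuation, so $\sim$ reduces to equality of images under $\Phi$). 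The case analysis for the partial traces that you defer is precisely the five-case check the paper carries out, and your one imprecision — after acting by $(\rho,\lambda)$ the new shuffles and the permutation acting on the solar part depend on $\sh_{in}(G)$ and $\sh_{out}(G)$, not merely on $k',l'$ — is harmless, since these shuffles are themselves $\sim$-invariants.
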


\begin{proof}

	\begin{itemize}
		\item
We first show  the compatibility of the equivalence relation with the left and right actions of the symmetric group.

Let $G,G'\in {\PGr}(P)$, such that $G\sim G'$.  
If $\sigma\in \sym_{k+l}$, there exists a  unique triple $(\sigma_1,\sigma_2,\sigma')\in \sym_{k'}\otimes \sym_{k-k'}\times \sym_{k}$
such that:
\begin{itemize}
\item $\sh_{in}(G)\circ \sigma=\sigma'\circ (\sigma_1\otimes \sigma_2)$.
\item $\sigma'(1)<\ldots<\sigma'(k')$ and $\sigma'(k'+1)<\ldots<\sigma'(k)$.
\end{itemize}
Then $\reg(G\cdot \sigma)=\reg(G)\cdot \sigma_1$, $\sh_{in}(G\cdot \sigma)=\sigma'$ and $\sh_{out}(G\cdot \sigma)=sh_{{out}}(G)$.
Obviously, $\val_{\grapheo}(G\cdot \sigma)=\val_{\grapheo}(G')$. A similar result holds for $G'$. We immediately obtain that:
\begin{align*}
\sh_{in}(G\cdot \sigma)&=\sh_{in}(G'\cdot \sigma),&
sh_o(G\cdot \sigma)&=sh_o(G'\cdot \sigma),&
\val_{\grapheo}(G\cdot \sigma)&=\val_{\grapheo}(G'\cdot \sigma).
\end{align*}
Moreover, as $\Phi$ is a TRAP morphism:
\[\Phi(\reg(G\cdot \sigma))=\Phi(\reg(G)\cdot\sigma_1)=\Phi(\reg(G))\cdot\sigma_1
=\Phi(\reg(G'))\cdot\sigma_1=\Phi(\reg(G'\cdot \sigma)),\]
So $G\cdot \sigma\sim G'\cdot \sigma$. Similarly, if $\tau \in \sym_l$,
$\tau\cdot G\sim \tau \cdot G'$.

	\item 
Let us show the compatibility of the equivalence relation  with the  horizontal concatenation $*$ on the left and on the right.

Let $H\in {\PGr}(P)$. Then:
\begin{align*}
\reg(G*H)&=\reg(G)*\reg(H),\\
\sh_{in}(G*H)&=\sh_{in}(G)*\sh_{in}(H),\\
\sh_{out}(G*H)&=\sh_{out}(G)*\sh_{out}(H),\\
\val_{\grapheo}(G*H)&=\val_{\grapheo}(G)+\val_{\grapheo}(H).
\end{align*}
A similar result holds for $G'*H$. As $G\sim G'$,
\begin{align*}
\sh_{in}(G*H)&=\sh_{in}(G'*H),&
sh_{{out}}(G*H)&=sh_{{out}}(G'*H),&
\val_{\grapheo}(G*H)&=\val_{\grapheo}(G'*H).
\end{align*}
Moreover, as $\Phi$ is a TRAP morphism:
\begin{align*}
\Phi(\reg(G*H))&=\Phi(\reg(G)*\reg(H))\\
&=\Phi(\reg(G))*\Phi(\reg(H))\\
&=\Phi(\reg(G'))*\Phi(\reg(H))=\Phi(\reg(G'*H)).
\end{align*}
Hence, $G*H\sim G'*H$. Similarly, $H*G\sim H*G'$. \\
\item
We now check the compatibility of the equivalence relation with the partial trace maps.

Let $i\in [k]$ and $j\in [l]$. We denote by $e_i$ (respectively $e'_i$) the $i$-th input of $G$ (respectively of $G'$)
and  by $f_j$ (respectively $f'_j$) the $j$-th output of $G$ (respectively of $G'$). There are five possible cases:
\begin{enumerate}
\item If $e_i=f_j\in IO(G)$, then $e'_i=f'_j\in IO(G)$.  Moreover:
\begin{align*}
\sh_{in}(t_{i,j}(G))&=\sh_{in}(t_{i,j}(G')),\\
sh_o(t_{i,j}(G))&=sh_o(t_{i,j}(G')),\\
\val_{\grapheo}(t_{i,j}(G))&=\val_{\grapheo}(t_{i,j}(G'))=\val_{\grapheo}(G)+1,\\
\reg(t_{i,j}(G))&=\reg(G),&\reg(t_{i,j}(G'))&=\reg(G').
\end{align*}
As $G\sim G'$, $\Phi(\reg(G))=\Phi(\reg(G'))$, so $t_{i,j}(G)\sim t'_{i,j}(G)$.
\item If $e_i,f_j \in IO(G)$, with $e_i\neq f_j$, then $e'_i,f'_j \in IO(G')$, with $e'_i\neq f'_j$. Moreover:
\begin{align*}
\sh_{in}(t_{i,j}(G))&=\sh_{in}(t_{i,j}(G')),\\
\sh_o(t_{i,j}(G))&=\sh_o(t_{i,j}(G')),\\
\val_{\grapheo}(t_{i,j}(G))&=\val_{\grapheo}(t_{i,j}(G'))=\val_{\grapheo}(G),\\
\reg(t_{i,j}(G))&=\reg(G),&\reg(t_{i,j}(G'))&=\reg(G').
\end{align*}
So $t_{i,j}(G)\sim t'_{i,j}(G)$.
\item If $e_i\in IO(G)$ and $f_j\notin IO(G)$, then $e'_i\in IO(G')$ and $f_j\notin IO(G')$. Moreover:
\begin{align*}
\sh_{in}(t_{i,j}(G))&=\sh_{in}(t_{i,j}(G')),\\
\sh_o(t_{i,j}(G))&=\sh_o(t_{i,j}(G')),\\
\val_{\grapheo}(t_{i,j}(G))&=\val_{\grapheo}(t_{i,j}(G'))=\val_{\grapheo}(G),
\end{align*}
and there exists a permutation $\alpha \in \sym_{k'}$ such that
\begin{align*}
\reg(t_{i,j}(G))&=\reg(G)\cdot \alpha,&\reg(t_{i,j}(G'))&=\reg(G')\cdot \alpha.
\end{align*}
As $\Phi$ is a TRAP morphism:
\[\Phi(\reg(t_{i,j}(G))=\Phi(\reg(G))\cdot \alpha
=\Phi(\reg(G'))\cdot \alpha=\Phi(\reg(G')\cdot \alpha)=\Phi(\reg(t_{i,j}(G')),\]
so $t_{i,j}(G)\sim t_{i,j}(G')$.
\item The case where  $e_i\notin IO(G)$ and $f_j\in IO(G)$ is treated similarly.
\item If $e_i,f_j\notin IO(G)$, then $e_i,f_j\notin IO(G')$. Moreover:
 \begin{align*}
\sh_{in}(t_{i,j}(G))&=\sh_{in}(t_{i,j}(G')),\\
\sh_{{out}}(t_{i,j}(G))&=\sh_{{out}}(t_{i,j}(G')),\\
\val_{\grapheo}(t_{i,j}(G))&=\val_{\grapheo}(t_{i,j}(G'))=\val_{\grapheo}(G),
\end{align*}
and there exist $i'\in [k']$, $j'\in [l']$, such that
\begin{align*}
\reg(t_{i,j}(G))&=t_{i',j'}(\reg(G)),&\reg(t_{i,j}(G'))&=t_{i',j'}(\reg(G')).
\end{align*}
As $\Phi$ is a TRAP morphism:
\begin{align*}
\Phi(\reg(t_{i,j}(G)))&=\Phi\circ t_{i',j'}(\reg(G))\\
&=t_{i',j'}\circ \Phi(\reg(G))\\
&=t_{i',j'}\circ \Phi(\reg(G'))=\Phi(\reg(t_{i,j}(G'))),
\end{align*}
so $t_{i,j}(G)\sim t_{i,j}(G')$. 
\end{enumerate}
\end{itemize}
By Lemma \ref{lem:relation_respection_TRAP}, $\uPGr(P)$ is a unitary TRAP. 

The canonical injection $\iota:P\longrightarrow \PGr(P)$ induces a map $\iota':P\longrightarrow \uPGr(P)$.
If $p,q$ lie in $ P$, then in $\PGr(P)$, $\iota(p)*\iota(q)$ and $\iota(p*q)$ are solar graphs, and:
\[\Phi(\iota(p)*\iota(q))=\Phi\circ\iota(p)*\Phi\circ\iota(q)=p*q=\Phi\circ \iota(p*q),\]
so $\iota(p)*\iota(q)\sim \iota(p*q)$. Hence, $\iota'(p)*\iota'(q)=\iota(p*q)$. 
If $p\in P(k,l)$, $i\in [k]$ and $j\in [l]$, then $t_{i,j}\circ \iota(p)$ and $\iota\circ t_{i,j}(p)$ are solar graphs in $\PGr(P)$, and:
\[\Phi\circ t_{i,j}\circ \iota(p)=t_{i,j}\circ \Phi\circ \iota(p)=t_{i,j}(p)=\Phi\circ \iota \circ t_{i,j}(p),\]
so $t_{i,j}\circ \iota(p)\sim\iota\circ t_{i,j}(p)$, which implies that $t_{i,j}\circ \iota'(p)=\iota'\circ t_{i,j}(p)$:
the map $\iota'$ is a TRAP morphism. 

Let $p,q\in P$, such that $\iota'(p)=\iota'(q)$. Then $\iota(p)\sim \iota(q)$. As $\iota(p)$ and $\iota(q)$ are solar graphs,
\[p=\Phi\circ \iota(p)=\Phi\circ \iota(q)=q,\]
so $\iota'$ is injective. We have  proved that the unitary TRAP $\uPGr(P)$ contains a (non unitary) sub-TRAP isomorphic to $P$.
\end{proof}

\subsection{ {A functor} from TRAPs to unitary TRAPs}

We now identify the sub-TRAP $\iota'(P)$ of $\uPGr(P)$ and $P$. 
We obtain a description the $\sym_l\times \sym_k^{op}$-module $\uPGr(P)(k,l)$:
\[\uPGr(P)(k,l)=\left(\bigsqcup_{{i=0}}^{\min(k,l)} \mathrm{ind}_{(\sym_i\otimes \sym_{l-i})\times 
(\sym_i\otimes \sym_{k-i})^{op}}^{\sym_l\times \sym_k^{op}} \sym_i\times P(k-i,l-i)\right)\times \{\grapheo^i,i\in \N_0\},\]
where $\mathrm{ind}$ is the  induction of modules and $\sym_i$ is {{equipped with} its canonical $\sym_i\times \sym_i^{op}$-action.
The elements of $\sym_i$ correspond to input-output edges and $\grapheo$ is   the trace of the identity.
The  partial trace maps can be computed with the help of the unitary TRAP axioms. For example, if $p\in P(k-1,l-1)$,
if $j>1$, then:
\[t_{1,j}(\mathrm{Id}_{[1]},p,\grapheo^i)=(\mathrm{Id}_{[0]},(1\ldots j-1)\cdot p, \grapheo^i),\]
which is graphically represented by
\begin{align*}
\substack{\displaystyle  t_{1,j}\\ \vspace{4cm}}
\begin{tikzpicture}[line cap=round,line join=round,>=triangle 45,x=0.5cm,y=0.5cm]
\clip(-3.5,-4.5) rectangle (4.2,4.);
\draw [line width=0.4pt] (-2.,1.)-- (3.5,1.);
\draw [line width=0.4pt] (3.5,1.)-- (3.5,-1.);
\draw [line width=0.4pt] (3.5,-1.)-- (-2.,-1.);
\draw [line width=0.4pt] (-2.,-1.)-- (-2.,1.);
\draw [->,line width=0.4pt] (-3,-3.) -- (-3.,3.);
\draw [->,line width=0.4pt] (-1.5,1.) -- (-1.5,3.);
\draw [->,line width=0.4pt] (0.,1.) -- (0.,3.);
\draw [->,line width=0.4pt] (1.5,1.) -- (1.5,3.);
\draw [->,line width=0.4pt] (3.,1.) -- (3.,3.);
\draw [->,line width=0.4pt] (-1.5,-3.) -- (-1.5,-1.);
\draw [->,line width=0.4pt] (0.,-3.) -- (0.,-1.);
\draw [->,line width=0.4pt] (1.5,-3.) -- (1.5,-1.);
\draw [->,line width=0.4pt] (3.,-3.) -- (3.,-1.);
\draw (0.25,0.5) node[anchor=north west] {$p$};
\draw (-3.4,-3) node[anchor=north west] {$1$};
\draw (-1.9,-3) node[anchor=north west] {$2$};
\draw (-0.4,-3) node[anchor=north west] {$3$};
\draw (1.,-3) node[anchor=north west] {$k$};
\draw (2.2,-3) node[anchor=north west] {$k+1$};
\draw (0.1,-2.2) node[anchor=north west] {$\ldots$};
\draw (-3.4,4.2) node[anchor=north west] {$1$};
\draw (-1.8,4.2) node[anchor=north west] {$2$};
\draw (-0.3,4.2) node[anchor=north west] {$j$};
\draw (0.5,4.2) node[anchor=north west] {$j+1$};
\draw (2.2,4.2) node[anchor=north west] {$l+1$};
\draw (-1.4,2.) node[anchor=north west] {$\ldots$};
\draw (1.6,2.) node[anchor=north west] {$\ldots$};
\end{tikzpicture}
\substack{\displaystyle \grapheo^i \hspace{.2cm} =\\ \vspace{4cm}}
\begin{tikzpicture}[line cap=round,line join=round,>=triangle 45,x=0.5cm,y=0.5cm]
\clip(-5.5,-4.5) rectangle (4.2,4.5);
\draw [line width=0.4pt] (-2.,1.)-- (3.5,1.);
\draw [line width=0.4pt] (3.5,1.)-- (3.5,-1.);
\draw [line width=0.4pt] (3.5,-1.)-- (-2.,-1.);
\draw [line width=0.4pt] (-2.,-1.)-- (-2.,1.);
\draw [->,line width=0.4pt] (-5.,-3.) -- (-5.,3.);
\draw [line width=0.4pt] (-3,-3.) -- (-3.,3.5);
\draw [->,line width=0.4pt] (-1.5,1.) -- (-1.5,3.);
\draw [line width=0.4pt] (0.,1.) -- (0.,3.5);
\draw [->,line width=0.4pt] (1.5,1.) -- (1.5,3.);
\draw [->,line width=0.4pt] (3.,1.) -- (3.,3.);
\draw [->,line width=0.4pt] (-1.5,-3.) -- (-1.5,-1.);
\draw [->,line width=0.4pt] (0.,-3.) -- (0.,-1.);
\draw [->,line width=0.4pt] (1.5,-3.) -- (1.5,-1.);
\draw [->,line width=0.4pt] (3.,-3.) -- (3.,-1.);
\draw (0.25,0.5) node[anchor=north west] {$p$};
\draw (-1.9,-3) node[anchor=north west] {$1$};
\draw (-0.4,-3) node[anchor=north west] {$2$};
\draw (0.4,-3) node[anchor=north west] {$k-1$};
\draw (2.7,-3) node[anchor=north west] {$k$};
\draw (0.1,-2.2) node[anchor=north west] {$\ldots$};
\draw (-5.4,4.2) node[anchor=north west] {$1$};
\draw (-1.8,4.2) node[anchor=north west] {$2$};
\draw (1.2,4.2) node[anchor=north west] {$j$};
\draw (2.7,4.2) node[anchor=north west] {$l$};
\draw (-1.4,2.) node[anchor=north west] {$\ldots$};
\draw (1.6,2.) node[anchor=north west] {$\ldots$};
\draw [shift={(-1.,3.5)},line width=.4pt]  plot[domain=0.:1.5707963267948966,variable=\t]({1.*1.*cos(\t r)+0.*1.*sin(\t r)},{0.*1.*cos(\t r)+1.*1.*sin(\t r)});	
\draw [line width=0.4pt] (-1.,4.5)-- (-2.,4.5);
\draw [shift={(-2.,3.5)},line width=.4pt]  plot[domain=1.5707963267948966:3.141592654,variable=\t]({1.*1.*cos(\t r)+0.*1.*sin(\t r)},{0.*1.*cos(\t r)+1.*1.*sin(\t r)});	
\draw [shift={(-4.,-3.)},line width=.4pt]  plot[domain=-3.141592654:0,variable=\t]({1.*1.*cos(\t r)+0.*1.*sin(\t r)},{0.*1.*cos(\t r)+1.*1.*sin(\t r)});	
\end{tikzpicture}	
\substack{\displaystyle \grapheo^i \hspace{.2cm} \\ \vspace{4cm}}
&\substack{\displaystyle = \hspace{.2cm} \\ \vspace{4cm}}
\begin{tikzpicture}[line cap=round,line join=round,>=triangle 45,x=0.5cm,y=0.5cm]
\clip(-2.5,-4.5) rectangle (4.2,4.);
\draw [line width=0.4pt] (-2.,1.)-- (3.5,1.);
\draw [line width=0.4pt] (3.5,1.)-- (3.5,-1.);
\draw [line width=0.4pt] (3.5,-1.)-- (-2.,-1.);
\draw [line width=0.4pt] (-2.,-1.)-- (-2.,1.);
\draw [->,line width=0.4pt] (-1.5,1.) -- (-1.5,3.);
\draw [->,line width=0.4pt] (0.,1.) -- (0.,3.);
\draw [->,line width=0.4pt] (1.5,1.) -- (1.5,3.);
\draw [->,line width=0.4pt] (3.,1.) -- (3.,3.);
\draw [->,line width=0.4pt] (-1.5,-3.) -- (-1.5,-1.);
\draw [->,line width=0.4pt] (0.,-3.) -- (0.,-1.);
\draw [->,line width=0.4pt] (1.5,-3.) -- (1.5,-1.);
\draw [->,line width=0.4pt] (3.,-3.) -- (3.,-1.);
\draw (0.25,0.5) node[anchor=north west] {$p$};
\draw (-1.9,-3) node[anchor=north west] {$1$};
\draw (-0.4,-3) node[anchor=north west] {$2$};
\draw (0.4,-3) node[anchor=north west] {$k-1$};
\draw (2.7,-3) node[anchor=north west] {$k$};
\draw (0.1,-2.2) node[anchor=north west] {$\ldots$};
\draw (-1.8,4.2) node[anchor=north west] {$2$};
\draw (-0.3,4.2) node[anchor=north west] {$1$};
\draw (1.,4.2) node[anchor=north west] {$j$};
\draw (2.7,4.2) node[anchor=north west] {$l$};
\draw (-1.4,2.) node[anchor=north west] {$\ldots$};
\draw (1.6,2.) node[anchor=north west] {$\ldots$};
\end{tikzpicture}
\substack{\displaystyle \grapheo^i. \\ \vspace{4cm}}
\end{align*}
 \vspace{-2.5cm}

\noindent and
\[t_{1,1}(\mathrm{Id}_{[1]},p,\grapheo^i)=(\mathrm{Id}_{[0]},p,\grapheo^{i+1}),\]
which is graphically represented by
\begin{align*}
\substack{\displaystyle  t_{1,1}\\ \vspace{4cm}}
\begin{tikzpicture}[line cap=round,line join=round,>=triangle 45,x=0.5cm,y=0.5cm]
\clip(-3.5,-4.5) rectangle (4.2,4.);
\draw [line width=0.4pt] (-2.,1.)-- (3.5,1.);
\draw [line width=0.4pt] (3.5,1.)-- (3.5,-1.);
\draw [line width=0.4pt] (3.5,-1.)-- (-2.,-1.);
\draw [line width=0.4pt] (-2.,-1.)-- (-2.,1.);
\draw [->,line width=0.4pt] (-3,-3.) -- (-3.,3.);
\draw [->,line width=0.4pt] (-1.5,1.) -- (-1.5,3.);
\draw [->,line width=0.4pt] (0.,1.) -- (0.,3.);
\draw [->,line width=0.4pt] (1.5,1.) -- (1.5,3.);
\draw [->,line width=0.4pt] (3.,1.) -- (3.,3.);
\draw [->,line width=0.4pt] (-1.5,-3.) -- (-1.5,-1.);
\draw [->,line width=0.4pt] (0.,-3.) -- (0.,-1.);
\draw [->,line width=0.4pt] (1.5,-3.) -- (1.5,-1.);
\draw [->,line width=0.4pt] (3.,-3.) -- (3.,-1.);
\draw (0.25,0.5) node[anchor=north west] {$p$};
\draw (-3.4,-3) node[anchor=north west] {$1$};
\draw (-1.9,-3) node[anchor=north west] {$2$};
\draw (-0.4,-3) node[anchor=north west] {$3$};
\draw (1.,-3) node[anchor=north west] {$k$};
\draw (2.2,-3) node[anchor=north west] {$k+1$};
\draw (0.1,-2.2) node[anchor=north west] {$\ldots$};
\draw (-3.4,4.2) node[anchor=north west] {$1$};
\draw (-1.9,4.2) node[anchor=north west] {$2$};
\draw (-0.4,4.2) node[anchor=north west] {$3$};
\draw (2.2,4.2) node[anchor=north west] {$l+1$};
\draw (0.1,2.) node[anchor=north west] {$\ldots$};
\end{tikzpicture}
&\substack{\displaystyle \grapheo^i \hspace{.2cm} =\\ \vspace{4cm}}
\begin{tikzpicture}[line cap=round,line join=round,>=triangle 45,x=0.5cm,y=0.5cm]
\clip(-5.5,-4.5) rectangle (4.2,4.5);
\draw [line width=0.4pt] (-2.,1.)-- (3.5,1.);
\draw [line width=0.4pt] (3.5,1.)-- (3.5,-1.);
\draw [line width=0.4pt] (3.5,-1.)-- (-2.,-1.);
\draw [line width=0.4pt] (-2.,-1.)-- (-2.,1.);
\draw [line width=0.4pt] (-5.,-3.) -- (-5.,3.);
\draw [->,line width=0.4pt] (-3,-3.) -- (-3.,3.);
\draw [->,line width=0.4pt] (-1.5,1.) -- (-1.5,3.);
\draw [->,line width=0.4pt] (0.,1.) -- (0.,3.);
\draw [->,line width=0.4pt] (1.5,1.) -- (1.5,3.);
\draw [->,line width=0.4pt] (3.,1.) -- (3.,3.);
\draw [->,line width=0.4pt] (-1.5,-3.) -- (-1.5,-1.);
\draw [->,line width=0.4pt] (0.,-3.) -- (0.,-1.);
\draw [->,line width=0.4pt] (1.5,-3.) -- (1.5,-1.);
\draw [->,line width=0.4pt] (3.,-3.) -- (3.,-1.);
\draw (0.25,0.5) node[anchor=north west] {$p$};
\draw (-1.9,-3) node[anchor=north west] {$1$};
\draw (-0.4,-3) node[anchor=north west] {$2$};
\draw (0.4,-3) node[anchor=north west] {$k-1$};
\draw (2.7,-3) node[anchor=north west] {$k$};
\draw (0.1,-2.2) node[anchor=north west] {$\ldots$};
\draw (-1.8,4.2) node[anchor=north west] {$1$};
\draw (-0.4,4.2) node[anchor=north west] {$2$};
\draw (0.4,4.2) node[anchor=north west] {$l-1$};
\draw (2.7,4.2) node[anchor=north west] {$l$};
\draw (0.1,2.) node[anchor=north west] {$\ldots$};
\draw [shift={(-4.,3.)},line width=.4pt]  plot[domain=0.:3.141592654,variable=\t]({1.*1.*cos(\t r)+0.*1.*sin(\t r)},{0.*1.*cos(\t r)+1.*1.*sin(\t r)});	
\draw [shift={(-4.,-3.)},line width=.4pt]  plot[domain=-3.141592654:0,variable=\t]({1.*1.*cos(\t r)+0.*1.*sin(\t r)},{0.*1.*cos(\t r)+1.*1.*sin(\t r)});	
\end{tikzpicture}	
\substack{\displaystyle \grapheo^i \hspace{.2cm} \\ \vspace{4cm}}
\substack{\displaystyle = \hspace{.2cm} \\ \vspace{4cm}}
\begin{tikzpicture}[line cap=round,line join=round,>=triangle 45,x=0.5cm,y=0.5cm]
\clip(-2.5,-4.5) rectangle (4.2,4.);
\draw [line width=0.4pt] (-2.,1.)-- (3.5,1.);
\draw [line width=0.4pt] (3.5,1.)-- (3.5,-1.);
\draw [line width=0.4pt] (3.5,-1.)-- (-2.,-1.);
\draw [line width=0.4pt] (-2.,-1.)-- (-2.,1.);
\draw [->,line width=0.4pt] (-1.5,1.) -- (-1.5,3.);
\draw [->,line width=0.4pt] (0.,1.) -- (0.,3.);
\draw [->,line width=0.4pt] (1.5,1.) -- (1.5,3.);
\draw [->,line width=0.4pt] (3.,1.) -- (3.,3.);
\draw [->,line width=0.4pt] (-1.5,-3.) -- (-1.5,-1.);
\draw [->,line width=0.4pt] (0.,-3.) -- (0.,-1.);
\draw [->,line width=0.4pt] (1.5,-3.) -- (1.5,-1.);
\draw [->,line width=0.4pt] (3.,-3.) -- (3.,-1.);
\draw (0.25,0.5) node[anchor=north west] {$p$};
\draw (-1.9,-3) node[anchor=north west] {$1$};
\draw (-0.4,-3) node[anchor=north west] {$2$};
\draw (0.4,-3) node[anchor=north west] {$k-1$};
\draw (2.7,-3) node[anchor=north west] {$k$};
\draw (0.1,-2.2) node[anchor=north west] {$\ldots$};
\draw (-1.8,4.2) node[anchor=north west] {$1$};
\draw (-0.4,4.2) node[anchor=north west] {$2$};
\draw (0.4,4.2) node[anchor=north west] {$l-1$};
\draw (2.7,4.2) node[anchor=north west] {$l$};
\draw (0.1,2.) node[anchor=north west] {$\ldots$};
\end{tikzpicture}
\substack{\displaystyle \grapheo^{i+1}. \\ \vspace{4cm}}
\end{align*}
\vspace{-2.5cm}

The unitary TRAP $\uPGr(P)$ satisfies the following universal property:

\begin{prop}\label{prop:univpropPGr}
Let $P$ be a TRAP, $Q$  a unitary TRAP and $\theta:P\longrightarrow Q$ be a TRAP morphism. There exists a unique unitary TRAP
morphism $\Theta:\uPGr(P)\longrightarrow Q$ extending $\theta$.
\end{prop}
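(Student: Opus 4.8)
The plan is to obtain $\Theta$ by descending to the quotient the canonical unitary TRAP morphism out of the free unitary TRAP $\PGr(P)$. Regarding $\theta$ as a map of families of sets from the decorating family $P$ to $Q$, Theorem \ref{freetraps} provides a unique unitary TRAP morphism $\widetilde{\Theta}\colon\PGr(P)\longrightarrow Q$ sending each one-vertex corolla $x\in P(k,l)$ to $\theta(x)$. Everything then reduces to checking that $\widetilde{\Theta}$ is constant on $\sim$-classes, so that it factors through $\uPGr(P)=\PGr(P)/\sim$.

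First I would identify the restriction of $\widetilde{\Theta}$ to the free non-unitary TRAP $\rPGr(P)$. Both $\widetilde{\Theta}|_{\rPGr(P)}$ and $\theta\circ\Phi$ are TRAP morphisms $\rPGr(P)\to Q$ sending $x$ to $\theta(x)$ (recall that $\Phi$ extends the identity of $P$, so $\theta(\Phi(x))=\theta(x)$), hence they coincide by the uniqueness part of Theorem \ref{freetraps}; in particular $\widetilde{\Theta}(\reg(G))=\theta\bigl(\Phi(\reg(G))\bigr)$ for every $G\in\PGr(P)$. Next, since $\widetilde{\Theta}$ is a \emph{unitary} TRAP morphism it sends the unit $I$ of $\PGr(P)$ to the unit $I_Q$ of $Q$ and, because $\grapheo=t_{1,1}(I)$, it sends $\grapheo$ to $t_{1,1}(I_Q)$; applying it to the canonical decomposition
\[
G=\grapheo^{\val_\grapheo(G)}*\sh_{out}(G)^{-1}\cdot\bigl(\reg(G)*I^{k-k'}\bigr)\cdot\sh_{in}(G)
\]
and using compatibility with $*$ and the $\sym\times\sym^{op}$-actions yields
\[
\widetilde{\Theta}(G)=t_{1,1}(I_Q)^{*\val_\grapheo(G)}*\sh_{out}(G)^{-1}\cdot\bigl(\theta(\Phi(\reg(G)))*I_Q^{*(k-k')}\bigr)\cdot\sh_{in}(G).
\]
The right-hand side depends on $G$ only through $\Phi(\reg(G))$, $\sh_{in}(G)$, $\sh_{out}(G)$ and $\val_\grapheo(G)$; since $G\sim G'$ says precisely that these four data agree, we get $\widetilde{\Theta}(G)=\widetilde{\Theta}(G')$. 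Therefore $\widetilde{\Theta}$ factors as $\widetilde{\Theta}=\Theta\circ q$, where $q\colon\PGr(P)\to\uPGr(P)$ is the quotient morphism, which is a surjective morphism of unitary TRAPs; the induced map $\Theta$ is then a unitary TRAP morphism. It extends $\theta$ since, under the identification $\iota'(P)\cong P$ of Theorem \ref{theo:uPGr} (with $\iota'=q\circ\iota$), one has $\Theta(\iota'(p))=\widetilde{\Theta}(\iota(p))=\theta(p)$.

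For uniqueness, let $\Theta'\colon\uPGr(P)\to Q$ be any unitary TRAP morphism extending $\theta$. Then $\Theta'\circ q\colon\PGr(P)\to Q$ is a unitary TRAP morphism sending each one-vertex corolla $p$ to $\Theta'(\iota'(p))=\theta(p)$, so by the uniqueness clause of Theorem \ref{freetraps} it equals $\widetilde{\Theta}=\Theta\circ q$; as $q$ is surjective, $\Theta'=\Theta$.

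The only step requiring real care is the computation of $\widetilde{\Theta}$ on the canonical decomposition: one must confirm that a unitary TRAP morphism necessarily sends $I$ to $I_Q$ and hence $\grapheo=t_{1,1}(I)$ to $t_{1,1}(I_Q)$, and that the shuffle permutations and the loop multiplicity are transported verbatim (this is compatibility with the $\sym\times\sym^{op}$-action and with $*$). Everything else is bookkeeping with the axioms already established for $\PGr(P)$, $\rPGr(P)$ and the equivalence $\sim$ in the proof of Theorem \ref{theo:uPGr}.
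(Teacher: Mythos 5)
Your proposal is correct and follows essentially the same route as the paper: lift $\theta$ to the free unitary TRAP $\PGr(P)$ via Theorem \ref{freetraps}, check constancy on $\sim$-classes using the canonical decomposition into solar part, shuffles, units and loops together with $\Phi(\reg(G))=\Phi(\reg(G'))$, and descend to the quotient $\uPGr(P)$. Your uniqueness argument (freeness of $\PGr(P)$ plus surjectivity of the quotient map) is a repackaging of the paper's observation that the decomposition determines $\Theta$ on every class, and your explicit tracking of the $\grapheo^{\val_{\grapheo}(G)}$ factor and of the identification $\widetilde{\Theta}|_{\rPGr(P)}=\theta\circ\Phi$ merely fills in details the paper leaves implicit.
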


\begin{proof}
\textit{{Uniqueness}}. Let $\Theta$ be such a morphism. For any $G\in \PGr(P)$, we denote by $[G]$ its class in $\uPGr(P)$. Then:
\[G=\sh_{out}(G)\cdot (\reg(G)*I^{*k''})\cdot \sh_{in}(G),\]
so 
\[\Theta([G])=\sh_{out}(G)\cdot (\theta(\reg(G))*I_Q^{*k''})\cdot \sh_{out}(G),\]
which entirely determines $\Theta$. \\

\textit{Existence}. Let $\overline{\Theta}:\PGr(P)\longrightarrow Q$ be the unique unitary TRAP morphism such that 
$\Theta(p)=\theta(p)$ for any $p\in P$. Let $G,G'\in \PGr(P)$, such that $G\sim G'$. Then:
\begin{align*}
G&=\sh_{out}(G)\cdot (\reg(G)*I^{*k''})\cdot \sh_{in}(G),\\
G'&=\sh_{out}(G)\cdot (\reg(G')*I^{*k''})\cdot \sh_{in}(G),
\end{align*}
and $\Phi(\reg(G))=\Phi(\reg(G'))$ in $P$, so:
\begin{align*}
\overline{\Theta}(G)&=\sh_{out}(G)\cdot (\overline{\Theta}(\reg(G))*I_Q^{*k''})\cdot \sh_{in}(G)\\
&=\sh_{out}(G)\cdot (\theta\circ \Phi(\reg(G))*I_Q^{*k''})\cdot \sh_{in}(G)\\
&=\sh_{out}(G)\cdot (\theta\circ \Phi(\reg(G'))*I_Q^{*k''})\cdot \sh_{in}(G)\\
&=\sh_{out}(G)\cdot (\overline{\Theta}(\reg(G'))*I_Q^{*k''})\cdot \sh_{in}(G)\\
&=\overline{\Theta}(G').
\end{align*}
Hence, $\overline{\Theta}$ induces a unitary TRAP morphism $\Theta:\uPGr(P)\longrightarrow Q$,
extending $\theta$. 
\end{proof}

In other words, $\uPGr$ is a functor from the category of TRAPs to the category of unitary TRAPs,
left adjoint of the forgetful functor from the category of unitary TRAPs to the category of TRAPs. {This functor is the functor $L$ of \cite[Theorem 12.1]{JY15} (with the difference that in \cite{JY15}, one works in the coloured setup). Notice that we have a more explicit and straightforward construction of this tensor than the one of \cite{JY15}.}

\section{ Compositions,  generalised trace and  convolution}

\subsection{Vertical concatenation in a TRAP}

In the same way as wheeled PROPs are PROPs and are equipped with a second associative product \cite{JY15},
TRAPs can be equipped with a natural operation, called the \emph{vertical concatenation}.
We start with the various  TRAPs of graphs we introduced.

Let $G$ and $G'$ be two graphs such that $o(G)=i(G')$. We define a graph 
	$G''=G'\circ G$
	in the following way:
	\begin{align*}
	V(G'')&=V(G)\sqcup V(G'),\\
	E(G'')&=E(G)\sqcup E(G')\sqcup \{(f,e)\in O(G)\times I(G'):\beta(f)=\alpha'(e)\},\\
	I(G'')&=I(G)\sqcup \{(f,e)\in IO(G)\times I(G'):\beta(f)=\alpha'(e)\},\\
	O(G'')&=O(G)\sqcup \{(f,e)\in O(G)\times IO(G') : \beta(f)=\alpha'(e)\},\\
	IO(G'')&=\{(f,e)\in IO(G)\times IO(G'): \beta(f)=\alpha'(e)\},\\
	L(G'')&=L(G)\sqcup L(G').
	\end{align*}	
	Its \textbf{source} and \textbf{target} maps  are given by:
	\begin{align*}
	s''_{\mid E(G)}&=s_{\mid E(G)},&s''_{\mid E(G')}&=s'_{\mid E(G')},&
	s''_{\mid O(G')}&=s'_{\mid O(G')},&s''((f,e))&=s(f),\\
	t''_{\mid E(G)}&=t_{\mid E(G)},&t''_{\mid E(G')}&=s'_{\mid E(G')},&
	t''_{\mid I(G)}&=s_{\mid I(G)},&s''((f,e))&=t'(e).
	\end{align*}
	The indexations of its input and output edges are given by:
	\begin{align*}
	\alpha''_{\mid I(G)}&=\alpha_{\mid I(G)},& \alpha''((f,e))&=\alpha(f),\\
	\beta''_{\mid O(G')}&=\beta'_{\mid O(G')},&\beta''((f,e))&=\beta'(e).
	\end{align*}
	Roughly speaking, $G'\circ G$ is obtained by gluing together the outgoing edges of $G$ and the incoming
	edges of $G'$ according to their indexation. 
		\begin{center}
		\begin{tikzpicture}[line cap=round,line join=round,>=triangle 45,x=0.5cm,y=0.5cm]
		\clip(-2.5,-4.) rectangle (1.,4.);
		\draw [line width=0.4pt] (-2.,1.)-- (0.5,1.);
		\draw [line width=0.4pt] (0.5,1.)-- (0.5,-1.);
		\draw [line width=0.4pt] (0.5,-1.)-- (-2.,-1.);
		\draw [line width=0.4pt] (-2.,-1.)-- (-2.,1.);
		\draw [->,line width=0.4pt] (-1.5,1.) -- (-1.5,3.);
		\draw [->,line width=0.4pt] (0.,1.) -- (0.,3.);
		\draw [->,line width=0.4pt] (-1.5,-3.) -- (-1.5,-1.);
		\draw [->,line width=0.4pt] (0.,-3.) -- (0.,-1.);
		\draw (-1.25,0.5) node[anchor=north west] {$G'$};
		\draw (-1.8,-3) node[anchor=north west] {$1$};
		\draw (-0.3,-3) node[anchor=north west] {$l$};
		\draw (-1.4,-2.2) node[anchor=north west] {$\ldots$};
		\draw (-1.8,4.2) node[anchor=north west] {$1$};
		\draw (-0.3,4.2) node[anchor=north west] {$m$};
		\draw (-1.4,2.) node[anchor=north west] {$\ldots$};
		\end{tikzpicture}
		$\substack{\displaystyle \circ\\ \vspace{3cm}}$		
		\begin{tikzpicture}[line cap=round,line join=round,>=triangle 45,x=0.5cm,y=0.5cm]
		\clip(-2.5,-4.) rectangle (0.7,4.);
		\draw [line width=0.4pt] (-2.,1.)-- (0.5,1.);
		\draw [line width=0.4pt] (0.5,1.)-- (0.5,-1.);
		\draw [line width=0.4pt] (0.5,-1.)-- (-2.,-1.);
		\draw [line width=0.4pt] (-2.,-1.)-- (-2.,1.);
		\draw [->,line width=0.4pt] (-1.5,1.) -- (-1.5,3.);
		\draw [->,line width=0.4pt] (0.,1.) -- (0.,3.);
		\draw [->,line width=0.4pt] (-1.5,-3.) -- (-1.5,-1.);
		\draw [->,line width=0.4pt] (0.,-3.) -- (0.,-1.);
		\draw (-1.25,0.5) node[anchor=north west] {$G$};
		\draw (-1.8,-3) node[anchor=north west] {$1$};
		\draw (-0.3,-3) node[anchor=north west] {$k$};
		\draw (-1.4,-2.2) node[anchor=north west] {$\ldots$};
		\draw (-1.8,4.2) node[anchor=north west] {$1$};
		\draw (-0.4,4.1) node[anchor=north west] {$l$};
		\draw (-1.4,2.) node[anchor=north west] {$\ldots$};
		\end{tikzpicture}
		$\substack{\displaystyle =\\ \vspace{3cm}}$
		\begin{tikzpicture}[line cap=round,line join=round,>=triangle 45,x=0.5cm,y=0.5cm]
		\clip(-2.5,-4.) rectangle (0.5,8.);
		\draw [line width=0.4pt] (-2.,1.)-- (0.5,1.);
		\draw [line width=0.4pt] (0.5,1.)-- (0.5,-1.);
		\draw [line width=0.4pt] (0.5,-1.)-- (-2.,-1.);
		\draw [line width=0.4pt] (-2.,-1.)-- (-2.,1.);
		\draw [->,line width=0.4pt] (-1.5,1.) -- (-1.5,3.);
		\draw [->,line width=0.4pt] (0.,1.) -- (0.,3.);
		\draw [->,line width=0.4pt] (-1.5,-3.) -- (-1.5,-1.);
		\draw [->,line width=0.4pt] (0.,-3.) -- (0.,-1.);
		\draw (-1.25,0.5) node[anchor=north west] {$G$};
		\draw (-1.8,-3) node[anchor=north west] {$1$};
		\draw (-0.3,-3) node[anchor=north west] {$k$};
		\draw (-1.4,-2.2) node[anchor=north west] {$\ldots$};
		\draw [line width=0.4pt] (-2.,5.)-- (0.5,5.);
		\draw [line width=0.4pt] (0.5,5.)-- (0.5,3.);
		\draw [line width=0.4pt] (0.5,3.)-- (-2.,3.);
		\draw [line width=0.4pt] (-2.,3.)-- (-2.,5.);
		\draw [->,line width=0.4pt] (-1.5,5.) -- (-1.5,7.);
		\draw [->,line width=0.4pt] (0.,5.) -- (0.,7.);
		\draw (-1.25,4.5) node[anchor=north west] {$G'$};
		\draw (-1.8,8.2) node[anchor=north west] {$1$};
		\draw (-0.4,8.1) node[anchor=north west] {$m$};
		\draw (-1.4,6.) node[anchor=north west] {$\ldots$};
		\end{tikzpicture}
		
		\vspace{-1.5cm}
	\end{center}
	\begin{example} Here is an example of vertical concatenation :\\
	
	\vspace{-1.8cm}
		\[
		\xymatrix{&2&1& \\ 
			&\rond{}\ar[u]&\rond{}\ar[l] \ar[u] &\ar@(ul,dl)[]\\
			1\ar[ru]&2\ar[u]&3\ar[u]&}\hspace{5mm}
		\substack{\vspace{2.5cm}\\ \displaystyle \circ}
		\xymatrix{&2&1&3\\
			&\rond{}\ar[u]\ar@/_1pc/[r]&\rond{}\ar[u]\ar[ru]\ar@/_1pc/[l]&\\
			1\ar[ru]&2\ar[u]&3\ar[u]&4\ar[lu]} 
						\substack{\vspace{2.5cm}\\ \hspace{.3cm}\displaystyle=}
		\xymatrix{&2&1&\\
			&\rond{}\ar[u]&\rond{}\ar[u]\ar[l]&\ar@(ul,dl)[]\\
			&\rond{}\ar[u]\ar@/_1pc/[r]&\rond{}\ar@/_.5pc/[lu]\ar[u]\ar@/_1pc/[l]&\\
			1\ar[ru]&2\ar[u]&3\ar[u]&4\ar[lu]}\]
	\end{example}
If $G$ and $G'$ are {corolla ordered} (respectively $X$-decorated) graphs, then $G$ and $G'$ are naturally {corolla ordered} (respectively $X$-decorated).
This operation $\circ$ is clearly associative. Moreover, denoting by $I$ the identity graph, for any $k,l\in \N$,
for any graph $G$ with $k$ inputs and $l$ outputs,
\[I^{*l}\circ G=G\circ I^{*k}=G.\]
The vertical concatenation  can be described in terms of the horizontal concatenation and of the partial trace maps:
If $G$ is a graph with $k$ inputs and $l$ outputs, and $G'$ a graph with $l$ inputs and $m$ outputs, then:
\begin{align*}
t_{k+1,1}\circ \ldots \circ t_{k+l-1,l-1}\circ t_{k+l,l}(G*G')&=G\circ G',
\end{align*}
or, graphically:
\begin{center}
\begin{tikzpicture}[line cap=round,line join=round,>=triangle 45,x=0.5cm,y=0.5cm]
\clip(-2.5,-4.) rectangle (7.,5.);
\draw [line width=0.4pt] (-2.,1.)-- (0.5,1.);
\draw [line width=0.4pt] (0.5,1.)-- (0.5,-1.);
\draw [line width=0.4pt] (0.5,-1.)-- (-2.,-1.);
\draw [line width=0.4pt] (-2.,-1.)-- (-2.,1.);
\draw [line width=0.4pt] (-1.5,1.) -- (-1.5,2.);
\draw [line width=0.4pt] (0.,1.) -- (0.,2.);
\draw [->,line width=0.4pt] (-1.5,-3.) -- (-1.5,-1.);
\draw [->,line width=0.4pt] (0.,-3.) -- (0.,-1.);
\draw (-1.25,0.5) node[anchor=north west] {$G$};
\draw (-1.8,-3) node[anchor=north west] {$1$};
\draw (-0.3,-3) node[anchor=north west] {$k$};
\draw (-1.4,-2.2) node[anchor=north west] {$\ldots$};
\draw (-1.4,2.) node[anchor=north west] {$\ldots$};
\draw [shift={(0.,2.)},line width=0.4pt]  plot[domain=0.:3.141592653589793,variable=\t]({1.*1.5*cos(\t r)+0.*1.5*sin(\t r)},{0.*1.5*cos(\t r)+1.*1.5*sin(\t r)});
\draw [shift={(3.,-2.)},line width=0.4pt]  plot[domain=3.141592653589793:6.283185307179586,variable=\t]({1.*1.5*cos(\t r)+0.*1.5*sin(\t r)},{0.*1.5*cos(\t r)+1.*1.5*sin(\t r)});
\draw [line width=0.4pt] (1.5,2.) -- (1.5,-2.);
\draw [shift={(1.5,2.)},line width=0.4pt]  plot[domain=0.:3.141592653589793,variable=\t]({1.*1.5*cos(\t r)+0.*1.5*sin(\t r)},{0.*1.5*cos(\t r)+1.*1.5*sin(\t r)});
\draw [shift={(4.5,-2.)},line width=0.4pt]  plot[domain=3.141592653589793:6.283185307179586,variable=\t]({1.*1.5*cos(\t r)+0.*1.5*sin(\t r)},{0.*1.5*cos(\t r)+1.*1.5*sin(\t r)});
\draw [line width=0.4pt] (3.,2.) -- (3.,-2.);
\draw [line width=0.4pt] (4.,1.)-- (6.5,1.);
\draw [line width=0.4pt] (6.5,1.)-- (6.5,-1.);
\draw [line width=0.4pt] (6.5,-1.)-- (4.,-1.);
\draw [line width=0.4pt] (4.,-1.)-- (4.,1.);
\draw [->,line width=0.4pt] (4.5,1.) -- (4.5,3.);
\draw [->,line width=0.4pt] (6.,1.) -- (6.,3.);
\draw [->,line width=0.4pt] (4.5,-2.) -- (4.5,-1.);
\draw [->,line width=0.4pt] (6.,-2.) -- (6.,-1.);
\draw (4.75,0.5) node[anchor=north west] {$G'$};
\draw (4.2,4.2) node[anchor=north west] {$1$};
\draw (5.7,4) node[anchor=north west] {$m$};
\draw (4.4,-2.2) node[anchor=north west] {$\ldots$};
\draw (4.6,2.) node[anchor=north west] {$\ldots$};
\end{tikzpicture}
$\substack{\displaystyle =\\ \vspace{3cm}}$
\begin{tikzpicture}[line cap=round,line join=round,>=triangle 45,x=0.5cm,y=0.5cm]
\clip(-2.5,-4.) rectangle (1.,8.);
\draw [line width=0.4pt] (-2.,1.)-- (0.5,1.);
\draw [line width=0.4pt] (0.5,1.)-- (0.5,-1.);
\draw [line width=0.4pt] (0.5,-1.)-- (-2.,-1.);
\draw [line width=0.4pt] (-2.,-1.)-- (-2.,1.);
\draw [->,line width=0.4pt] (-1.5,1.) -- (-1.5,3.);
\draw [->,line width=0.4pt] (0.,1.) -- (0.,3.);
\draw [->,line width=0.4pt] (-1.5,-3.) -- (-1.5,-1.);
\draw [->,line width=0.4pt] (0.,-3.) -- (0.,-1.);
\draw (-1.25,0.5) node[anchor=north west] {$G$};
\draw (-1.8,-3) node[anchor=north west] {$1$};
\draw (-0.3,-3) node[anchor=north west] {$k$};
\draw (-1.4,-2.2) node[anchor=north west] {$\ldots$};
\draw (-1.4,2.) node[anchor=north west] {$\ldots$};
\draw [line width=0.4pt] (-2.,5.)-- (0.5,5.);
\draw [line width=0.4pt] (0.5,5.)-- (0.5,3.);
\draw [line width=0.4pt] (0.5,3.)-- (-2.,3.);
\draw [line width=0.4pt] (-2.,3.)-- (-2.,5.);
\draw [->,line width=0.4pt] (-1.5,5.) -- (-1.5,7.);
\draw [->,line width=0.4pt] (0.,5.) -- (0.,7.);
\draw (-1.25,4.5) node[anchor=north west] {$G'$};
\draw (-1.8,8.2) node[anchor=north west] {$1$};
\draw (-0.3,8.) node[anchor=north west] {$m$};
\draw (-1.4,6.) node[anchor=north west] {$\ldots$};
\end{tikzpicture}
\end{center}

\vspace{-1.5cm}

This construction can be generalised from TRAPs of graphs to an arbitrary TRAP:

\begin{prop}\label{propverticalconcatenation}
Let $P$ be a TRAP. We define a vertical concatenation\footnote{
	When there is a risk of confusion, we will write $\circ_P$ for the vertical concatenation of a given TRAP $P$.}  $\circ$ in the following way:
\begin{align*}
&\forall (k,l,m)\in \N_0^3,\:\forall p\in P(k,l),\:\forall q\in P(l,m),&
q\circ p&=t_{k+1,1}\circ \ldots \circ t_{k+l-1,l-1}\circ t_{k+l,l}(p*q).
\end{align*}
This operation is associative: for any $(k,l,m,n)\in \N^4$, for any $(p,q,r)\in P(k,l)\times P(l,m)\times P(l,n)$,
\begin{equation}\label{eq:assovertconcTRAP}
r\circ (q\circ p)=(r\circ q)\circ p.\end{equation}
If the TRAP is unitary, then for any $(k,l)\in \N_0^2$, for any $p\in P(k,l)$, if $I_P$ is the unit of $P$, then
\[I_P^{*l}\circ p=p\circ I_P^{*k}=p.\]
\end{prop}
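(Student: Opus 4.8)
Proof plan for Proposition \ref{propverticalconcatenation}.

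The plan is to verify the three claims in turn: well-definedness (implicit), associativity, and the unit property. First I would observe that the composite $t_{k+1,1}\circ\cdots\circ t_{k+l,l}$ is well-defined because the indices decrease in a way that keeps us in range: after applying $t_{k+l,l}$ to an element of $P(k+l,m+l)$ we land in $P(k+l-1,m+l-1)$, so $t_{k+l-1,l-1}$ is applicable, and so on, finishing in $P(k,m)$. Using the commutativity axiom 3.(a) for partial trace maps, the order in which one removes the $l$ input-output pairs $(k+1,1),\ldots,(k+l,l)$ does not matter, which gives a convenient symmetrised description of $q\circ p$; I would record this as a preliminary remark since it will streamline the associativity computation.

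For associativity, the cleanest route is to reduce to the graph case already established and then transport via the universal property. Concretely, consider the free TRAP $\rPGr(X)$ on a family $X$ with one generator $x\in X(k,l)$, one generator $y\in X(l,m)$, one generator $z\in X(m,n)$; by Theorem \ref{freetraps} there is a unique TRAP morphism $\Phi$ sending $x\mapsto p$, $y\mapsto q$, $z\mapsto r$. Since $\Phi$ commutes with $*$ and with all $t_{i,j}$, it commutes with $\circ$ as defined by the displayed formula. Hence it suffices to prove $r\circ(q\circ p)=(r\circ q)\circ p$ in $\rPGr(X)$, i.e. for the corresponding one-vertex graphs. But for graphs the vertical concatenation $\circ$ was defined directly (gluing outgoing edges of one graph to incoming edges of the next according to their indexation), the displayed identity $t_{k+1,1}\circ\cdots\circ t_{k+l,l}(G*G')=G'\circ G$ was checked, and the graph-level $\circ$ is visibly associative (gluing is associative up to the canonical identification of edge sets). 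So associativity follows. Alternatively, one can give a direct bookkeeping proof using axioms 3.(a), 3.(b), 3.(c): expand both sides as a composite of partial traces applied to $p*q*r$ and check that the two resulting composites of $t_{i,j}$'s agree after relabelling, using 3.(c) to move partial traces across the relevant factors and 3.(a) to commute them past each other. I expect the indexing arithmetic here to be the main obstacle, which is exactly why the reduction to graphs is preferable.

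For the unit property, assume $P$ is unitary with unit $I_P\in P(1,1)$, and let $p\in P(k,l)$. I would compute $p\circ I_P^{*k}$ directly from the definition: $p\circ I_P^{*k}=t_{k+1,1}\circ\cdots\circ t_{2k,k}(I_P^{*k}*p)$. Here $I_P^{*k}*p\in P(2k,k+l)$, and I apply the unitary TRAP axioms of Definition \ref{defi:Trap} (the four relations involving $I$), together with axiom 3.(c) (compatibility of $t_{i,j}$ with $*$), to peel off the copies of $I_P$ one at a time: each trace $t_{k+i,i}$ contracts one copy of $I_P$ against the corresponding input of $p$, producing at most a cyclic permutation which is then absorbed or cancelled when the next copy is contracted. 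Carefully tracking these permutations (the cycles $(1,\ldots,j-1)$ etc. appearing in the unitary axioms) one finds they telescope to the identity, leaving $p$. The computation of $I_P^{*l}\circ p$ is symmetric, using instead the relations $t_{1,j}(I*p)$ and $t_{i,l+1}(p*I)$. As in the associativity case, one may instead invoke the graph model: $I_P^{*l}\circ p$ and $p\circ I_P^{*k}$ are the images under $\Phi$ of $I^{*l}\circ G_x$ and $G_x\circ I^{*k}$ in $\PGr(X)$ for a one-vertex graph $G_x$, and these were already noted to equal $G_x$. I would present the unit claim via this shortcut and relegate the direct permutation bookkeeping to a remark, since that bookkeeping — correctly composing the cyclic permutations coming from the unitary axioms across $k$ (resp. $l$) successive contractions — is the one genuinely fiddly point.
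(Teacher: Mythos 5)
Your proposal is correct and follows essentially the same route as the paper: both reduce the statement to the free TRAP of decorated graphs via Theorem \ref{freetraps} (the paper uses the canonical morphism $\alpha:\rPGr(P)\longrightarrow P$ extending $\mathrm{Id}_P$, you use a free TRAP on abstract generators mapped to $p,q,r$), exploit that a TRAP morphism commutes with $*$ and the $t_{i,j}$ and hence with $\circ$, and conclude from the evident associativity of graph gluing and, in the unitary case, from $I^{*l}\circ G=G\circ I^{*k}=G$ in $\PGr$ together with $\alpha(I)=I_P$. The direct permutation bookkeeping you sketch is only an optional alternative, so no substantive difference remains.
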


\begin{proof} 
Recall that in Subsection \ref{sec:morphism_TRAPs_free} we identified any element $p$ of the decorating set and the solar graph with one vertex decorated $p$ (see Equation (\ref{eq:simple_decorated_graph}). 
Let $\alpha:\rPGr(P)\longrightarrow P$ be the unique TRAP morphism such that $\alpha(p)=p$ for any $p\in P$.
This a surjective TRAP morphism. As $\alpha$ respects the horizontal concatenation and the partial trace maps, 
for any graphs $G,G' \in \rPGr(P)$,
\[\alpha(G)\circ \alpha(G')=\alpha(G\circ G').\]
Since the vertical concatenation is clearly associative in $\rPGr(P)$, the vertical concatenation is associative in $P$.
If $P$ is unitary, this morphism is extended as a unitary TRAP morphism from $\PGr(P)$ to $P$, which we also denote by $\alpha$.
For any $p\in P(k,l)$, in $\PGr(P)$:
\[I^{*l}\circ p=p\circ I^{*k}=p.\]
As $\alpha(I)=I_P$, in $P$:
\[\alpha(I^{*l}\circ p)=I_P^{*l}\circ p=p=\alpha(p\circ I^{*k})=p\circ I_P^{*k}.\qedhere\]
\end{proof}
{
\begin{remark}
 One could also define {\bf partial vertical concatenations}, where only a 
 subset of the outputs are glued to the inputs with the partial trace maps, in the spirit of \cite[Paragraph 3.3.3]{JY15}. We do not pursue this course here since such partial vertical concatenations will play no role in the rest of the paper.
\end{remark}
}

\begin{example}
Let $V$ be a vector space and let $f=\theta(v_1\ldots v_l\otimes f_1\ldots f_k)\in \Hom_V^{fr}(k,l)$,
$g=\theta(w_1\ldots w_m\otimes g_1\ldots g_l) \in \Hom_V^{fr}(l,m)$. Then, denoting by $\bullet$ the vertical
concatenation of $\Hom_V^{fr}$:
\begin{align*}
g\bullet f&=g_1(v_1)\ldots g_l(v_l)\theta(w_1\ldots w_m\otimes f_1\ldots f_k)\\
&=\theta(w_1\ldots w_m\otimes g_1\ldots g_l) \circ \theta(v_1\ldots v_l\otimes f_1\ldots f_k)\\
&=f\circ g.
\end{align*}
Hence, the vertical concatenation induced by the TRAP structure is the usual composition of linear maps. 
If $V$ is not finite-dimensional, this composition does not have a unit, as $\mathrm{Id}_V$ is not of finite rank.
\end{example}

We end  this subsection with a simple yet important Proposition.
\begin{prop} \label{prop:morphismPROPfromTRAP}
For any two   TRAPs $P=(P(k,l))_{k,l\in\N^2}$ and $Q$, any   TRAP morphism $\phi=(\phi(k,l))_{k,l\in\N^2}:P\longrightarrow Q$ yields a morphism for the vertical concatenations of   $P$ and $Q$.
\end{prop}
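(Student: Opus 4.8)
The plan is to unwind the definition of the vertical concatenation, which by Proposition \ref{propverticalconcatenation} is expressed purely in terms of the horizontal concatenation $*$ and the partial trace maps $t_{i,j}$. Since a TRAP morphism is by definition compatible with both of these structures (Definition \ref{defn:trap_morphism}), the result should follow by a direct computation.

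First I would fix $(k,l,m)\in\N_0^3$ and elements $p\in P(k,l)$, $q\in P(l,m)$. By the definition of $\circ$ in Proposition \ref{propverticalconcatenation},
\[
q\circ_P p = t^P_{k+1,1}\circ\cdots\circ t^P_{k+l-1,l-1}\circ t^P_{k+l,l}(p*q).
\]
Applying $\phi$ and using repeatedly the compatibility of $\phi$ with the partial trace maps ($\phi\circ t^P_{i,j} = t^Q_{i,j}\circ\phi$, being careful that the indices $(i,j)$ in the composite $t^P_{k+1,1}\circ\cdots\circ t^P_{k+l,l}$ are exactly preserved since at each stage the relevant element still has enough inputs and outputs), we get
\[
\phi(q\circ_P p) = t^Q_{k+1,1}\circ\cdots\circ t^Q_{k+l-1,l-1}\circ t^Q_{k+l,l}\bigl(\phi(p*q)\bigr).
\]
Then compatibility of $\phi$ with the horizontal concatenation gives $\phi(p*q)=\phi(p)*\phi(q)$, and recognising the right-hand side as the definition of $\phi(q)\circ_Q\phi(p)$ concludes the argument. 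One should also note that $\phi(p)\in Q(k,l)$ and $\phi(q)\in Q(l,m)$, so the vertical concatenation $\phi(q)\circ_Q\phi(p)$ is indeed defined.

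There is no serious obstacle here; the only point requiring a little care is the bookkeeping of indices when pushing $\phi$ through the iterated composition of partial trace maps — one must check that $\phi$ intertwines $t^P_{k+r,r}$ with $t^Q_{k+r,r}$ at each step $r=l,l-1,\ldots,1$, which is immediate from the morphism axiom applied to the intermediate elements $t^P_{k+r+1,r+1}\circ\cdots\circ t^P_{k+l,l}(p*q)\in P(k+r,l+r)$. Thus the statement is essentially a formal consequence of the defining properties of a TRAP morphism, and the proof is short.

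\begin{proof}
Let $(k,l,m)\in\N_0^3$, $p\in P(k,l)$ and $q\in P(l,m)$. By Proposition \ref{propverticalconcatenation},
\[
q\circ_P p = t^P_{k+1,1}\circ\cdots\circ t^P_{k+l-1,l-1}\circ t^P_{k+l,l}(p*q).
\]
Since $\phi$ is a morphism of TRAPs, it commutes with the partial trace maps; applying this repeatedly to the $l$ successive traces above yields
\[
\phi\bigl(q\circ_P p\bigr) = t^Q_{k+1,1}\circ\cdots\circ t^Q_{k+l-1,l-1}\circ t^Q_{k+l,l}\bigl(\phi(p* q)\bigr).
\]
As $\phi$ is also a morphism for the horizontal concatenation, $\phi(p* q)=\phi(p)*\phi(q)$, with $\phi(p)\in Q(k,l)$ and $\phi(q)\in Q(l,m)$. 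Hence
\[
\phi\bigl(q\circ_P p\bigr) = t^Q_{k+1,1}\circ\cdots\circ t^Q_{k+l-1,l-1}\circ t^Q_{k+l,l}\bigl(\phi(p)*\phi(q)\bigr) = \phi(q)\circ_Q \phi(p),
\]
where the last equality is the definition of $\circ_Q$ from Proposition \ref{propverticalconcatenation}. Therefore $\phi$ is a morphism for the vertical concatenations of $P$ and $Q$.
\end{proof}
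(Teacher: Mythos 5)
Your proof is correct and takes essentially the same route as the paper's own proof: unwind $q\circ_P p$ via its definition as an iterated partial trace of $p*q$, push $\phi$ through the trace maps using the morphism axiom, and then through the horizontal concatenation. The only slip is in your preliminary discussion, where the intermediate element $t^P_{k+r+1,r+1}\circ\cdots\circ t^P_{k+l,l}(p*q)$ lies in $P(k+r,m+r)$ rather than $P(k+r,l+r)$; this does not affect the proof itself.
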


\begin{proof}
 We  need to show that for any TRAPs $P$ and $Q$ and any TRAP morphism $\phi:P\longrightarrow Q$ as in the statement of the proposition, for any $(k,l,m)\in\N^3$, $p_1\in P(k,l)$ and $p_2\in P(l,m)$ we have
 \begin{equation*}
  \phi(k,m)(p_2\circ p_1) = \phi(k,l)(p_1) \circ\phi(l,m)(p_2).
 \end{equation*}
 Using the definition of the vertical concatenation in the TRAP $P$ and the third property of the Definition \ref{defn:trap_morphism} of morphisms of TRAPs we have
 \begin{equation*}
  \phi(k,m)(p_2\circ p_1) = t_{k+1,1}^Q\circ\cdots\circ t_{k+l,l}^Q[\phi(k+l,m+l)(p_1*p_2)]
 \end{equation*}
 with $t_{i,j}^Q$ the  partial trace maps  of the TRAP $Q$. Then using the second property of Definition \ref{defn:trap_morphism} 
 we obtain:
 \begin{equation*}
  \phi(k,m)(p_2\circ p_1) = t_{k+1,1}^Q\circ\cdots\circ t_{k+l,l}^Q[\phi(k,l)(p_1)*\phi(l,m)(p_2)] = \phi(k,l)(p_1) \circ\phi(l,m)(p_2).
  \qedhere  \end{equation*}
\end{proof}

\subsection{The generalised trace  on  a TRAP}

If $G$ is a graph with the same number of inputs and outputs, we define its generalised trace by, roughly speaking, grafting
any of its input to the output with the same index:
\begin{center}
\begin{tikzpicture}[line cap=round,line join=round,>=triangle 45,x=0.5cm,y=0.5cm]
\clip(-2.5,-4.) rectangle (1.,4.);
\draw [line width=0.4pt] (-2.,1.)-- (0.5,1.);
\draw [line width=0.4pt] (0.5,1.)-- (0.5,-1.);
\draw [line width=0.4pt] (0.5,-1.)-- (-2.,-1.);
\draw [line width=0.4pt] (-2.,-1.)-- (-2.,1.);
\draw [->,line width=0.4pt] (-1.5,1.) -- (-1.5,3.);
\draw [->,line width=0.4pt] (0.,1.) -- (0.,3.);
\draw [->,line width=0.4pt] (-1.5,-3.) -- (-1.5,-1.);
\draw [->,line width=0.4pt] (0.,-3.) -- (0.,-1.);
\draw (-1.25,0.5) node[anchor=north west] {$G$};
\draw (-1.8,-3) node[anchor=north west] {$1$};
\draw (-0.3,-3) node[anchor=north west] {$k$};
\draw (-1.4,-2.2) node[anchor=north west] {$\ldots$};
\draw (-1.8,4.2) node[anchor=north west] {$1$};
\draw (-0.3,4.2) node[anchor=north west] {$k$};
\draw (-1.4,2.) node[anchor=north west] {$\ldots$};
\end{tikzpicture}
$\substack{\displaystyle \stackrel{Tr}{\longrightarrow}\\ \vspace{3cm}}$
\begin{tikzpicture}[line cap=round,line join=round,>=triangle 45,x=0.5cm,y=0.5cm]
\clip(-2.5,-4.) rectangle (3.5,4.);
\draw [line width=0.4pt] (-2.,1.)-- (0.5,1.);
\draw [line width=0.4pt] (0.5,1.)-- (0.5,-1.);
\draw [line width=0.4pt] (0.5,-1.)-- (-2.,-1.);
\draw [line width=0.4pt] (-2.,-1.)-- (-2.,1.);
\draw [line width=0.4pt] (-1.5,1.) -- (-1.5,2.);
\draw [line width=0.4pt] (0.,1.) -- (0.,2.);
\draw [line width=0.4pt] (-1.5,-2.) -- (-1.5,-1.);
\draw [line width=0.4pt] (0.,-2.) -- (0.,-1.);
\draw (-1.25,0.5) node[anchor=north west] {$G$};
\draw (-1.4,-2.2) node[anchor=north west] {$\ldots$};
\draw (-1.4,2.) node[anchor=north west] {$\ldots$};
\draw [shift={(1.5,-2.)},line width=0.4pt]  plot[domain=3.141592653589793:6.283185307179586,variable=\t]({1.*1.5*cos(\t r)+0.*1.5*sin(\t r)},{0.*1.5*cos(\t r)+1.*1.5*sin(\t r)});
\draw [shift={(0.,-2.)},line width=0.4pt]  plot[domain=3.141592653589793:6.283185307179586,variable=\t]({1.*1.5*cos(\t r)+0.*1.5*sin(\t r)},{0.*1.5*cos(\t r)+1.*1.5*sin(\t r)});
\draw [shift={(1.5,2.)},line width=0.4pt]  plot[domain=0.:3.141592653589793,variable=\t]({1.*1.5*cos(\t r)+0.*1.5*sin(\t r)},{0.*1.5*cos(\t r)+1.*1.5*sin(\t r)});
\draw [shift={(0.,2.)},line width=0.4pt]  plot[domain=0.:3.141592653589793,variable=\t]({1.*1.5*cos(\t r)+0.*1.5*sin(\t r)},{0.*1.5*cos(\t r)+1.*1.5*sin(\t r)});
\draw [->,line width=0.4pt] (1.5,2.) -- (1.5,-2.);
\draw [->,line width=0.4pt] (3.,2.) -- (3.,-2.);
\end{tikzpicture}

\vspace{-1.5cm}
\end{center}
In particular, {in\ $\PGr(X)$,} $\grapheo=\mathrm{Tr}(I)$. Note that if $G$ is an solar graph, then $\mathrm{Tr}(G)$ is solar:
$\mathrm{Tr}({\rPGr}(k,k))\subseteq {\rPGr}(0,0)$ for any $k$.
If $G$ is a {corolla ordered} graph (respectively an $X$-decorated graph), then $\mathrm{Tr}(G)$ is {corolla ordered} (respectively $X$-decorated).
This operation can be described in terms of the partial trace maps: if $G\in {\rPGr}(k,k)$, then
\[\mathrm{Tr}(G)=t_{1,1}\circ \ldots \circ t_{k,k}(G)
=t_{1,1}\circ \ldots \circ t_{1,1}(G).\]
Similarly, we can define this generalised trace on any TRAP:

\begin{prop} \label{prop:generalised_traces}
Let $P$ be a TRAP. For any $p\in P(k,k)$, {$k\in \N_0$},  the generalised trace on $P$ is defined as:
\[\mathrm{Tr}_P(p){:=}t_{1,1}\circ \ldots \circ t_{k,k}(p)\in P(0,0).\]
\begin{enumerate}
\item For any $(k,l)\in  \N_0^2$, for any $(p,q)\in P(k,l)\times P(l,k)$,
\begin{align*}
\mathrm{Tr}_P(p\circ q)&=\mathrm{Tr}_P(q\circ p),
\end{align*}
{which justifies the terminology "trace".}
\item For any $(k,l)\in  \N_0^2$, for any $(p,q)\in P(k,k)\times P(l,l)$,
\begin{align*}
\mathrm{Tr}_P(p*q)&=\mathrm{Tr}_P(p)*\mathrm{Tr}_P(q).
\end{align*}\end{enumerate}\end{prop}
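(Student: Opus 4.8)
The plan is to deduce both identities from the universal property of the free (unitary) TRAP of decorated graphs, exactly as was done for the vertical concatenation in Proposition \ref{propverticalconcatenation}. Concretely, let $\alpha:\rPGr(P)\longrightarrow P$ (or its unitary extension $\PGr(P)\longrightarrow P$ when $P$ is unitary) be the canonical TRAP morphism sending each $p\in P$ to itself, viewed as a solar graph with one vertex. Since $\alpha$ commutes with the partial trace maps and since $\mathrm{Tr}_P$ and $\mathrm{Tr}$ are both defined as the iterated composite $t_{1,1}\circ\cdots\circ t_{1,1}$, we have $\alpha(\mathrm{Tr}(G))=\mathrm{Tr}_P(\alpha(G))$ for every graph $G$. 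Because $\alpha$ is surjective, it suffices to prove the two identities at the level of graphs, where the generalised trace is the combinatorial operation of grafting each input to the output of the same index. Both claims then become transparent combinatorial facts about graphs, and applying $\alpha$ transports them to $P$.

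For item 2 (multiplicativity on disjoint unions), I would argue directly on graphs: if $G\in\rPGr(k,k)(P)$ and $H\in\rPGr(l,l)(P)$, then $G*H$ is the disjoint union with $H$'s edges re-indexed after $G$'s, so grafting input $i$ to output $i$ for all $i\in[k+l]$ grafts the edges of $G$ among themselves and the edges of $H$ among themselves, with no interaction; hence $\mathrm{Tr}(G*H)=\mathrm{Tr}(G)*\mathrm{Tr}(H)$ in $\rPGr(P)$. Applying $\alpha$ and using that $\alpha$ preserves $*$ gives $\mathrm{Tr}_P(p*q)=\mathrm{Tr}_P(p)*\mathrm{Tr}_P(q)$ for $p=\alpha(G)$, $q=\alpha(H)$; surjectivity of $\alpha$ finishes. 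Alternatively, one can give a purely axiomatic proof using Axiom 3.(c) (compatibility of $t_{i,j}$ with $*$) to peel off the traces on the $G$-block and the $H$-block one index at a time, but the graphical argument is cleaner.

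For item 1 (cyclicity), the key observation at the graph level is that for $G\in\rPGr(k,l)(P)$ and $G'\in\rPGr(l,k)(P)$, both $\mathrm{Tr}(G'\circ G)$ and $\mathrm{Tr}(G\circ G')$ are obtained from the disjoint union $G*G'$ by grafting \emph{every} input of $G$ to the correspondingly-indexed output of $G'$ and \emph{every} input of $G'$ to the correspondingly-indexed output of $G$; the two constructions differ only in the order in which one performs these identifications, and the resulting graph (up to isomorphism) is the same closed-up graph. More precisely, using the description of $\circ$ via partial traces, $\mathrm{Tr}(G'\circ G)=t_{1,1}^{\circ k}\big(t_{k+1,1}\circ\cdots\circ t_{k+l,l}(G*G')\big)$, a composite of $k+l$ partial trace maps applied to $G*G'$; likewise $\mathrm{Tr}(G\circ G')=t_{1,1}^{\circ l}\big(t_{l+1,1}\circ\cdots\circ t_{l+k,k}(G'*G)\big)$. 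Using the commutativity axiom 2.(d) to relate $G*G'$ and $G'*G$, together with the commutation relations 3.(a) among the $t_{i,j}$'s, one checks these two composites coincide. Then $\alpha(\mathrm{Tr}(G'\circ G))=\mathrm{Tr}_P(\alpha(G')\circ\alpha(G))$ and similarly on the other side (using that $\alpha$ preserves both $\circ$ and $\mathrm{Tr}$), and surjectivity of $\alpha$ concludes.

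The main obstacle is item 1: one must carefully bookkeep the indices so that after forming $G*G'$ (or $G'*G$) and applying the prescribed strings of partial traces, the pairings realised are exactly ``input $i$ of $G$ with output $i$ of $G'$'' and ``input $j$ of $G'$ with output $j$ of $G$,'' in \emph{both} orders, and then invoke the precise shifted-index identities in Axiom 3.(a) to commute the $t_{i,j}$'s past one another. This is routine but index-heavy; the graphical picture (the trace of $G'\circ G$ and of $G\circ G'$ are the same ``cyclic'' closed graph) is what guarantees it works and is probably the cleanest way to present it. Everything else — transport along $\alpha$, the disjoint-union case — is immediate from the universal property already established in Theorem \ref{freetraps} and the morphism axioms.
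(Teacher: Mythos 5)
Your proposal is correct and follows essentially the same route as the paper: both reduce the identities to the TRAP of $P$-decorated graphs via the canonical morphism $\alpha:\rPGr(P)\longrightarrow P$ (which commutes with $*$, the partial traces, hence with $\circ$ and $\mathrm{Tr}$), observe that the two closed-up graphs coincide, and push the identities back to $P$. The paper simply records the graph-level equalities pictorially for the one-vertex graphs decorated by $p$ and $q$, so no axiom-level index bookkeeping is needed — exactly the graphical shortcut you yourself identify as the cleanest presentation.
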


\begin{proof}
Let $\alpha:{\rPGr}(P)\longrightarrow P$ be, as before, the unique TRAP morphism which extends the identity map on $P$.
Since $\alpha$ respects the partial trace maps, for any graph $G\in {\rPGr}(P)(k,k)$,
\[\alpha \circ \mathrm{Tr}(G)=\mathrm{Tr}_P\circ \alpha(G).\]
Let $p,q\in P(k,k)$. In $\rGr(P)$, $\mathrm{Tr}(q\circ p)$ and  $\mathrm{Tr}(p\circ q)$ are represented by the graphs
\begin{align*}
&\begin{tikzpicture}[line cap=round,line join=round,>=triangle 45,x=0.5cm,y=0.5cm]
\clip(-2.5,-4.) rectangle (3.5,8.);
\draw [line width=0.4pt] (-2.,1.)-- (0.5,1.);
\draw [line width=0.4pt] (0.5,1.)-- (0.5,-1.);
\draw [line width=0.4pt] (0.5,-1.)-- (-2.,-1.);
\draw [line width=0.4pt] (-2.,-1.)-- (-2.,1.);
\draw [->,line width=0.4pt] (-1.5,1.) -- (-1.5,3.);
\draw [->,line width=0.4pt] (0.,1.) -- (0.,3.);
\draw [line width=0.4pt] (-1.5,-2.) -- (-1.5,-1.);
\draw [line width=0.4pt] (0.,-2.) -- (0.,-1.);
\draw (-1.25,0.5) node[anchor=north west] {$p$};
\draw (-1.4,-2.2) node[anchor=north west] {$\ldots$};
\draw (-1.4,2.) node[anchor=north west] {$\ldots$};
\draw [line width=0.4pt] (-2.,5.)-- (0.5,5.);
\draw [line width=0.4pt] (0.5,5.)-- (0.5,3.);
\draw [line width=0.4pt] (0.5,3.)-- (-2.,3.);
\draw [line width=0.4pt] (-2.,3.)-- (-2.,5.);
\draw [->,line width=0.4pt] (-1.5,5.) -- (-1.5,6.);
\draw [->,line width=0.4pt] (0.,5.) -- (0.,6.);
\draw (-1.25,4.5) node[anchor=north west] {$q$};
\draw (-1.4,6.) node[anchor=north west] {$\ldots$};
\draw [shift={(1.5,-2.)},line width=0.4pt]  plot[domain=3.141592653589793:6.283185307179586,variable=\t]({1.*1.5*cos(\t r)+0.*1.5*sin(\t r)},{0.*1.5*cos(\t r)+1.*1.5*sin(\t r)});
\draw [shift={(0.,-2.)},line width=0.4pt]  plot[domain=3.141592653589793:6.283185307179586,variable=\t]({1.*1.5*cos(\t r)+0.*1.5*sin(\t r)},{0.*1.5*cos(\t r)+1.*1.5*sin(\t r)});
\draw [shift={(1.5,6.)},line width=0.4pt]  plot[domain=0.:3.141592653589793,variable=\t]({1.*1.5*cos(\t r)+0.*1.5*sin(\t r)},{0.*1.5*cos(\t r)+1.*1.5*sin(\t r)});
\draw [shift={(0.,6.)},line width=0.4pt]  plot[domain=0.:3.141592653589793,variable=\t]({1.*1.5*cos(\t r)+0.*1.5*sin(\t r)},{0.*1.5*cos(\t r)+1.*1.5*sin(\t r)});
\draw [->,line width=0.4pt] (1.5,6.) -- (1.5,-2.);
\draw [->,line width=0.4pt] (3.,6.) -- (3.,-2.);
\end{tikzpicture}& 
\begin{tikzpicture}[line cap=round,line join=round,>=triangle 45,x=0.5cm,y=0.5cm]
\clip(-2.5,-4.) rectangle (3.5,8.);
\draw [line width=0.4pt] (-2.,1.)-- (0.5,1.);
\draw [line width=0.4pt] (0.5,1.)-- (0.5,-1.);
\draw [line width=0.4pt] (0.5,-1.)-- (-2.,-1.);
\draw [line width=0.4pt] (-2.,-1.)-- (-2.,1.);
\draw [->,line width=0.4pt] (-1.5,1.) -- (-1.5,3.);
\draw [->,line width=0.4pt] (0.,1.) -- (0.,3.);
\draw [line width=0.4pt] (-1.5,-2.) -- (-1.5,-1.);
\draw [line width=0.4pt] (0.,-2.) -- (0.,-1.);
\draw (-1.25,0.5) node[anchor=north west] {$q$};
\draw (-1.4,-2.2) node[anchor=north west] {$\ldots$};
\draw (-1.4,2.) node[anchor=north west] {$\ldots$};
\draw [line width=0.4pt] (-2.,5.)-- (0.5,5.);
\draw [line width=0.4pt] (0.5,5.)-- (0.5,3.);
\draw [line width=0.4pt] (0.5,3.)-- (-2.,3.);
\draw [line width=0.4pt] (-2.,3.)-- (-2.,5.);
\draw [->,line width=0.4pt] (-1.5,5.) -- (-1.5,6.);
\draw [->,line width=0.4pt] (0.,5.) -- (0.,6.);
\draw (-1.25,4.5) node[anchor=north west] {$p$};
\draw (-1.4,6.) node[anchor=north west] {$\ldots$};
\draw [shift={(1.5,-2.)},line width=0.4pt]  plot[domain=3.141592653589793:6.283185307179586,variable=\t]({1.*1.5*cos(\t r)+0.*1.5*sin(\t r)},{0.*1.5*cos(\t r)+1.*1.5*sin(\t r)});
\draw [shift={(0.,-2.)},line width=0.4pt]  plot[domain=3.141592653589793:6.283185307179586,variable=\t]({1.*1.5*cos(\t r)+0.*1.5*sin(\t r)},{0.*1.5*cos(\t r)+1.*1.5*sin(\t r)});
\draw [shift={(1.5,6.)},line width=0.4pt]  plot[domain=0.:3.141592653589793,variable=\t]({1.*1.5*cos(\t r)+0.*1.5*sin(\t r)},{0.*1.5*cos(\t r)+1.*1.5*sin(\t r)});
\draw [shift={(0.,6.)},line width=0.4pt]  plot[domain=0.:3.141592653589793,variable=\t]({1.*1.5*cos(\t r)+0.*1.5*sin(\t r)},{0.*1.5*cos(\t r)+1.*1.5*sin(\t r)});
\draw [->,line width=0.4pt] (1.5,6.) -- (1.5,-2.);
\draw [->,line width=0.4pt] (3.,6.) -- (3.,-2.);
\end{tikzpicture}
\end{align*}
which are the same. Applying $\alpha$, we obtain $\mathrm{Tr}_P(p\circ q)=\mathrm{Tr}(q\circ p)$. 
Moreover, the graph $\mathrm{Tr}(p*q)$ is represented by 

\begin{center}
\begin{tikzpicture}[line cap=round,line join=round,>=triangle 45,x=0.5cm,y=0.5cm]
\clip(-2.5,-4.) rectangle (3.5,4.);
\draw [line width=0.4pt] (-2.,1.)-- (0.5,1.);
\draw [line width=0.4pt] (0.5,1.)-- (0.5,-1.);
\draw [line width=0.4pt] (0.5,-1.)-- (-2.,-1.);
\draw [line width=0.4pt] (-2.,-1.)-- (-2.,1.);
\draw [line width=0.4pt] (-1.5,1.) -- (-1.5,2.);
\draw [line width=0.4pt] (0.,1.) -- (0.,2.);
\draw [line width=0.4pt] (-1.5,-2.) -- (-1.5,-1.);
\draw [line width=0.4pt] (0.,-2.) -- (0.,-1.);
\draw (-1.25,0.5) node[anchor=north west] {$p$};
\draw (-1.4,-2.2) node[anchor=north west] {$\ldots$};
\draw (-1.4,2.) node[anchor=north west] {$\ldots$};
\draw [shift={(1.5,-2.)},line width=0.4pt]  plot[domain=3.141592653589793:6.283185307179586,variable=\t]({1.*1.5*cos(\t r)+0.*1.5*sin(\t r)},{0.*1.5*cos(\t r)+1.*1.5*sin(\t r)});
\draw [shift={(0.,-2.)},line width=0.4pt]  plot[domain=3.141592653589793:6.283185307179586,variable=\t]({1.*1.5*cos(\t r)+0.*1.5*sin(\t r)},{0.*1.5*cos(\t r)+1.*1.5*sin(\t r)});
\draw [shift={(1.5,2.)},line width=0.4pt]  plot[domain=0.:3.141592653589793,variable=\t]({1.*1.5*cos(\t r)+0.*1.5*sin(\t r)},{0.*1.5*cos(\t r)+1.*1.5*sin(\t r)});
\draw [shift={(0.,2.)},line width=0.4pt]  plot[domain=0.:3.141592653589793,variable=\t]({1.*1.5*cos(\t r)+0.*1.5*sin(\t r)},{0.*1.5*cos(\t r)+1.*1.5*sin(\t r)});
\draw [->,line width=0.4pt] (1.5,2.) -- (1.5,-2.);
\draw [->,line width=0.4pt] (3.,2.) -- (3.,-2.);
\end{tikzpicture}
\begin{tikzpicture}[line cap=round,line join=round,>=triangle 45,x=0.5cm,y=0.5cm]
\clip(-2.5,-4.) rectangle (6.,4.);
\draw [line width=0.4pt] (-2.,1.)-- (0.5,1.);
\draw [line width=0.4pt] (0.5,1.)-- (0.5,-1.);
\draw [line width=0.4pt] (0.5,-1.)-- (-2.,-1.);
\draw [line width=0.4pt] (-2.,-1.)-- (-2.,1.);
\draw [line width=0.4pt] (-1.5,1.) -- (-1.5,2.);
\draw [line width=0.4pt] (0.,1.) -- (0.,2.);
\draw [line width=0.4pt] (-1.5,-2.) -- (-1.5,-1.);
\draw [line width=0.4pt] (0.,-2.) -- (0.,-1.);
\draw (-1.25,0.5) node[anchor=north west] {$q$};
\draw (-1.4,-2.2) node[anchor=north west] {$\ldots$};
\draw (-1.4,2.) node[anchor=north west] {$\ldots$};
\draw [shift={(1.5,-2.)},line width=0.4pt]  plot[domain=3.141592653589793:6.283185307179586,variable=\t]({1.*1.5*cos(\t r)+0.*1.5*sin(\t r)},{0.*1.5*cos(\t r)+1.*1.5*sin(\t r)});
\draw [shift={(0.,-2.)},line width=0.4pt]  plot[domain=3.141592653589793:6.283185307179586,variable=\t]({1.*1.5*cos(\t r)+0.*1.5*sin(\t r)},{0.*1.5*cos(\t r)+1.*1.5*sin(\t r)});
\draw [shift={(1.5,2.)},line width=0.4pt]  plot[domain=0.:3.141592653589793,variable=\t]({1.*1.5*cos(\t r)+0.*1.5*sin(\t r)},{0.*1.5*cos(\t r)+1.*1.5*sin(\t r)});
\draw [shift={(0.,2.)},line width=0.4pt]  plot[domain=0.:3.141592653589793,variable=\t]({1.*1.5*cos(\t r)+0.*1.5*sin(\t r)},{0.*1.5*cos(\t r)+1.*1.5*sin(\t r)});
\draw [->,line width=0.4pt] (1.5,2.) -- (1.5,-2.);
\draw [->,line width=0.4pt] (3.,2.) -- (3.,-2.);
\end{tikzpicture}
\end{center}
which is also a graphical representation of $\mathrm{Tr}(p)*\mathrm{Tr}(q)$. Applying $\alpha$,
we obtain $\mathrm{Tr}_P(p*q)=\mathrm{Tr}_P(p)*Tr_P(q)$. 
\end{proof}

\begin{example}
Let $V$ be a finite dimensional vector space and $f=\Theta(v_1\ldots v_k\otimes f_1\ldots f_k) \in \Hom_V^{fr}(k,k)$. 
Identifying $\Hom_V(0,0)$ with $\mathbb{R}$, we obtain that
\[\mathrm{Tr}_{\Hom_V}(f)=f_1(v_1)\ldots f_k(v_k),\]
which is the usual trace of linear endomorphisms of a finite-dimensional vector space.
If $V$ is not finite-dimensional, $\mathrm{Tr}_{\Hom_V^{fr}}$ is a direct generalisation of this trace for linear endomorphisms
of finite rank.  
\end{example}

\subsection{Amplitudes and generalised convolutions}

By Theorem \ref{freetraps} applied to $\phi=\mathrm{Id}_P$,
 we know that for any TRAP $P$ there exists a canonical TRAP map $\Phi_P:\rPGr(P)\longrightarrow P$ (see Remark \ref{rk:PhiPGrX}).
\begin{defi} \label{defi:generalised-P-convolution}
 Let $G$ be a graph decorated by a TRAP $P$. 
 The \textbf{ $P$-amplitude} associated to $G$ is the  image $\Phi_P(G)$  of $G$ under $\Phi_P$.
 
 When $P=\mathcal{K}_M^\infty$ is the TRAP of smooth generalised kernel over a smooth finite dimensional closed Riemannian manifold $M$ of \S \ref{sec:smoothingop} (i.e. $P(k,l):=\mathcal{K}_M^\infty(k,l)$ with the r.h.s defined in \eqref{eq:Klm}), 
 we simply write $\Phi$ for $\Phi_P$ and call $\Phi(G)$   the \textbf{smooth amplitudes} 
 associated to $G\in\rPGr(\mathcal{K}_M^\infty)$.
\end{defi}

\begin{remark}The terminology $P$-amplitude is justified in so far as it associates to a graph an expression in $P$ depending on the ingoing and outgoing edges of the graph in a similar way  as an amplitude  associated to a Feynman depend on the external ingoing and outgoing momenta.\end{remark}
\begin{remark} If we specialise to    spaces $  {\mathcal E}({M})^{\widehat\odot k}\, \hat \otimes \,  {\mathcal E}({M})^{\widehat\odot  l} $ which are symmetric in both sets of input and output variables, then  $\phi$ can be extended to $\rGr(X)$ 
 (see Corollary \ref{cor:extension_identity}).
	\end{remark}

The case of a ladder graph relates amplitudes to convolutions:
\begin{remark} \label{rk:generalised_conv}
	Let $G$ be a ladder graph decorated by $X=(\mathcal{K}_X^\infty(k,l))_{kl,\in  \N_0}$  i.e., a graph such that $I(G)=O(G)=[1]$, 
	$IO(G)=L(G)=\emptyset$,  $V(G)=\{v_1,\cdots,v_n\}$, $E(G)=\{e_1,\cdots,e_{n-1}\}$ and the source and target maps defined by
	\begin{align*}
	&& s_G(1)&=v_n,&t_G(1)&=v_1,\\
	&\forall i\in [n-1],&s_G(e_i)&=v_i,&t_G(e_i)&=v_{i+1}.
	\end{align*}
	Here is a graphical representation of this graph:
	\[\xymatrix{1\ar[r]& \rond{v_1}\ar[r]&\ldots \ar[r]&\rond{v_n}\ar[r]&1}\]
	Let $P_i, i=1, \cdots, n$ be   smoothing pseudo-differential operator each of which is defined by the kernel $K_i$ that decorates  the vertex $v_i$, namely 
	$K_i:=\mathrm{dec}(v_i)$ for any 
	$v_i, i\in[n]$. Then the generalised convolution associated to the graph $G$ is the convolution {$K_1\star\cdots\star K_n$} of the kernels $ {K_1},\cdots,K_n$, which is the 
	kernel of the smoothing pseudo-differential 
	operator $P_1\circ\cdots\circ P_n$. In this sense, $P$-amplitudes can be seen as a  generalisation of the convolution of multiple smooth kernels.
	
	 \end{remark}

    \begin{prop} \label{prop:gen_conv_vert_conc}
     For any TRAP $P$, the $P$-amplitude  associated to a horizontal concatenation of graphs is the horizontal concatenation of their $P$-amplitudes: for any $G_1,G_2\in\rPGr(P)$,
\begin{equation*}
\Phi_P(G_1*G_2)=\Phi_P(G_1)*\Phi_P(G_2),
\end{equation*}     
     and the same holds for the vertical concatenation: if $G_1\circ G_2$ exists, then 
     \begin{equation*}
      \Phi_P(G_1\circ G_2) = \Phi_P(G_1)\circ_P\Phi_P(G_2)
     \end{equation*}
     with $\circ_P$ the vertical concatenation of $P$.
    \end{prop}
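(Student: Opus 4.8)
The plan is to derive both identities from the fact that $\Phi_P$ is a morphism of TRAPs (Theorem \ref{freetraps}) together with the characterisation of the vertical concatenation in terms of the horizontal concatenation and the partial trace maps (Proposition \ref{propverticalconcatenation}).

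First I would note that the horizontal identity $\Phi_P(G_1*G_2)=\Phi_P(G_1)*\Phi_P(G_2)$ is immediate: by Theorem \ref{freetraps}, $\Phi_P$ is a TRAP morphism, and one of the defining properties of a TRAP morphism (Definition \ref{defn:trap_morphism}) is precisely compatibility with the horizontal concatenation. So this is nothing to prove beyond quoting the relevant item.

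For the vertical identity, I would recall from the proof of Proposition \ref{propverticalconcatenation} that, on graphs in $\rPGr(P)$, the vertical concatenation $G_1\circ G_2$ (when $o(G_2)=i(G_1)$, i.e.\ when the composite is defined; note that in the statement $G_1\circ G_2$ plays the role of the $\circ$ appearing there) is given by iterating the partial trace maps on the horizontal concatenation: explicitly, writing $k,l,m$ for the relevant arities,
\begin{equation*}
G_1\circ G_2 = t_{k+1,1}\circ\cdots\circ t_{k+l,l}(G_2*G_1).
\end{equation*}
Applying $\Phi_P$ and using that it commutes with the horizontal concatenation and with each partial trace map (again Definition \ref{defn:trap_morphism}), one gets
\begin{equation*}
\Phi_P(G_1\circ G_2) = t_{k+1,1}^P\circ\cdots\circ t_{k+l,l}^P\bigl(\Phi_P(G_2)*\Phi_P(G_1)\bigr) = \Phi_P(G_1)\circ_P \Phi_P(G_2),
\end{equation*}
where the last equality is the definition of $\circ_P$ in Proposition \ref{propverticalconcatenation}. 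The argument is entirely formal once the characterisation of $\circ$ via $t_{i,j}$ and $*$ is in hand, so there is really no serious obstacle; the only point requiring mild care is to keep the indexing of inputs and outputs straight and to make sure the composability hypothesis ($o(G_2)=i(G_1)$) is stated so that both sides are defined. I would present the proof by first handling the horizontal case in one line and then carrying out the short displayed computation above for the vertical case.
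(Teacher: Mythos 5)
Your proof is correct and takes essentially the same route as the paper: the paper deduces the horizontal identity from $\Phi_P$ being a TRAP morphism (Theorem \ref{freetraps}) and the vertical identity by invoking Proposition \ref{prop:morphismPROPfromTRAP}, whose proof is precisely the computation you display (unfolding $\circ$ via Proposition \ref{propverticalconcatenation} and commuting $\Phi_P$ past $*$ and the partial trace maps). The only difference is that you inline that computation instead of citing the lemma, which is immaterial.
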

    \begin{proof}
     First, notice that since $G_2\circ G_2$ exists and since $\Phi_P$ is a TRAP morphism, $\Phi_P(G_1)\circ\Phi_P(G_2)$ exists. 
     The results then follows directly from the fact that $\Phi_P$ is a TRAP morphism and from Proposition \ref{prop:morphismPROPfromTRAP}.
    \end{proof}
For any TRAP $P$, let $\iota_P:P\longrightarrow\rPGr(P)$ be the canonical embedding of $P$ into the  TRAP of $P$-decorated graphs i.e.,  $\iota_P(p)$ is the solar graph with only one vertex decorated by $p$. We  have the following simple yet useful Lemma.
\begin{lemma}
 For any TRAP $P$ the following diagram commutes:
 \begin{align*}
  & \xymatrix{P\times P~\ar@{^{(}->}[r]
  \ar[d]_{\circ_P} & \rPGr(P)\times\rPGr(P) \ar[d]^{\circ} \\
  P & \ar[l]^{\Phi_P} \rPGr(P)}
 \end{align*}
 with $\circ_p$ the vertical concatenation of the TRAP $P$ and the obvious abuse of notation that vertical concatenations, if seen as maps, are not defined on the whole of $P\times P$.
\end{lemma}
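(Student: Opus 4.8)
The plan is to unwind the two composites along the top-right and left-bottom paths of the diagram and observe that they agree on any pair $(p,q)$ on which $\circ_P$ is defined, i.e.\ whenever $p\in P(l,m)$ and $q\in P(k,l)$ for compatible $k,l,m$. First I would fix such a pair and trace the top-right path: the inclusion $\iota_P$ sends $(p,q)$ to the pair of solar one-vertex graphs $(\iota_P(p),\iota_P(q))$, and then the vertical concatenation of $\rPGr(P)$ produces the graph $\iota_P(p)\circ\iota_P(q)$. Applying $\Phi_P$, Proposition \ref{prop:gen_conv_vert_conc} gives
\[
\Phi_P\bigl(\iota_P(p)\circ\iota_P(q)\bigr)=\Phi_P(\iota_P(p))\circ_P\Phi_P(\iota_P(q)).
\]
For the left-bottom path one simply computes $\Phi_P(\iota_P(p))\circ_P\Phi_P(\iota_P(q))=p\circ_P q$, because $\Phi_P$ is the canonical TRAP morphism extending $\mathrm{Id}_P$, hence $\Phi_P\circ\iota_P=\mathrm{Id}_P$ by the very definition of $\iota_P$ (the one-vertex solar graph decorated by $p$ is, under the identification of Subsection \ref{sec:morphism_TRAPs_free} recalled around Equation \eqref{eq:simple_decorated_graph}, sent back to $p$). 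Comparing the two displays shows the square commutes on the domain of $\circ_P$, which is exactly the content of the Lemma once the stated abuse of notation is taken into account.

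The two key steps, then, are: (1) identify $\Phi_P\circ\iota_P$ with the identity of $P$, which is immediate from Theorem \ref{freetraps} applied to $\phi=\mathrm{Id}_P$ and the fact that $\iota_P$ is precisely the embedding $x\mapsto$ (one-vertex graph decorated by $x$) used in the construction of $\Phi_P$; and (2) invoke the compatibility of $\Phi_P$ with vertical concatenation, which is Proposition \ref{prop:gen_conv_vert_conc} (whose proof in turn rests on $\Phi_P$ being a TRAP morphism together with Proposition \ref{prop:morphismPROPfromTRAP}). The only mild subtlety is bookkeeping about when the composites are defined: $\iota_P(p)\circ\iota_P(q)$ exists in $\rPGr(P)$ precisely when $o(\iota_P(q))=i(\iota_P(p))$, i.e.\ when the number of outputs of $q$ equals the number of inputs of $p$, which is exactly the condition for $p\circ_P q$ to be defined; this matches the parenthetical remark in the statement that vertical concatenations are not everywhere defined.

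I do not expect any genuine obstacle here: the Lemma is essentially a tautological consequence of the universal property, and the main "work" has already been done in Proposition \ref{prop:gen_conv_vert_conc}. If anything, the one point deserving a sentence of care is making explicit that the left vertical arrow $\circ_P$ and the top horizontal arrow (the product of the inclusions) have matching domains, so that the diagram is asserting a genuine equality of partial maps rather than of everywhere-defined ones; I would handle this simply by stating at the outset "let $(p,q)\in P(l,m)\times P(k,l)$" and carrying those indices through. A clean write-up would therefore be: recall $\Phi_P\circ\iota_P=\mathrm{Id}_P$; for a composable pair compute both paths using Proposition \ref{prop:gen_conv_vert_conc}; conclude.
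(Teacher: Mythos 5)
Your proposal is correct and follows essentially the same route as the paper's own proof: it reduces the commutativity of the square to Proposition \ref{prop:gen_conv_vert_conc} (compatibility of $\Phi_P$ with vertical concatenation) together with the identity $\Phi_P\circ\iota_P=\mathrm{Id}_P$, which is exactly what the paper does, citing Equation \eqref{eq:G_simple_solar} with $k=1$ and $\phi=\mathrm{Id}_P$. Your extra bookkeeping on when the partial compositions are defined matches the paper's brief remark that $\iota_P$ respects the gradings, so nothing is missing.
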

 
  In words,  the horizontal concatenation of two elements $p_1$ $p_2$ of $P$ is the $P$-amplitude associated with the graph given by the vertical concatenation of two graphs with exactly one vertex, each decorated by one $p_i$.
Graphically, if $p\in P(k,l)$ and $q\in P(l,m)$:
\[\Phi_P\left(\substack{\hspace{5mm}\\ 
\begin{tikzpicture}[line cap=round,line join=round,>=triangle 45,x=0.5cm,y=0.5cm]
		\clip(-2.5,-4.) rectangle (0.5,8.);
		\draw [line width=0.4pt] (-2.,1.)-- (0.5,1.);
		\draw [line width=0.4pt] (0.5,1.)-- (0.5,-1.);
		\draw [line width=0.4pt] (0.5,-1.)-- (-2.,-1.);
		\draw [line width=0.4pt] (-2.,-1.)-- (-2.,1.);
		\draw [->,line width=0.4pt] (-1.5,1.) -- (-1.5,3.);
		\draw [->,line width=0.4pt] (0.,1.) -- (0.,3.);
		\draw [->,line width=0.4pt] (-1.5,-3.) -- (-1.5,-1.);
		\draw [->,line width=0.4pt] (0.,-3.) -- (0.,-1.);
		\draw (-1.25,0.5) node[anchor=north west] {$p$};
		\draw (-1.8,-3) node[anchor=north west] {$1$};
		\draw (-0.3,-3) node[anchor=north west] {$k$};
		\draw (-1.4,-2.2) node[anchor=north west] {$\ldots$};
		\draw [line width=0.4pt] (-2.,5.)-- (0.5,5.);
		\draw [line width=0.4pt] (0.5,5.)-- (0.5,3.);
		\draw [line width=0.4pt] (0.5,3.)-- (-2.,3.);
		\draw [line width=0.4pt] (-2.,3.)-- (-2.,5.);
		\draw [->,line width=0.4pt] (-1.5,5.) -- (-1.5,7.);
		\draw [->,line width=0.4pt] (0.,5.) -- (0.,7.);
		\draw (-1.25,4.5) node[anchor=north west] {$q$};
		\draw (-1.8,8.2) node[anchor=north west] {$1$};
		\draw (-0.4,8.1) node[anchor=north west] {$m$};
		\draw (-1.4,6.) node[anchor=north west] {$\ldots$};
		\end{tikzpicture}}\hspace{3mm}\right)
=\Phi_P(p)\circ_P \Phi_P(q).\]
\begin{proof}
 Let $P$ be a TRAP. Then for any $p_1$, $p_2$ in $P$ such that $p_1\circ_P p_2$ is well defined, $\iota_P(p_1)\circ\iota_P(p_2)$ is well-defined since $\iota_P$ respects the gradings and we have
 \begin{align*}
  \Phi_P(\iota_P(p_1)\circ\iota_P(p_2)) & = \Phi_P(\iota_P(p_1))\circ_P\Phi(\iota_P(p_2)) \quad \text{by Proposition \ref{prop:gen_conv_vert_conc}} \\
  & = p_1\circ_P p_2
 \end{align*}
 since for any TRAP $P$, $\Phi_P\circ\iota_P = \mathrm{Id}_P$ by definition of $\Phi_P$ (Equation \eqref{eq:G_simple_solar} with $k=1$ and $\phi=\mathrm{Id}_P$).
\end{proof}

 \begin{remark} Note that the vertical concatenation is \emph{not} the same as the $P$-amplitude: 
the latter has a much larger domain. 
\end{remark}

Applying the above constructions to the TRAP of smooth kernels  described in Theorem \ref{theo:Kinfty} whose  partial traces  \ref{eq:trconvK} are given by integrations on the underlying manifold,  easily yields the following statement (we use the notations of 
Paragraph \ref{sec:smoothingop}{: $M$ is a smooth, finite dimensional orientable closed manifold and $\mu(z)$ is a volume form on $M$}).
	
\begin{theo} \label{thm:generalised_convolution}
  The family $\left(X_{k,l}:=\mathcal{K}_M^\infty(k,l)\right)_{k, l\in \N_0}$ is a TRAP and:
  \begin{enumerate}
   \item The vertical concatenation  of two kernels corresponds to their  \textbf{generalised  convolution}: 
   \begin{align*}
&\forall (k,l,m)\in \N_0^3,\:\forall K_1\in {\mathcal K}_{M}^\infty(k,l),\:\forall K_2\in {\mathcal K}_{M}^\infty(l,m), \forall (x_1, \cdots, x_k, z_1, \cdots, z_m)\in M^{k+m} ,\\
&K_2\circ K_1(x_1,  \cdots, x_k, z_1, \cdots, z_m)\\
&=t_{k+1,1}\circ \ldots \circ t_{k+l-1,l-1}\circ t_{k+l,l}(K_1\otimes K_2)(x_1,  \cdots, x_k, z_1, \cdots, z_m)\\
& = \int_{M^l} K_1(x_1, \cdots, x_k, y_1, \cdots, y_l)\, K_2(y_1,  \cdots, y_l,  z_1, \cdots, z_m)\, {d\mu(y_1)\, \cdots d\mu(y_l)},
\end{align*} 
obtained  by integrating along the diagonal $\Delta_M^l:=\{(y_1,\cdots, y_l, y_1, \cdots, y_l), y_i\in M\}\subset M^{2l}$. 

   \item The associativity property $K_3\circ (K_2\circ K_1)= (K_3\circ K_2)\circ K_1$ (cfr. (\ref{eq:assovertconcTRAP})) for any   $K_3\in {\mathcal K}_{M}^\infty(m,n)$,
     amounts to  the Fubini property for the corresponding multiple integrals{:
     \begin{equation*}
      \int_{M^m} \left(\int_{M^l} K_1(\vec{x},\vec{y}_1)K_2(\vec{y}_1,\vec{y}_2)d\vec{\mu}(\vec{y}_1)\right)K_3(\vec{y}_2,\vec{z})d\vec{\mu}(\vec{y}_2) = \int_{M^l} K_1(\vec{x},\vec{y}_1)\left(\int_{M^m} K_2(\vec{y}_1,\vec{y}_2)K_3(\vec{y}_2,\vec{z})d\vec{\mu}(\vec{y}_2)\right) d\vec{\mu}(\vec{y}_1)
     \end{equation*}
     for any $\vec{x}\in M^k$ and $\vec{z}\in M^n$, where we use the short-hand notations $d\vec{\mu}(\vec{y}_i):=d\mu(y_1)\cdots d\mu(y_{l_i})$.}

     \item The generalised trace  of a generalised kernel $K\in \mathcal{K}_M^\infty(k,k)$ is given by the integral along the small diagonal of $M^k$:
   \[\mathrm{Tr}_{{K}^\infty}(K)=\int_{M^k}K(x_1, \cdots, x_k, x_1, \cdots, x_k)\, dx_1\cdots dx_k   \]
and obeys the following cyclicity property:
   \[\mathrm{Tr}_{{K}^\infty}(K_2\circ K_1)= \mathrm{Tr}_{{K}^\infty}(K_1\circ K_2)\]
   for $K_1\in\mathcal{K}_M^\infty(k,l)$ and $K_2\in\mathcal{K}_M^\infty(l,k)$.
   
   \item The $\mathcal{K}_M^\infty$-amplitude is compatible with the horizontal and vertical concatenations
   in $\mathcal{K}_M^\infty$.
    \end{enumerate}
   
\end{theo}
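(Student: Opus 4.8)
The plan is to derive Theorem \ref{thm:generalised_convolution} almost entirely by specialising the general TRAP machinery developed in the preceding sections to the concrete TRAP $\mathcal{K}_M^\infty$ of Theorem \ref{theo:Kinfty}, so that the only genuinely new content is the explicit evaluation of the various abstract operations as iterated integrals over $M$. The fact that $\left(\mathcal{K}_M^\infty(k,l)\right)_{k,l\in\N_0}$ is a TRAP is exactly Theorem \ref{theo:Kinfty}, so nothing needs to be done there. For item 1., I would start from the definition of the vertical concatenation in Proposition \ref{propverticalconcatenation}, namely $K_2\circ K_1 = t_{k+1,1}\circ\cdots\circ t_{k+l,l}(K_1*K_2)$, unwind the horizontal concatenation $K_1*K_2$ using the formula in Theorem \ref{theo:Kinfty}, and then apply the partial trace maps \eqref{eq:trconvK} one at a time. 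Each $t_{i,j}$ introduces one integration $\int_M\,d\mu(z)$ identifying one output variable of $K_1$ with the correspondingly indexed input variable of $K_2$; iterating $l$ times produces the $l$-fold integral over $M^l$ written in the statement. One should be slightly careful about bookkeeping: after each partial trace the indices shift, which is why the composite is written $t_{k+1,1}\circ\cdots\circ t_{k+l,l}$ rather than with fixed indices, but the net effect is precisely integration along the diagonal $\Delta_M^l$. Here the existence and smoothness of the resulting kernel is already guaranteed by Theorem \ref{theo:Kinfty} (this is exactly the ``differentiation under the integral sign'' argument given there), so no new analysis is needed.

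For item 2., associativity of $\circ$ is the abstract statement \eqref{eq:assovertconcTRAP} of Proposition \ref{propverticalconcatenation}, which holds in every TRAP; it therefore holds in $\mathcal{K}_M^\infty$. Combining this with the explicit integral formula from item 1., the identity $K_3\circ(K_2\circ K_1)=(K_3\circ K_2)\circ K_1$ translates verbatim into the displayed equality of iterated integrals, which is thus a Fubini-type statement: the abstract associativity is what \emph{forces} the interchange of the order of the two groups of integrations. I would phrase this as: ``by item 1.\ each side equals the indicated iterated integral, and the two sides are equal by \eqref{eq:assovertconcTRAP}, which is therefore precisely the Fubini property in this setting.'' One may, if desired, add a remark that the smoothness of all kernels and the compactness of $M$ independently justify the interchange, so that the TRAP associativity and classical Fubini are consistent --- but strictly speaking nothing more than quoting \eqref{eq:assovertconcTRAP} is required.

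For item 3., the generalised trace is defined in Proposition \ref{prop:generalised_traces} as $\mathrm{Tr}_P(K)=t_{1,1}\circ\cdots\circ t_{k,k}(K)$ for $K\in P(k,k)$. Applying this to $P=\mathcal{K}_M^\infty$ and unwinding the $t_{i,j}$'s using \eqref{eq:trconvK} exactly as in item 1.\ --- but now contracting the $i$-th input of $K$ against the $i$-th output of the \emph{same} kernel $K$ --- gives, after $k$ integrations, $\int_{M^k}K(x_1,\dots,x_k,x_1,\dots,x_k)\,dx_1\cdots dx_k$, i.e.\ the integral of $K$ along the small diagonal of $M^k$. The cyclicity $\mathrm{Tr}(K_2\circ K_1)=\mathrm{Tr}(K_1\circ K_2)$ is then an immediate instance of part 1.\ of Proposition \ref{prop:generalised_traces}, which already establishes the cyclicity of $\mathrm{Tr}_P$ for any TRAP $P$. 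Finally, item 4.\ --- compatibility of the $\mathcal{K}_M^\infty$-amplitude with horizontal and vertical concatenation --- is nothing but Proposition \ref{prop:gen_conv_vert_conc} applied to $P=\mathcal{K}_M^\infty$, so it requires only a citation.

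The proof is therefore essentially a translation exercise, and the only place requiring real care is item 1.: tracking the index shifts produced by successive partial trace maps and checking that the specific composite $t_{k+1,1}\circ\cdots\circ t_{k+l,l}$ glues the $j$-th output of $K_1$ to the $j$-th input of $K_2$ for every $j\in[l]$, rather than some permuted pairing. I would handle this by an explicit small induction on $l$ (or by first treating $l=1$, then $l=2$, to exhibit the pattern), invoking the compatibility axiom 3.(a) of Definition \ref{defi:Trap} and the graphical description of $\circ$ in terms of $*$ and the $t_{i,j}$'s given just before Proposition \ref{propverticalconcatenation}. The remaining steps are direct consequences of Propositions \ref{propverticalconcatenation}, \ref{prop:generalised_traces} and \ref{prop:gen_conv_vert_conc}, which I will simply cite.
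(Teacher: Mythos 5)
Your proposal is correct and matches the paper's own treatment: the paper gives no separate proof, asserting that the theorem follows directly by applying Theorem \ref{theo:Kinfty}, Proposition \ref{propverticalconcatenation}, Proposition \ref{prop:generalised_traces} and Proposition \ref{prop:gen_conv_vert_conc} to $\mathcal{K}_M^\infty$, with the partial traces \eqref{eq:trconvK} realised as integrations over $M$ --- exactly the translation exercise you describe. Your extra care with the index bookkeeping in item 1 is a reasonable addition but not a departure from the paper's route.
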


\section{Categorical interpretation} \label{section:monad}

We describe the categories of (resp. unitary) TRAPs as (resp. $\rGacirc$-) $\Gacirc$-algebras, where  $\Gacirc$ is an endofunctor on the category of  {$\sym\times \sym^{op}-$} modules,  which sends a (resp. unitary) TRAP  $X$ to the free (resp. unitary) TRAP of graphs  (resp. $\rGacirc(X)$) $\Gacirc(X)$ generated by $X$, thus extending known results of \cite{JY15} on the categorial aspects of wheeled PROPs.

\subsection{Two endofunctors in the category of \texorpdfstring{$\sym\times \sym^{op}$}-modules} 

 We  consider graphs decorated by a $\sym\times \sym^{op}$-module $X=(X(k,l))_{k,l\geqslant 0}$ 
 and use the action of the symmetric groups on the vertices of Definition \ref{defiactionsommets} to define
an endofunctor $\Gacirc$.
 \begin{defi}
We define a relation on $\PGr(X)(k,l)$ by  $(G,d_G)\mathcal{R}_{k,l} (G',d_{G'})$ for $(G,d_G)$ and $(G',d_{G'})$  if there exists  a vertex $v$ of $G$ and permutations
 $\sigma \in \sym_{o(v)}$, $\tau\in \sym_{i(v)}$ such that
  \begin{equation*}
   \sigma\cdot_v(G,d_G)\cdot_v\tau {=} (G,d_G^{\sigma,v,\tau})
  \end{equation*}
with
  \begin{align*}
   d_G^{\sigma,v,\tau}(v') = \begin{cases}
                              & d_G(v')\quad\text{for }v'\neq v \\
                              & \sigma\cdot d_G(v)\cdot\tau \mbox{ otherwise}.
                             \end{cases}
  \end{align*}
  We denote by $\sim_{k,l}$ the transitive closure of $\mathcal{R}_{k,l}$ which defines an equivalence. We further define
  \begin{align*}
     \Gacirc(X)(k,l)&:=\dfrac{\PGr(X)(k,l)}{\sim_{k,l}},&
          \rGacirc(X)(k,l)&:=\dfrac{\rPGr(X)(k,l)}{\sim_{k,l}}.
  \end{align*}
  \end{defi}

Here is the type of relations we obtain graphically:
\begin{align*}
\begin{tikzpicture}[line cap=round,line join=round,>=triangle 45,x=0.7cm,y=0.7cm]
\clip(0.6,-2.1) rectangle (2.4,4.5);
\draw [line width=.4pt] (0.8,0.)-- (2.2,0.);
\draw [line width=.4pt] (2.2,0.)-- (2.2,-0.5);
\draw [line width=.4pt] (2.2,-0.5)-- (0.8,-0.5);
\draw [line width=.4pt] (0.8,-0.5)-- (0.8,0.);
\draw [line width=.4pt] (0.8,2.)-- (2.2,2.);
\draw [line width=.4pt] (2.2,2.)-- (2.2,2.5);
\draw [line width=.4pt] (2.2,2.5)-- (0.8,2.5);
\draw [line width=.4pt] (0.8,2.5)-- (0.8,2.);
\draw (1.2,0.05) node[anchor=north west] {\scriptsize $x$};
\draw (1.2,2.55) node[anchor=north west] {\scriptsize $y$};
\draw [->,line width=.4pt] (1.,0.) -- (1.,2.);
\draw [->,line width=.4pt] (2.,0.) -- (2.,2.);
\draw [->,line width=.4pt] (1.,-1.5) -- (1.,-0.5);
\draw [->,line width=.4pt] (1.5,-1.5) -- (1.5,-0.5);
\draw [->,line width=.4pt] (2.,-1.5) -- (2.,-0.5);
\draw (0.7,-1.4) node[anchor=north west] {\scriptsize $1$};
\draw (1.2,-1.4) node[anchor=north west] {\scriptsize $2$};
\draw (1.7,-1.4) node[anchor=north west] {\scriptsize $3$};
\draw [->,line width=.4pt] (1.,2.5) -- (1,3.5);
\draw [->,line width=.4pt] (2.,2.5) -- (2.,3.5);
\draw (0.7,4.1) node[anchor=north west] {\scriptsize $1$};
\draw (1.7,4.1) node[anchor=north west] {\scriptsize $2$};
\end{tikzpicture}&
\substack{\displaystyle =\\ \vspace{3.5cm}}\begin{tikzpicture}[line cap=round,line join=round,>=triangle 45,x=0.7cm,y=0.7cm]
\clip(0.6,-2.1) rectangle (2.4,4.5);
\draw [line width=.4pt] (0.8,0.)-- (2.2,0.);
\draw [line width=.4pt] (2.2,0.)-- (2.2,-0.5);
\draw [line width=.4pt] (2.2,-0.5)-- (0.8,-0.5);
\draw [line width=.4pt] (0.8,-0.5)-- (0.8,0.);
\draw [line width=.4pt] (0.8,2.)-- (2.2,2.);
\draw [line width=.4pt] (2.2,2.)-- (2.2,2.5);
\draw [line width=.4pt] (2.2,2.5)-- (0.8,2.5);
\draw [line width=.4pt] (0.8,2.5)-- (0.8,2.);
\draw (0.65,0.15) node[anchor=north west] {\scriptsize $(12)\cdot x$};
\draw (1.2,2.55) node[anchor=north west] {\scriptsize $y$};
\draw [->,line width=.4pt] (1.,0.) -- (2.,2.);
\draw [->,line width=.4pt] (2.,0.) -- (1.,2.);
\draw [->,line width=.4pt] (1.,-1.5) -- (1.,-0.5);
\draw [->,line width=.4pt] (1.5,-1.5) -- (1.5,-0.5);
\draw [->,line width=.4pt] (2.,-1.5) -- (2.,-0.5);
\draw (0.7,-1.4) node[anchor=north west] {\scriptsize $1$};
\draw (1.2,-1.4) node[anchor=north west] {\scriptsize $2$};
\draw (1.7,-1.4) node[anchor=north west] {\scriptsize $3$};
\draw [->,line width=.4pt] (1.,2.5) -- (1,3.5);
\draw [->,line width=.4pt] (2.,2.5) -- (2.,3.5);
\draw (0.7,4.1) node[anchor=north west] {\scriptsize $1$};
\draw (1.6,4.1) node[anchor=north west] {\scriptsize $2$};
\end{tikzpicture}
\substack{\displaystyle =\\ \vspace{3.5cm}}\begin{tikzpicture}[line cap=round,line join=round,>=triangle 45,x=0.7cm,y=0.7cm]
\clip(0.6,-2.1) rectangle (2.4,4.5);
\draw [line width=.4pt] (0.8,0.)-- (2.2,0.);
\draw [line width=.4pt] (2.2,0.)-- (2.2,-0.5);
\draw [line width=.4pt] (2.2,-0.5)-- (0.8,-0.5);
\draw [line width=.4pt] (0.8,-0.5)-- (0.8,0.);
\draw [line width=.4pt] (0.8,2.)-- (2.2,2.);
\draw [line width=.4pt] (2.2,2.)-- (2.2,2.5);
\draw [line width=.4pt] (2.2,2.5)-- (0.8,2.5);
\draw [line width=.4pt] (0.8,2.5)-- (0.8,2.);
\draw (1.2,0.05) node[anchor=north west] {\scriptsize $x$};
\draw (0.7,2.6) node[anchor=north west] {\scriptsize $y\cdot (12)$};
\draw [->,line width=.4pt] (1.,0.) -- (2.,2.);
\draw [->,line width=.4pt] (2.,0.) -- (1.,2.);
\draw [->,line width=.4pt] (1.,-1.5) -- (1.,-0.5);
\draw [->,line width=.4pt] (1.5,-1.5) -- (1.5,-0.5);
\draw [->,line width=.4pt] (2.,-1.5) -- (2.,-0.5);
\draw (0.7,-1.4) node[anchor=north west] {\scriptsize $1$};
\draw (1.2,-1.4) node[anchor=north west] {\scriptsize $2$};
\draw (1.7,-1.4) node[anchor=north west] {\scriptsize $3$};
\draw [->,line width=.4pt] (1.,2.5) -- (1,3.5);
\draw [->,line width=.4pt] (2.,2.5) -- (2.,3.5);
\draw (0.7,4.1) node[anchor=north west] {\scriptsize $1$};
\draw (1.6,4.1) node[anchor=north west] {\scriptsize $2$};
\end{tikzpicture}
\substack{\displaystyle =\\ \vspace{3.5cm}}\begin{tikzpicture}[line cap=round,line join=round,>=triangle 45,x=0.7cm,y=0.7cm]
\clip(0.6,-2.1) rectangle (2.4,4.5);
\draw [line width=.4pt] (0.8,0.)-- (2.2,0.);
\draw [line width=.4pt] (2.2,0.)-- (2.2,-0.5);
\draw [line width=.4pt] (2.2,-0.5)-- (0.8,-0.5);
\draw [line width=.4pt] (0.8,-0.5)-- (0.8,0.);
\draw [line width=.4pt] (0.8,2.)-- (2.2,2.);
\draw [line width=.4pt] (2.2,2.)-- (2.2,2.5);
\draw [line width=.4pt] (2.2,2.5)-- (0.8,2.5);
\draw [line width=.4pt] (0.8,2.5)-- (0.8,2.);
\draw (0.65,0.15) node[anchor=north west] {\scriptsize $(12)\cdot x$};
\draw (0.7,2.6) node[anchor=north west] {\scriptsize $y\cdot (12)$};
\draw [->,line width=.4pt] (1.,0.) -- (1.,2.);
\draw [->,line width=.4pt] (2.,0.) -- (2.,2.);
\draw [->,line width=.4pt] (1.,-1.5) -- (1.,-0.5);
\draw [->,line width=.4pt] (1.5,-1.5) -- (1.5,-0.5);
\draw [->,line width=.4pt] (2.,-1.5) -- (2.,-0.5);
\draw (0.7,-1.4) node[anchor=north west] {\scriptsize $1$};
\draw (1.2,-1.4) node[anchor=north west] {\scriptsize $2$};
\draw (1.7,-1.4) node[anchor=north west] {\scriptsize $3$};
\draw [->,line width=.4pt] (1.,2.5) -- (1,3.5);
\draw [->,line width=.4pt] (2.,2.5) -- (2.,3.5);
\draw (0.7,4.1) node[anchor=north west] {\scriptsize $1$};
\draw (1.6,4.1) node[anchor=north west] {\scriptsize $2$};
\end{tikzpicture}\substack{\displaystyle ,\\ \vspace{3.5cm}}&
\begin{tikzpicture}[line cap=round,line join=round,>=triangle 45,x=0.7cm,y=0.7cm]
\clip(0.6,-2.1) rectangle (2.4,4.5);
\draw [line width=.4pt] (0.8,0.)-- (2.2,0.);
\draw [line width=.4pt] (2.2,0.)-- (2.2,-0.5);
\draw [line width=.4pt] (2.2,-0.5)-- (0.8,-0.5);
\draw [line width=.4pt] (0.8,-0.5)-- (0.8,0.);
\draw [line width=.4pt] (0.8,2.)-- (2.2,2.);
\draw [line width=.4pt] (2.2,2.)-- (2.2,2.5);
\draw [line width=.4pt] (2.2,2.5)-- (0.8,2.5);
\draw [line width=.4pt] (0.8,2.5)-- (0.8,2.);
\draw (0.7,0.15) node[anchor=north west] {\scriptsize $x\cdot (12)$};
\draw (1.2,2.55) node[anchor=north west] {\scriptsize $y$};
\draw [->,line width=.4pt] (1.,0.) -- (1.,2.);
\draw [->,line width=.4pt] (2.,0.) -- (2.,2.);
\draw [->,line width=.4pt] (1.,-1.5) -- (1.,-0.5);
\draw [->,line width=.4pt] (1.5,-1.5) -- (1.5,-0.5);
\draw [->,line width=.4pt] (2.,-1.5) -- (2.,-0.5);
\draw (0.7,-1.4) node[anchor=north west] {\scriptsize $1$};
\draw (1.2,-1.4) node[anchor=north west] {\scriptsize $2$};
\draw (1.7,-1.4) node[anchor=north west] {\scriptsize $3$};
\draw [->,line width=.4pt] (1.,2.5) -- (1,3.5);
\draw [->,line width=.4pt] (2.,2.5) -- (2.,3.5);
\draw (0.7,4.1) node[anchor=north west] {\scriptsize $1$};
\draw (1.7,4.1) node[anchor=north west] {\scriptsize $2$};
\end{tikzpicture}&
\substack{\displaystyle =\\ \vspace{3.5cm}}\begin{tikzpicture}[line cap=round,line join=round,>=triangle 45,x=0.7cm,y=0.7cm]
\clip(0.6,-2.1) rectangle (2.4,4.5);
\draw [line width=.4pt] (0.8,0.)-- (2.2,0.);
\draw [line width=.4pt] (2.2,0.)-- (2.2,-0.5);
\draw [line width=.4pt] (2.2,-0.5)-- (0.8,-0.5);
\draw [line width=.4pt] (0.8,-0.5)-- (0.8,0.);
\draw [line width=.4pt] (0.8,2.)-- (2.2,2.);
\draw [line width=.4pt] (2.2,2.)-- (2.2,2.5);
\draw [line width=.4pt] (2.2,2.5)-- (0.8,2.5);
\draw [line width=.4pt] (0.8,2.5)-- (0.8,2.);
\draw (1.2,0.05) node[anchor=north west] {\scriptsize $x$};
\draw (1.2,2.55) node[anchor=north west] {\scriptsize $y$};
\draw [->,line width=.4pt] (1.,0.) -- (1.,2.);
\draw [->,line width=.4pt] (2.,0.) -- (2.,2.);
\draw [->,line width=.4pt] (1.,-1.5) -- (1.,-0.5);
\draw [->,line width=.4pt] (1.5,-1.5) -- (1.5,-0.5);
\draw [->,line width=.4pt] (2.,-1.5) -- (2.,-0.5);
\draw (0.7,-1.4) node[anchor=north west] {\scriptsize $2$};
\draw (1.2,-1.4) node[anchor=north west] {\scriptsize $1$};
\draw (1.7,-1.4) node[anchor=north west] {\scriptsize $3$};
\draw [->,line width=.4pt] (1.,2.5) -- (1,3.5);
\draw [->,line width=.4pt] (2.,2.5) -- (2.,3.5);
\draw (0.7,4.1) node[anchor=north west] {\scriptsize $1$};
\draw (1.7,4.1) node[anchor=north west] {\scriptsize $2$};
\end{tikzpicture}\substack{\displaystyle ,\\ \vspace{3.5cm}}&
\begin{tikzpicture}[line cap=round,line join=round,>=triangle 45,x=0.7cm,y=0.7cm]
\clip(0.6,-2.1) rectangle (2.4,4.5);
\draw [line width=.4pt] (0.8,0.)-- (2.2,0.);
\draw [line width=.4pt] (2.2,0.)-- (2.2,-0.5);
\draw [line width=.4pt] (2.2,-0.5)-- (0.8,-0.5);
\draw [line width=.4pt] (0.8,-0.5)-- (0.8,0.);
\draw [line width=.4pt] (0.8,2.)-- (2.2,2.);
\draw [line width=.4pt] (2.2,2.)-- (2.2,2.5);
\draw [line width=.4pt] (2.2,2.5)-- (0.8,2.5);
\draw [line width=.4pt] (0.8,2.5)-- (0.8,2.);
\draw (1.2,0.05) node[anchor=north west] {\scriptsize $x$};
\draw (0.7,2.6) node[anchor=north west] {\scriptsize $(12)\cdot y$};
\draw [->,line width=.4pt] (1.,0.) -- (1.,2.);
\draw [->,line width=.4pt] (2.,0.) -- (2.,2.);
\draw [->,line width=.4pt] (1.,-1.5) -- (1.,-0.5);
\draw [->,line width=.4pt] (1.5,-1.5) -- (1.5,-0.5);
\draw [->,line width=.4pt] (2.,-1.5) -- (2.,-0.5);
\draw (0.7,-1.4) node[anchor=north west] {\scriptsize $1$};
\draw (1.2,-1.4) node[anchor=north west] {\scriptsize $2$};
\draw (1.7,-1.4) node[anchor=north west] {\scriptsize $3$};
\draw [->,line width=.4pt] (1.,2.5) -- (1,3.5);
\draw [->,line width=.4pt] (2.,2.5) -- (2.,3.5);
\draw (0.7,4.1) node[anchor=north west] {\scriptsize $1$};
\draw (1.7,4.1) node[anchor=north west] {\scriptsize $2$};
\end{tikzpicture}&
\substack{\displaystyle =\\ \vspace{3.5cm}}\begin{tikzpicture}[line cap=round,line join=round,>=triangle 45,x=0.7cm,y=0.7cm]
\clip(0.6,-2.1) rectangle (2.4,4.5);
\draw [line width=.4pt] (0.8,0.)-- (2.2,0.);
\draw [line width=.4pt] (2.2,0.)-- (2.2,-0.5);
\draw [line width=.4pt] (2.2,-0.5)-- (0.8,-0.5);
\draw [line width=.4pt] (0.8,-0.5)-- (0.8,0.);
\draw [line width=.4pt] (0.8,2.)-- (2.2,2.);
\draw [line width=.4pt] (2.2,2.)-- (2.2,2.5);
\draw [line width=.4pt] (2.2,2.5)-- (0.8,2.5);
\draw [line width=.4pt] (0.8,2.5)-- (0.8,2.);
\draw (1.2,0.05) node[anchor=north west] {\scriptsize $x$};
\draw (1.2,2.55) node[anchor=north west] {\scriptsize $y$};
\draw [->,line width=.4pt] (1.,0.) -- (1.,2.);
\draw [->,line width=.4pt] (2.,0.) -- (2.,2.);
\draw [->,line width=.4pt] (1.,-1.5) -- (1.,-0.5);
\draw [->,line width=.4pt] (1.5,-1.5) -- (1.5,-0.5);
\draw [->,line width=.4pt] (2.,-1.5) -- (2.,-0.5);
\draw (0.7,-1.4) node[anchor=north west] {\scriptsize $2$};
\draw (1.2,-1.4) node[anchor=north west] {\scriptsize $1$};
\draw (1.7,-1.4) node[anchor=north west] {\scriptsize $3$};
\draw [->,line width=.4pt] (1.,2.5) -- (1,3.5);
\draw [->,line width=.4pt] (2.,2.5) -- (2.,3.5);
\draw (0.7,4.1) node[anchor=north west] {\scriptsize $2$};
\draw (1.7,4.1) node[anchor=north west] {\scriptsize $1$};
\end{tikzpicture}\substack{\displaystyle ,\\ \vspace{3.5cm}}
\end{align*}
\vspace{-2cm} 

\noindent where $x\in X_{3,2}$ and $y\in X_{2,2}$. \vskip 0,2cm

It is easy to show that the family of equivalences $\sim=(\sim_{k,l})_{(k,l)\in \N_0^2}$ 
{is compatible with} the action of $\sym\times \sym^{op}$, 
the partial trace maps and the horizontal concatenation in the sense of Lemma \ref{lem:relation_respection_TRAP}. The subsequent useful statement follows from  Lemma \ref{lem:relation_respection_TRAP}.
\begin{lemma}\label{lem:Gamma}  
Let $X$ be a $\sym\times\sym^{op}$-module. $\Gacirc{(X)}$ is a unitary TRAP
and $\rGacirc{(X)}$ is a TRAP. 
\end{lemma}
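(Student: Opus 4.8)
The plan is to verify that the family of equivalences $\sim = (\sim_{k,l})_{(k,l)\in\N_0^2}$ satisfies the three compatibility conditions of Lemma \ref{lem:relation_respection_TRAP}, so that the quotient inherits a (unitary) TRAP structure from $\PGr(X)$ (resp. from $\rPGr(X)$). Recall that Proposition \ref{prop:PROPgraphs} endows $\PGr(X)$ with a unitary TRAP structure and $\rPGr(X)$ with a TRAP structure (a subTRAP of the former, which is non-unitary). Since $\sim_{k,l}$ is by construction the transitive closure of $\mathcal R_{k,l}$, and since the conditions in Lemma \ref{lem:relation_respection_TRAP} are preserved under transitive closure (each operation is a map, so if it respects $\mathcal R_{k,l}$ it respects its transitive closure), it suffices to check the three compatibilities for the generating relation $\mathcal R_{k,l}$, i.e.\ for a single elementary move $\sigma\cdot_v(G,d_G)\cdot_v\tau = (G, d_G^{\sigma,v,\tau})$ at one vertex $v$.

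First I would treat compatibility with the $\sym\times\sym^{op}$-action. Here the key observation is that the global action $\rho\cdot(G,d_G)\cdot\theta$ (relabelling the external input/output edges via $\rho,\theta$) only modifies the bijections $\alpha,\beta$ and leaves the underlying half-edge data at each vertex — hence the local orders $\leq_v$ and the decorations $d_G(v)$ — untouched. Therefore the vertex-wise move at $v$ commutes with the global action: $\rho\cdot(\sigma\cdot_v(G,d_G)\cdot_v\tau)\cdot\theta = \sigma\cdot_v(\rho\cdot(G,d_G)\cdot\theta)\cdot_v\tau$, which gives $\rho\cdot(G,d_G)\cdot\theta\ \mathcal R_{k,l}\ \rho\cdot(G',d_{G'})\cdot\theta$ when $(G,d_G)\mathcal R_{k,l}(G',d_{G'})$. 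Compatibility with the horizontal concatenation is similar and even more transparent: $G*H$ (resp.\ $H*G$) is the disjoint union of the graph data of $G$ and $H$, with the vertex set $V(G)\sqcup V(H)$, and the local structure at a vertex $v\in V(G)$ is unchanged by $*H$; thus an elementary move at $v$ in $G$ is literally the same elementary move at $v$ viewed in $G*H$, giving $(G*H)\mathcal R (G'*H)$ and likewise on the other side.

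The main point is compatibility with the partial trace maps $t_{i,j}$. Fix $i\in[k]$, $j\in[l]$ and an elementary move at a vertex $v$ of $G$ producing $G'=\sigma\cdot_v G\cdot_v\tau$ with redecorated $d_G^{\sigma,v,\tau}$. Recall from the construction preceding Proposition \ref{prop:PROPgraphs} (and the rigorous version in Appendix \ref{Appendix:B}) that $t_{i,j}(G)$ is obtained by gluing the $i$-th input edge $e_i=\alpha_G^{-1}(i)$ to the $j$-th output edge $f_j=\beta_G^{-1}(j)$ and then reindexing the remaining external edges in increasing order. Neither of these steps alters the set of vertices, the local orders at any vertex, or the decoration map: reindexing only changes $\alpha,\beta$, and the gluing only identifies two half-edges belonging to (the sources/targets of) the affected vertices without touching their internal orderings. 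Consequently the elementary move at $v$ survives the partial trace: $t_{i,j}(\sigma\cdot_v G\cdot_v\tau) = \sigma\cdot_v t_{i,j}(G)\cdot_v\tau$ with the decoration on $v$ still acted upon by $(\sigma,\tau)$, so $t_{i,j}(G)\ \mathcal R_{k-1,l-1}\ t_{i,j}(G')$. (One subtlety to note: the edge $e_i$ or $f_j$ being glued may be an edge incident to $v$; but since the local order $\leq_v$ is on the set of incoming resp.\ outgoing half-edges at $v$ and the gluing does not add or remove such half-edges at $v$ — it merely turns an external edge into an internal one — the order is unaffected, and the decoration is still governed by $d_G(v)\in X(i(v),o(v))$.) Passing to the transitive closure $\sim_{k,l}$ then gives compatibility of $\sim$ with $t_{i,j}$.

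Having checked all three compatibilities for $\mathcal R$ and hence for $\sim$, Lemma \ref{lem:relation_respection_TRAP} applies: the quotient $\PGr(X)/\sim = \Gacirc(X)$ is a TRAP, and since $\PGr(X)$ is unitary with unit $I\in\PGr(X)(1,1)$ (whose single vertex — there is none — carries no decoration, so $[I]$ makes sense), $\Gacirc(X)$ is a unitary TRAP with $[I]$ as unit for the partial trace maps and $[\grapheo]$ as its trace. Restricting to solar graphs, $\rPGr(X)/\sim = \rGacirc(X)$; by the Corollary after Proposition \ref{prop:PROPgraphs}, $\rPGr(X)$ is a (non-unitary) subTRAP of $\PGr(X)$, it is stable under all the operations, and the relation $\sim$ restricts to it (the elementary moves at vertices preserve solarity since they change neither $IO(G)$ nor $L(G)$), so Lemma \ref{lem:relation_respection_TRAP} again gives that $\rGacirc(X)$ is a TRAP; it is non-unitary since $\rPGr(X)$ is. The main obstacle in writing this out carefully is purely bookkeeping: making precise, in the half-edge/quiver formalism of Definition \ref{defi:graph} and Appendix \ref{Appendix:B}, that the gluing-and-reindexing defining $t_{i,j}$ genuinely leaves the per-vertex data intact, so that the commutation $t_{i,j}(\sigma\cdot_v G\cdot_v\tau)=\sigma\cdot_v t_{i,j}(G)\cdot_v\tau$ holds on the nose rather than merely up to isomorphism — but once the conventions of the appendix are in place this is routine.
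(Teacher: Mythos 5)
Your proposal is correct and follows essentially the same route as the paper: the paper simply asserts that $\sim$ is compatible with the $\sym\times\sym^{op}$-action, the horizontal concatenation and the partial trace maps and then invokes Lemma \ref{lem:relation_respection_TRAP}, exactly the strategy you carry out (with the details of the elementary-move checks and the restriction to solar graphs written out). Nothing in your argument deviates from or adds a gap to the paper's proof; you have merely filled in the verifications the authors left as "easy to show".
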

 Moreover, we know  from Theorem \ref{freetraps} that $\rPGr(X)$ is the free TRAP generated by the set $X$.
	Now, by its very definition $\rGacirc(X)$ is the free  TRAP
generated by the $\sym\times \sym^{op}$-module $X$: 
\begin{prop}\label{prop:freerGamma}
Let $X$ be $\sym\times \sym^{op}$ module, $P$ be a TRAP and $\phi:X\longrightarrow P$
be a morphism of $\sym\times \sym^{op}$-modules. There exists a {unique extension  $\phi$ to a} TRAP morphism $\Phi:\rGacirc(X)\longrightarrow P$. If moreover $P$ is unitary, this morphism   extends to $\Gacirc(X)$. 
\end{prop}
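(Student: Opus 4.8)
\textbf{Proof plan for Proposition \ref{prop:freerGamma}.}

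The plan is to build $\Phi$ by first applying the universal property of the free TRAP on the \emph{set} $X$ (Theorem \ref{freetraps}) and then checking that the resulting morphism factors through the quotient by $\sim$ defining $\rGacirc(X)$. More precisely, forgetting the $\sym\times\sym^{op}$-module structure on $X$ and viewing $X$ merely as a family of sets, the map $\phi\colon X\to P$ (the underlying map of sets of the given module morphism) extends by Theorem \ref{freetraps} to a unique TRAP morphism $\widetilde\Phi\colon\rPGr(X)\longrightarrow P$ with $\widetilde\Phi(x)=\phi(x)$ for $x\in X$. If moreover $P$ is unitary, Theorem \ref{freetraps} gives the unique extension of $\widetilde\Phi$ to a unitary TRAP morphism $\widetilde\Phi\colon\PGr(X)\longrightarrow P$. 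The key point is then to show that $\widetilde\Phi$ is constant on the equivalence classes of $\sim=(\sim_{k,l})_{k,l}$, so that it descends to a TRAP morphism $\Phi\colon\rGacirc(X)\to P$ (resp. $\Gacirc(X)\to P$), which by construction will still send $[x]$ to $\phi(x)$.

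First I would reduce the descent check to the generating relation $\mathcal{R}_{k,l}$: since $\sim_{k,l}$ is the transitive closure of $\mathcal{R}_{k,l}$, it suffices to prove $\widetilde\Phi(G)=\widetilde\Phi(G')$ whenever $(G,d_G)\,\mathcal{R}_{k,l}\,(G',d_{G'})$, i.e. whenever $G'$ is obtained from $G$ by acting on a single vertex $v$ by $(\sigma,\tau)\in\sym_{o(v)}\times\sym_{i(v)}$, simultaneously twisting the local edge-orders at $v$ and replacing the decoration $d_G(v)$ by $\sigma\cdot d_G(v)\cdot\tau$. Here I would use the inductive description of $\widetilde\Phi$ from the proof of Theorem \ref{freetraps}: for a graph with no internal edges one decomposes it as a horizontal product of $\grapheo$'s, $I$'s and the decorated corollas $x_i$, conjugated by permutations, and for graphs with internal edges one writes $G=t_{1,1}(G_e)$ and sets $\widetilde\Phi(G)=t_{1,1}(\widetilde\Phi(G_e))$. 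Acting by $(\sigma,\tau)$ on the vertex $v$ corresponds, after choosing a decomposition in which the corolla attached to $v$ appears as one of the $x_i=d_G(v)$, to replacing that factor $\phi(d_G(v))$ by $\sigma\cdot\phi(d_G(v))\cdot\tau=\phi(\sigma\cdot d_G(v)\cdot\tau)$ — the last equality being exactly the hypothesis that $\phi$ is a morphism of $\sym\times\sym^{op}$-modules — while the local reindexation of the edges at $v$ is absorbed into the surrounding permutations and partial traces by the TRAP axioms (compatibility of $*$ and of the $t_{i,j}$ with the symmetric actions, axioms 2.(c) and 3.(b)). Thus $\widetilde\Phi(G)=\widetilde\Phi(G')$ and $\widetilde\Phi$ descends.

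Uniqueness: any TRAP morphism $\Phi\colon\rGacirc(X)\to P$ with $\Phi([x])=\phi(x)$ pulls back along the quotient projection $\rPGr(X)\twoheadrightarrow\rGacirc(X)$ to a TRAP morphism $\rPGr(X)\to P$ sending $x\mapsto\phi(x)$, which by the uniqueness clause of Theorem \ref{freetraps} must equal $\widetilde\Phi$; hence $\Phi$ is forced, and likewise in the unitary case. Finally one notes that $\rGacirc(X)$ is generated as a TRAP by the classes of the decorated corollas $[x]$, $x\in X$ (every corolla ordered solar graph is built from such corollas by $*$, the symmetric actions and the $t_{i,j}$, up to $\sim$), which also re-proves uniqueness directly and confirms the ``free'' terminology. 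The main obstacle I expect is purely bookkeeping: verifying carefully that acting on a single vertex of a \emph{corolla ordered} graph by $(\sigma,\tau)$ is tracked correctly through the inductive definition of $\widetilde\Phi$ — in particular that the choice of decomposition of $G$ and of the cut edge $e$ does not interfere — but this is exactly the content of the well-definedness arguments already established in the proof of Theorem \ref{freetraps}, so no genuinely new difficulty arises.
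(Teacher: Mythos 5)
Your proposal is correct and follows exactly the route the paper intends: the paper gives no written proof, treating the statement as immediate from Theorem \ref{freetraps} together with the definition of $\rGacirc(X)$ as the quotient of $\rPGr(X)$ by the relation $\sim$ (compatible with the TRAP structure by Lemma \ref{lem:Gamma}), and your extension-then-descent argument, with the module-morphism property of $\phi$ ensuring invariance under the generating relation $\mathcal{R}_{k,l}$, is precisely the justification being left implicit there.
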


\begin{example}\label{ex:GammaGr}
If $X$ is a trivial $\sym\times \sym^{op}$-module, then $\Gacirc(X)=\Gr(X)$ and $\rGacirc(X)=\rGr(X)$ as TRAPs.
More generally, choosing for any graph $G$ a {corolla orientation} $\overline{G}$, we can prove that for any $(k,l)\in \N_0^2$,
the sets $\Gacirc(X)(k,l)$ and $\Gr(X)(k,l)$ are in bijection, as well as $\rGacirc(X)(k,l)$ and $\rGr(X)(k,l)$ 
(but not in a canonical way), through the map sending the equivalence class of $\overline{G}$ to $G$.
\end{example}
  The correspondence $P\to\Gacirc(P)$ defined above  induces an endofunctor in the category $\catssm$ of $\sym\times \sym^{op}$-modules which we now introduce.
  \begin{defi} \label{deficatssm}
   Let $\catssm$ denote the category of $\sym\times\sym^{op}$-modules: its objects are families $P=(P(k,l))_{(k,l)\in \N_0^2}$, such that for any $(k,l)\in \N_0^2$, $P(k,l)$ is a $\sym_l\times \sym_k^{op}$-module; a morphism $\phi:P\longrightarrow Q$ is a family $(\phi(k,l))_{(k,l)\in \N^2}$, where for any $(k,l)\in \N_0^2$, $\phi(k,l):P(k,l)\longrightarrow Q(k,l)$ is a morphism of $\sym_l\times \sym_k^{op}$-modules. 
 \end{defi}
 We can now state the main result of this subsection:
 \begin{prop} \label{prop:two_endofunctors} 
  The   correspondences  $\rGacirc,\Gacirc:\catssm\longrightarrow\catssm$ define endofunctors of the category   $\catssm$.
 \end{prop}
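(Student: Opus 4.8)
The plan is to verify the two defining conditions for a functor: action on objects is already given (send $X \mapsto \rGacirc(X)$, resp. $X \mapsto \Gacirc(X)$), so the work is to define the action on morphisms and check functoriality (preservation of identities and composition). The key observation is that each of $\rGacirc(X)$ and $\Gacirc(X)$ has been shown (Lemma \ref{lem:Gamma}) to be a TRAP, resp. unitary TRAP, which in particular is a $\sym\times\sym^{op}$-module; hence both correspondences land in $\catssm$ on objects. For morphisms, given a morphism $\phi: X \longrightarrow Y$ of $\sym\times\sym^{op}$-modules, I would compose with the canonical inclusion $Y \hookrightarrow \rGacirc(Y)$ (resp. $Y \hookrightarrow \Gacirc(Y)$) of the decorating module into the corresponding free (unitary) TRAP of graphs, obtaining a morphism of $\sym\times\sym^{op}$-modules $X \longrightarrow \rGacirc(Y)$ whose target is a TRAP; then apply the universal property of Proposition \ref{prop:freerGamma} to get a unique TRAP morphism $\rGacirc(\phi): \rGacirc(X) \longrightarrow \rGacirc(Y)$ extending it. In the unitary case the same construction with $\Gacirc$ in place of $\rGacirc$ produces a unitary TRAP morphism $\Gacirc(\phi): \Gacirc(X) \longrightarrow \Gacirc(Y)$, again by Proposition \ref{prop:freerGamma}. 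In particular $\rGacirc(\phi)$ and $\Gacirc(\phi)$ are morphisms of $\sym\times\sym^{op}$-modules, so they are morphisms in $\catssm$.

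Next I would verify functoriality. For the identity: $\mathrm{Id}_{\rGacirc(X)}$ is itself a TRAP morphism $\rGacirc(X)\longrightarrow\rGacirc(X)$ which, precomposed with the inclusion $X\hookrightarrow\rGacirc(X)$, recovers that very inclusion; since by Proposition \ref{prop:freerGamma} there is a \emph{unique} TRAP morphism extending the inclusion, it must be $\mathrm{Id}_{\rGacirc(X)}$, i.e. $\rGacirc(\mathrm{Id}_X) = \mathrm{Id}_{\rGacirc(X)}$. For composition: given $\phi: X \longrightarrow Y$ and $\psi: Y \longrightarrow Z$ in $\catssm$, both $\rGacirc(\psi)\circ\rGacirc(\phi)$ and $\rGacirc(\psi\circ\phi)$ are TRAP morphisms $\rGacirc(X)\longrightarrow\rGacirc(Z)$; restricting each along the inclusion $X\hookrightarrow\rGacirc(X)$ yields in both cases the composite $X\xrightarrow{\phi} Y\hookrightarrow\rGacirc(Y)\xrightarrow{\rGacirc(\psi)}\rGacirc(Z)$, which equals the $\sym\times\sym^{op}$-module morphism $X \xrightarrow{\psi\circ\phi} Z \hookrightarrow \rGacirc(Z)$ because $\rGacirc(\psi)$ restricts to $\psi$ on $Y$. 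By the uniqueness clause of Proposition \ref{prop:freerGamma}, the two TRAP morphisms agree, so $\rGacirc(\psi\circ\phi) = \rGacirc(\psi)\circ\rGacirc(\phi)$. The unitary case is identical, using the uniqueness of the \emph{unitary} TRAP extension in Proposition \ref{prop:freerGamma}.

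I do not expect a serious obstacle here: the statement is essentially a formal consequence of the universal property already established in Proposition \ref{prop:freerGamma}, which is the standard way free-object constructions become functors (the left adjoint of the forgetful functor from TRAPs, resp. unitary TRAPs, to $\catssm$). The only point requiring a line of care is that the construction of $\rGacirc(\phi)$ uses the \emph{composite} $X\xrightarrow{\phi}Y\hookrightarrow\rGacirc(Y)$ as input to the universal property — one should note this composite is indeed a morphism of $\sym\times\sym^{op}$-modules (composition of such), and that the inclusion $Y\hookrightarrow\rGacirc(Y)$, resp. $Y\hookrightarrow\Gacirc(Y)$, identifying $y\in Y(k,l)$ with the one-vertex solar {corolla ordered} graph decorated by $y$, is equivariant for the symmetric group actions by construction of the action on vertices (Definition \ref{defiactionsommets}) and the quotient defining $\rGacirc$. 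All remaining verifications are pure uniqueness arguments, so the proof is short.

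\begin{proof}
We first treat $\rGacirc$. By Lemma \ref{lem:Gamma}, for any $\sym\times\sym^{op}$-module $X$, $\rGacirc(X)$ is a TRAP, hence in particular a $\sym\times\sym^{op}$-module, so $\rGacirc$ sends objects of $\catssm$ to objects of $\catssm$. Let $\iota_X: X\longrightarrow\rGacirc(X)$ denote the canonical morphism of $\sym\times\sym^{op}$-modules sending $x\in X(k,l)$ to the class of the one-vertex solar {corolla ordered} graph decorated by $x$. Given a morphism $\phi: X\longrightarrow Y$ in $\catssm$, the composite $\iota_Y\circ\phi: X\longrightarrow\rGacirc(Y)$ is a morphism of $\sym\times\sym^{op}$-modules into a TRAP, so by Proposition \ref{prop:freerGamma} there is a unique TRAP morphism $\rGacirc(\phi):\rGacirc(X)\longrightarrow\rGacirc(Y)$ with $\rGacirc(\phi)\circ\iota_X=\iota_Y\circ\phi$; in particular $\rGacirc(\phi)$ is a morphism of $\sym\times\sym^{op}$-modules, hence a morphism in $\catssm$.

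For $\phi=\mathrm{Id}_X$, the morphism $\mathrm{Id}_{\rGacirc(X)}$ is a TRAP morphism satisfying $\mathrm{Id}_{\rGacirc(X)}\circ\iota_X=\iota_X=\iota_X\circ\mathrm{Id}_X$, so by the uniqueness in Proposition \ref{prop:freerGamma}, $\rGacirc(\mathrm{Id}_X)=\mathrm{Id}_{\rGacirc(X)}$. For $\phi: X\longrightarrow Y$ and $\psi: Y\longrightarrow Z$ in $\catssm$, both $\rGacirc(\psi)\circ\rGacirc(\phi)$ and $\rGacirc(\psi\circ\phi)$ are TRAP morphisms $\rGacirc(X)\longrightarrow\rGacirc(Z)$. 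Restricting along $\iota_X$ we get, on one hand, $\rGacirc(\psi)\circ\rGacirc(\phi)\circ\iota_X=\rGacirc(\psi)\circ\iota_Y\circ\phi=\iota_Z\circ\psi\circ\phi$, and on the other hand $\rGacirc(\psi\circ\phi)\circ\iota_X=\iota_Z\circ(\psi\circ\phi)$. By the uniqueness clause of Proposition \ref{prop:freerGamma} these two TRAP morphisms coincide, so $\rGacirc(\psi\circ\phi)=\rGacirc(\psi)\circ\rGacirc(\phi)$. Hence $\rGacirc$ is an endofunctor of $\catssm$.

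The argument for $\Gacirc$ is identical, replacing "TRAP" by "unitary TRAP" and "TRAP morphism" by "unitary TRAP morphism" throughout: by Lemma \ref{lem:Gamma}, $\Gacirc(X)$ is a unitary TRAP for any $X$, and Proposition \ref{prop:freerGamma} provides, for any morphism $\phi: X\longrightarrow Y$ in $\catssm$, a unique unitary TRAP morphism $\Gacirc(\phi):\Gacirc(X)\longrightarrow\Gacirc(Y)$ extending $\iota_Y\circ\phi$ along the canonical inclusion $\iota_X: X\longrightarrow\Gacirc(X)$. Preservation of identities and composition then follows from the uniqueness of the unitary TRAP extension exactly as above. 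Thus $\Gacirc$ is also an endofunctor of $\catssm$.
\end{proof}
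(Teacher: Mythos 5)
Your proof is correct, but it goes by a different route than the paper. The paper's proof is entirely concrete: it defines the action on a morphism $\varphi:X\longrightarrow Y$ by pulling back the decoration, $\rPGr(\varphi)(G,d_G):=(G,\varphi\circ d_G)$ (resp. $\PGr(\varphi)$), observes this descends to the quotients $\rGacirc(X)$, $\Gacirc(X)$, and asserts that the induced maps are $\sym\times\sym^{op}$-module morphisms and functorial. You instead treat $\rGacirc(X)$, $\Gacirc(X)$ as free objects and obtain $\rGacirc(\phi)$, $\Gacirc(\phi)$ from the universal property of Proposition \ref{prop:freerGamma}, with preservation of identities and composition coming for free from the uniqueness clause; this is the standard "free construction is a left adjoint, hence a functor" argument, and it spares you the (elementary but unspoken) verifications the paper waves through with "it is easy to check". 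Two small caveats: in the unitary case the uniqueness of the extension to $\Gacirc(X)$ is only implicit in the statement of Proposition \ref{prop:freerGamma} (it says "this morphism extends"), but it does hold since $\Gacirc(X)$ is the free unitary TRAP on $X$ (cf.\ Theorem \ref{freetraps}), so your uniqueness arguments are sound; and note that by that same uniqueness your $\rGacirc(\phi)$ coincides with the paper's explicit relabelling map $(G,d_G)\mapsto(G,\varphi\circ d_G)$, whose concrete description is what the paper relies on later when building the monad structure and checking naturality, so it is worth recording it even if your abstract argument does not need it.
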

 \begin{proof}
  We have already seen that the correspondences $P\to \rGacirc(P)$ and $P\to\Gacirc(P)$ send $\sym\times\sym^{op}$-modules to $\sym\times\sym^{op}$-modules. So we only need to specify the action of the functors on morphisms of $\sym\times\sym^{op}$-modules:
 
  Let $X$ and $Y$ be two $\sym\times \sym^{op}$-modules and  $\varphi:X\longrightarrow Y$ a morphism of $\sym\times \sym^{op}$-modules and let $\rPGr(\varphi):\rPGr(P)\longrightarrow \rPGr(Q)$ (resp. $\PGr(\varphi):\PGr(P)\longrightarrow \PGr(Q)$) be its pullback defined by 
  \begin{equation}\label{eq:GRcpullback}
   \rPGr(\varphi)(G,d_G):=(G,\varphi \circ d_G)
  \end{equation} 
  (resp. $\PGr(\varphi)(G,d_G):=(G,\varphi \circ d_G)$) for any $G\in \rPGr(P)$ (resp. in $\PGr(P)$).
  
  Is is easy to check that the induced morphisms $ \rGacirc(\varphi):\rGacirc(P)\longrightarrow \rGacirc(Q)$ and $\Gacirc(\varphi):\Gacirc(P)\longrightarrow \Gacirc(Q)$ are indeed morphisms of $\sym\times\sym^{op}$-modules turning $\rGacirc$ and $\Gacirc$ {into endofunctors of $\catssm$}.
 \end{proof}

 \subsection{Monad of graphs}
 
 We now endow the functor $\Gacirc$  with a monad structure. 
 
 We first recall  basic definitions of category theory. In particular,  for two functors $F,G:\mathcal{C}\longrightarrow\mathcal{D}$, a 
 \textbf{natural transformation} $\eta:F\longrightarrow G$ between these two functors  is given by maps $\eta_X:F(X)\longmapsto G(X)$ for each object $X$ of $\mathcal{C}$ such that for any pair of objects $X,Y\in\Obj(\calC)$ and morphism $f:X\longmapsto Y\in \Mor(\calC)$ the following diagram commutes:
 \begin{align} \label{axiomes_natur_tsfm}
  & \xymatrix{F(X)\ar[r]^{F(f)}\ar[d]_{\eta_X} & F(Y)\ar[d]^{\eta_Y}\\
    G(X)\ar[r]_{G(f)}&G(Y).}
 \end{align}
Let us now introduce the structure of monad, a terminology we borrow from \cite{ML71}. A monad is the categorical equivalent of monoids.
\begin{defi} \label{defimonades}
 A \textbf{monad} (also called a triple) on a category $\mathcal C$ is   given by  an endofunctor $\Gamma\in \End(\cat)$ and two natural transformations $\mu:\Gamma\circ \Gamma\longrightarrow \Gamma$ and $\nu:\mathrm{Id}_{\cat}\longrightarrow\Gamma$ which form an associative and unital monoid $(\Gamma, \mu, \nu)$ in the unital monoid\footnote{The terminology monoid is used in this definition with an obvious abuse of vocabulary since $\Gamma$ and $\End(\cat)$ are not necessarily sets.} $\End(\cat)$ of endofunctors of $\cat$, where  are two natural transformations. This means that the multiplication $\mu:\Gamma\circ \Gamma\longrightarrow \Gamma$ and the unit morphism $\nu:\mathrm{Id}_\cat\longrightarrow \Gamma$ should satisfy the axioms given by commutativity of the diagrams below for any object $P$ of the category $\cat$. 
 \begin{align} \label{axiomesmonade}
  &\xymatrix{\Gamma\circ \Gamma\circ \Gamma(P)\ar[r]^{\Gamma(\mu_P)}\ar[d]_{\mu_{\Gamma(P)}}
  &\Gamma\circ \Gamma(P)\ar[d]^{\mu_P}\\
   \Gamma\circ \Gamma(P)\ar[r]_{\mu_P}&\Gamma(E)}&
   \xymatrix{\Gamma(P)\ar[r]^{\Gamma(\nu_P)}\ar[rd]_{\mathrm{Id}_\cat}&\Gamma\circ \Gamma(P)\ar[d]_{\mu_P}&
   \Gamma(P)\ar[l]_{\nu_{\Gamma(P)}}\ar[ld]^{\mathrm{Id}_\cat}\\
  &\Gamma(P)&}
 \end{align}
\end{defi}
We now recall the notion of  $\Gamma$-algebra  (see e.g. \cite[Definition 2.1.4]{Merkulov2009}).
\begin{defi} Let $\cat$ be a category. An   algebra over a monad  $\Gamma\in\End(\cat)$ or a \textbf{$\Gamma$-algebra} is an object $P$ of $\cat$ together with a
	structure morphism $\alpha: \Gamma(P)\to P$  such that
the following diagrams commute:
\begin{align}
\label{axiomesalgebres}
&\xymatrix{\Gamma\circ \Gamma(P)\ar[r]^{\Gamma(\alpha)} \ar[d]_{\mu_P}\ar[d]_{\mu_P}&\Gamma(P) \ar[d]^\alpha\\
\Gamma(P)\ar[r]_\alpha&P}&
&\xymatrix{P\ar[r]^{\nu_P}\ar[d]_{\mathrm{Id}}&\Gamma(P)\ar[ld]^{\alpha}\\P&}
\end{align}
Let $(P,\alpha)$ and $(Q,\beta)$ be two algebras over a fixed monad $\Gamma$. A morphism of $\Gamma$-algebras 
from $P$ to $Q$ is a morphism $\phi:P\longrightarrow Q$ in the category $\cat$ such that the following diagram commutes:
\begin{equation} \label{eq:defi_alg_monad}
 \xymatrix{\Gamma(P)\ar[d]_{\Gamma(\phi)} \ar[r]^\alpha&P\ar[d]^\phi\\
\Gamma(Q)\ar[r]_\beta&Q}
\end{equation}
\end{defi} 
 We now define the natural transformations $\nu$ and $\mu$   in the case  $\cat=\catssm$ and $\Gamma=\Gacirc$. In this case, for any $\sym\times \sym^{op}$-module $P$, elements of $\Gacirc\circ \Gacirc(P)$ are graphs $G$  whose vertices $v$ are decorated by graphs $G_v$, consistently with  the number of incoming and outgoing edges.
 \begin{defi}
  \begin{enumerate}
   \item For any $\sym\times \sym^{op}$-module $P$, let $\eta_P:P\longrightarrow\Gacirc(P)$ be the morphism of $\sym\times \sym^{op}$-modules which sends an element $p\in P(k,l)$ to the class of the graph {$G(k,l)(p)$} with one vertex $v$  decorated by $p$, $k$ incoming edges indexed from left to right by $1,\ldots,k$ and  $l$ outgoing edges indexed from left to right by $1,\ldots,l$.
   \item For any $\sym\times \sym^{op}$-module $P$, let $\mu_P:\Gacirc\circ \Gacirc(P)\longrightarrow \Gacirc(P)$ be the morphism $\sym\times \sym^{op}$-modules which sends a  graph $G\in\Gacirc\circ \Gacirc(P)$ to the graph $H\in\Gacirc(P)$  with $V(H)=\bigsqcup_{v\in V(G)} V(G_v)$ and whose edges are obtained by identifying, for any vertex $v$,   the $i$-th incoming edges of $v$ with the $i$-th incoming edge of $G_v$, and the $j$-th outgoing edge of $v$ with the $j$-th outgoing edge of $G_v$.
  \end{enumerate}
 \end{defi}
 In simpler words, the map $\eta_P$ sends an element $p\in P$ to the graph with one vertex, which is decorated by $p$ and has the same numbers of input and output edges as $p$ has inputs and outputs. In picture:
 \[\substack{\displaystyle \nu_P(p)=\\ \vspace{2cm}}
\begin{tikzpicture}[line cap=round,line join=round,>=triangle 45,x=0.7cm,y=0.7cm]
\clip(0.6,-2.1) rectangle (2.4,1.7);
\draw [line width=.4pt] (0.8,0.)-- (2.2,0.);
\draw [line width=.4pt] (2.2,0.)-- (2.2,-0.5);
\draw [line width=.4pt] (2.2,-0.5)-- (0.8,-0.5);
\draw [line width=.4pt] (0.8,-0.5)-- (0.8,0.);
\draw (1.2,0.1) node[anchor=north west] {\scriptsize $p$};
\draw [->,line width=.4pt] (1.,0.) -- (1.,1.);
\draw [->,line width=.4pt] (2.,0.) -- (2.,1.);
\draw [->,line width=.4pt] (1.,-1.5) -- (1,-0.5);
\draw [->,line width=.4pt] (2.,-1.5) -- (2.,-0.5);
\draw (0.7,-1.4) node[anchor=north west] {\scriptsize $1$};
\draw (1.05,-1.6) node[anchor=north west] {\scriptsize $\ldots$};
\draw (1.7,-1.4) node[anchor=north west] {\scriptsize $k$};
\draw (0.7,1.6) node[anchor=north west] {\scriptsize $1$};
\draw (1.05,1.4) node[anchor=north west] {\scriptsize $\ldots$};
\draw (1.7,1.6) node[anchor=north west] {\scriptsize $l$};
\end{tikzpicture}\substack{\displaystyle .\\ \vspace{2cm}}\]

Furthermore, the map $\mu_P$ replaces vertices decorated by graphs $(G,d_G)$ by (decorated) subgraphs. These subgraphs are exactly the graphs that were decorating the vertices of the original graph. To illustrate this graphically, we give an example in which $\mu_P$ sends the graph on the left to the graph on the right:
\begin{align*}
\substack{\hspace{5mm}\\ \begin{tikzpicture}[line cap=round,line join=round,>=triangle 45,x=0.7cm,y=0.7cm]
\clip(0.2,-4.1) rectangle (3.8,11.);
\draw [line width=.4pt] (0.8,0.)-- (2.2,0.);
\draw [line width=.4pt] (2.2,0.)-- (2.2,-0.5);
\draw [line width=.4pt] (2.2,-0.5)-- (0.8,-0.5);
\draw [line width=.4pt] (0.8,-0.5)-- (0.8,0.);
\draw (1.2,0.1) node[anchor=north west] {\scriptsize $p$};
\draw [->,line width=.4pt] (1.,0.) -- (1.,1.);
\draw [->,line width=.4pt] (1.5,0.) -- (1.5,1.);
\draw [->,line width=.4pt] (2.,0.) -- (2.,1.);
\draw [->,line width=.4pt] (1.,-1.5) -- (1.,-0.5);
\draw [->,line width=.4pt] (2.,-1.5) -- (2.,-0.5);
\draw (0.7,-1.4) node[anchor=north west] {\scriptsize $1$};
\draw (1.7,-1.4) node[anchor=north west] {\scriptsize $2$};
\draw (0.7,1.6) node[anchor=north west] {\scriptsize $1$};
\draw (1.2,1.6) node[anchor=north west] {\scriptsize $2$};
\draw (1.7,1.6) node[anchor=north west] {\scriptsize $3$};
\draw [line width=.4pt]  (0.3,-2.5) -- (2.7,-2.5);
\draw [line width=.4pt]  (2.7,-2.5) -- (2.7,2.);
\draw [line width=.4pt]  (2.7,2.) -- (0.3,2.);
\draw [line width=.4pt]  (0.3,2.) -- (0.3,-2.5);
\draw [->,line width=.4pt] (0.5,-3.5) -- (0.5,-2.5);
\draw [->,line width=.4pt] (2.5,-3.5) -- (2.5,-2.5);
\draw (0.2,-3.4) node[anchor=north west] {\scriptsize $2$};
\draw (2.2,-3.4) node[anchor=north west] {\scriptsize $1$};
\draw [->,line width=.4pt] (1.5,2.) -- (1.5,3.);
\draw [->,line width=.4pt] (2.5,2.) -- (2.5,3.);
\draw [line width=.4pt]  (1.3,3.) -- (3.7,3.); 
\draw [line width=.4pt]  (3.7,3.) -- (3.7,9.); 
\draw [line width=.4pt]  (3.7,9.) -- (1.3,9.); 
\draw [line width=.4pt]  (1.3,9.) -- (1.3,3.); 
\draw [->,line width=.4pt] (2.,4.) -- (2.,5.);
\draw [->,line width=.4pt] (1.5,9.) -- (1.5,10.);
\draw [->,line width=.4pt] (2.5,9.) -- (2.5,10.);
\draw [->,line width=.4pt] (3.5,9.) -- (3.5,10.);
\draw [->,line width=.4pt] (0.5,2.) -- (0.5,10.);
\draw (0.2,10.6) node[anchor=north west] {\scriptsize $2$};
\draw (1.2,10.6) node[anchor=north west] {\scriptsize $3$};
\draw (2.2,10.6) node[anchor=north west] {\scriptsize $1$};
\draw (3.2,10.6) node[anchor=north west] {\scriptsize $4$};
\draw (1.7,4.1) node[anchor=north west] {\scriptsize $1$};
\draw (2.7,4.1) node[anchor=north west] {\scriptsize $2$};
\draw [->,line width=.4pt] (3.,4.) -- (3.,5.);
\draw [line width=.4pt]  (1.8,5.) -- (3.2,5.);
\draw [line width=.4pt]  (3.2,5.) -- (3.2,5.5);
\draw [line width=.4pt]  (3.2,5.5) -- (1.8,5.5);
\draw [line width=.4pt]  (1.8,5.5) -- (1.8,5.);
\draw (2.2,5.6) node[anchor=north west] {\scriptsize $q$};
\draw [->,line width=.4pt] (2.,5.5) -- (2.,6.5);
\draw [->,line width=.4pt] (3.,5.5) -- (3.,6.5);
\draw [line width=.4pt]  (1.8,6.5) -- (3.2,6.5);
\draw [line width=.4pt]  (3.2,6.5) -- (3.2,7);
\draw [line width=.4pt]  (3.2,7) -- (1.8,7);
\draw [line width=.4pt]  (1.8,7) -- (1.8,6.5);
\draw (2.2,7.1) node[anchor=north west] {\scriptsize $r$};
\draw [->,line width=.4pt] (2.,7.) -- (2.,8.);
\draw [->,line width=.4pt] (2.5,7.) -- (2.5,8.);
\draw [->,line width=.4pt] (3.,7.) -- (3.,8.);
\draw (1.7,8.6) node[anchor=north west] {\scriptsize $1$};
\draw (2.2,8.6) node[anchor=north west] {\scriptsize $2$};
\draw (2.7,8.6) node[anchor=north west] {\scriptsize $3$};
\end{tikzpicture}}&\hspace{1cm}\substack{ \mu_P\\ \longmapsto}\hspace{1cm}
\substack{\hspace{5mm}\\ \begin{tikzpicture}[line cap=round,line join=round,>=triangle 45,x=0.7cm,y=0.7cm]
\clip(0.2,-4.1) rectangle (3.8,11.);
\draw [line width=.4pt] (0.8,0.)-- (2.2,0.);
\draw [line width=.4pt] (2.2,0.)-- (2.2,-0.5);
\draw [line width=.4pt] (2.2,-0.5)-- (0.8,-0.5);
\draw [line width=.4pt] (0.8,-0.5)-- (0.8,0.);
\draw (1.2,0.1) node[anchor=north west] {\scriptsize $p$};
\draw [->,line width=.4pt] (1.,0.) -- (0.5,10.); 
\draw [->,line width=.4pt] (1.5,0.) -- (2.,5.); 
\draw [->,line width=.4pt] (2.,0.) -- (3.,5.);  
\draw [->,line width=.4pt] (0.5,-3.5) -- (1.,-0.5); 
\draw [->,line width=.4pt] (2.5,-3.5) -- (2.,-0.5); 
\draw (0.2,-3.4) node[anchor=north west] {\scriptsize $2$};
\draw (2.2,-3.4) node[anchor=north west] {\scriptsize $1$};
\draw [->,line width=.4pt] (2.,7.) -- (1.5,10.);
\draw [->,line width=.4pt] (2.5,7.) -- (2.5,10.);
\draw [->,line width=.4pt] (3.,7.) -- (3.5,10.);
\draw (0.2,10.6) node[anchor=north west] {\scriptsize $2$};
\draw (1.2,10.6) node[anchor=north west] {\scriptsize $3$};
\draw (2.2,10.6) node[anchor=north west] {\scriptsize $1$};
\draw (3.2,10.6) node[anchor=north west] {\scriptsize $4$};
\draw [line width=.4pt]  (1.8,5.) -- (3.2,5.);
\draw [line width=.4pt]  (3.2,5.) -- (3.2,5.5);
\draw [line width=.4pt]  (3.2,5.5) -- (1.8,5.5);
\draw [line width=.4pt]  (1.8,5.5) -- (1.8,5.);
\draw (2.2,5.6) node[anchor=north west] {\scriptsize $q$};
\draw [->,line width=.4pt] (2.,5.5) -- (2.,6.5);
\draw [->,line width=.4pt] (3.,5.5) -- (3.,6.5);
\draw [line width=.4pt]  (1.8,6.5) -- (3.2,6.5);
\draw [line width=.4pt]  (3.2,6.5) -- (3.2,7);
\draw [line width=.4pt]  (3.2,7) -- (1.8,7);
\draw [line width=.4pt]  (1.8,7) -- (1.8,6.5);
\draw (2.2,7.1) node[anchor=north west] {\scriptsize $r$};
\end{tikzpicture}}
\end{align*}
where $p\in P(2,3)$, $q\in P(2,2)$ and $r\in P(2,3)$.\\

The families of morphisms $\eta_P$ and $\mu_P$ define two natural tranformations and we further obtain:
\begin{prop} \label{prop:monades_graphes}
 The triple $\Gacirc=(\Gacirc,\mu,\nu)$ is a monad in the category $\catssm$. 
Moreover, $\rGacirc=(\rGacirc,\mu_{\mid \rGacirc}, \mu_{\mid \rGacirc})$ is a sub-monad of $\Gacirc$.
\end{prop}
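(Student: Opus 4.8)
The plan is to verify the monad axioms for $\Gacirc = (\Gacirc, \mu, \nu)$ directly on decorated graphs, checking the commutativity of the diagrams in \eqref{axiomesmonade}, and then observe that the solar case restricts compatibly. First I would confirm that $\eta_P$ and $\mu_P$ are well-defined morphisms of $\sym\times\sym^{op}$-modules: for $\eta_P$ this is immediate from the definition of the action of $\sym\times\sym^{op}$ on a graph with a single vertex (it matches the given module structure on $P$ via Definition \ref{defiactionsommets}); for $\mu_P$ one must check that the construction respects the equivalence $\sim_{k,l}$ on both source and target and commutes with the $\sym\times\sym^{op}$ action, which is routine since reindexing the global input/output edges of $G \in \Gacirc\circ\Gacirc(P)$ translates transparently into the same reindexing of the substituted graph $H$. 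The naturality squares \eqref{axiomes_natur_tsfm} for $\eta$ and $\mu$ follow from the explicit pullback formula \eqref{eq:GRcpullback}: applying a morphism $\varphi: P \to Q$ of $\sym\times\sym^{op}$-modules just relabels decorations, and substitution of subgraphs clearly commutes with relabelling.

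Next I would check the associativity pentagon (left diagram of \eqref{axiomesmonade}). An element of $\Gacirc\circ\Gacirc\circ\Gacirc(P)$ is a graph $G$ whose vertices $v$ are decorated by graphs $G_v$, each of whose vertices $w$ is in turn decorated by a graph $G_{v,w}$ with vertices decorated in $P$. Both paths around the square produce the same final graph: its vertex set is $\bigsqcup_{v}\bigsqcup_{w} V(G_{v,w})$, its $P$-decorations are inherited, and its edges come from iteratively identifying the $i$-th incoming (resp. $j$-th outgoing) edge of each vertex with the $i$-th incoming (resp. $j$-th outgoing) edge of the graph decorating it. Computing $\Gacirc(\mu_P)$ first performs the inner substitution $G_v \rightsquigarrow$ (the graph obtained by expanding each $w$), then outer substitution; computing $\mu_{\Gacirc(P)}$ first performs the outer substitution, then the inner one — and since edge-identification is done vertex by vertex and these identifications commute, both yield the same graph. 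The unit triangles (right diagram of \eqref{axiomesmonade}) are even easier: $\Gacirc(\nu_P)$ decorates each vertex $v$ of $G$ by the single-vertex graph on $d_G(v)$, and $\mu_P$ then contracts each such trivial graph back, recovering $G$; similarly $\nu_{\Gacirc(P)}$ produces the single-vertex graph whose decoration is $G$ itself, and $\mu_P$ unpacks it to $G$. In both cases one must be slightly careful about the edge indexations and the passage to the quotient by $\sim$, but the verification is formal.

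Finally, for the sub-monad claim, I would note that $\rGacirc(X) \subseteq \Gacirc(X)$ for every $X$ (solar decorated corolla-ordered graphs form a subset of all decorated corolla-ordered graphs, stable under $\sim_{k,l}$ by the corollary following Proposition \ref{prop:PROPgraphs}), that $\eta_P$ lands in $\rGacirc(P)$ since a single-vertex graph has no input-output edges and no loops, and that $\mu_P$ maps $\rGacirc\circ\rGacirc(P)$ into $\rGacirc(P)$ since substituting solar graphs into the vertices of a solar graph produces no new input-output edges or loops. Hence $\mu$ and $\nu$ restrict to natural transformations $\mu_{\mid\rGacirc}$ and $\nu_{\mid\rGacirc}$ (the statement's ``$\mu_{\mid \rGacirc}$'' in the second slot is a typo for $\nu_{\mid\rGacirc}$), and the monad axioms, already verified on all decorated graphs, hold a fortiori on the solar sub-family.

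The main obstacle I anticipate is bookkeeping rather than conceptual: one has to be genuinely precise about how the input/output edge \emph{indexations} behave under iterated substitution, and about the fact that all the identities hold only after passing to the quotient by the equivalences $\sim_{k,l}$ — i.e. one must check that each constructed graph is well-defined up to the action on vertices and up to graph isomorphism. Lemma \ref{lem:relation_respection_TRAP} and Lemma \ref{lem:Gamma} already package most of the compatibility of $\sim$ with the TRAP operations, so the remaining work is to track the combinatorics of edge-gluing carefully enough to see that the two sides of each diagram coincide on the nose as (iso-classes of) decorated graphs.
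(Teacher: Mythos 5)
Your verification is correct, and it is precisely the routine check the paper leaves implicit: Proposition \ref{prop:monades_graphes} is stated without proof right after the definitions of $\nu_P$ and $\mu_P$, the intended argument being exactly yours — substitution of decorated graphs into vertices is associative and unital on the nose, the standard ``monad of graphs'' verification that also underlies the wheeled-PROP literature the paper cites. Your reading of the second entry of $\rGacirc=(\rGacirc,\mu_{\mid\rGacirc},\mu_{\mid\rGacirc})$ as a typo for $\nu_{\mid\rGacirc}$ is right, as are the stability of solar graphs under substitution and the fact that $\nu_P$ lands in $\rGacirc(P)$. The one point that genuinely needs the care you flag is well-definedness of $\mu_P$ on the quotient by $\sim$: a vertex-level permutation $(\sigma,\tau)$ at a vertex $v$ of the outer graph, compensated by the global $\sym\times\sym^{op}$-action on the decorating graph $G_v$ (and likewise an inner equivalence applied to $G_v$ itself), must produce the same glued graph, respectively an equivalent one, after substitution; spelling this out is bookkeeping, not an obstruction, so your proposal has no gap.
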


\subsection{TRAPs versus wheeled PROPs} \label{subsec:trap_vs_wPROP}

We can now state the main result  of this section, which relates TRAPs and various known objects.
\begin{theo} \label{thm:equivalence_TRAP_wPROP}
The categories of $\Gacirc$-algebras (i.e of wheeled PROPs) and of unitary TRAPs are isomorphic.
Similarly, the categories of $\rGacirc$-algebras and of TRAPs are isomorphic.
\end{theo}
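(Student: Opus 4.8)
The plan is to construct an explicit pair of mutually inverse functors between the category $\uTrap$ of unitary TRAPs and the category of $\Gacirc$-algebras (and, in parallel, between $\Trap$ and $\rGacirc$-algebras), and to check that they are inverse to each other. The monad $\Gacirc$ has already been built (Proposition \ref{prop:monades_graphes}), and by Lemma \ref{lem:Gamma} together with Proposition \ref{prop:freerGamma} we know that $\Gacirc(X)$ is the free unitary TRAP on the $\sym\times\sym^{op}$-module $X$. This is the classical situation in which the Eilenberg--Moore category of a monad coming from a free--forgetful adjunction is equivalent to the category of algebraic structures, so the real content is to verify that the adjunction here is monadic, i.e. that the comparison functor is an isomorphism of categories.

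First I would describe the functor from unitary TRAPs to $\Gacirc$-algebras. Given a unitary TRAP $P$, its underlying $\sym\times\sym^{op}$-module is an object of $\catssm$, and by Proposition \ref{prop:freerGamma} (applied to $\phi=\mathrm{Id}_P:P\to P$) there is a canonical unitary TRAP morphism $\alpha_P:\Gacirc(P)\longrightarrow P$ extending the identity — concretely, this is the map $\Phi_P$ of Definition \ref{defi:generalised-P-convolution} descended through the equivalence $\sim_{k,l}$, which is well-defined precisely because $\Phi_P$ is compatible with the action on vertices by Corollary \ref{cor:extension_identity}'s hypothesis being automatic on a $\sym\times\sym^{op}$-module. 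One then checks the two $\Gacirc$-algebra axioms \eqref{axiomesalgebres}: the unit triangle $\alpha_P\circ\nu_P=\mathrm{Id}_P$ holds because $\nu_P$ sends $p$ to the one-vertex graph decorated by $p$ and $\alpha_P$ sends that graph back to $p$ (Equation \eqref{eq:G_simple_solar} with one vertex); the associativity square $\alpha_P\circ\Gacirc(\alpha_P)=\alpha_P\circ\mu_P$ follows from uniqueness in Proposition \ref{prop:freerGamma}, since both composites are unitary TRAP morphisms $\Gacirc\circ\Gacirc(P)\to P$ restricting to the same map on one-vertex-decorated-by-one-vertex graphs (or, more conceptually, from the fact that $\mu$ is the free-algebra structure map and $\alpha_P$ is a TRAP morphism). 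For a unitary TRAP morphism $\phi:P\to Q$ one takes $\Gacirc(\phi)$ and checks the square \eqref{eq:defi_alg_monad} commutes, again by the uniqueness part of Proposition \ref{prop:freerGamma}.

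Conversely, I would describe the functor from $\Gacirc$-algebras to unitary TRAPs. Given a $\Gacirc$-algebra $(P,\alpha)$, equip $P$ with the horizontal concatenation $p*q:=\alpha(\iota_P(p)*\iota_P(q))$, the partial trace maps $t_{i,j}(p):=\alpha(t_{i,j}(\iota_P(p)))$, the symmetric actions already present on $P\in\catssm$, and units $I_0:=\alpha(\iota_P\text{-image of }I_0\in\PGr(P)(0,0))$, $I:=\alpha(I)$ where $I\in\PGr(P)(1,1)$ is the identity graph with no vertices. One verifies the TRAP axioms of Definition \ref{defi:Trap} one by one: associativity, unitality, and the compatibility axioms 2.(a)--(d) and 3.(a)--(c) all hold in $\Gacirc(P)$ for purely combinatorial reasons (Proposition \ref{prop:PROPgraphs}), and then transfer to $P$ using that $\alpha$ respects $\mu$ and $\nu$ — the key identity is $\alpha\circ\Gacirc(\alpha)=\alpha\circ\mu_P$, which lets one rewrite $\alpha$ applied to a multi-vertex graph (each vertex decorated by an element of $P$) as iterated applications of $\alpha$ to one-operation graphs, exactly mirroring how each TRAP axiom is built from the generating operations. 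The unitality axiom for $I$ uses the second part of Lemma \ref{lemmeaxiomessimples} reduced to $t_{1,2}(I*p)=p$, which holds in $\PGr(P)$ by the proof of Proposition \ref{prop:PROPgraphs}. For a morphism $\psi:(P,\alpha)\to(Q,\beta)$ of $\Gacirc$-algebras, one checks it is a unitary TRAP morphism by specialising the square \eqref{eq:defi_alg_monad} to one-operation graphs.

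Finally I would check that the two functors are mutually inverse. In one direction, starting from a unitary TRAP $P$, passing to $(P,\alpha_P)$ and back recovers the original operations because $p*q=\alpha_P(\iota_P(p)*\iota_P(q))=\Phi_P(\iota_P(p))*\Phi_P(\iota_P(q))=p*q$ (using that $\Phi_P=\alpha_P\circ q$ is a TRAP morphism with $\Phi_P\circ\iota_P=\mathrm{Id}_P$), and similarly for traces and units. In the other direction, starting from $(P,\alpha)$, the reconstructed structure map is again $\alpha$: both are unitary TRAP morphisms $\Gacirc(P)\to P$ extending $\mathrm{Id}_P$, hence equal by the uniqueness in Proposition \ref{prop:freerGamma}. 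The same arguments, restricting everywhere from $\PGr$ to $\rPGr$ and from $\Gacirc$ to $\rGacirc$ and dropping the unit $I$, give the non-unitary statement. The main obstacle I anticipate is the bookkeeping in showing that each TRAP axiom for $P$ follows from the corresponding combinatorial identity in $\Gacirc(P)$ via the algebra axioms; this is conceptually routine but requires care in expressing each side as $\alpha$ applied to a suitable decorated graph and invoking $\alpha\circ\Gacirc(\alpha)=\alpha\circ\mu_P$ to collapse nested decorations — in other words, verifying that the free-TRAP presentation of $\Gacirc(P)$ by generators (the one-operation graphs) and relations is exactly matched by the Definition \ref{defi:Trap} axioms, so that an algebra structure is the same data as a unitary TRAP structure extending the given $\sym\times\sym^{op}$-module.
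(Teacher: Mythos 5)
Your proposal is correct and follows essentially the same route as the paper: both directions are built exactly as in the paper's proof, with $\alpha_P:\Gacirc(P)\to P$ coming from freeness (Proposition \ref{prop:freerGamma}), the TRAP operations on a $\Gacirc$-algebra $(P,\alpha)$ defined by applying $\alpha$ to one-operation decorated graphs, the axioms transferred via $\alpha\circ\Gacirc(\alpha)=\alpha\circ\mu_P$, and the two functors shown mutually inverse by uniqueness of TRAP morphisms out of the free object generated by the $\nu_P(p)$. The only quibble is your appeal to Corollary \ref{cor:extension_identity} for descending $\Phi_P$ through $\sim_{k,l}$ — that corollary concerns trivial vertex actions, and the correct reference is Proposition \ref{prop:freerGamma} itself, which you also invoke, so the argument stands.
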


\begin{remark}\label{rk:MMS09}
	Wheeled PROPs are defined (for example in \cite{Merkulov2009}) as $\Gacirc$-algebras. Thus Theorem \ref{thm:equivalence_TRAP_wPROP}    precisely says that wheeled PROPs and unitary TRAPs coincide, and that TRAPs can be viewed as non-unitary wheeled PROPs.
\end{remark}
\begin{proof}
Let us start with the TRAP case, i.e. the non-unitary case.

{From Proposition \ref{prop:freerGamma}, we know that   $\rGacirc(P)$ is the free  TRAP
generated by the $\sym\times \sym^{op}$-module $P$. 
Let $\nu_P$ be the canonical injection from $P$ to $\rGacirc(P)$}. If $P$ is a TRAP, then the canonical TRAP morphism
${\alpha_P}:\Gacirc(P)\longrightarrow P$ makes it a $\Gacirc$-algebra, thus defining  a functor from the category
of TRAPs to the category of $\rGacirc$-algebras.\\

Conversely, if $(P,\alpha)$ is a $\rGacirc$-algebra:
\begin{itemize}
\item For any $(p,p')\in P(k,l)\times P(k',l')$,we define  $p*p'$ by  applying  $\alpha$ to the following graph:
\begin{align*}
\begin{tikzpicture}[line cap=round,line join=round,>=triangle 45,x=0.7cm,y=0.7cm]
\clip(0.8,-2.1) rectangle (2.2,1.7);
\draw [line width=.4pt] (0.8,0.)-- (2.2,0.);
\draw [line width=.4pt] (2.2,0.)-- (2.2,-0.5);
\draw [line width=.4pt] (2.2,-0.5)-- (0.8,-0.5);
\draw [line width=.4pt] (0.8,-0.5)-- (0.8,0.);
\draw (1.2,0.1) node[anchor=north west] {\scriptsize $p$};
\draw [->,line width=.4pt] (1.,0.) -- (1.,1.);
\draw [->,line width=.4pt] (2.,0.) -- (2.,1.);
\draw [->,line width=.4pt] (1.,-1.5) -- (1,-0.5);
\draw [->,line width=.4pt] (2.,-1.5) -- (2.,-0.5);
\draw (0.7,-1.4) node[anchor=north west] {\scriptsize $1$};
\draw (1.1,-1.6) node[anchor=north west] {\scriptsize $\ldots$};
\draw (1.7,-1.4) node[anchor=north west] {\scriptsize $k$};
\draw (0.7,1.7) node[anchor=north west] {\scriptsize $1$};
\draw (1.05,1.5) node[anchor=north west] {\scriptsize $\ldots$};
\draw (1.7,1.7) node[anchor=north west] {\scriptsize $l$};
\end{tikzpicture}
\begin{tikzpicture}[line cap=round,line join=round,>=triangle 45,x=0.7cm,y=0.7cm]
\clip(-0.3,-2.1) rectangle (3.4,1.7);
\draw [line width=.4pt] (0.8,0.)-- (2.2,0.);
\draw [line width=.4pt] (2.2,0.)-- (2.2,-0.5);
\draw [line width=.4pt] (2.2,-0.5)-- (0.8,-0.5);
\draw [line width=.4pt] (0.8,-0.5)-- (0.8,0.);
\draw (1.2,0.2) node[anchor=north west] {\scriptsize $p'$};
\draw [->,line width=.4pt] (1.,0.) -- (1.,1.);
\draw [->,line width=.4pt] (2.,0.) -- (2.,1.);
\draw [->,line width=.4pt] (1.,-1.5) -- (1,-0.5);
\draw [->,line width=.4pt] (2.,-1.5) -- (2.,-0.5);
\draw (0.1,-1.4) node[anchor=north west] {\scriptsize $k+1$};
\draw (1.1,-1.6) node[anchor=north west] {\scriptsize $\ldots$};
\draw (1.7,-1.35) node[anchor=north west] {\scriptsize $k+k'$};
\draw (0.1,1.7) node[anchor=north west] {\scriptsize $l+1$};
\draw (1.05,1.5) node[anchor=north west] {\scriptsize $\ldots$};
\draw (1.7,1.75) node[anchor=north west] {\scriptsize $l+l'$};
\end{tikzpicture}\end{align*}
\item For any $p\in P(k,l)$, for any $(i,j)\in [k]\times [l]$, we define $t_{i,j}(p)$ by the application of $\alpha$ to the following graph:
\begin{align*}
\begin{tikzpicture}[line cap=round,line join=round,>=triangle 45,x=0.7cm,y=0.7cm]
\clip(0.8,-2.6) rectangle (6.5,2.1);
\draw [line width=.4pt] (0.8,0.)-- (5.2,0.);
\draw [line width=.4pt] (5.2,0.)-- (5.2,-0.5);
\draw [line width=.4pt] (5.2,-0.5)-- (0.8,-0.5);
\draw [line width=.4pt] (0.8,-0.5)-- (0.8,0.);
\draw (2.7,0.1) node[anchor=north west] {\scriptsize $p$};
\draw [->,line width=.4pt] (1.,0.) -- (1.,1.);
\draw [->,line width=.4pt] (2.,0.) -- (2.,1.);
\draw [line width=.4pt] (3.,0.) -- (3.,1.5);
\draw [->,line width=.4pt] (4.,0.) -- (4.,1.);
\draw [->,line width=.4pt] (5.,0.) -- (5.,1.);
\draw [->,line width=.4pt] (1.,-1.5) -- (1,-0.5);
\draw [->,line width=.4pt] (2.,-1.5) -- (2.,-0.5);
\draw [->,line width=.4pt] (3.,-2) -- (3,-0.5);
\draw [->,line width=.4pt] (4.,-1.5) -- (4.,-0.5);
\draw [->,line width=.4pt] (5.,-1.5) -- (5.,-0.5);
\draw (0.7,-1.4) node[anchor=north west] {\scriptsize $1$};
\draw (1.1,-1.) node[anchor=north west] {\scriptsize $\ldots$};
\draw (1.2,-1.4) node[anchor=north west] {\scriptsize $i-1$};
\draw (3.7,-1.4) node[anchor=north west] {\scriptsize $i$};
\draw (4.1,-1.) node[anchor=north west] {\scriptsize $\ldots$};
\draw (4.5,-1.4) node[anchor=north west] {\scriptsize $k-1$};
\draw (0.7,1.6) node[anchor=north west] {\scriptsize $1$};
\draw (1.1,0.6) node[anchor=north west] {\scriptsize $\ldots$};
\draw (1.2,1.6) node[anchor=north west] {\scriptsize $j-1$};
\draw (3.7,1.6) node[anchor=north west] {\scriptsize $j$};
\draw (4.1,0.6) node[anchor=north west] {\scriptsize $\ldots$};
\draw (4.5,1.6) node[anchor=north west] {\scriptsize $l-1$};
\draw [shift={(3.5,-2.)},line width=0.4pt]  plot[domain=3.141592653589793:4.71238898,variable=\t]({1.*0.5*cos(\t r)+0.*0.5*sin(\t r)},{0.*0.5*cos(\t r)+1.*0.5*sin(\t r)});
\draw [line width=.4pt] (3.5,-2.5) -- (6.,-2.5);
\draw [shift={(6.,-2.)},line width=0.4pt]  plot[domain=4.71238898:6.283185307,variable=\t]({1.*0.5*cos(\t r)+0.*0.5*sin(\t r)},{0.*0.5*cos(\t r)+1.*0.5*sin(\t r)});
\draw [shift={(3.5,1.5)},line width=0.4pt]  plot[domain=1.570796327:3.141592653589793,variable=\t]({1.*0.5*cos(\t r)+0.*0.5*sin(\t r)},{0.*0.5*cos(\t r)+1.*0.5*sin(\t r)});
\draw [line width=.4pt] (3.5,2.) -- (6.,2.);
\draw [shift={(6,1.5)},line width=0.4pt]  plot[domain=0.:1.570796327,variable=\t]({1.*0.5*cos(\t r)+0.*0.5*sin(\t r)},{0.*0.5*cos(\t r)+1.*0.5*sin(\t r)});
\draw [line width=.4pt] (6.5,-2.) -- (6.5,1.5);
\end{tikzpicture}
\end{align*}
\end{itemize}
Let us prove some of the axioms of TRAPs for $P$. The others can be proved in the same way and are left to the reader.

1. holds by Proposition \ref{prop:two_endofunctors}.

2. (a):  let $(p,p',p'')\in P(k,l)\times P(k',l')\times P(k'',l'')$. Then $(p*p')*p''$ is obtained by the application
of $\alpha_P$ to the graph:
\begin{align*}
\begin{tikzpicture}[line cap=round,line join=round,>=triangle 45,x=0.7cm,y=0.7cm]
\clip(0.8,-1.5) rectangle (2.4,1.);
\draw [line width=.4pt] (0.8,0.)-- (2.2,0.);
\draw [line width=.4pt] (2.2,0.)-- (2.2,-0.5);
\draw [line width=.4pt] (2.2,-0.5)-- (0.8,-0.5);
\draw [line width=.4pt] (0.8,-0.5)-- (0.8,0.);
\draw (0.8,0.15) node[anchor=north west] {\scriptsize $p*p'$};
\draw [->,line width=.4pt] (1.,0.) -- (1.,1.);
\draw [->,line width=.4pt] (2.,0.) -- (2.,1.);
\draw [->,line width=.4pt] (1.,-1.5) -- (1,-0.5);
\draw [->,line width=.4pt] (2.,-1.5) -- (2.,-0.5);
\draw (1.1,-0.9) node[anchor=north west] {\scriptsize $\ldots$};
\draw (1.1,0.6) node[anchor=north west] {\scriptsize $\ldots$};
\end{tikzpicture}
\begin{tikzpicture}[line cap=round,line join=round,>=triangle 45,x=0.7cm,y=0.7cm]
\clip(0.8,-1.5) rectangle (2.4,1.);
\draw [line width=.4pt] (0.8,0.)-- (2.2,0.);
\draw [line width=.4pt] (2.2,0.)-- (2.2,-0.5);
\draw [line width=.4pt] (2.2,-0.5)-- (0.8,-0.5);
\draw [line width=.4pt] (0.8,-0.5)-- (0.8,0.);
\draw (1.2,0.15) node[anchor=north west] {\scriptsize $p''$};
\draw [->,line width=.4pt] (1.,0.) -- (1.,1.);
\draw [->,line width=.4pt] (2.,0.) -- (2.,1.);
\draw [->,line width=.4pt] (1.,-1.5) -- (1,-0.5);
\draw [->,line width=.4pt] (2.,-1.5) -- (2.,-0.5);
\draw (1.1,-0.9) node[anchor=north west] {\scriptsize $\ldots$};
\draw (1.1,0.6) node[anchor=north west] {\scriptsize $\ldots$};
\end{tikzpicture}
\end{align*}
(For the sake of simplicity, we delete the indices of the input and output edges of this graph: they are always indexed from left to right).
Hence, $(p*p')*p''$ is obtained by application of $\alpha\circ \Gacirc(\alpha)$ to the graph:
\begin{align*}
\begin{tikzpicture}[line cap=round,line join=round,>=triangle 45,x=0.7cm,y=0.7cm]
\clip(0.5,-3.) rectangle (7.,2.5);
\draw [line width=.4pt] (0.8,0.)-- (2.2,0.);
\draw [line width=.4pt] (2.2,0.)-- (2.2,-0.5);
\draw [line width=.4pt] (2.2,-0.5)-- (0.8,-0.5);
\draw [line width=.4pt] (0.8,-0.5)-- (0.8,0.);
\draw (1.2,0.05) node[anchor=north west] {\scriptsize $p$};
\draw [->,line width=.4pt] (1.,0.) -- (1.,1.);
\draw [->,line width=.4pt] (2.,0.) -- (2.,1.);
\draw [->,line width=.4pt] (1.,-1.5) -- (1,-0.5);
\draw [->,line width=.4pt] (2.,-1.5) -- (2.,-0.5);
\draw (1.1,-0.9) node[anchor=north west] {\scriptsize $\ldots$};
\draw (1.1,0.6) node[anchor=north west] {\scriptsize $\ldots$};
\draw [line width=.4pt] (2.8,0.)-- (4.2,0.);
\draw [line width=.4pt] (4.2,0.)-- (4.2,-0.5);
\draw [line width=.4pt] (4.2,-0.5)-- (2.8,-0.5);
\draw [line width=.4pt] (2.8,-0.5)-- (2.8,0.);
\draw (3.2,0.15) node[anchor=north west] {\scriptsize $p'$};
\draw [->,line width=.4pt] (3.,0.) -- (3.,1.);
\draw [->,line width=.4pt] (4.,0.) -- (4.,1.);
\draw [->,line width=.4pt] (3.,-1.5) -- (3,-0.5);
\draw [->,line width=.4pt] (4.,-1.5) -- (4.,-0.5);
\draw (3.1,-0.9) node[anchor=north west] {\scriptsize $\ldots$};
\draw (3.1,0.6) node[anchor=north west] {\scriptsize $\ldots$};
\draw [line width=.4pt] (5.3,0.)-- (6.7,0.);
\draw [line width=.4pt] (6.7,0.)-- (6.7,-0.5);
\draw [line width=.4pt] (6.7,-0.5)-- (5.3,-0.5);
\draw [line width=.4pt] (5.3,-0.5)-- (5.3,0.);
\draw (5.7,0.15) node[anchor=north west] {\scriptsize $p''$};
\draw [->,line width=.4pt] (5.5,0.) -- (5.5,1.);
\draw [->,line width=.4pt] (6.5,0.) -- (6.5,1.);
\draw [->,line width=.4pt] (5.5,-1.5) -- (5.5,-0.5);
\draw [->,line width=.4pt] (6.5,-1.5) -- (6.5,-0.5);
\draw (5.6,-0.9) node[anchor=north west] {\scriptsize $\ldots$};
\draw (5.6,0.6) node[anchor=north west] {\scriptsize $\ldots$};
\draw [line width=.4pt] (0.5,-2) -- (4.5,-2);
\draw [line width=.4pt] (4.5,-2) -- (4.5,1.5);
\draw [line width=.4pt] (4.5,1.5) -- (0.5,1.5);
\draw [line width=.4pt] (0.5,1.5) -- (0.5,-2.);
\draw [->,line width=.4pt] (1.,-3.) -- (1.,-2.);
\draw [->,line width=.4pt] (1.,1.5) -- (1.,2.5);
\draw [->,line width=.4pt] (4.,-3.) -- (4.,-2.);
\draw [->,line width=.4pt] (4.,1.5) -- (4.,2.5);
\draw (2.,-2.4) node[anchor=north west] {\scriptsize $\ldots$};
\draw (2.,2.2) node[anchor=north west] {\scriptsize $\ldots$};
\draw [line width=.4pt] (5,-2) -- (7,-2);
\draw [line width=.4pt] (7,-2) -- (7,1.5);
\draw [line width=.4pt] (7,1.5) -- (5,1.5);
\draw [line width=.4pt] (5,1.5) -- (5,-2);
\draw [->,line width=.4pt] (5.5,-3.) -- (5.5,-2.);
\draw [->,line width=.4pt] (6.5,1.5) -- (6.5,2.5);
\draw [->,line width=.4pt] (6.5,-3.) -- (6.5,-2.);
\draw [->,line width=.4pt] (5.5,1.5) -- (5.5,2.5);
\draw (5.5,-2.4) node[anchor=north west] {\scriptsize $\ldots$};
\draw (5.5,2.2) node[anchor=north west] {\scriptsize $\ldots$};
\end{tikzpicture}
\end{align*}
Note that for the second connected component of this graph, this comes from:
\[\alpha \circ \Gacirc(\alpha)\circ \Gacirc(\nu_P)(p'')=\alpha \circ \Gacirc(\alpha \circ \nu_P)(p'')
=\alpha \circ \Gacirc(\mathrm{Id}_P)(p'')=\alpha(p'').\]
As $\alpha \circ \Gacirc(\alpha)=\alpha \circ \mu_P$, $(p*p')*p''$ is obtained by applying $\alpha$ to the graph:
\begin{align*}
\begin{tikzpicture}[line cap=round,line join=round,>=triangle 45,x=0.7cm,y=0.7cm]
\clip(0.8,-1.5) rectangle (2.4,1.);
\draw [line width=.4pt] (0.8,0.)-- (2.2,0.);
\draw [line width=.4pt] (2.2,0.)-- (2.2,-0.5);
\draw [line width=.4pt] (2.2,-0.5)-- (0.8,-0.5);
\draw [line width=.4pt] (0.8,-0.5)-- (0.8,0.);
\draw (1.2,0.05) node[anchor=north west] {\scriptsize $p$};
\draw [->,line width=.4pt] (1.,0.) -- (1.,1.);
\draw [->,line width=.4pt] (2.,0.) -- (2.,1.);
\draw [->,line width=.4pt] (1.,-1.5) -- (1,-0.5);
\draw [->,line width=.4pt] (2.,-1.5) -- (2.,-0.5);
\draw (1.1,-0.9) node[anchor=north west] {\scriptsize $\ldots$};
\draw (1.1,0.6) node[anchor=north west] {\scriptsize $\ldots$};
\end{tikzpicture}
\begin{tikzpicture}[line cap=round,line join=round,>=triangle 45,x=0.7cm,y=0.7cm]
\clip(0.8,-1.5) rectangle (2.4,1.);
\draw [line width=.4pt] (0.8,0.)-- (2.2,0.);
\draw [line width=.4pt] (2.2,0.)-- (2.2,-0.5);
\draw [line width=.4pt] (2.2,-0.5)-- (0.8,-0.5);
\draw [line width=.4pt] (0.8,-0.5)-- (0.8,0.);
\draw (1.2,0.15) node[anchor=north west] {\scriptsize $p'$};
\draw [->,line width=.4pt] (1.,0.) -- (1.,1.);
\draw [->,line width=.4pt] (2.,0.) -- (2.,1.);
\draw [->,line width=.4pt] (1.,-1.5) -- (1,-0.5);
\draw [->,line width=.4pt] (2.,-1.5) -- (2.,-0.5);
\draw (1.1,-0.9) node[anchor=north west] {\scriptsize $\ldots$};
\draw (1.1,0.6) node[anchor=north west] {\scriptsize $\ldots$};
\end{tikzpicture}
\begin{tikzpicture}[line cap=round,line join=round,>=triangle 45,x=0.7cm,y=0.7cm]
\clip(0.8,-1.5) rectangle (2.4,1.);
\draw [line width=.4pt] (0.8,0.)-- (2.2,0.);
\draw [line width=.4pt] (2.2,0.)-- (2.2,-0.5);
\draw [line width=.4pt] (2.2,-0.5)-- (0.8,-0.5);
\draw [line width=.4pt] (0.8,-0.5)-- (0.8,0.);
\draw (1.2,0.15) node[anchor=north west] {\scriptsize $p''$};
\draw [->,line width=.4pt] (1.,0.) -- (1.,1.);
\draw [->,line width=.4pt] (2.,0.) -- (2.,1.);
\draw [->,line width=.4pt] (1.,-1.5) -- (1,-0.5);
\draw [->,line width=.4pt] (2.,-1.5) -- (2.,-0.5);
\draw (1.1,-0.9) node[anchor=north west] {\scriptsize $\ldots$};
\draw (1.1,0.6) node[anchor=north west] {\scriptsize $\ldots$};
\end{tikzpicture}
\end{align*}
The same computation can be carried out for $p*(p'*p'')$, which gives the associativity of $*$.

2. (b):  the unity $I_0$ of the concatenation product of graph is the empty graph, which is the image of the unity of $P$ for the horizontal concatenation under $\alpha$.

2. (c) holds trivially by definition of the horizontal concatenation product on $P$, the $\sym\times\sym^{op}$-module structure of $P$, and the fact that $\rPGr(X)$ is a TRAP.

3. (c): for any $k,l,k',l'\geq 1$, for any $i\in [k]$, $j\in [l]$, for any $p\in P(k,l)$, $p'\in P(k',l')$, $t_{i,j}(p*p')$ is the image under $\alpha_P$ of the graph
\begin{align} \label{graph:compatibility_tij_vert_conc1}
\substack{\hspace{1cm} \\ \begin{tikzpicture}[line cap=round,line join=round,>=triangle 45,x=0.7cm,y=0.7cm]
\clip(0.8,-2.6) rectangle (6.5,2.1);
\draw [line width=.4pt] (0.8,0)-- (5.2,0.);
\draw [line width=.4pt] (5.2,0.)-- (5.2,-0.5);
\draw [line width=.4pt] (5.2,-0.5)-- (0.8,-0.5);
\draw [line width=.4pt] (0.8,-0.5)-- (0.8,0.);
\draw (2.7,0.1) node[anchor=north west] {\scriptsize $p$};
\draw [->,line width=.4pt] (1.,0.) -- (1.,1.);
\draw [->,line width=.4pt] (2.,0.) -- (2.,1.);
\draw [line width=.4pt] (3.,0.) -- (3.,1.5);
\draw [->,line width=.4pt] (4.,0.) -- (4.,1.);
\draw [->,line width=.4pt] (5.,0.) -- (5.,1.);
\draw [->,line width=.4pt] (1.,-1.5) -- (1,-0.5);
\draw [->,line width=.4pt] (2.,-1.5) -- (2.,-0.5);
\draw [->,line width=.4pt] (3.,-2) -- (3,-0.5);
\draw [->,line width=.4pt] (4.,-1.5) -- (4.,-0.5);
\draw [->,line width=.4pt] (5.,-1.5) -- (5.,-0.5);
\draw (0.7,-1.4) node[anchor=north west] {\scriptsize $1$};
\draw (1.1,-1.) node[anchor=north west] {\scriptsize $\ldots$};
\draw (1.2,-1.4) node[anchor=north west] {\scriptsize $i-1$};
\draw (3.7,-1.4) node[anchor=north west] {\scriptsize $i$};
\draw (4.1,-1.) node[anchor=north west] {\scriptsize $\ldots$};
\draw (4.5,-1.4) node[anchor=north west] {\scriptsize $k-1$};
\draw (0.7,1.6) node[anchor=north west] {\scriptsize $1$};
\draw (1.1,0.6) node[anchor=north west] {\scriptsize $\ldots$};
\draw (1.2,1.6) node[anchor=north west] {\scriptsize $j-1$};
\draw (3.7,1.6) node[anchor=north west] {\scriptsize $j$};
\draw (4.1,0.6) node[anchor=north west] {\scriptsize $\ldots$};
\draw (4.5,1.6) node[anchor=north west] {\scriptsize $l-1$};
\draw [shift={(3.5,-2.)},line width=0.4pt]  plot[domain=3.141592653589793:4.71238898,variable=\t]({1.*0.5*cos(\t r)+0.*0.5*sin(\t r)},{0.*0.5*cos(\t r)+1.*0.5*sin(\t r)});
\draw [line width=.4pt] (3.5,-2.5) -- (6.,-2.5);
\draw [shift={(6.,-2.)},line width=0.4pt]  plot[domain=4.71238898:6.283185307,variable=\t]({1.*0.5*cos(\t r)+0.*0.5*sin(\t r)},{0.*0.5*cos(\t r)+1.*0.5*sin(\t r)});
\draw [shift={(3.5,1.5)},line width=0.4pt]  plot[domain=1.570796327:3.141592653589793,variable=\t]({1.*0.5*cos(\t r)+0.*0.5*sin(\t r)},{0.*0.5*cos(\t r)+1.*0.5*sin(\t r)});
\draw [line width=.4pt] (3.5,2.) -- (6.,2.);
\draw [shift={(6,1.5)},line width=0.4pt]  plot[domain=0.:1.570796327,variable=\t]({1.*0.5*cos(\t r)+0.*0.5*sin(\t r)},{0.*0.5*cos(\t r)+1.*0.5*sin(\t r)});
\draw [line width=.4pt] (6.5,-2.) -- (6.5,1.5);
\end{tikzpicture}}
\substack{\hspace{1cm} \\ \begin{tikzpicture}[line cap=round,line join=round,>=triangle 45,x=0.7cm,y=0.7cm]
\clip(-0.3,-2.1) rectangle (3.9,2.4);
\draw [line width=.4pt] (0.8,0.4)-- (2.2,0.4);
\draw [line width=.4pt] (2.2,0.4)-- (2.2,-0.1);
\draw [line width=.4pt] (2.2,-0.1)-- (0.8,-0.1);
\draw [line width=.4pt] (0.8,-0.1)-- (0.8,0.4);
\draw (1.2,0.6) node[anchor=north west] {\scriptsize $p'$};
\draw [->,line width=.4pt] (1.,0.4) -- (1.,1.4);
\draw [->,line width=.4pt] (2.,0.4) -- (2.,1.4);
\draw [->,line width=.4pt] (1.,-1.1) -- (1,-0.1);
\draw [->,line width=.4pt] (2.,-1.1) -- (2.,-0.1);
\draw (0.7,-1.0) node[anchor=north west] {\scriptsize $k$};
\draw (1.1,-0.6) node[anchor=north west] {\scriptsize $\ldots$};
\draw (1.4,-0.95) node[anchor=north west] {\scriptsize $k+k'-1$};
\draw (0.7,2.1) node[anchor=north west] {\scriptsize $l$};
\draw (1.05,1.1) node[anchor=north west] {\scriptsize $\ldots$};
\draw (1.4,2.15) node[anchor=north west] {\scriptsize $l+l'-1$};
\end{tikzpicture}}
\end{align}
On the other hand, $t_{i,j}(p)*p'$ is the image under $\alpha_P$ of the graph
\begin{align} \label{graph:compatibility_tij_vert_conc2}
\substack{\hspace{1cm} \\ \begin{tikzpicture}[line cap=round,line join=round,>=triangle 45,x=0.7cm,y=0.7cm]
\clip(0.8,-2.1) rectangle (2.9,1.7);
\draw [line width=.4pt] (0.8,0.)-- (2.2,0.);
\draw [line width=.4pt] (2.2,0.)-- (2.2,-0.5);
\draw [line width=.4pt] (2.2,-0.5)-- (0.8,-0.5);
\draw [line width=.4pt] (0.8,-0.5)-- (0.8,0.);
\draw (0.9,0.1) node[anchor=north west] {\scriptsize $\alpha(\tilde p)$};
\draw [->,line width=.4pt] (1.,0.) -- (1.,1.);
\draw [->,line width=.4pt] (2.,0.) -- (2.,1.);
\draw [->,line width=.4pt] (1.,-1.5) -- (1,-0.5);
\draw [->,line width=.4pt] (2.,-1.5) -- (2.,-0.5);
\draw (0.7,-1.4) node[anchor=north west] {\scriptsize $1$};
\draw (1.1,-0.9) node[anchor=north west] {\scriptsize $\ldots$};
\draw (1.7,-1.4) node[anchor=north west] {\scriptsize $k-1$};
\draw (0.7,1.7) node[anchor=north west] {\scriptsize $1$};
\draw (1.05,0.6) node[anchor=north west] {\scriptsize $\ldots$};
\draw (1.7,1.7) node[anchor=north west] {\scriptsize $l-1$};
\end{tikzpicture}}
\substack{\hspace{1cm} \\ \begin{tikzpicture}[line cap=round,line join=round,>=triangle 45,x=0.7cm,y=0.7cm]
\clip(-0.3,-2.1) rectangle (3.9,1.7);
\draw [line width=.4pt] (0.8,0.)-- (2.2,0.);
\draw [line width=.4pt] (2.2,0.)-- (2.2,-0.5);
\draw [line width=.4pt] (2.2,-0.5)-- (0.8,-0.5);
\draw [line width=.4pt] (0.8,-0.5)-- (0.8,0.);
\draw (1.2,0.2) node[anchor=north west] {\scriptsize $p'$};
\draw [->,line width=.4pt] (1.,0.) -- (1.,1.);
\draw [->,line width=.4pt] (2.,0.) -- (2.,1.);
\draw [->,line width=.4pt] (1.,-1.5) -- (1,-0.5);
\draw [->,line width=.4pt] (2.,-1.5) -- (2.,-0.5);
\draw (0.7,-1.4) node[anchor=north west] {\scriptsize $k$};
\draw (1.1,-0.9) node[anchor=north west] {\scriptsize $\ldots$};
\draw (1.7,-1.35) node[anchor=north west] {\scriptsize $k+k'-1$};
\draw (0.7,1.7) node[anchor=north west] {\scriptsize $l$};
\draw (1.05,0.6) node[anchor=north west] {\scriptsize $\ldots$};
\draw (1.7,1.75) node[anchor=north west] {\scriptsize $l+l'-1$};
\end{tikzpicture}}\end{align}
with $\tilde p$ the image under $\alpha_P$ of the graph
\begin{align*}
\begin{tikzpicture}[line cap=round,line join=round,>=triangle 45,x=0.7cm,y=0.7cm]
\clip(0.8,-2.6) rectangle (6.5,2.1);
\draw [line width=.4pt] (0.8,0.)-- (5.2,0.);
\draw [line width=.4pt] (5.2,0.)-- (5.2,-0.5);
\draw [line width=.4pt] (5.2,-0.5)-- (0.8,-0.5);
\draw [line width=.4pt] (0.8,-0.5)-- (0.8,0.);
\draw (2.7,0.1) node[anchor=north west] {\scriptsize $p$};
\draw [->,line width=.4pt] (1.,0.) -- (1.,1.);
\draw [->,line width=.4pt] (2.,0.) -- (2.,1.);
\draw [line width=.4pt] (3.,0.) -- (3.,1.5);
\draw [->,line width=.4pt] (4.,0.) -- (4.,1.);
\draw [->,line width=.4pt] (5.,0.) -- (5.,1.);
\draw [->,line width=.4pt] (1.,-1.5) -- (1,-0.5);
\draw [->,line width=.4pt] (2.,-1.5) -- (2.,-0.5);
\draw [->,line width=.4pt] (3.,-2) -- (3,-0.5);
\draw [->,line width=.4pt] (4.,-1.5) -- (4.,-0.5);
\draw [->,line width=.4pt] (5.,-1.5) -- (5.,-0.5);
\draw (0.7,-1.4) node[anchor=north west] {\scriptsize $1$};
\draw (1.1,-1.) node[anchor=north west] {\scriptsize $\ldots$};
\draw (1.2,-1.4) node[anchor=north west] {\scriptsize $i-1$};
\draw (3.7,-1.4) node[anchor=north west] {\scriptsize $i$};
\draw (4.1,-1.) node[anchor=north west] {\scriptsize $\ldots$};
\draw (4.5,-1.4) node[anchor=north west] {\scriptsize $k-1$};
\draw (0.7,1.6) node[anchor=north west] {\scriptsize $1$};
\draw (1.1,0.6) node[anchor=north west] {\scriptsize $\ldots$};
\draw (1.2,1.6) node[anchor=north west] {\scriptsize $j-1$};
\draw (3.7,1.6) node[anchor=north west] {\scriptsize $j$};
\draw (4.1,0.6) node[anchor=north west] {\scriptsize $\ldots$};
\draw (4.5,1.6) node[anchor=north west] {\scriptsize $l-1$};
\draw [shift={(3.5,-2.)},line width=0.4pt]  plot[domain=3.141592653589793:4.71238898,variable=\t]({1.*0.5*cos(\t r)+0.*0.5*sin(\t r)},{0.*0.5*cos(\t r)+1.*0.5*sin(\t r)});
\draw [line width=.4pt] (3.5,-2.5) -- (6.,-2.5);
\draw [shift={(6.,-2.)},line width=0.4pt]  plot[domain=4.71238898:6.283185307,variable=\t]({1.*0.5*cos(\t r)+0.*0.5*sin(\t r)},{0.*0.5*cos(\t r)+1.*0.5*sin(\t r)});
\draw [shift={(3.5,1.5)},line width=0.4pt]  plot[domain=1.570796327:3.141592653589793,variable=\t]({1.*0.5*cos(\t r)+0.*0.5*sin(\t r)},{0.*0.5*cos(\t r)+1.*0.5*sin(\t r)});
\draw [line width=.4pt] (3.5,2.) -- (6.,2.);
\draw [shift={(6,1.5)},line width=0.4pt]  plot[domain=0.:1.570796327,variable=\t]({1.*0.5*cos(\t r)+0.*0.5*sin(\t r)},{0.*0.5*cos(\t r)+1.*0.5*sin(\t r)});
\draw [line width=.4pt] (6.5,-2.) -- (6.5,1.5);
\end{tikzpicture}
\end{align*}
The images  of the graphs \eqref{graph:compatibility_tij_vert_conc1} and \eqref{graph:compatibility_tij_vert_conc2} under $\alpha_P$ are identical by the commutativity of the first diagram of \eqref{axiomesalgebres}. The case $l+1\leq j \leq l+l'$ and $k+1\leq i\leq k+k'$ holds by the same argument.

{Let us now focus on the unitary case.

First if $P$ is a unitary TRAP, then by Theorem \ref{freetraps}, $(P,\mu_P)$ is a $\Gacirc$-algebra with exactly the same argument as in the non unitary case.

Conversely, let $(P,\alpha_P)$ be a $\Gacirc$-algebra. We then set 
\begin{equation*}
 I:=\alpha_P(I_1)
\end{equation*}}
where $I_1$ is the graph with only one input-output edge.

Let $p\in P(k,l)$ and $2\leqslant j\leqslant l+1$. Then $t_{1,j}(I*p)$ is obtained by applying
  $\alpha \circ \Gacirc(\alpha)$ to the graph:
\begin{align*}
\begin{tikzpicture}[line cap=round,line join=round,>=triangle 45,x=0.7cm,y=0.7cm]
\clip(-1.7,-3.5) rectangle (2.7,3.2);
\draw [line width=.4pt] (0.8,0.)-- (2.2,0.);
\draw [line width=.4pt] (2.2,0.)-- (2.2,-0.5);
\draw [line width=.4pt] (2.2,-0.5)-- (0.8,-0.5);
\draw [line width=.4pt] (0.8,-0.5)-- (0.8,0.);
\draw (1.2,0.05) node[anchor=north west] {\scriptsize $p$};
\draw [->,line width=.4pt] (1.,0.) -- (1.,1.);
\draw [->,line width=.4pt] (2.,0.) -- (2.,1.);
\draw [->,line width=.4pt] (1.,-1.5) -- (1,-0.5);
\draw [->,line width=.4pt] (2.,-1.5) -- (2.,-0.5);
\draw (1.1,-0.9) node[anchor=north west] {\scriptsize $\ldots$};
\draw (1.1,0.6) node[anchor=north west] {\scriptsize $\ldots$};
\draw [->,line width=.4pt] (0.,-1.5) -- (0.,1.);
\draw [line width=.4pt] (-0.5,-2) -- (2.5,-2);
\draw [line width=.4pt] (2.5,-2) -- (2.5,1.5);
\draw [line width=.4pt] (2.5,1.5) -- (-0.5,1.5);
\draw [line width=.4pt] (-0.5,1.5) -- (-0.5,-2.);
\draw [->,line width=.4pt] (0.,-3.) -- (0.,-2.);
\draw [->,line width=.4pt] (1.,-3.) -- (1.,-2.);
\draw [->,line width=.4pt] (2.,-3.) -- (2.,-2.);
\draw (1.1,-2.4) node[anchor=north west] {\scriptsize $\ldots$};
\draw [->,line width=.4pt] (0.,1.5) -- (0.,2.5);
\draw [line width=.4pt] (1.,1.5) -- (1.,2.5);
\draw [->,line width=.4pt] (2.,1.5) -- (2.,2.5);
\draw (0.1,2.2) node[anchor=north west] {\scriptsize $\ldots$};
\draw (1.1,2.2) node[anchor=north west] {\scriptsize $\ldots$};
\draw [shift={(-0.5,-3)},line width=0.4pt]  plot[domain=4.71238898:6.283185307,variable=\t]({1.*0.5*cos(\t r)+0.*0.5*sin(\t r)},{0.*0.5*cos(\t r)+1.*0.5*sin(\t r)});
\draw [line width=.4pt] (-0.5,-3.5) -- (-1.,-3.5);
\draw [shift={(-1.,-3)},line width=0.4pt]  plot[domain=3.141592654:4.71238898,variable=\t]({1.*0.5*cos(\t r)+0.*0.5*sin(\t r)},{0.*0.5*cos(\t r)+1.*0.5*sin(\t r)});
\draw [line width=.4pt] (-1.5,-3.) -- (-1.5,2.5);
\draw [shift={(-1.,2.5)},line width=0.4pt]  plot[domain=1.570796327:3.141592654,variable=\t]({1.*0.5*cos(\t r)+0.*0.5*sin(\t r)},{0.*0.5*cos(\t r)+1.*0.5*sin(\t r)});
\draw [line width=.4pt] (-1.,3.) -- (0.5,3.);
\draw [shift={(0.5,2.5)},line width=0.4pt]  plot[domain=0.:1.570796327,variable=\t]({1.*0.5*cos(\t r)+0.*0.5*sin(\t r)},{0.*0.5*cos(\t r)+1.*0.5*sin(\t r)});
\end{tikzpicture}
\end{align*}
where the curved edge relate the first edge at the bottom to the $j$-th edge on the top.
As $\alpha \circ \Gacirc(\alpha)=\alpha \circ \mu_P$, $t_{1,j}(I*p)$ is obtained by application
of $\alpha$ to the graph:
\begin{align*}
\begin{tikzpicture}[line cap=round,line join=round,>=triangle 45,x=0.7cm,y=0.7cm]
\clip(-0.7,-2) rectangle (3.2,1.5);
\draw [line width=.4pt] (0.8,0.)-- (3.2,0.);
\draw [line width=.4pt] (3.2,0.)-- (3.2,-0.5);
\draw [line width=.4pt] (3.2,-0.5)-- (0.8,-0.5);
\draw [line width=.4pt] (0.8,-0.5)-- (0.8,0.);
\draw (1.7,0.05) node[anchor=north west] {\scriptsize $p$};
\draw [->,line width=.4pt] (1.,0.) -- (1.,1.);
\draw [line width=.4pt] (2.,0.) -- (2.,1.);
\draw [->,line width=.4pt] (3.,0.) -- (3.,1.);
\draw [->,line width=.4pt] (1.,-1.5) -- (1,-0.5);
\draw [->,line width=.4pt] (3.,-1.5) -- (3.,-0.5);
\draw (1.6,-0.9) node[anchor=north west] {\scriptsize $\ldots$};
\draw (1.1,0.6) node[anchor=north west] {\scriptsize $\ldots$};
\draw (2.1,0.6) node[anchor=north west] {\scriptsize $\ldots$};
\draw [->,line width=.4pt] (-0.5,-1.5) -- (-0.5,1.);
\draw [shift={(0.,-1.5)},line width=0.4pt]  plot[domain=3.141592654:6.283185307,variable=\t]({1.*0.5*cos(\t r)+0.*0.5*sin(\t r)},{0.*0.5*cos(\t r)+1.*0.5*sin(\t r)});
\draw [line width=.4pt] (0.5,-1.5) -- (0.5,1);
\draw [shift={(1.,1.)},line width=0.4pt]  plot[domain=1.570796327:3.141592654,variable=\t]({1.*0.5*cos(\t r)+0.*0.5*sin(\t r)},{0.*0.5*cos(\t r)+1.*0.5*sin(\t r)});
\draw [line width=.4pt] (1.,1.5) -- (1.5,1.5);
\draw [shift={(1.5,1.)},line width=0.4pt]  plot[domain=0.:1.570796327,variable=\t]({1.*0.5*cos(\t r)+0.*0.5*sin(\t r)},{0.*0.5*cos(\t r)+1.*0.5*sin(\t r)});
\end{tikzpicture}
\end{align*}
where the curved edge relate the first edge on the bottom to the $j$-th edge on the top (note that this edge is also
the $(j-1)$-th outgoing the vertex decorated by $p$). As $\alpha$ is a $\sym\times \sym^{op}$ morphism,
we obtain that this is $(1,\ldots,j-1)\cdot \alpha \circ \nu_P(p)$, that is to say $(1,\ldots,j-1)\cdot p$. 

In this way, we define a functor from the category of $r\Gacirc$-algebras to the category of TRAPs.
In the same way, we define a functor from the category of $\Gacirc$-algebras to the category of unitary TRAPs.

{We obtain in this way two functors
\begin{align*}
\mathcal{F}&:\Trap\longrightarrow\rGacirc-\mathbf{Alg},&
\mathcal{G}&:\rGacirc-\mathbf{Alg}\longrightarrow\Trap.
\end{align*}
Let $P$ be a {TRAP} and $P'$ the {TRAP} $\mathcal{G}\circ \mathcal{F}(P)$, with concatenation $*'$
and trace operators $t'_{i,j}$. 
We set $\mathcal{F}(P):=(P,\alpha_P)$: in other words, $\alpha_P$ is the TRAP morphism from $\rGacirc(P)$ to $P$
which is the identity on $P$. For any $p,q\in P$:
\[p*'q=\alpha_P (\nu_P(p)*\nu_P(q))={p*q},\]
where in the middle term $*$ is the concatenation in the TRAP $\rGacirc(P)$ {and where we used that $\alpha_P$ is a TRAP morphism by Proposition \ref{prop:freerGamma}}. Therefore, $*=*'$.
If $p\in P(k,l)$, $(i,j)\in [k]\times [l]$, then $t'_{i,j}$ is obtained by the application of $\alpha_P$ to the graph:
\[\begin{tikzpicture}[line cap=round,line join=round,>=triangle 45,x=0.7cm,y=0.7cm]
\clip(0.8,-2.6) rectangle (6.5,2.1);
\draw [line width=.4pt] (0.8,0.)-- (5.2,0.);
\draw [line width=.4pt] (5.2,0.)-- (5.2,-0.5);
\draw [line width=.4pt] (5.2,-0.5)-- (0.8,-0.5);
\draw [line width=.4pt] (0.8,-0.5)-- (0.8,0.);
\draw (2.7,0.1) node[anchor=north west] {\scriptsize $p$};
\draw [->,line width=.4pt] (1.,0.) -- (1.,1.);
\draw [->,line width=.4pt] (2.,0.) -- (2.,1.);
\draw [->,line width=.4pt] (3.,0.) -- (3.,1.5);
\draw [->,line width=.4pt] (4.,0.) -- (4.,1.);
\draw [->,line width=.4pt] (5.,0.) -- (5.,1.);
\draw [->,line width=.4pt] (1.,-1.5) -- (1,-0.5);
\draw [->,line width=.4pt] (2.,-1.5) -- (2.,-0.5);
\draw [->,line width=.4pt] (3.,-2) -- (3,-0.5);
\draw [->,line width=.4pt] (4.,-1.5) -- (4.,-0.5);
\draw [->,line width=.4pt] (5.,-1.5) -- (5.,-0.5);
\draw (0.7,-1.4) node[anchor=north west] {\scriptsize $1$};
\draw (1.1,-1.) node[anchor=north west] {\scriptsize $\ldots$};
\draw (1.2,-1.4) node[anchor=north west] {\scriptsize $i-1$};
\draw (3.7,-1.4) node[anchor=north west] {\scriptsize $i$};
\draw (4.1,-1.) node[anchor=north west] {\scriptsize $\ldots$};
\draw (4.5,-1.4) node[anchor=north west] {\scriptsize $k-1$};
\draw (0.7,1.7) node[anchor=north west] {\scriptsize $1$};
\draw (1.1,0.6) node[anchor=north west] {\scriptsize $\ldots$};
\draw (1.2,1.7) node[anchor=north west] {\scriptsize $j-1$};
\draw (3.7,1.7) node[anchor=north west] {\scriptsize $j$};
\draw (4.1,0.6) node[anchor=north west] {\scriptsize $\ldots$};
\draw (4.5,1.7) node[anchor=north west] {\scriptsize $l-1$};
\draw [shift={(3.5,-2.)},line width=0.4pt]  plot[domain=3.141592653589793:4.71238898,variable=\t]({1.*0.5*cos(\t r)+0.*0.5*sin(\t r)},{0.*0.5*cos(\t r)+1.*0.5*sin(\t r)});
\draw [line width=.4pt] (3.5,-2.5) -- (6.,-2.5);
\draw [shift={(6.,-2.)},line width=0.4pt]  plot[domain=4.71238898:6.283185307,variable=\t]({1.*0.5*cos(\t r)+0.*0.5*sin(\t r)},{0.*0.5*cos(\t r)+1.*0.5*sin(\t r)});
\draw [shift={(3.5,1.5)},line width=0.4pt]  plot[domain=1.570796327:3.141592653589793,variable=\t]({1.*0.5*cos(\t r)+0.*0.5*sin(\t r)},{0.*0.5*cos(\t r)+1.*0.5*sin(\t r)});
\draw [line width=.4pt] (3.5,2.) -- (6.,2.);
\draw [shift={(6,1.5)},line width=0.4pt]  plot[domain=0.:1.570796327,variable=\t]({1.*0.5*cos(\t r)+0.*0.5*sin(\t r)},{0.*0.5*cos(\t r)+1.*0.5*sin(\t r)});
\draw [line width=.4pt] (6.5,-2.) -- (6.5,1.5);
\end{tikzpicture}\]
which is $t_{i,j}(\nu_P(p))$, where here $t_{i,j}$ is the trace operator of $\rGacirc(P)$. As $\alpha_P$ is a TRAP morphism:
\[t'_{i,j}(p)=\alpha_P \circ t_{i,j}\circ \nu_P(p)=t_{i,j}\circ \alpha_P \circ \nu_P(p)=t_{i,j}(p),\]
so $P'=P$ and $\mathcal{G}\circ \mathcal{F}$ is the identity functor of $\Trap$.

Let now $(P,\alpha)$ be a $\rGacirc$-algebra and let us consider $(P',\alpha')$ be the $\rGacirc$-algebra
$ \mathcal{F}\circ  \mathcal{G}(P)$. Both $\alpha$ and $\alpha'$ are TRAP morphisms from
$\rGacirc(P)$ to $\mathcal{G}(P)$; for any $p\in P$,
\[\alpha \circ \nu_P(p)=\alpha'\circ \nu_P(p)=p.\]
{Since $\rGacirc(P)$  viewed as  a TRAP is generated}  by the elements $\nu_P(p)$, $\alpha=\alpha'$,
it follows that $\mathcal{F}\circ \mathcal{G}$ is the identity functor of $\rGacirc-\mathbf{Alg}$.\\

The proof is similar in the unitary case. }\end{proof}

Remark \ref{rk:MMS09} gives a straightforward Corollary of Theorems \ref{thm:equivalence_TRAP_wPROP} and \ref{freetraps},
 thus confirming previous statements.

\begin{cor}
 $\PGr(X)$ is the free wheeled PROP generated by $X$.
\end{cor}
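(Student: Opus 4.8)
The plan is to combine the two structural results that have already been established in the excerpt. By Theorem~\ref{freetraps}, $\PGr(X)$ is the free unitary TRAP generated by the family $X$, meaning that for every unitary TRAP $P$ and every map $\phi\colon X\longrightarrow P$ of families of sets there is a unique unitary TRAP morphism $\Phi\colon\PGr(X)\longrightarrow P$ with $\Phi\circ\iota=\phi$, where $\iota\colon X\hookrightarrow\PGr(X)$ is the canonical inclusion sending $x\in X(k,l)$ to the one-vertex graph decorated by $x$. By Theorem~\ref{thm:equivalence_TRAP_wPROP} (see also Remark~\ref{rk:MMS09}), the category $\uTrap$ of unitary TRAPs is \emph{isomorphic} to the category of wheeled PROPs, i.e.\ of $\Gacirc$-algebras, and this isomorphism is the identity on underlying families of sets and on morphisms. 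So the only thing to do is to transport the universal property of Theorem~\ref{freetraps} across this isomorphism of categories.

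Concretely, I would argue as follows. Let $\mathsf{W}$ be a wheeled PROP and $\psi\colon X\longrightarrow \mathsf{W}$ a map of families of sets. Under the category isomorphism of Theorem~\ref{thm:equivalence_TRAP_wPROP}, $\mathsf{W}$ corresponds to a unitary TRAP $P$ with the same underlying family of sets, so $\psi$ is equally a map $X\longrightarrow P$. Theorem~\ref{freetraps} produces a unique unitary TRAP morphism $\Phi\colon\PGr(X)\longrightarrow P$ extending $\psi$; applying the inverse of the category isomorphism (again the identity on objects and morphisms) turns $\Phi$ into a morphism of wheeled PROPs $\PGr(X)\longrightarrow\mathsf{W}$ extending $\psi$. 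Uniqueness is inherited verbatim: any wheeled-PROP morphism extending $\psi$ is, under the isomorphism, a unitary TRAP morphism extending $\psi$, hence equal to $\Phi$. This shows $\PGr(X)$ together with $\iota$ satisfies the universal property defining the free wheeled PROP on $X$, which is exactly the claim.

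The one subtlety worth spelling out — and the only place where there is any content at all — is that ``free wheeled PROP generated by the family $X$'' must be interpreted with respect to the same kind of generating data as in Theorem~\ref{freetraps}, namely a family of \emph{sets} $X=(X(k,l))_{k,l\geq 0}$ with no symmetric-group action, rather than a $\sym\times\sym^{op}$-module; the corresponding left adjoint is to the forgetful functor from wheeled PROPs to families of sets, which factors through $\Mod_\sym$. Once this is fixed, there is genuinely nothing to prove beyond invoking the two cited theorems, since an isomorphism of categories preserves and reflects all universal properties. (If instead one wants the free wheeled PROP on a $\sym\times\sym^{op}$-module $X$, one uses Proposition~\ref{prop:freerGamma} in place of Theorem~\ref{freetraps} in exactly the same way, with $\Gacirc(X)$ playing the role of $\PGr(X)$ and $\nu_X$ the unit.) So the ``main obstacle'' is not an obstacle but merely a matter of stating precisely which free object is meant; the proof itself is a two-line chase through the equivalence $\uTrap\simeq\{\text{wheeled PROPs}\}$.
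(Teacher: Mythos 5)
Your argument is exactly the paper's: the corollary is deduced by combining Theorem \ref{freetraps} ($\PGr(X)$ is the free unitary TRAP on the family of sets $X$) with the isomorphism of categories between unitary TRAPs and wheeled PROPs from Theorem \ref{thm:equivalence_TRAP_wPROP}, transporting the universal property across it. Your added remark distinguishing freeness over a family of sets from freeness over a $\sym\times\sym^{op}$-module (where Proposition \ref{prop:freerGamma} and $\Gacirc(X)$ take over) is a correct and welcome clarification, but the route is the same as the paper's.
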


\begin{remark}
The monad $\Gacirc$ contains an interesting sub-monad, formed by graphs without any oriented cycle (which includes also loops).
This submonad is denoted by $\Ga$. It is well-known that $\Ga$ is the monad of PROPs \cite{Markl}.
Hence, unitary  TRAPs are PROPS. In particular,   a unitary TRAP $P$ inherits a vertical composition denoted by $\circ$,
which is the one described in Proposition \ref{propverticalconcatenation} in the more general frame of (non unitary) TRAPs.
\end{remark}

\appendix

\section{Appendix: topologies on tensor products} \label{section:topologies_tens_prod}

Tensor products ot topological spaces can be equipped with various topologies. A first possibility is the so-called \textbf{$\epsilon$-topology}; \cite[Definition 43.1]{Treves67}. For two   topological vector spaces $E$ and $F$, one 
can show (\cite[Proposition 42.4]{Treves67}) the isomorphism  of vector spaces
$E\otimes F\simeq \mathcal{B}^c(E'_\sigma\times F'_\sigma,\K)$ where 
$\mathcal{B}^c(E'_\sigma\times F'_\sigma,\K)$ denotes the space of continuous bilinear maps from $E'_\sigma\times F'_\sigma$ to $\K$ and $E'_\sigma$ (resp. 
$F'_\sigma$)  the topological dual of $E$ (resp. $F$) for $\sigma$, the weak topology. 

Recall that a bilinear map $f:E\times F\longrightarrow K$ is called separately continuous if, 
for any pair $(x,y)\in E\times F$, the maps $z\to f(x,z)$ and $z\to f(z,y)$ are continuous. We then clearly have that continuous bilinear maps 
build a linear subspace of the space $\mathcal{B}^{sc}(E\times F,\K)$ of separately continuous bilinear maps.

The space  $\mathcal{B}^{sc}(E\times F,\K)$ can be equipped with the topology of uniform convergence on products of equicontinuous subsets of $E'_\sigma$ with 
equicontinuous subsets of $F'_\sigma$. Recall that, for a topological space $X$ and a topological vector space $G$, a set $S$ of maps from $X$ to $G$ is 
said to be equicontinuous at $x_0\in X$ if, for any $V\subseteq G$ neighbourhood of zero, there is some neighbourhood  $V(x_0)\subseteq X$   of $x_0$, such 
that 
\begin{equation*}
\forall f\in S,~x\in V(x_0) \Rightarrow f(x)-f(x_0)\in V.
\end{equation*}
In our case, $G$ is $\K$ and $X$ is $E_\sigma$ (resp. $F_\sigma$). This topology induces a topology on the subspace 
$\mathcal{B}^c(E'_\sigma\times F'_\sigma,\K)$ and thus on $E\otimes F$. We denote by $E\otimes_\epsilon F$ the topological vector space obtained by 
endowing $E\otimes F$ with this topology.

There is another  topology on $E\otimes F$   called the \textbf{projective topology}; 
\cite[Definition 43.2]{Treves67}.  The projective topology is defined as the strongest 
locally convex topology on $E\otimes F$ such that the canonical map $\phi:E\times F\longrightarrow E\otimes F$ is continuous. 
We write $E\otimes_\pi F$ the topological vector space obtained by 
endowing $E\otimes F$ with this topology.

The neighbourhoods of zero of the projective topology can be simply described in terms of neighbourhoods of zero in $E$ and $V$. A convex subset $S$ of 
$E\otimes F$ containing zero is a neighbourhood of zero if it exist a neighbourhood $U$ (resp. V) of zero in $E$ (resp. $F$) such that 
$U\otimes V:=\{u\otimes v|u\in U\wedge v\in V\}\subseteq S$.

Various  topologies can be defined on the vector space $E\otimes F$ for $E$ and $F$ two topological vector spaces. However the projective topology and the $\epsilon$-topology play an important special role since they allow to define nuclear spaces (see Definition \ref{defi:nuclear_spaces}).

\section{Appendix: definition of the partial trace maps on $\Gr$} \label{Appendix:B}

We give a rigorous definition of the partial trace maps on the space of graphs $\Gr$, which were only loosely defined in the bulk of the article.

Let $G\in \Gr(k,l)$ with $k,l\geqslant 1$, $i\in [k]$ and $j\in [l]$. We put $e_i=\alpha_G^{-1}(i)$
and $f_j=\beta_G^{-1}(j)$. We define the graph $G'=t_{i,j}(G)$ in the following way:
\begin{enumerate}
	\item If $e_i\in I(G)$ and $f_j\in O(G)$, then:
	\begin{align*}
	V(G')&=V(G),&E(G')&=E(G)\sqcup \{(e_i,f_j)\},\\
	I(G')&=I(G)\setminus\{e_i\},& O(G')&=O(G)\setminus\{f_j\},\\
	IO(G')&=IO(G),&L(G')&=L(G),\\
	s_{G'}(e)&=\begin{cases}
	s_G(f_j)\mbox{ if }e=(e_i,f_j),\\
	s_G(e)\mbox{ otherwise},
	\end{cases}&
	t_{G'}(e)&=\begin{cases}
	t_G(e_i)\mbox{ if }e=(e_i,f_j),\\
	t_G(e)\mbox{ otherwise},
	\end{cases}\\
	\alpha_{G'}(e)&=\begin{cases}
	\alpha_G(e)\mbox{ if }\alpha_G(e)<i,\\
	\alpha_G(e)-1\mbox{  if }\alpha_G(e)\geqslant i,
	\end{cases}&
	\beta_{G'}(e)&=\begin{cases}
	\beta_G(e)\mbox{ if }\beta_G(e)<j,\\
	\beta_G(e)-1\mbox{ if }\beta_G(e)\geqslant j.
	\end{cases}
	\end{align*}
	
	\item If $e_i\in IO(G)$ and $f_j\in O(G)$, then:
	\begin{align*}
	V(G')&=V(G),&E(G')&=E(G),\\
	I(G')&=I(G),& O(G')&=O(G)\setminus\{f_j\}\sqcup \{(e_i,f_j,)\},\\
	IO(G')&=IO(G)\setminus\{e_i\},&L(G')&=L(G),\\
	s_{G'}(e)&=\begin{cases}
	s_G(f_j)\mbox{ if }e=(e_i,f_j),\\
	s_G(e)\mbox{ otherwise},
	\end{cases}&
	t_{G'}(e)&=t_G(e),\\
	\alpha_{G'}(e)&=\begin{cases}
	\alpha_G(e)\mbox{ if }\alpha_G(e)<i,\\
	\alpha_G(e)-1\mbox{  if }\alpha_G(e)\geqslant i,
	\end{cases}&
	\beta_{G'}(e)&=\begin{cases}
	\beta_G(e_i)\mbox{ if }e=(e_i,f_j)\mbox{ and }\beta_G(e_i)<j,\\
	\beta_G(e_i)-1\mbox{ if }e=(e_i,f_j)\mbox{ and }\beta_G(e_i)\geqslant j,\\
	\beta_G(e)\mbox{ if }e\neq(e_i,f_j)\mbox{ and }\beta_G(e)<j,\\
	\beta_G(e)-1\mbox{ if }e\neq(e_i,f_j)\mbox{ and }\beta_G(e)\geqslant j.
	\end{cases}
	\end{align*}
	
	\item If $e_i\in I(G)$ and $f_j\in IO(G)$, then:
	\begin{align*}
	V(G')&=V(G),&E(G')&=E(G),\\
	I(G')&=I(G)\setminus\{e_i\}\sqcup \{(e_i,f_j)\},& O(G')&=O(G),\\
	IO(G')&=IO(G)\setminus\{f_j\},&L(G')&=L(G),\\
	s_{G'}(e)&=s_G(e),&
	t_{G'}(e)&=\begin{cases}
	t_G(e_i)\mbox{ if }e=(e_i,f_j),\\
	t_G(e)\mbox{ otherwise},
	\end{cases}\\
	\alpha_{G'}(e)&=\begin{cases}
	\alpha_G(f_i)\mbox{ if }e=(e_i,f_j)\mbox{ and }\alpha_G(f_j)<i,\\
	\alpha_G(f_i)-1\mbox{ if }e=(e_i,f_j)\mbox{ and }\alpha_G(f_j)\geqslant i,\\
	\alpha_G(e)\mbox{ if }e\neq(e_i,f_j)\mbox{ and }\alpha_G(e)<i,\\
	\alpha_G(e)-1\mbox{ if }e\neq(e_i,f_j)\mbox{ and }\alpha_G(e)\geqslant i,
	\end{cases}&
	\beta_{G'}(e)&=\begin{cases}
	\beta_G(e)\mbox{ if }\beta_G(e)<j,\\
	\beta_G(e)-1\mbox{ if }\beta_G(e)\geqslant j.
	\end{cases}
	\end{align*}

	\item If $e_i\in IO(G)$, $f_j\in IO(G)$ and $e_i\neq f_j$, then:
	\begin{align*}
	V(G')&=V(G),&E(G')&=E(G),\\
	I(G')&=I(G),& O(G')&=O(G),\\
	IO(G')&=\{(e_i,f_j)\}\sqcup IO(G)\setminus\{e_i,f_j\},&L(G')&=L(G),\\
	s_{G'}(e)&=s_G(e),&
	t_{G'}(e)&=t_G(e),\\
	\alpha_{G'}(e)&=\begin{cases}
	\alpha_G(f_i)\mbox{ if }e=(e_i,f_j)\mbox{ and }\alpha_G(f_j)<i,\\
	\alpha_G(f_i)-1\mbox{ if }e=(e_i,f_j)\mbox{ and }\alpha_G(f_j)\geqslant i,\\
	\alpha_G(e)\mbox{ if }e\neq(e_i,f_j)\mbox{ and }\alpha_G(e)<i,\\
	\beta_G(e)-1\mbox{ if }e\neq(e_i,f_j)\mbox{ and }\alpha_G(e)\geqslant i,
	\end{cases}\\
	\beta_{G'}(e)&=\begin{cases}
	\beta_G(e_i)\mbox{ if }e=(e_i,f_j)\mbox{ and }\beta_G(e_i)<j,\\
	\beta_G(e_i)-1\mbox{ if }e=(e_i,f_j)\mbox{ and }\beta_G(e_i)\geqslant j,\\
	\beta_G(e)\mbox{ if }e\neq(e_i,f_j)\mbox{ and }\beta_G(e)<j,\\
	\beta_G(e)-1\mbox{ if }e\neq(e_i,f_j)\mbox{ and }\beta_G(e)\geqslant j.
	\end{cases}
	\end{align*}

	\item If $e_i\in IO(G)$, $f_j\in IO(G)$ and $e_i=f_j$, then:
	\begin{align*}
	V(G')&=V(G),&E(G')&=E(G),\\
	I(G')&=I(G),& O(G')&=O(G),\\
	IO(G')&=IO(G)\setminus\{e_i,f_j\},&L(G')&=L(G)\sqcup \{(e_i,f_j)\},\\
	s_{G'}(e)&=s_G(e),&
	t_{G'}(e)&=t_G(e),\\
	\alpha_{G'}(e)&=\begin{cases}
	\alpha_G(e)\mbox{ if }\alpha_G(e)<i,\\
	\alpha_G(e)-1\mbox{  if }\alpha_G(e)\geqslant i,
	\end{cases}&
	\beta_{G'}(e)&=\begin{cases}
	\beta_G(e)\mbox{ if }\beta_G(e)<j,\\
	\beta_G(e)-1\mbox{ if }\beta_G(e)\geqslant j.
	\end{cases}
	\end{align*}
\end{enumerate}

\section{Appendix: Freeness of $\PGr(X)$}

We now give a detailed proof of Theorem \ref{freetraps}.
\begin{proof}
We simultaneously prove the unitary and non-unitary cases.

We first define $\Phi:\PGr(X)\longrightarrow P$ by assigning to
any graph   {$G\in \PGr(X)(k,l)$ (or $G\in\rPGr(X)(k,l)$ for the non-unitary case)
 such that  $\Phi(\sigma\cdot G\cdot \tau)=\sigma\cdot\Phi(G)\cdot \tau$ any $(\sigma,\tau)\in \sym_l\times \sym_k$}.  We proceed by induction on the number $N$  of internal edges of $G$. If $N=0$, then $G$ can be written (non uniquely) as
\[G=\grapheo^{*p}*\sigma\cdot(I^{*q}*G_{k_1,l_1}*\ldots *G_{k_r,l_r})\cdot \tau,\]
where $p,q,r\in  \N_0$ are unique, $(k_i,k_i) \in  \N_0^2$ for any $i$, unique up to a permutation, and $\sigma \in \sym_{q+k_1+\ldots+k_r}$, $\tau\in \sym_{q+l_1+\ldots+l_r}$. Notice that in the non-unitary case we necessarily have $p=q=0$, and we have set in this case $g^{*0}=I_0=\emptyset$, the empty graph, for any graph $g$.

We then put:
\[\Phi(G)=t_{1,1}(I)^{*p}*\sigma\cdot(I^{*q}*x_{k_1,l_1}*\ldots*x_{k_r,l_r})\cdot \tau,\]
{where as before $x^{*0}=I_0$ (the unit for horizontal concatenation in the image $P$ of $\Phi$) for any $x\in P$; and $I$ is now the unit of $P$ in the unitary case.}

Let us prove that this does not depend of the choice of the decomposition of $G$. Such a decomposition is determined  modulo a permutation of the vertices
and of the choice of $\sigma$ and $\tau$. Thus,  we can go from one decomposition of $G$ to any other one by means of a finite number of steps among the following two types:
\begin{enumerate}
\item We consider two decompositions of $G$ of the form
\begin{align*}
G&=\grapheo^{*p}*\sigma\cdot(I^{*q}*G_{k_1,l_1}*\ldots* G_{k_i,l_i}*G_{k_{i+1},l_{i+1}} *\ldots *G_{k_r,l_r})\cdot \tau,\\
G&=\grapheo^{*p}*\sigma'\cdot(I^{*q}*G_{k_1,l_1}*\ldots*G_{k_{i+1},l_{i+1}}* G_{k_i,l_i} *\ldots *G_{k_r,l_r})\cdot \tau',
\end{align*}
with
\begin{align*}
\sigma'&=\sigma(\mathrm{Id}_{q+l_1+\ldots+l_{i-1}}\otimes c_{l_i,l_{i+1}}\otimes \mathrm{Id}_{l_{i+2}+\ldots+l_r}),\\
\tau'&=(\mathrm{Id}_{q+k_1+\ldots+k_{i-1}}\otimes c_{k_{i+1},k_i}\otimes \mathrm{Id}_{k_{i+2}+\ldots+k_r})\tau.
\end{align*}
Then, by commutativity of $*$:
\begin{align*}
&\sigma'\cdot(I^{*q}*x_{k_1,l_1}*\ldots*x_{k_r,l_r})\cdot \tau'\\
&=\sigma\cdot (I^{*q}*x_{k_1,l_1}*\ldots * c_{l_i,l_{i+1}}
\cdot(x_{k_{i+1},l_{i+1}}*x_{k_i,l_i})\cdot c_{k_{i+1},k_i}*\ldots*x_{k_r,l_r})\cdot \tau\\
&=\sigma\cdot (I^{*q}*x_{k_1,l_1}*\ldots *x_{k_i,l_i}*x_{k_{i+1},l_{i+1}}*\ldots *x_{k_r,l_r})\cdot \tau.
\end{align*}
\item We consider two decompositions of $G$ of the form
\begin{align*}
G&=\grapheo^{*p}*\sigma\cdot(I^{*q}*G_{k_1,l_1}*\ldots*G_{k_r,l_r})\cdot \tau,\\
G&=\grapheo^{*p}*\sigma'\cdot(I^{*q}*G_{k_1,l_1}*\ldots*G_{k_r,l_r})\cdot \tau',
\end{align*}
with
\begin{align*}
\sigma'&=\sigma (\sigma_0\otimes \sigma_1\otimes \ldots \otimes \sigma_r),&
\tau'&=(\sigma_0^{-1}\otimes \tau_1\otimes \ldots \otimes \tau_r)\tau',
\end{align*}
with $\sigma_0\in \sym_q$, $\sigma_i\in \sym_{k_i}$ and $\tau_i\in \sym_{l_i}$ if $i\geqslant 1$.
Using the commutativity of $*$ and the invariance of the $x_{k,l}$, we find
\begin{align*}
&\sigma'\cdot (I^{*q}*x_{k_1,l_1}*\ldots*x_{k_r,l_r})\cdot \tau'\\
&=\sigma \cdot (\sigma_0\cdot I^{*q}\cdot \sigma_0^{-1}
*\sigma_1\cdot x_{k_1,l_1}\cdot \tau_1*\ldots*\sigma_r\cdot x_{k_r,l_r}\cdot \tau_r)\cdot \tau\\
&=\sigma\cdot (I^{*q}*x_{k_1,l_1}*\ldots*x_{k_r,l_r})\cdot \tau.
\end{align*}
\end{enumerate}
Notice that setting $p=q=0$ in these computations does not change the result.

Hence, $\Phi(G)$ is well-defined. Moreover, for any  $\tau' \in \sym_k$, $\sigma'\in \sym_l$,  a decomposition  of $G$ of the form
\[G=\grapheo^{*p}*\sigma\cdot(I^{*q}*G_{k_1,l_1}*\ldots* G_{k_i,l_i}*G_{k_{i+1},l_{i+1}} *\ldots *G_{k_r,l_r})\cdot \tau,\]
give rise to a decomposition of $G'=\sigma'\cdot G\cdot \tau'$ given by
\[\grapheo^{*p}*\sigma'\sigma\cdot(I^{*q}*G_{k_1,l_1}*\ldots*G_{k_r,l_r})\cdot \tau\tau',\]
and, by definition of $\Phi(G')$:
\begin{align*}
\Phi(G')&=t_{1,1}(I)^{*p}*\sigma'\sigma\cdot(I^{*q}*x_{k_1,l_1}*\ldots*x_{k_r,l_r})\cdot \tau\tau'\\
&=\sigma'\cdot( t_{1,1}(I)^*p*\sigma\cdot(I^{*q}*x_{k_1,l_1}*\ldots*x_{k_r,l_r})\cdot \tau)*\tau'\\
&=\sigma'\cdot \Phi(G)\cdot \tau'.
\end{align*}
{Here again, the  computations are valid in particular in the case $p=q=0$, so for the non-unitary case.}

Let us assume now that $\Phi(G')$ is defined for any graph with $N-1$ internal edges, for a given $N \geqslant 1$.
Let $G$ be a graph with $N$ internal edges and let $e$ be one of these edges. 
Let $G_e$ be a graph obtained by cutting this edge in two:
\begin{enumerate}
\item $V(G_e)=V(G)$.
\item $E(G_e)=E(G)\setminus \{e\}$, $I(G_e)=I(G)\sqcup \{e\}$, $O(G_e)=O(G)\sqcup \{e\}$, $IO(G_e)=IO(G)$, $L(G_e)=L(G)$.
\item $s_{G_e}=s_G$ and $t_{G_e}=t_G$.
\item For any $e'\in I(G_e)\sqcup IO(G_e)$, for any $f'\in O(G_e)\sqcup IO(G_e)$:
\begin{align*}
\alpha_{G_e}(e')&=\begin{cases}
1\mbox{ if }e'=e,\\
\alpha_{G}(e')+1\mbox{ if }e'\neq e,
\end{cases}
&\beta_{G_e}(f')&=\begin{cases}
1\mbox{ if }f'=e,\\
\beta_{G}(f')+1\mbox{ if }f'\neq e.
\end{cases}
\end{align*}
\end{enumerate}
{Notice that if $G\in\rPGr(X)$ (i.e. $IO(G)=L(G)=\emptyset$) then $G_e$ also lies in $\rPGr(X)$. Then, as before, we can treat the unitary and non-unitary cases simultaneously. In both cases we have} $G=t_{1,1}(G_e)$ and $G_e$ has $N-1$ internal edges. We then put:
\[\Phi(G)=t_{1,1}\circ \Phi(G_e).\]
Let us prove that this does not depend of the choice of $e$. If $e'$ is another internal edge of $G$,
then:
\[(G_e)_{e'}=(12)\cdot (G_{e'})_e\cdot (12),\]
which implies, by definition of $\Phi(G_e)$ and $\Phi(G_{e'})$:
\begin{align*}
t_{1,1}\circ \Phi(G_e)&=t_{1,1}\circ t_{1,1}\circ \Phi((G_e)_{e'})\\
&=t_{1,1}\circ t_{1,1} \circ ((12)\cdot \Phi((G_{e'})_e)\cdot (12))\\
&=t_{1,1}\circ t_{2,2}\circ \Phi((G_{e'})_e)\\
&=t_{1,1}\circ t_{1,1}\circ \Phi((G_{e'})_e)\\
&=t_{1,1}\circ \Phi(G_{e'}).
\end{align*}
So $\Phi(G)$ is well-defined. Let $\sigma \in \sym_k$ and $\tau\in \sym_l$. Then:
\[(\sigma\cdot G\cdot \tau)_e=((1)\otimes \sigma)\cdot (G_e)\cdot ((1)\otimes \tau),\]
so:
\begin{align*}
\Phi(\sigma \cdot G\cdot \tau)&=t_{1,1}\circ \Phi((\sigma\cdot G\cdot \tau)_e)\\
&=t_{1,1}((1)\otimes \sigma)\cdot \Phi(G_e)\cdot ((1)\otimes \tau)\\
&=((1)\otimes \sigma)_1\cdot t_{1,1}\circ \Phi(G_e)\cdot ((1)\otimes \tau)_1\\
&=\sigma\cdot \Phi(G)\cdot \tau.
\end{align*}
where, for $\sigma\in\sym_k$ we use $\sigma_i$ for the permutation in $\sym_{k-1}$ defined by
\begin{equation*}
  \sigma_i(j) = \begin{cases}
               & \sigma(j) \quad\text{if }j\leq i-1, \\
               & \sigma(j-1)\quad\text{if }j\geq i. 
              \end{cases}
\end{equation*}
where $((1)\otimes \tau)_1$ is defined by (\ref{defalphak}).

We have therefore defined a map {$\Phi:\PGr(X)\longrightarrow P$ (resp. $\Phi:\rPGr(X)\longrightarrow P$ in the non-unitary case)} compatible with the action of the symmetric groups. Let us prove that for any graphs $G$, $G'$,
\[\Phi(G*G')=\Phi(G)*\Phi(G').\]
We proceed by induction on the number $N$ of internal edges of $G*G'$. If $N=0$, we put:
\begin{align*}
G&=\grapheo^{*p}*\sigma\cdot (I^{*q}*G_{k_1,l_1}*\ldots*G_{k_r,l_r})\cdot \tau,\\
G'&=\grapheo^{*p'}*\sigma'\cdots (I^{*q'}*G_{k'_1,l'_1}*\ldots*G_{k'_{r'},l'_{r'}})\cdot \tau'.
\end{align*}
As before, if we are in the non-unitary case we set $p=q=p'=q'=0$ and the whole discussion still holds. We obtain:
\begin{align*}
G*G'&=\grapheo^{*(p+p')}*(\sigma \otimes \sigma')* (\mathrm{Id}_q\otimes c_{k_1+\ldots+k_r,q'}\otimes \mathrm{Id}_{k'_1+\ldots+k'_{r'}})\\
&\cdot (I^{q+q'}*G_{k_1,l_1}*\ldots*G_{k'_{r'},l'_{r'}})\cdot
(\mathrm{Id}_q\otimes c_{q',l_1+\ldots+l_r}\otimes \mathrm{Id}_{l'_1+\ldots+l'_{r'}}),
\end{align*}
which gives, by commutativity of $*$:
\begin{align*}
\Phi(G*G')&=t_{1,1}(I)^{*(p+p')}*(\sigma \otimes \sigma')* (\mathrm{Id}_q\otimes c_{l_1+\ldots+l_r,q'}
\otimes \mathrm{Id}_{l'_1+\ldots+l'_{r'}})\\
&\cdot (I^{q+q'}*x_{k_1,l_1}*\ldots*x_{k'_{r'},l'_{r'}})\cdot
(\mathrm{Id}_q\otimes c_{q',k_1+\ldots+k_r}\otimes \mathrm{Id}_{k'_1+\ldots+k'_{r'}})\\
&=t_{1,1}(I)^{*p}*\sigma\cdot(I^{*q}*x_{k_1,l_1}*\ldots*x_{k_r,l_r})\cdot \tau\\
&*t_{1,1}(I)^{*p'}*\sigma'\cdot(I^{*q'}*x_{k'_1,l'_1}*\ldots*x_{k'_{r'},l'_{r'}})\cdot \tau'\\
&=\Phi(G)*\Phi(G').
\end{align*}
In the non-unitary case, the TRAP $P$ has no unit $P$ and one simply removes the terms with the identity $I$ in the above computation and sets $p=q=p'=q=0$. In this case, the result is the same as in the unitary case: $\Phi(G*G')=\Phi(G)*\Phi(G')$.

If $N\geqslant 1$, let us take an internal edge $e$ of $G*G'$. If $e$ is an internal edge of $G$, then
$(G*G')_e=G_e*G'$, and:
\begin{align*}
 \Phi(G*G')&=t_{1,1}\circ \Phi((G*G')_e)\\
 &=t_{1,1}\circ \Phi(G_e*G')\\
 &=t_{1,1}(\Phi(G_e)*G')\\
 &=t_{1,1}\circ \Phi(G_e)*\Phi(G')=\Phi(G)*\Phi(G').
\end{align*}
If $e$ is an internal edge of $G'$, we obtain similarly that $\Phi(G'*G)=\Phi(G')*\Phi(G)$.
The result then 
follows from the commutativity of $*$ (axiom $2.(d)$ of Definition \ref{defi:Trap}).
So $\Phi$ is compatible with $*$.

We still need to prove the compatibility of $\Phi$ with the partial trace maps. By Lemma \ref{lemmemorphismes}, it is enough to prove that $\Phi$ is compatible with $t_{1,1}$. Let {$G\in \PGr(X)(k,l)$ (or $G\in\rPGr(X)(k,l)$ in the non-unitary case)} be a graph, $e_1=\alpha^{-1}(1)$, $f_1=\beta^{-1}(1)$. We set $G'=t_{1,1}(G)$ and $e=\{e_1,f_1\}$ to be the edge of $G'$ created in the process. {Notice that if $G\in\rPGr(X)$ then $G'\in\rPGr(X)$.} There are five different cases {(but only the first cases appear if $G\in\rPGr(X)$)}:
\begin{enumerate}
\item If $e_1\in I(G)$ and $f_1\in O(G)$, then $e\in E(G')$ and $G'_e=G$. By construction of $\Phi(G')$:
\[\Phi\circ t_{1,1}(G)=\Phi(G')=t_{1,1}\circ \Phi(G'_e)=t_{1,1}\circ \Phi(G).\]
\item If $e_1\in IO(G)$ and $f_1\in O(G)$, let us put $j=\beta(e_1)$. Then there exists a graph $H$
such that $(1,j)\cdot G=I*H$, hence
\begin{align*}
t_{1,1}(G)&=t_{1,1}((1,j)\cdot(I*H))=(1,\ldots,j)\cdot (t_{1,i}(I*H))=
(1,\ldots,j)\cdot H,
\end{align*}
so:
\begin{align*}
t_{1,1}\circ \Phi(G)&=t_{1,1}((1,j)\cdot (I*\Phi(H))\\
&=  (1,j)(1,\ldots,j-1)\cdot \Phi(H)\\
&=(1,\ldots,j)\cdot \Phi(H)\\
&=\Phi((1,\ldots,j)\cdot H)\\
&=\Phi\circ t_{1,1,}(G).
\end{align*}
\item If $e_1\in I(G)$ and $f_1\in IO(G)$ the  computation  is similar.
\item If $e_1,f_1\in IO(G)$, with $e_1\neq f_1$  the  computation  is similar.
\item If $e_1=f_1$ in $IO(G)$, then $G=I*H$ for a certain graph $G$ and $t_{1,1}(G)=\grapheo*H$.
Then:
\[\Phi\circ t_{1,1}(G)=\Phi(\grapheo)*\Phi(H)=t_{1,1}\circ \Phi(I)*\Phi(H)=t_{1,1}(\Phi(I)*\Phi(H)
=t_{1,1}\circ \Phi(G).\]
\end{enumerate}
So $\Phi$ is compatible with the partial trace maps{, both in the unitary and non-unitary cases}. 
\end{proof}
 
\bibliographystyle{alpha}
 \addcontentsline{toc}{section}{References}
\bibliography{biblio_new}
 
\end{document}